\newtheorem{thm}{Theorem}[section]
\newtheorem{cor}[thm]{Corollary}
\newtheorem{lem}[thm]{Lemma}
\newtheorem{prop}[thm]{Proposition}
\theoremstyle{remark}
\newtheorem*{rem}{Remark}
\newcounter{remarkscounter}
\numberwithin{equation}{section}
\newcommand{\A}{\mathbb{A}}
\newcommand{\GL}{\mathrm{GL}}
\newcommand{\gl}{\mathfrak{gl}}
\newcommand{\SL}{\mathrm{SL}}
\newcommand{\ZZ}{\mathbb{Z}}
\newcommand{\QQ}{\mathbb{Q}}
\newcommand{\lto}{\longrightarrow}
\newcommand{\OO}{\mathcal{O}}
\newcommand{\CC}{\mathbb{C}}
\newcommand{\RR}{\mathbb{R}}
\newcommand{\GG}{\mathbb{G}}
\newcommand{\Sp}{\mathrm{Sp}}
\newcommand{\F}{\mathcal{F}}
\newcommand{\quash}[1]{}
\newcommand{\ord}{\mathrm{ord}}
\newcommand\norm[1]{\left\lVert#1\right\rVert}
\theoremstyle{definition}
\newenvironment{psmatrix}
  {\left(\begin{smallmatrix}}
  {\end{smallmatrix}\right)}
\renewcommand{\bar}{\overline}
\numberwithin{equation}{subsection}
\newcommand{\one}{\mathbbm{1}}
\begin{document}

\title[The Fourier transform for triples of quadratic spaces]{The Fourier transform for triples of quadratic spaces}

\author{Jayce R. Getz}
\address{Department of Mathematics\\
Duke University\\
Durham, NC 27708}
\email{jgetz@math.duke.edu}

\author{Chun-Hsien Hsu}
\address{Department of Mathematics\\
University of Chicago\\
Chicago, IL 60637}
\email{chunhsien@uchicago.edu}

\subjclass[2010]{Primary 11F70, Secondary  11F27, 11F66}

\thanks{The authors are thankful for partial support provided by the first author's NSF grant DMS 1901883. 
Any opinions, findings, and conclusions or recommendations expressed in this material are those of the authors and do not necessarily reflect the views of the National Science Foundation.  The first author also thanks D.~Kazhdan for travel support under his ERC grant AdG 669655.  Part of this paper was written while the first author was on sabbatical at the Postech Mathematics Institute in South Korea.  He thanks the institute and the Postech Math Department for their hospitality and excellent working conditions.}

\begin{abstract}
Let $V_1,V_2,V_3$ be a triple of even dimensional vector spaces over a number field $F$ equipped with nondegenerate quadratic forms $\mathcal{Q}_1,\mathcal{Q}_2,\mathcal{Q}_3$, respectively.  Let $
Y  \subset \prod_{i=1}^3 V_i
$
be the closed subscheme consisting of $(v_1,v_2,v_3)$ such that $\mathcal{Q}_1(v_1)=\mathcal{Q}_2(v_2)=\mathcal{Q}_3(v_3)$.   One has a Poisson summation formula for this scheme under suitable assumptions on the functions involved, but the relevant Fourier transform was previously only defined as a correspondence.  In the current paper we employ a novel global-to-local argument to prove that this Fourier transform is well-defined on the Schwartz space of $Y(\A_F).$  To execute the global-to-local argument, we introduce boundary terms and thereby extend the Poisson summation formula to a broader class of test functions. This is the first time a summation formula with boundary terms has been proven for a spherical variety that is not a Braverman-Kazhdan space.
\end{abstract}

\maketitle

\setcounter{tocdepth}{1}
{\small
\tableofcontents
}

\section{Introduction}\label{sec:intro}

Let $d_1,d_2,d_3$ be three positive even integers, let  $V_i:=\GG_a^{d_i}$, $V:=\oplus_{i=1}^3V_i$, and let $F$ be a number field.  For each $i$, let $\mathcal{Q}_i$ be a nondegenerate quadratic form on $V_i(F)$.  
Let
 $Y \subset V$ be the closed subscheme whose points in an $F$-algebra $R$ are given by 
\begin{align*}
Y(R):&=\{ (v_1,v_2,v_3) \in V(R):\mathcal{Q}_1(v_1)=\mathcal{Q}_2(v_2)=\mathcal{Q}_3(v_3)\}.
\end{align*}
In \cite{Getz:Liu:Triple} the first author and Liu proved a Poisson summation formula for this space.  The space $Y$ is a spherical variety, and hence the summation formula confirms a special case of  conjectures of Braverman and Kazhdan, later investigated by L.~Lafforgue and Ng\^o, and extended to spherical varieties by Sakellaridis
\cite{BK:basic:affine,BK-lifting,BK:normalized,LafforgueJJM,Ngo:Hankel,SakellaridisSph}.  
It is the first  summation formula for a spherical variety that is not a Braverman-Kazhdan space.  Here a Braverman-Kazhdan space is the affine closure of $[P,P] \backslash G$ where $G$ is a reductive group and $P <  G$ is a parabolic subgroup.

In this paper we prove that the Fourier transform on $Y,$ originally defined as a correspondence, descends to an automorphism of the Schwartz space.  Let us be more precise.  Let $X^\circ:=[P,P] \backslash \mathrm{Sp}_6,$ where $P < \mathrm{Sp}_6$ is the Siegel parabolic, and let $X:=\overline{\mathrm{Pl}(X^{\circ})}$ be the corresponding Braverman-Kazhdan space (see \eqref{X}). As explained in \S \ref{sec:BK}, the Schwartz space $\mathcal{S}(X(\A_F))$ is defined and comes equipped with a Fourier transform $\mathcal{F}_X: \mathcal{S}(X(\A_F)) \to \mathcal{S}(X(\A_F))$.  We define $\mathcal{S}(X(\A_F) \times V(\A_F))$ using the conventions in \S \ref{sec:gen:Schwartz}.  
For notational simplicity, we let
$$
\mathcal{F}_X:\mathcal{S}(X(\A_F) \times V(\A_F)) \lto \mathcal{S}(X(\A_F) \times V(\A_F))
$$
be the automorphism given on pure tensors by $\mathcal{F}_X(f_1 \otimes f_2)=\mathcal{F}_X(f_1) \otimes f_2$. Let $Y^{\mathrm{sm}} \subset Y$ be the smooth locus. 
In this paper we introduce the \textbf{Schwartz space}
$$
\mathcal{S}(Y(\A_F)):=\mathrm{Im}\big(I:\mathcal{S}(X(\A_F) \times V(\A_F)) \lto C^\infty(Y^{\mathrm{sm}}(\A_F))\big)
$$
where $I$ is defined as in \eqref{Is:global:intro} below.  
The Poisson summation formula in \cite{Getz:Liu:Triple} relies not on a Fourier transform from $\mathcal{S}(Y(\A_F))$ to itself, but a correspondence
\medskip
\begin{center}
\begin{tikzcd}[column sep=large]
\mathcal{S}(X(\A_F)\times V(\A_F))
\arrow[d, two heads, "I"]
\arrow[r, "\mathcal{F}_X"] &\mathcal{S}(X(\A_F) \times V(\A_F)) \arrow[d,two heads, "I"]\\
\mathcal{S}(Y(\A_F)) \arrow[r,dotted]
& \mathcal{S}(Y(\A_F)).
\end{tikzcd}
\end{center} 
 In the current paper we prove the following theorem:
\begin{thm} \label{thm:FY:intro}  Assume $Y^{\mathrm{sm}}(\A_F)$ is nonempty.  
There is a unique $\CC$-linear isomorphism $\mathcal{F}_Y:\mathcal{S}(Y(\A_F)) \to \mathcal{S}(Y(\A_F))$ such that 
$
I \circ \mathcal{F}_X=\mathcal{F}_Y \circ I.$
\end{thm}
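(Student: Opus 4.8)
The claims that $\mathcal{F}_Y$ is unique and that it is an isomorphism are formal. Since $\mathcal{S}(Y(\A_F))=\mathrm{Im}(I)$ by definition, $I$ is a surjection onto $\mathcal{S}(Y(\A_F))$, so any $\mathcal{F}_Y$ with $I\circ\mathcal{F}_X=\mathcal{F}_Y\circ I$ is determined by this identity; and applying whatever construction produces $\mathcal{F}_Y$ to the automorphism $\mathcal{F}_X^{-1}$ in place of $\mathcal{F}_X$ yields a two-sided inverse. The entire content of the theorem is therefore the implication $I(f)=0\Rightarrow I(\mathcal{F}_X f)=0$, i.e. that $\ker I$ (and, symmetrically, $\ker(I\circ\mathcal{F}_X^{-1})$) is stable under $\mathcal{F}_X$; granting this, one simply sets $\mathcal{F}_Y(I(f)):=I(\mathcal{F}_X f)$, which is then well defined and $\CC$-linear.

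To prove $\mathcal{F}_X(\ker I)\subseteq\ker I$, fix $f$ with $I(f)=0$ and put $\psi:=I(\mathcal{F}_X f)\in\mathcal{S}(Y(\A_F))$; the goal is to show $\psi\equiv0$ on $Y^{\mathrm{sm}}(\A_F)$. Since $Y$ is affine, $Y^{\mathrm{sm}}(F)$ is discrete and closed in $Y^{\mathrm{sm}}(\A_F)$, so for $\varphi\in\mathcal{S}(Y(\A_F))$ and $g$ in the natural symmetry group $G(\A_F)$ of the situation — a group acting on $X\times V$ which renders $I$ equivariant, whose action on the $X$-factor factors through $\mathrm{Sp}_6$ and is therefore intertwined by $\mathcal{F}_X$ up to a character, and which contains a subgroup of the product of the orthogonal similitude groups of the $\mathcal{Q}_i$ and the scaling torus acting on $V$ — the periodization $\Theta_\varphi(g):=\sum_{\xi\in Y^{\mathrm{sm}}(F)}\varphi(g^{-1}\xi)$ converges absolutely and descends to $G(F)\backslash G(\A_F)$. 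The first step is to extend the Poisson summation formula of \cite{Getz:Liu:Triple} to a class of test functions on $X(\A_F)\times V(\A_F)$ containing every translate $g\cdot f$ and its $\mathcal{F}_X$-image; in this generality it reads, for admissible $h$, $\sum_{\xi\in Y^{\mathrm{sm}}(F)}I(h)(\xi)=\sum_{\xi\in Y^{\mathrm{sm}}(F)}I(\mathcal{F}_X h)(\xi)+B(h)$, where the boundary term $B(h)$ is assembled from the restrictions of $I(h)$ to the $G$-stable strata of $Y\setminus Y^{\mathrm{sm}}$. Applying this with $h=g\cdot f$ and using $I(g\cdot f)=g\cdot I(f)=0$ — which kills both the left-hand side and all boundary contributions — together with the equivariance of $\mathcal{F}_X$, we obtain $\Theta_\psi(g)=0$ for every $g\in G(\A_F)$; that is, $\Theta_\psi\equiv0$.

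The remaining, genuinely global-to-local, step is to deduce $\psi\equiv0$ from $\Theta_\psi\equiv0$, i.e. the injectivity of the theta/periodization map $\mathcal{S}(Y(\A_F))\to C^\infty(G(F)\backslash G(\A_F))$. Using the hypothesis $Y^{\mathrm{sm}}(\A_F)\neq\emptyset$ together with the Hasse principle and strong approximation for $Y^{\mathrm{sm}}$, every point of $Y^{\mathrm{sm}}(\A_F)$ lies in a $G(\A_F)$-orbit meeting $Y^{\mathrm{sm}}(F)$; restricting $\Theta_\psi$ to a suitable unipotent subgroup of $G$ and Fourier-expanding (along the Weil-representation structure underlying the summation formula) recovers the relevant orbital integrals of $\psi$, and letting the orbit vary, which is possible by weak approximation for $G$, pins down $\psi$ pointwise. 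Running the identical argument with $\mathcal{F}_X^{-1}$ in place of $\mathcal{F}_X$ gives $\mathcal{F}_X^{-1}(\ker I)\subseteq\ker I$ as well, and the theorem follows.

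The principal obstacle is the extended Poisson summation formula and the identification of its boundary terms. One must (i) enlarge the class of admissible test functions on $X(\A_F)\times V(\A_F)$ so that it is stable under $\mathcal{F}_X$ and under $G(\A_F)$ while the summation formula still converges; (ii) analyse the asymptotics of $I(h)$ along the boundary $Y\setminus Y^{\mathrm{sm}}$ precisely enough to show that $B(h)$ depends on $h$ only through the restrictions of $I(h)$ to the lower-dimensional $G$-strata, so that $I(f)=0$ forces $B(g\cdot f)=0$; and (iii) since $Y$ is not a Braverman--Kazhdan space there is no existing template for these boundary terms, so identifying them correctly is the genuinely new difficulty — this is the first summation formula with boundary terms outside the Braverman--Kazhdan setting. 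A secondary task is to establish the strong and weak approximation and theta-injectivity input of the previous paragraph in the generality allowed by the sole hypothesis that $Y^{\mathrm{sm}}(\A_F)$ is nonempty.
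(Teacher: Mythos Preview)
Your reduction to showing $\ker I$ is $\mathcal{F}_X$-stable is correct, and the global-to-local spirit is right. But the mechanism you propose for killing the boundary terms has a genuine gap.

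You claim that the boundary contribution $B(h)$ in the extended Poisson summation formula ``is assembled from the restrictions of $I(h)$ to the $G$-stable strata of $Y\setminus Y^{\mathrm{sm}}$,'' so that $I(f)=0$ would force $B(g\cdot f)=0$. This is not how the boundary terms arise. In the paper (see \eqref{Is:global:intro} and \eqref{Ii}), the terms $I_0(f)$ and $I_i(f\otimes\Phi)$ are integrals of $f_1\cdot\rho(g)f_2$ over the \emph{other} $G$-orbits $\gamma_0 G,\gamma_i G\subset X^{\circ}$, not over the open orbit $\gamma_b G$ that defines $I$. They depend on $f_1|_{\gamma_i G}$, which is completely decoupled from $f_1|_{\gamma_b G}$; hence $I(f)=0$ says nothing about $I_0(f)$ or $I_i(f\otimes\Phi)$. (The map $I$ is not even defined on $Y\setminus Y^{\mathrm{sm}}$, so ``restriction of $I(h)$ to boundary strata'' does not make sense.) Your obstacle (ii) is therefore not a technical difficulty to be overcome but an assertion that is false as stated.

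The paper's strategy is different in two essential ways. First, it proves the \emph{local} statement $\ker I_v$ is $\mathcal{F}_{X,v}$-stable at each place $v$ separately (Theorem~\ref{thm:FY}), and the adelic Theorem~\ref{thm:FY:intro} follows by taking the restricted tensor product. Second, to prove the local statement it applies the global summation formula (Theorem~\ref{thm:main:intro}) to $f_v\cdot f_{v_1}\cdot f_{v_2}\cdot f^{vv_1v_2}$ where the \emph{auxiliary} test functions at two other finite places $v_1,v_2$ are chosen with support conditions (namely $f_{v_1}\in C_c^\infty(\gamma_b G(F_{v_1}))\otimes\mathcal{S}_{0v_1}$ and $\mathcal{F}_X(f_{v_2})$ similarly) that force every boundary term to vanish --- not because $I(f_v)=0$, but because $f_{v_1}$ is supported away from the orbits $\gamma_0 G,\gamma_i G$ and lies in $\mathcal{S}_{0v_1}$. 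One then varies the remaining auxiliary data to isolate any single $y_0\in Y^{\mathrm{ani}}(F)$ in the sum $\sum_\xi I(\mathcal{F}_X(f))(\xi)$, concluding $I(\mathcal{F}_X(f_v))(y_0)=0$; density of $Y^{\mathrm{ani}}(F)$ in $Y^{\mathrm{sm}}(F_v)$ (Theorem~\ref{thm:CTS}) and continuity finish the argument. Your injectivity-of-periodization step is replaced by this direct point-isolation using freedom at the auxiliary places, and no Fourier expansion along unipotents or strong approximation for $Y^{\mathrm{sm}}$ is needed.
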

\noindent In other words, the dotted arrow in the diagram above can be replaced by $\mathcal{F}_Y$ and the resulting diagram is commutative.  Theorem \ref{thm:FY:intro} follows from Theorem \ref{thm:FY} below.  We prove in Proposition \ref{prop:L2} below that $\mathcal{S}(Y(F_v))$ is contained in $L^2(Y(F_v))$ (with respect to an appropriate measure) for all places $v$ of $F.$  
As an application of Theorem \ref{thm:FY:intro}, in a follow-up paper \cite{Getz:Hsu:Leslie} with S.~Leslie, we give an explicit formula for $\mathcal{F}_Y$ and prove that it extends to a unitary operator in the non-Archimedean case.  This will constitute a sound setup for harmonic analysis on $Y(F_v)$. 
We refer to  \cite{Getz:Quadrics,Gurevich:Kazhdan,Kobayashi:Mano} for analogous work when $Y$ is replaced by the zero locus of a single quadratic form.  We would like to emphasize again that the setting of the current paper is a significant leap from the setting of these other papers because our space is not a Braverman-Kazhdan space.

We prove Theorem \ref{thm:FY:intro} via global-to-local argument.  Though global-to-local arguments using summation formulae such as the trace formula are common in the literature, the authors do not know of another example where such a technique is used to define a unitary operator.  The argument is contained in \S \ref{sec:appendix}.  To execute it, we prove a more flexible version of the Poisson summation formula of \cite{Getz:Liu:Triple} that involves boundary terms.  We also develop Fourier and harmonic analysis on $\mathcal{S}(Y(\A_F))$ to the point that we can take limits of functions. This work is of independent interest.

In remainder of the introduction, we state the Poisson summation formula we prove in this paper. Before  stating it in full generality, we highlight a special case.  Fix a nontrivial additive character $\psi:F \backslash \A_F \to \CC^\times.$  For $1 \leq i \leq 3$, let
 $$
\rho_i:=\rho_{i,\psi}:\SL_2(\A_F) \times \mathcal{S}(V_i(\A_F)) \lto \mathcal{S}(V_i(\A_F))
 $$
 be the Weil representation attached to $\psi$ and the $\mathcal{Q}_i.$  
 For a place $v$ of $F,$ let
 \begin{align}
  \label{intro:Siv}
  \begin{split}\mathcal{S}_{iv}:&=\{f \in \mathcal{S}(V_i(F_v)): \rho_i(g)f(0)=0 \textrm{ for all }g \in \SL_2(F_v)\},\\
    \mathcal{S}_{0v}:&=\mathcal{S}_{1v}\otimes \mathcal{S}_{2v}\otimes \mathcal{S}_{3v}.\end{split}
 \end{align}
From the definition of the Weil representation and the Bruhat decomposition of $\SL_2(F_v)$ we have
\begin{align}
\mathcal{S}_{iv}=\left\{f \in \mathcal{S}(V_i(F_v)): f(0)=0\textrm{ and } \int_{V_i(F_v)} \psi(tQ_i(w))f(w)dw=0 \textrm{ for all }t \in F_v\right\}.
\end{align}

Restrictions of elements of $\mathcal{S}_{0v}$ to $Y^{\mathrm{sm}}(F_v)$ are elements of $\mathcal{S}(Y(F_v))$ by  Lemma \ref{lem:smooth}.  We check in Lemma \ref{lem:nontriv} below that $\mathcal{S}_{0v}$ is nonzero for finite $v \nmid 2$ (in fact, infinite-dimensional).  
 \begin{thm}  \label{thm:PS}
 Let $f \in \mathcal{S}(Y(\A_F))$.  Assume that there are finite places $v_1,v_2$ of $F$ such that $f=f_{v_1}f_{v_2}f^{v_1v_2}$ where $f_{v_1}$ and $\mathcal{F}_Y(f_{v_2})$ are restrictions of elements of $\mathcal{S}_{0v_1}$ and $\mathcal{S}_{0v_2}$, respectively.   Then 
 $$
 \sum_{y \in Y^{\mathrm{sm}}(F)} f(y)=\sum_{y \in Y^{\mathrm{sm}}(F)} \mathcal{F}_{Y}(f)(y).
 $$
 \end{thm}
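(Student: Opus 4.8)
The plan is to reduce the identity to a Poisson summation formula ``upstairs'' on $X(\A_F)\times V(\A_F)$ and then transport it down along the surjection $I$, using the intertwining relation $I\circ\mathcal{F}_X=\mathcal{F}_Y\circ I$ from Theorem \ref{thm:FY:intro}. First I would choose a factorizable lift $\tilde f=\tilde f_{v_1}\otimes\tilde f_{v_2}\otimes\tilde f^{v_1v_2}\in\mathcal{S}(X(\A_F)\times V(\A_F))$ with $I(\tilde f)=f$, chosen to reflect the local hypotheses: since $f_{v_1}$ is the restriction of an element of $\mathcal{S}_{0v_1}$ and $\mathcal{F}_Y(f_{v_2})$ is the restriction of an element of $\mathcal{S}_{0v_2}$, I would arrange that $\tilde f_{v_1}$, respectively the $v_2$-component of $\mathcal{F}_X(\tilde f)$, lies in the subspace of $\mathcal{S}(X(F_v)\times V(F_v))$ that maps under $I$ into restrictions of $\mathcal{S}_{0v}$. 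A preliminary point is absolute convergence of both sums over $Y(F)$, which should follow from the decay of elements of $\mathcal{S}(Y(\A_F))$ together with the support restrictions imposed by $\mathcal{S}_{0v_1}$ and $\mathcal{S}_{0v_2}$.

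Next I would invoke the Poisson summation formula on $X(\A_F)\times V(\A_F)$: the sum over $V(F)$ is ordinary Poisson summation with no boundary contribution, while on the Braverman--Kazhdan space $X$ one has its summation formula, which in general carries boundary terms supported on $X\setminus X^\circ$. The core of the argument is the comparison of $\sum_{(x,v)\in(X\times V)(F)}\tilde f(x,v)$ with $\sum_{y\in Y(F)}f(y)$: unwinding the definition of $I$ as an integral over the fibers of a map $X\times V\to Y$, the discrepancy between the two consists of contributions of $F$-points meeting the singular locus $Y\setminus Y^{\mathrm{sm}}$ together with the boundary terms from the $X$-summation formula. Here the hypothesis at $v_1$ is exactly what is needed: because $f_{v_1}$ arises from $\mathcal{S}_{0v_1}$, the defining vanishing condition $\rho_i(g)f_i(0)=0$ propagates to show that $f$ is supported away from the part of $Y(\A_F)$ responsible for these terms, so they all vanish and one obtains $\sum_{(x,v)}\tilde f(x,v)=\sum_{y\in Y(F)}f(y)$.

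Running the same comparison for $\mathcal{F}_X(\tilde f)$ in place of $\tilde f$ --- now the hypothesis at $v_2$ does the work, since $I(\mathcal{F}_X(\tilde f))=\mathcal{F}_Y(f)$ by Theorem \ref{thm:FY:intro} and $\mathcal{F}_Y(f)_{v_2}$ comes from $\mathcal{S}_{0v_2}$ --- gives $\sum_{(x,v)}\mathcal{F}_X(\tilde f)(x,v)=\sum_{y\in Y(F)}\mathcal{F}_Y(f)(y)$. Combining these two identities with the upstairs Poisson summation formula (whose boundary terms have now been shown to vanish) yields $\sum_{y\in Y(F)}f(y)=\sum_{y\in Y(F)}\mathcal{F}_Y(f)(y)$, as desired. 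This also explains the asymmetry of the hypotheses: the condition on $f_{v_1}$ controls the left-hand sum, while the condition on $\mathcal{F}_Y(f_{v_2})$ controls the right-hand sum.

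The main difficulty is the comparison step: one must identify precisely the boundary terms of the summation formula on $X\times V$ and the contributions over $Y\setminus Y^{\mathrm{sm}}$, and prove that the conditions defining $\mathcal{S}_{0v}$ force all of them to vanish. This needs a firm grip on the local structure of $\mathcal{S}(Y(F_v))$ near the singular locus and on how the Weil representations $\rho_i$ govern the fibral integral defining $I$ --- precisely the analysis that the paper packages as the ``flexible'' Poisson summation formula with boundary terms, of which Theorem \ref{thm:PS} is the boundary-term-free special case.
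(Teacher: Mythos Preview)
Your high-level strategy—lift $f$ to $\tilde f\in\mathcal{S}(X(\A_F)\times V(\A_F))$, apply an upstairs summation formula, and use the local hypotheses to kill boundary terms—matches the paper's, and your final paragraph correctly identifies Theorem~\ref{thm:PS} as the boundary-term-free special case of Theorem~\ref{thm:main:intro}. But the middle of your argument misidentifies the upstairs objects in a way that would block the proof. There is no sum $\sum_{(x,v)\in(X\times V)(F)}\tilde f(x,v)$ in play, no separate Poisson summation on $V$, and $I$ is not a fiber integral over a map $X\times V\to Y$: by definition $I(f_1\otimes f_2)(y)=\int_{G_{\gamma_b}\backslash G}f_1(\gamma_b g)\,\rho(g)f_2(y)\,dg$, a group integral coupling the two factors through the Weil representation. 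The upstairs identity \emph{is} Theorem~\ref{thm:main:intro}, which already has the shape $\sum_{y\in Y(F)}I(\tilde f)(y)+(\text{boundary})=\sum_{y\in Y(F)}I(\mathcal{F}_X(\tilde f))(y)+(\text{boundary})$, so no separate comparison step is needed once the lift is chosen.

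You also misplace the source of the boundary terms: $I_0$ and $I_i$ do not come from $X\setminus X^\circ$ or from the singular locus of $Y$, but from the four non-open $G$-orbits in $X^\circ$ (representatives $\gamma_0,\gamma_1,\gamma_2,\gamma_3$). Consequently they are not killed by the $\mathcal{S}_{0v}$ vanishing condition on the $V$-factor; they are killed by choosing the $X$-factor of the lift at $v_1$ to lie in $C_c^\infty(\gamma_bG(F_{v_1}))$—supported only on the open orbit—so that $f_{1v_1}(\gamma_ig)\equiv 0$ for $0\le i\le 3$. Such a lift exists by Lemma~\ref{lem:smooth} exactly because $f_{v_1}$ is the restriction of a Schwartz function on $V(F_{v_1})$. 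At $v_2$ one does the same for $\mathcal{F}_Y(f_{v_2})$ and precomposes with $\mathcal{F}_X^{-1}$, so that $\mathcal{F}_X(\tilde f)$ has $X$-component in $C_c^\infty(\gamma_bG(F_{v_2}))$; this kills the boundary on the right. The actual role of $\mathcal{S}_{0v}$ is separate: it is what verifies hypothesis~\eqref{awayfromVcirc} of Theorem~\ref{thm:main:intro}, without which that theorem cannot be invoked. With the lift so chosen, Theorem~\ref{thm:main:intro} together with $I\circ\mathcal{F}_X=\mathcal{F}_Y\circ I$ gives the conclusion directly.
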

 \noindent This is similar to the main theorem of \cite{Getz:Liu:Triple}, but Theorem \ref{thm:PS} has the additional benefit that the hypotheses and conclusion are given intrinsically in terms of the space $\mathcal{S}(Y(\A_F))$ and not extrinsically in terms of the map $I:\mathcal{S}(X(\A_F) \times V(\A_F)) \to \mathcal{S}(Y(\A_F)).$
 
\subsection{The boundary terms} \label{ssec:bt}
We now describe our main summation formula more precisely.    Possibly confusing (but useful) notational conventions on Schwartz spaces are given in \S \ref{sec:gen:Schwartz}. 
Let $G$ be the image of $\SL_2^3$ under a natural embedding $\SL_2^3 \to \mathrm{Sp}_6$ (see \eqref{Gembed}). The quasi-affine scheme $X^{\circ}=[P,P]\backslash \mathrm{Sp}_6$  admits a  natural right $G$-action.

Over a field of characteristic zero, there are five orbits in $X^{\circ}$ under the action of $G.$ 
We fix representatives $\gamma_b,\gamma_0:=\mathrm{Id},\gamma_1,\gamma_2,\gamma_3$ as in \S \ref{sec:groups:orbits} and let $G_\gamma$ be the  stabilizer of $\gamma$ in $G.$  The subscript $b$ stands for basepoint; $\gamma_b$ is a representative for the open orbit.  

We have a Weil representation 
$$
\rho:=\rho_{1} \otimes \rho_2 \otimes \rho_3:G(\A_F) \times \mathcal{S}(V(\A_F)) \lto \mathcal{S}(V(\A_F)).
$$
We will require the following assumption on $f=f_1\otimes f_2 \in \mathcal{S}(X(\A_F) \times V(\A_F))$: There are finite places $v_1,v_2$ of $F$ such that
\begin{align}\label{cptsupport0}
    f_1&=f_{v_1}f_{v_2}f^{v_1v_2} \textrm{ and } f_{v_1} \in C_c^\infty(X^{\circ}(F_{v_1})),\textrm{ } \mathcal{F}_X(f_{v_2}) \in C_c^\infty(X^{\circ}(F_{v_2})),\\
\label{awayfromVcirc0}
    \rho(g)f_2(v)&=0\,\,\textrm{for $v\not\in V^\circ(F)$,\, for all $g\in G(\A_F)$.}
\end{align}
Here $V^\circ$ is the open subscheme of $V$ consisting of triples $(v_1,v_2,v_3)$ with each $v_i \neq 0$.  The origin of condition \eqref{awayfromVcirc0} is explained below \eqref{Theta}.  We point out that if $f_2=f_{2v}f_2^v$ for some place $v$ of $F$ with $f_{2v} \in \mathcal{S}_{0v}$ then $f_2$ satisfies \eqref{awayfromVcirc0}.

\begin{rem}
A similar condition on $f_2$ was assumed in \cite{Getz:Liu:Triple}.  We warn the reader that in loc.~cit. the assertion that (5.0.7) implies (5.0.5) is false.  Fortunately, this claim is never used in loc.~cit.
\end{rem}

Let $\Phi\in \mathcal{S}(\mathbb{A}_F^2)$ and let $N_2 < \SL_2$ be the subgroup of upper triangular unipotent matrices. For
$f=f_1\otimes f_2 \in \mathcal{S}(X(\A_F) \times V(\A_F))$ we define
\begin{align} \label{Is:global:intro}\begin{split}
I(f)\left(\xi \right)&:=\int_{G_{\gamma_b}(\A_F) \backslash G(\A_F)} f_1\left(\gamma_bg\right) \rho\left(g\right)f_2(\xi)\,dg\,\quad \text{for  } \xi \in Y^{\mathrm{sm}}(\A_F),\\
I_{0}(f)\left(\xi \right)&:=\int_{N_2^3(\A_F) \backslash G(\A_F)} f_1\left(g\right) \rho\left(g\right)f_2(\xi)\,dg\, \quad \text{for  } \xi \in \widetilde{Y}_0(\A_F). \end{split}
\end{align}
Here $\widetilde{Y}_0$ (and $\widetilde{Y}_i$ for $1 \leq i \leq 3$) is defined as in \S\ref{sec:groups:orbits}.   
For $\xi \in \widetilde{Y}_i(\A_F)$ and $s\in \CC,$  we set
\begin{align*}
&I_{i}(f\otimes \Phi)(\xi,s)\\&:=
\int_{G_{\gamma_i}(\A_F) \backslash G(\A_F)}f_1\left(\gamma_{i}g\right) \left(
\int_{N_2(\A_F) \backslash \SL_2(\A_F)}\int_{\A_F^\times}
\rho\left(\Delta_i(h)g\right)f_2(\xi )\Phi(x(0,1)hp_i(g))|x|^{2s}d^{\times}xdh\right)dg
\end{align*} 
where $\Delta_i$ is defined as in \eqref{Deltai} and $p_i$ is defined as in \eqref{pi}.  In \S\ref{sec:groups:orbits},  certain quotient schemes $Y_i$ of $\widetilde{Y}_i$ are also defined. Roughly, $Y_0(F)$ is a quotient of
$$
\widetilde{Y}_0(F):=\{(v_1,v_2,v_3) \in V^\circ (F): \mathcal{Q}_1(v_1)=\mathcal{Q}_2(v_2)=\mathcal{Q}_3(v_3)=0\}
$$
by an action of $(F^\times)^2$
and $Y_i$ is the product of the zero locus of $\mathcal{Q}_i$ in $V_i^\circ$ and the quasi-projective scheme cut out of $\mathbb{P}(V_{i-1}^\circ \times V_{i+1}^\circ)$ by $\mathcal{Q}_{i-1}-\mathcal{Q}_{i+1}$, where the indices are understood ``modulo $3$'' in the obvious sense.  Here $\mathbb{P}(V_{i-1}^{\circ} \times V_{i+1}^{\circ})$ is the image of $V_{i-1}^{\circ} \times V_{i+1}^\circ$ in $\mathbb{P}(V_{i-1} \times V_{i+1}).$

The main summation formula  proved in this paper is the following theorem:

\begin{thm} \label{thm:main:intro}
Assume that 
$$
(f=f_1\otimes f_2,\Phi) \in \mathcal{S}(X(\A_F) \times V(\A_F)) \times \mathcal{S}(\A_F^2)
$$ 
where $f$ satisfies \eqref{cptsupport0} and  \eqref{awayfromVcirc0}, and $\widehat{\Phi}(0) \neq 0.$  Let $\kappa_F=\frac{2}{\mathrm{Vol}(F^\times \backslash (\A_F^\times)^1)}.$ One has 
\begin{align*}
&\sum_{\xi \in Y^{\mathrm{sm}}(F)} I(f)(\xi)
+\sum_{\xi \in   Y_0(F)}I_0(f)(\xi)+\frac{\kappa_F}{\widehat{\Phi}(0)}
\mathrm{Res}_{s=1}\sum_{i=1}^3 \sum_{\xi \in Y_i(F)}I_i(f\otimes \Phi)(\xi,s)\\
&=\sum_{\xi \in Y^{\mathrm{sm}}(F)} I(\mathcal{F}_X(f))(\xi)+\sum_{\xi \in Y_0(F)} I_0(\mathcal{F}_X(f))(\xi)+
\frac{\kappa_F}{\widehat{\Phi}(0)}\mathrm{Res}_{s=1}\sum_{i=1}^3 \sum_{\xi \in Y_i(F)}I_i(\F_X(f)\otimes \Phi)(\xi,s).
\end{align*} 
\end{thm}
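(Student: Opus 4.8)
The plan is to deduce the identity from the Poisson summation formula for the Braverman--Kazhdan space $X$, paired against a theta function built from $f_2$ over $G(F)\backslash G(\A_F)$ and regularized, where the non-open orbits force it, by a degenerate Eisenstein series assembled from $\Phi$; the resulting master integral is manifestly unchanged when $f$ is replaced by $\mathcal{F}_X(f)$, and all the content lies in unfolding it and in controlling convergence. Concretely, I would begin with the fact recalled in \S\ref{sec:BK} that for $h\in\mathcal{S}(X(\A_F))$ satisfying \eqref{cptsupport0} one has $\sum_{\xi\in X^{\circ}(F)}h(\xi)=\sum_{\xi\in X^{\circ}(F)}\mathcal{F}_X(h)(\xi)$. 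Since $G$ preserves $X^{\circ}$, so that \eqref{cptsupport0} is stable under right translation by $G(\A_F)$, and since $\mathcal{F}_X$ commutes with right translation by $G(\A_F)$, applying this with $h$ replaced by $\xi\mapsto f_1(\xi g)$ gives, for each $g\in G(\A_F)$,
\[
\sum_{\xi\in X^{\circ}(F)}f_1(\xi g)=\sum_{\xi\in X^{\circ}(F)}\mathcal{F}_X(f_1)(\xi g).
\]
I would then introduce the theta function $\Theta_{f_2}(g):=\sum_{v\in V(F)}\rho(g)f_2(v)$, which by \eqref{awayfromVcirc0} is a sum over $v\in V^{\circ}(F)$, is left $G(F)$-invariant, and, thanks to \eqref{awayfromVcirc0}, has at worst moderate growth on $G(F)\backslash G(\A_F)$; for $\Phi\in\mathcal{S}(\A_F^2)$ I would attach a degenerate Eisenstein series $E(g,s,\Phi)$ on the relevant $\SL_2$-factors of $G$, built from the Godement sections $x\mapsto\Phi\big(x(0,1)\,\cdot\,\big)|x|^{2s}$, and set
\[
Z(f,s,\Phi):=\int_{G(F)\backslash G(\A_F)}\Big(\sum_{\xi\in X^{\circ}(F)}f_1(\xi g)\Big)\,\Theta_{f_2}(g)\,E(g,s,\Phi)\,dg .
\]
This converges for $\mathrm{Re}(s)$ large and continues meromorphically to a neighborhood of $s=1$ with at most a simple pole there, and the displayed identity, applied pointwise in $g$, gives $Z(f,s,\Phi)=Z(\mathcal{F}_X(f),s,\Phi)$, first for $\mathrm{Re}(s)$ large and hence identically.

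The next step is to unfold. Writing $\sum_{\xi\in X^{\circ}(F)}=\sum_{\gamma}\sum_{\delta\in G_\gamma(F)\backslash G(F)}$ over the orbit representatives $\gamma\in\{\gamma_b,\gamma_0,\gamma_1,\gamma_2,\gamma_3\}$ of \S\ref{sec:groups:orbits} and collapsing each coset sum into the integral over $G(F)\backslash G(\A_F)$, which is legitimate because $\Theta_{f_2}$ and $E$ are left $G(F)$-invariant, presents $Z(f,s,\Phi)$ as $\sum_\gamma Z_\gamma(f,s,\Phi)$ with $Z_\gamma(f,s,\Phi)=\int_{G_\gamma(F)\backslash G(\A_F)}f_1(\gamma g)\,\Theta_{f_2}(g)\,E(g,s,\Phi)\,dg$. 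For the open orbit $\gamma_b$ and for $\gamma_0=\mathrm{Id}$ these integrals converge absolutely even after deleting $E$ (using \eqref{cptsupport0} together with \eqref{awayfromVcirc0}); unfolding $\Theta_{f_2}$ along the orbits of $G_{\gamma_b}$ and of $G_{\gamma_0}$ on $V^{\circ}(F)$, which are identified in \S\ref{sec:groups:orbits} with $Y(F)$ and with $Y_0(F)$, and using that $\mathrm{Res}_{s=1}E(g,s,\Phi)$ is a constant multiple of $\widehat{\Phi}(0)$, equal to $1$ under the normalization hypothesis, one obtains $\mathrm{Res}_{s=1}Z_{\gamma_b}(f,s,\Phi)=\sum_{\xi\in Y(F)}I(f)(\xi)$ and $\mathrm{Res}_{s=1}Z_{\gamma_0}(f,s,\Phi)=\sum_{\xi\in Y_0(F)}I_0(f)(\xi)$.

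For $\gamma_1,\gamma_2,\gamma_3$ the stabilizer $G_{\gamma_i}$ meets a mirabolic subgroup of an $\SL_2$-factor of $G$, so $Z_{\gamma_i}$ is no longer absolutely convergent once $E$ is deleted, and here the Eisenstein factor does essential work: unfolding $\Theta_{f_2}$ along the $G_{\gamma_i}$-orbits on $V^{\circ}(F)$, which are indexed by $Y_i(F)$, and rewriting $E(g,s,\Phi)$ through its defining integral over $N_2(\A_F)\backslash\SL_2(\A_F)$ and $\A_F^{\times}$ exactly reproduces the inner integral appearing in the definition of $I_i(f\otimes\Phi)(\xi,s)$, so that $Z_{\gamma_i}(f,s,\Phi)=\sum_{\xi\in Y_i(F)}I_i(f\otimes\Phi)(\xi,s)$, with $\Delta_i$ and $p_i$ arising from the coordinates chosen on $G_{\gamma_i}(F)\backslash G(\A_F)$. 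Taking $\mathrm{Res}_{s=1}$ of $Z(f,s,\Phi)=\sum_\gamma Z_\gamma(f,s,\Phi)$ now produces precisely the left-hand side of the theorem, the same computation applied to $\mathcal{F}_X(f)$ produces the right-hand side, and $Z(f,s,\Phi)=Z(\mathcal{F}_X(f),s,\Phi)$ finishes the proof.

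The hard part will be the analysis at the non-open orbits: proving absolute convergence of $Z(f,s,\Phi)$ for $\mathrm{Re}(s)$ large, its meromorphic continuation past $s=1$, and the simple pole there rests on sharp decay estimates for the restricted theta function $\Theta_{f_2}$ --- this is where \eqref{awayfromVcirc0} is indispensable --- weighed against the growth of $E(g,s,\Phi)$; and one must carry out the three unfoldings at $\gamma_1,\gamma_2,\gamma_3$ precisely enough to recognize, after taking residues, exactly the functions $I_i(f\otimes\Phi)(\xi,s)$ over the somewhat intricate varieties $Y_i$, which are products of a quadric hypersurface with a projective quadric, with no spurious terms surviving. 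Justifying the interchange of $\mathrm{Res}_{s=1}$ with the only conditionally convergent $G_{\gamma_i}(F)\backslash G(\A_F)$-integrals, and confirming that the $\gamma_b$ and $\gamma_0$ contributions are holomorphic pieces scaled by $\mathrm{Res}_{s=1}E$, are the delicate points; the remainder is an elaboration of the unfolding already carried out in \cite{Getz:Liu:Triple}.
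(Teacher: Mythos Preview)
Your overall architecture is right and matches the paper: apply the Braverman--Kazhdan summation formula on $X$ pointwise in $g$, integrate against $\Theta_{f_2}$ over $G(F)\backslash G(\A_F)$, and split into the five $G$-orbits on $X^{\circ}$. But two points in your execution diverge from what actually happens, and one of them is a real obstruction.

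First, the master integral $\int_{[G]}\big(\sum_{\gamma}f_1(\gamma g)\big)\Theta_{f_2}(g)\,dg$ already converges absolutely under \eqref{cptsupport0} and \eqref{awayfromVcirc0}; no Eisenstein regularization is needed. In particular your claim that ``$Z_{\gamma_i}$ is no longer absolutely convergent once $E$ is deleted'' is false: the $\gamma_i$-contribution $\int_{G_{\gamma_i}(\A_F)\backslash G(\A_F)}f_1(\gamma_i g)\int_{[G_{\gamma_i}]}\Theta_{f_2}(hg)\,dh\,dg$ converges because $[G_{\gamma_i}]\cong [N_2]\times[\SL_2]$ has finite volume and $\Theta_{f_2}$ is bounded there (this is exactly \eqref{is:abs:conv} in Proposition~\ref{prop:other:AC}). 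The paper simply computes this convergent integral and then, a posteriori, rewrites it as $\mathrm{Res}_{s=1}\sum_{\xi\in Y_i(F)}I_i(f\otimes\Phi)(\xi,s)$ by inserting a Godement section on $\A_F^2$ and applying Poisson summation there. So $\Phi$ is a bookkeeping device to obtain an Eulerian expression for an already-convergent orbit contribution, not a regularizer.

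Second, and more seriously, a single $\SL_2$-Eisenstein series on one factor of $G=\SL_2^3$ cannot produce all three $I_i$ simultaneously. The stabilizer $G_{\gamma_i}$ contains $N_2$ in the $i$-th factor and a diagonal $\SL_2$ in the other two; the Godement section in $I_i$ lives on the $p_i(g)=g_{i+1}$ factor. If your $E$ lives on factor $j$, then for $i=j$ the Eisenstein series sits on the unipotent factor of $G_{\gamma_i}$ and the unfolding does not collapse to $I_i$ at all, while for the remaining $i$ you would get at best a variant of $I_i$ with $\Phi$ on the wrong factor. A product of three Eisenstein series sharing a common $s$ has a third-order pole, so taking a simple residue does not recover the statement either. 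This is why the paper avoids a global Eisenstein series entirely and instead handles each $\gamma_i$-contribution with its own auxiliary section, introduced only after the orbit decomposition.
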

\noindent Here $\widehat{\Phi}$ denotes the Fourier transform of $\Phi$ normalized as in \eqref{2dFT},
and $(\A_F^\times)^1<\A_F^\times$ is the subgroup of ideles of norm $1$.  When we speak of \textbf{boundary terms} in the paper, we mean the summands in Theorem \ref{thm:main:intro} involving $I_0$ and $I_i$.  The proof of Theorem \ref{thm:main:intro} is a refinement of the proof of the main theorem of \cite{Getz:Liu:Triple}.  Briefly, one substitutes a triple of $\Theta$-functions into the integral representation of the triple product $L$-function due to Garrett \cite{GarrettTripleAnnals}.  We make use of the adelic reformulation of Piatetski-Shapiro and Rallis \cite{PSRallisTriple}.  The boundary terms correspond to the $\SL_2^3$-orbits in $X^\circ$ that are not open.  In \cite{Getz:Liu:Triple} assumptions were made to eliminate these terms.  At the suggestion of the referee, we point out that Theorem \ref{thm:main:intro} implies that the same formula is valid if $f$ is replaced by a $\CC$-linear combination of functions $f$ satisfying conditions \eqref{cptsupport0} and \eqref{awayfromVcirc0}; in particular, one can allow the places $v_1,v_2$ to vary.

Of course it would be desirable to remove assumptions \eqref{cptsupport0} and  \eqref{awayfromVcirc0}.  To obtain an identity without these assumptions will    require the addition of boundary terms on both sides of the formula.  Thus the statement of the Poisson summation formula for general test functions will be more intricate.

The proof would also be far more technical as we now explain.  There is no assumption on support in the main theorem of \cite{Getz:Liu:BK}, but the boundary terms (i.e. those not given by the evaluation of a Schwartz function on a point of $X^\circ(F)$) are given in terms of residues of Eisenstein series.  It seems wise to wait until there is an explicit geometric understanding of these terms before attempting to remove \eqref{cptsupport0}.  

On the other hand, assumption \eqref{awayfromVcirc0} implies that the $\Theta$-function 
\begin{align} \label{Theta}
\Theta_f(g):=\sum_{\xi \in V(F)}\rho(g)f(\xi)
\end{align}
is cuspidal as a function of $g \in \SL_2^3(F) \backslash \SL_2^3(\A_F).$  Removing this assumption would probably require using Arthur truncation as in \cite{Getz:Quadrics}.  Even in the simpler situation of the zero locus of a single quadratic form in loc.~cit., one has to come to grips with a host of additional complications.  Explicitly, truncation introduces new analytic difficulties, the lack of invariance of the truncation requires attention, and at the end one has to introduce a whole family of boundary terms indexed by all lower dimensional quadratic forms in the Witt class of the original quadratic form.  We also suspect that using Arthur truncation it might be possible to rewrite the terms involving residues at $s=1.$

In any case, all of the possible refinements above, though interesting, are not necessary to define the Schwartz space of $Y$ and prove that the Fourier transform $\mathcal{F}_Y$ is well-defined.  For this purpose, we use the fact that the summation formula above allows us to treat $f_1$ such that $f_1$ and $\mathcal{F}_{X}(f_1)$ are not supported on the open $\SL_2^3(F)$-orbit in $X^{\circ}(F).$  The corresponding summation formula in \cite{Getz:Liu:Triple} was limited to functions satisfying this additional assumption.

\subsection{Outline}
We now outline the sections of this paper.  
We state conventions for Schwartz spaces in \S \ref{sec:gen:Schwartz}.  
In
 \S \ref{sec:BK} we recall and refine certain results from harmonic analysis on Braverman-Kazhdan spaces. In particular we prove a Plancherel formula for  $\mathcal{S}(X(F_v))$  (see Proposition \ref{prop:cont}).  

After this, we discuss the geometric preliminaries necessary for the study of $Y$ in \S \ref{sec:groups:orbits}.  We turn in \S \ref{sec:loc:func} to the definition of the local integrals necessary to prove our main summation formula, Theorem \ref{thm:main:intro}.   We give a definition of the Schwartz space of $Y(F_v)$ in \S \ref{ssec:Schwartz:Y}.  

Theorem \ref{thm:main:intro} is proved in \S \ref{sec:summation}. This summation formula is given in terms of infinite sums of Eulerian integrals, that is, integrals that factor along the places of $F$ (or residues of such expressions).  
The local integrals are computed in the unramified case in \S \ref{sec:unr}.
The proof of Theorem \ref{thm:main:intro} depends on bounds on local integrals that are deferred to  \S \ref{sec:bound:na} and \S\ref{sec:bound:arch}. We discuss the $L^2$-theory in \S \ref{sec:l2theory}, and prove in particular that $S(Y(F_v)) <L^1(Y(F_v)) \cap L^2(Y(F_v))$ with respect to a natural measure.  In \S \ref{sec:appendix} we construct the isomorphism $\mathcal{F}_Y$ as described above and prove Theorem \ref{thm:PS}.
We have appended a list of symbols for the reader's convenience.

\section*{Acknowledgements}

In response to the paper \cite{Getz:Liu:Triple}
several people including  Y.~Sakellaridis and Z.~Yun  asked the first author about whether it was possible to define $\mathcal{F}_Y$; he thanks them for this question. He also thanks D.~Kazhdan for many interesting conversations on material related to the topic of this paper, and H.~Hahn for her help with editing and her constant encouragement.  The authors thank D.~Kazhdan for suggesting that a discussion of the boundary terms be added to the introduction, T.~Ikeda for answering a question on his paper \cite{Ikeda:poles:triple}, and J.-L.~Colliot-Thélène for pointing out his result with Sansuc in \cite{Colliot1982} (see Theorem \ref{thm:CTS}).  We also acknowledge the comments of the anonymous referee; they helped improve the exposition.

\section{Schwartz spaces} 
\label{sec:gen:Schwartz}
This work involves several Schwartz spaces. Let $F$ be a local field. For a quasi-affine scheme $X$ of finite type over $F,$ let $X^{\mathrm{sm}} \subset X$ be the smooth locus.  Any Schwartz space $\mathcal{S}(X(F))$ we discuss will be a space of functions on $X^{\mathrm{sm}}(F)$.  Functions in the Schwartz space need not be defined on all of $X(F)$ if $X$ is not smooth.  We will not define Schwartz spaces of general quasi-affine schemes of finite type over $F.$  In fact obtaining a good definition for general spherical varieties is an important open problem \cite{SakellaridisSph}.  
In this subsection we explain the definition for smooth quasi-affine schemes and how to form Schwartz spaces of products $X(F) \times Y(F)$ given that the Schwartz spaces of each factor have been defined.     We have modeled our approach on the treatment of the smooth case in \cite{AG:Nash}.

Suppose $F$ is non-Archimedean.  If $X$ is smooth, we set
$
\mathcal{S}(X(F)):=C_c^\infty(X(F)).$
More generally, if we have already defined $\mathcal{S}(X(F))$ and $\mathcal{S}(Y(F))$, we set
$
\mathcal{S}(X(F) \times Y(F)):=\mathcal{S}(X(F)) \otimes \mathcal{S}(Y(F))
$
(the algebraic tensor product).  

Now assume that $F$ is Archimedean. If $X$ is smooth,  we define $\mathcal{S}(X(F))=\mathcal{S}(\mathrm{Res}_{F/\RR}X(\RR))$ as in \cite[Remark 3.2]{Elazar:Shaviv}.  By \cite[\S 2.2]{Elazar:Shaviv}, the Schwartz space of a real algebraic variety and the Schwartz space of its underlying Nash manifold defined in \cite{AG:Nash} may be canonically identified.
In any case $\mathcal{S}(X(F))$ is a  Fr\'echet space.  It is defined as a quotient of a nuclear space by a closed subspace, and hence is nuclear. In general,  suppose that we have defined Schwartz spaces $\mathcal{S}(X(F))$ and $\mathcal{S}(Y(F))$ that are additionally Fr\'echet spaces. 
We then define 
$
\mathcal{S}(X(F) \times Y(F)):=\mathcal{S}(X(F)) \widehat{\otimes} \mathcal{S}(Y(F))
$
where the hat denotes the (completed) projective topological tensor product.  It is also a Fr\'echet space.

We warn the reader that in \cite{Elazar:Shaviv} there is a definition of a Schwartz space for any quasi-affine scheme of finite type over the real numbers.  In the smooth case their definition coincides with ours.  In the limited situations in which we define Schwartz spaces of nonsmooth schemes, our definitions do not coincide with theirs because functions in our Schwartz spaces need not extend to the singular set.  

Finally we discuss the adelic setting.  Let $X$ be a quasi-affine scheme of finite type over a number field $F.$  Then for all finite sets $S$ of places of $F,$ $X(\A_F^S)$ is defined as a topological space \cite{Conrad_adelic_points}.  
 Assume $\mathcal{S}(X(F_v))$ is defined for all places $v$ and a \textbf{basic function} $b_{X,v}\in \mathcal{S}(X(F_v))$ is chosen for almost all $v.$   
 If $S$ contains all infinite places, $\mathcal{S}(X(\A_F^S))$ will always be a restricted tensor product
$\otimes_{v \not\in S}' \mathcal{S}(X(F_v))$
with respect to  $b_{X,v}$; if $S$ is a set of infinite places of $F,$ then 
$
\mathcal{S}(X(F_S)):=\widehat{\otimes}_{v\in S}\mathcal{S}(X(F_v)) 
$  is the (completed) projective topological tensor product. For general $S$, we put
 $$
 \mathcal{S}(X(\A_F^S)):=\mathcal{S}(X(F_{\infty-S})) \otimes \mathcal{S}(X(\A_F^{\infty \cup S})).
 $$

\section{Braverman-Kazhdan spaces}
\label{sec:BK}
In this section we study Schwartz spaces of certain Braverman-Kazhdan spaces.  In particular we endow these Schwartz spaces with a Fr\'echet structure in the Archimedean case.  This refinement is necessary for continuity arguments.  

Let $\mathrm{Sp}_{2n}$ denote the symplectic group on a $2n$-dimensional vector space, and let $P < \mathrm{Sp}_{2n}$, $M < P$ denote the usual Siegel parabolic and Levi subgroup.  More specifically, for $\ZZ$-algebras $R$, set
\begin{align}\begin{split}\label{eq:levi}
\mathrm{Sp}_{2n}(R):&=\left\{g \in \GL_{2n}(R): g^t\begin{psmatrix} & I_n \\ -I_n & \end{psmatrix}g:=\begin{psmatrix} & I_n \\ -I_n & \end{psmatrix}\right\},\\
M(R):&=\{\begin{psmatrix} A & \\ & A^{-t} \end{psmatrix}: A \in\GL_n(R)\},\\
N(R):&=\{\begin{psmatrix} I_n &  Z\\ & I_n \end{psmatrix} : Z \in \gl_n(R), Z^t=Z\}, \end{split}
\end{align}
and $P=MN.$  Apart from this section, we will only use the $n=3$ case, but since it is no more difficult to treat the general case, we include it.
We define a character
\begin{align} \label{omega} \begin{split}
\omega:M(R) &\lto R^\times\\
\begin{psmatrix} A & \\ & A^{-t} \end{psmatrix} &\longmapsto \det A. \end{split}
\end{align}
Let 
\begin{align} \label{XP}
    X^{\circ}:=[P,P] \backslash \mathrm{Sp}_{2n}.
\end{align}
Let $\mathrm{GSp}_{2n}$ denote the group of similitudes and let 
$$
\nu:\mathrm{GSp}_{2n} \lto \GG_m
$$
denote the similitude norm.
We note that there is a left action
\begin{align}
    \label{act}
    \begin{split}
    M^{\mathrm{ab}}(R) \times \mathrm{GSp}_{2n}(R) \times X^{\circ}(R) &\lto X^{\circ}(R)\\
    (m,g,x) &\longmapsto m\begin{psmatrix} I_n & \\ & \nu(g) I_n \end{psmatrix}xg^{-1}.
    \end{split}
\end{align}
One has the Pl\"ucker embedding
\begin{align}\label{pluckerembed}
    \mathrm{Pl}: X^{\circ} \lto \wedge^{n}\GG_a^{2n}
\end{align}
given by taking the wedge product of the last $n$ rows of a representative $g \in \Sp_{2n}(R)$ for $[P,P](R)g$.  We denote by $X$ the closure of $\mathrm{Pl}(X^{\circ})$:
\begin{align} \label{X}
    X:=\bar{\mathrm{Pl}(X^{\circ})}.
\end{align}
It is an affine variety (in fact a spherical variety, for many more details see \cite[\S 7.2]{WWLi:Zeta}).  As explained in loc.~cit., $X$ is the affine closure of $X^{\circ}$.

\subsection{The local Schwartz spaces} \label{ssec:loc:Schwartz}
Let $F$ be a local field.  Let 
$$
\delta_P(p):=|\det \left(\mathrm{Ad}(p):\mathrm{Lie}\,P \to \mathrm{Lie}\,P \right)|
$$
be the modular quasi-character of $P(F).$ The Schwartz space $\mathcal{S}(X(F))$ of $X(F)$ was defined in \cite[\S 5]{Getz:Hsu:Leslie}, where it was denoted $\mathcal{S}(X_P(F)).$  
For $f \in C^\infty(X^{\circ}(F))$ and $g \in \Sp_{2n}(F)$, and a quasi-character $\chi:F^\times\to\CC$, let
\begin{align} \label{Mellin}
f_{\chi_s}(g):=\int_{M^{\mathrm{ab}}(F)}\delta_P(m)^{1/2}\chi_s(\omega(m))f(m^{-1}g)dm
\end{align}
be its Mellin-transform. Here $\chi_s:=\chi|\cdot|^s$ and $\omega$ is defined as in \eqref{omega}. When this integral is well-defined, either because it converges absolutely or is meromorphically continued from a half-plane of absolute convergence, it is a section of 
$$
I(\chi_s):=\mathrm{Ind}_{P}^{\mathrm{Sp}_{2n}}(\chi_s\circ \omega).
$$ 
  Here the induction is normalized and taken in the category of smooth representations. 

Let $\psi:F \to \CC^\times$ be a nontrivial additive character.
\begin{thm}\cite[Theorem 4.4]{Getz:Liu:BK}, \cite[\S 5.3]{Getz:Hsu:Leslie} \label{thm:44}
There is a linear automorphism 
\begin{align*} 
\mathcal{F}_X:=\mathcal{F}_{X,\psi}:\mathcal{S}(X(F)) \lto \mathcal{S}(X(F)).
\end{align*}
For $f \in \mathcal{S}(X(F))$, the Fourier transform 
$\mathcal{F}_X(f)$ is the unique function in $\mathcal{S}(X(F))$ such that 
\begin{align*} 
\mathcal{F}_X(f)_{\chi_s}=M_{w_0}^*(f_{\bar{\chi}_{-s}})
\end{align*}
for all (unitary) characters $\chi$ and all $s \in \CC$.  \qed
\end{thm}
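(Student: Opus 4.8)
The statement is quoted from \cite[Theorem 4.4]{Getz:Liu:BK} and \cite[\S 5.3]{Getz:Hsu:Leslie}, so the plan is to reconstruct the argument establishing existence and the Mellin-side characterization of $\mathcal{F}_X$. The key input is the (already understood) structure of $\mathcal{S}(X(F))$ via Mellin transforms: sending $f$ to the family $(f_{\chi_s})_{\chi,s}$ realizes $\mathcal{S}(X(F))$ inside a space of sections of the degenerate principal series $I(\chi_s)$, and $\mathcal{S}(X(F))$ is recovered from such families by an inverse Mellin transform (a contour integral over unitary $\chi$ and $\mathrm{Re}(s)$ in a suitable range, plus a finite sum over the components of the unitary dual near the relevant exponents). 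First I would recall this Mellin isomorphism and the precise description of which families of sections arise, including the meromorphic continuation of $s \mapsto f_{\chi_s}$ and the location of its poles, which come from the geometry of the boundary $X \setminus X^\circ$; these poles are exactly what forces the ``$\mathrm{Re}(s) \ge 0$'' hypothesis in the characterization.

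Next I would introduce the intertwining operator $M_{w_0}: I(\chi_s) \to I(\bar\chi_{-s})$ attached to the long Weyl element $w_0$ of $\mathrm{Sp}_{2n}$ modulo the Weyl group of $M$, and its normalized version $M_{w_0}^*$ obtained by dividing by the appropriate ratio of local $L$- and $\varepsilon$-factors (or Tate $\gamma$-factors in the abelian quotient $M^{\mathrm{ab}}$), so that $M_{w_0}^*$ is holomorphic and invertible on the domain $\mathrm{Re}(s) \ge 0$ and satisfies a functional-equation-type identity $M_{w_0}^* \circ M_{w_0}^* = \mathrm{Id}$ up to a sign/character bookkeeping that cancels. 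The definition of $\mathcal{F}_X(f)$ is then forced: set $g := M_{w_0}^*(f_{\bar\chi_{-s}})$ for each $\chi$ and all $s$ with $\mathrm{Re}(s) \ge 0$, and define $\mathcal{F}_X(f)$ by applying the inverse Mellin transform to this family. Uniqueness is immediate from injectivity of the Mellin transform on $\mathcal{S}(X(F))$. The substantive content is (i) that the family $(M_{w_0}^*(f_{\bar\chi_{-s}}))$ is again a Mellin family of an element of $\mathcal{S}(X(F))$ — i.e.\ $\mathcal{F}_X$ preserves the Schwartz space, not merely some larger space of sections — and (ii) that $\mathcal{F}_X$ is invertible, which follows formally once (i) holds by running the same construction with $\psi$ replaced by $\bar\psi$ and using the functional equation for $M_{w_0}^*$.

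For step (i) the strategy is to check compatibility of $M_{w_0}^*$ with the analytic conditions defining the image of the Mellin transform: absolute convergence in a cone, prescribed meromorphic continuation, controlled growth of sections in the spectral parameters (Schwartz-type decay), and the precise normalization of poles in $s$ coming from $X \setminus X^\circ$. The normalized operator $M_{w_0}^*$ sends sections of bounded growth to sections of bounded growth because its normalizing factor has at worst polynomial growth on vertical strips, and it trades the boundary poles of $f_{\bar\chi_{-s}}$ on one side for the corresponding ones dictated by $\mathcal{F}_X(f)_{\chi_s}$ on the other — this matching of pole data is exactly what makes the resulting family lie in $\mathcal{S}(X(F))$ rather than just a completion of it. In the archimedean case one additionally uses the Fr\'echet structure on $\mathcal{S}(X(F))$ recorded in \S\ref{ssec:loc:Schwartz} to see that $\mathcal{F}_X$ is continuous, hence a topological automorphism by the open mapping theorem once bijectivity is known.

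\textbf{Main obstacle.} The hard part is step (i): showing that $\mathcal{F}_X$ maps $\mathcal{S}(X(F))$ \emph{into itself}. This requires a genuinely careful analysis of the boundary $X \setminus X^\circ$ — identifying the $\mathrm{GSp}_{2n}$-orbits there, computing the poles of the Mellin transform $s \mapsto f_{\chi_s}$ along each stratum, and verifying that the normalized intertwining operator exchanges these pole patterns in precisely the way demanded by the definition of the Schwartz space. The restriction to $\mathrm{Re}(s) \ge 0$ in the characterization is not cosmetic: it is the range on which $M_{w_0}^*$ is holomorphic and on which the defining identity pins down $\mathcal{F}_X(f)$ uniquely, and extending the identity to all of $\CC$ (i.e.\ getting the full meromorphic family, with correct poles) is part of what must be proved rather than assumed.
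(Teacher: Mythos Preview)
The paper does not prove this theorem: it is stated with a terminal \qed and attributed entirely to \cite[Theorem 4.4]{Getz:Liu:BK} and \cite[\S 5.3]{Getz:Hsu:Leslie}. There is therefore no proof in the paper to compare your proposal against.

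That said, your sketch is a reasonable outline of the argument one would expect to find in those references. A few remarks. First, your claim that $M_{w_0}^* \circ M_{w_0}^* = \mathrm{Id}$ ``up to a sign/character bookkeeping that cancels'' is not quite right: as recorded in Lemma~\ref{lem:correction} of the present paper, $M_{w_0}^* \circ M_{w_0}^*$ acts on $I(\chi_s)$ as multiplication by $\chi(-1)^{n+1}$, and this factor does not cancel in general---it is what gives rise to the identity $\mathcal{F}_X \circ \mathcal{F}_X = L(m(-1)^{n+1})$ in Proposition~\ref{prop:cont} rather than $\mathcal{F}_X^2 = \mathrm{Id}$. Second, your description of the Schwartz space via pole conditions coming from the boundary $X \setminus X^\circ$ is more geometric than what the paper actually uses: the seminorms \eqref{seminorm} defining $\mathcal{S}(X(F))$ are formulated directly in terms of the holomorphy of $p_w(s)a_w(s,\eta)$ and growth of $M_w(D.f)_{\eta_s}$, without explicit stratification of the boundary. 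The verification that $\mathcal{F}_X$ preserves this space (your ``main obstacle'') is indeed the heart of the matter, and in the cited works it proceeds by checking that $M_{w_0}^*$ swaps the $w=\mathrm{Id}$ and $w=w_0$ seminorm families, using the functional equation relating $a_{\mathrm{Id}}$ and $a_{w_0}$---essentially the mechanism behind Lemma~\ref{lem:33} and the proof of Proposition~\ref{prop:topology} here.
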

\noindent Here \begin{align} \label{Mw0psi}
    M_{w_0}^*:=M_{w_0,\psi}^*:=\Big(\gamma(s+\tfrac{1-n}{2},\chi,\psi)\prod_{r=1}^{\lfloor n/2\rfloor}\gamma(2s-n+2r,\chi^2,\psi)\Big)M_{w_0}:I(\chi_s) \lto I(\bar{\chi}_{-s})
\end{align}
is the normalized intertwining operator of \cite[(3.5)]{Getz:Liu:BK}. The Tate $\gamma$-factors depend on a choice of Haar measure on $F$ which we always take to be the self-dual measure with respect to $\psi$.  In \cite[Corollary 6.10]{Getz:Hsu:Leslie} one finds an explicit formula for $\mathcal{F}_X.$  We point out that $
    M_{w_0}=\iota_{w_0} \circ \mathcal{R}_{P|P^{\mathrm{op}}}
   $
    in the notation of loc.~cit.

Let $F$ be an Archimedean local field. For real numbers $A < B $, $p(x) \in \CC[x]$ and meromorphic functions $f:\CC \to \CC$, let 
\begin{align} \label{semi:normf}
\begin{split}
V_{A,B}:&=\{s \in \CC:A \leq \mathrm{Re}(s) \leq B\},\\
|f|_{A,B,p}:&=\mathrm{sup}_{s \in V_{A,B}}|p(s)f(s)|.
\end{split}
\end{align}
To complete our discussion of $\mathcal{S}(X(F))$ we must endow it with a topology.  Recall the $L$-factors $a_w(s,\eta)$ indexed by $w \in \{\mathrm{Id},w_0\}$ from \cite{Getz:Liu:BK}. 
Consider the Lie algebra
$$
\mathfrak{g}:=\mathrm{Lie}(M^{\mathrm{ab}}(F) \times  \Sp_{2n}(F)),
$$
viewed as a real Lie algebra.  It acts on $C^\infty(X^{\circ}(F))$ via the differential of the action \eqref{act} and hence we obtain an action of $U(\mathfrak{g})$, the universal enveloping algebra  of the complexification of $\mathfrak{g}$. Let  $\widehat{K}_{\GG_m}$ be a set of representatives for the (unitary) characters of $F^\times$ modulo equivalence, where $\chi$ is equivalent to $\chi'$ if and only if $\chi=|\cdot|^{it}\chi'$ for some $t \in \RR$. 
For all real numbers $A<B$, $w \in \{\mathrm{Id},w_0\}$, $D \in U(\mathfrak{g}),$ any polynomials $p_{w} \in \CC[s]$ such that $p_{w}(s)a_w(s,\eta)$ has no poles for all $(s,\eta) \in V_{A,B} \times \widehat{K}_{\GG_m}$ and compact subsets $\Omega \subset X^{\circ}(F)$, let
\begin{align} \label{seminorm}
|f|_{A,B,w,p_w,\Omega,D}:=\sum_{\eta \in \widehat{K}_{\GG_m}}\mathrm{sup}_{g \in \Omega}|M_w(D.f)_{\eta_s}(g)|_{A,B,p_w}.
\end{align}
By definition of the Schwartz space \cite[\S 5]{Getz:Hsu:Leslie} this is a seminorm on $\mathcal{S}(X(F))$ and the collection of these seminorms as $A,B,p_w,\Omega,D$ vary  gives $\mathcal{S}(X(F))$ the structure of a locally convex space.    

One could probably rewrite the seminorm in \eqref{seminorm} in terms of asymptotics toward the origin in place of Mellin transforms using work of Igusa (see \cite{JLL:Harmonic,CH:nonarch} for non-Archimedean analogues).  The expressions one would likely obtain are not any simpler or more conceptual from our point of view.  We do not know whether it is possible to remove dependence on the intertwining operator $M_{w_0}$ in the definition of the topology.

\begin{lem} 
Assume $F$ is Archimedean. The space $\mathcal{S}(X(F))$ is a Fr\'echet space.
\end{lem}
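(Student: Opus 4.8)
The plan is to show that the locally convex topology on $\mathcal{S}(X(F))$ defined by the seminorms $|\cdot|_{A,B,w,p_w,\Omega,D}$ is Hausdorff, metrizable, and complete. \emph{Hausdorffness} is quick: taking $w=\mathrm{Id}$ (so that $M_{\mathrm{Id}}$ is the identity, $a_{\mathrm{Id}}$ has no poles, and $p_{\mathrm{Id}}=1$ is admissible), $D=1$, and $\Omega=\{g\}$ a single point, the vanishing of $|f|_{A,B,\mathrm{Id},1,\{g\},1}$ for all $g$ and all $A<B$ forces $f_{\eta_s}(g)=0$ for every $\eta\in\widehat{K}_{\GG_m}$, every $s\in\CC$, and every $g$; since $\omega\colon M^{\mathrm{ab}}(F)\to F^\times$ is an isomorphism, Mellin inversion on $F^\times$ gives $f(m^{-1}g)=0$ for all $m$, hence $f\equiv 0$ on $X^{\circ}(F)$. (One can also invoke the uniqueness statement in Theorem \ref{thm:44}, which says that an element of $\mathcal{S}(X(F))$ is determined by its Mellin transforms.)

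For \emph{metrizability} we produce a countable cofinal subfamily of the defining seminorms. The label $w$ already ranges over a two-element set. Since $V_{A,B}\subset V_{A',B'}$ whenever $A'\le A$ and $B\le B'$, it suffices to let $A<B$ range over integers. For fixed integers $A<B$ and fixed $w$, the archimedean $L$-factors $a_w(\cdot,\eta)$ have their poles along finitely many progressions tending to $-\infty$, with offsets governed by the parameter of $\eta$; hence only finitely many $\eta\in\widehat{K}_{\GG_m}$ contribute a pole of $a_w(\cdot,\eta)$ inside the strip $V_{A,B}$, and the total pole set there is finite. Letting $p_w^{A,B}$ be the monic polynomial vanishing to the appropriate order at exactly these points, every weight $p_w$ admissible for $(A,B,w)$ is divisible by $p_w^{A,B}$, so $|\cdot|_{A,B,w,p_w,\Omega,D}\le C\,|\cdot|_{A,B,w,p_w^{A,B},\Omega,D}$ with $C=\sup_{V_{A,B}}|p_w/p_w^{A,B}|<\infty$. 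Next, $X^{\circ}(F)$ is $\sigma$-compact; fixing an exhaustion $\Omega_1\subset\Omega_2\subset\cdots$ by compacta with union $X^{\circ}(F)$, every compact $\Omega$ lies in some $\Omega_n$. Finally, fixing a Poincar\'e--Birkhoff--Witt basis $\{D_j\}$ of $U(\mathfrak{g})$ and using that $D\mapsto M_w(D.f)_{\eta_s}(g)$ is $\CC$-linear, the seminorm attached to $D=\sum_j c_j D_j$ is bounded by $\sum_j |c_j|$ times those attached to the $D_j$. Hence the countable family $\{|\cdot|_{A,B,w,p_w^{A,B},\Omega_n,D_j}\}$ generates the topology.

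For \emph{completeness}, let $(f_k)$ be Cauchy. Fix $D\in U(\mathfrak{g})$; since for $w=\mathrm{Id}$ any polynomial weight is admissible, choosing weights of arbitrarily high degree shows that $M_{\mathrm{Id}}(D.f_k)_{\eta_s}(g)$ is uniformly Cauchy on each strip and decays faster than any power of $\mathrm{Im}(s)$, uniformly in $k$ and summably in $\eta$; Mellin inversion on $M^{\mathrm{ab}}(F)\cong F^\times$ then shows $(D.f_k)$ is uniformly Cauchy on compacta. As this holds for every $D$, and the right action of $\Sp_{2n}$ on $X^{\circ}=[P,P]\backslash\Sp_{2n}$ is transitive so that $U(\mathfrak{g})$ realizes all higher derivatives, $(f_k)$ is Cauchy in $C^\infty(X^{\circ}(F))$; let $f$ be its limit there. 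For each $(w,\eta,D)$ the functions $p_w^{A,B}(s)M_w(D.f_k)_{\eta_s}(g)$ converge uniformly on $V_{A,B}\times\Omega_n$, so the limits are holomorphic in $s$ on the interior of $V_{A,B}$, continuous on $V_{A,B}$, and mutually consistent as $A,B,n$ vary, hence glue to a meromorphic function of $s$ with poles dominated by those of $a_w(\cdot,\eta)$; identifying this limit with $p_w^{A,B}(s)M_w(D.f)_{\eta_s}(g)$ shows $f\in\mathcal{S}(X(F))$, and letting $j\to\infty$ in the Cauchy estimates gives $f_k\to f$ in every seminorm. \textbf{The main obstacle} is precisely this last identification: one must justify interchanging $\lim_k$ with the Mellin transform \eqref{Mellin} and the intertwining operator $M_w$ in a right half-plane of convergence, and then propagate the identity to all of $\CC$ via the identity theorem. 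Everything else is a routine exploitation of the uniform bounds built into the seminorms together with completeness of $C^\infty(X^{\circ}(F))$, Pontryagin duality for $F^\times$, and the stability of holomorphy under uniform limits.
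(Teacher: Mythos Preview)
Your proposal is correct and follows essentially the same route as the paper: reduce to a countable generating family of seminorms (countable $D$'s, integer endpoints for the strips, countably many $p_w$ using that the poles of $a_w(s,\eta)$ lie in $\tfrac{1}{2}\ZZ$, and a compact exhaustion of $X^{\circ}(F)$), invoke Mellin inversion for separation, and check completeness. The paper dispatches completeness in a single sentence (``It is also clear that it is complete''), whereas you spell out the Cauchy argument and correctly flag the interchange of limits with the Mellin transform and intertwining operator as the only genuine technical point; one minor caveat is that $a_{\mathrm{Id}}(s,\eta)$ is only guaranteed holomorphic for $\mathrm{Re}(s)>-\tfrac{1}{2}$, so your remark that $p_{\mathrm{Id}}=1$ is always admissible should be restricted to such strips---but this is harmless for both the Hausdorff and completeness steps.
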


\begin{proof}
We first observe that we can replace the family of seminorms with a countable subfamily inducing the same topology.  More specifically we can choose a countable basis of $U(\mathfrak{g})$, and restrict the $(A,B)$ to lie in the set
 $\{(-N,N): N \in \ZZ_{> 0}\}$.  Since the poles of $a_w(s,\eta)$ can only occur at points in $\tfrac{1}{2}\ZZ$ (see \cite[(3.4)]{Getz:Liu:BK}), we can similarly restrict our attention to a countable family of $p_{w}$.  Finally we can restrict attention to a countable family of $\Omega$ by simply choosing a countable family of compact subsets of $X^{\circ}(F)$ whose union is $X^{\circ}(F)$.
 
  The countable family of seminorms described above is separating by Mellin inversion (see \cite[Lemma 4.3]{Getz:Liu:BK}).  It follows that $\mathcal{S}(X(F))$ is Hausdorff and metrizable.  It is also clear that it is complete.  
\end{proof}

\noindent We point out that when $F$ is non-Archimedean we do not endow $\mathcal{S}(X(F))$ with a topology.

  Recall that we have a left action \eqref{act} of $M^{\mathrm{ab}} \times \mathrm{GSp}_{2n}$ on $X^{\circ}.$ 
  This yields an action on functions: for a function $f$ on $X^{\circ}(F)$ and $(m,g,x)\in M^{\mathrm{ab}}(F)\times \mathrm{GSp}_{2n}(F) \times X^{\circ}(F) $
 \begin{align}\label{LRaction}
     L(m)R(g)f(x):=f\left(m^{-1}\begin{psmatrix} I_n & \\ & \nu(g)^{-1} I_n \end{psmatrix}xg\right).
 \end{align}
 Using the formula for $\mathcal{F}_{X}$ from \cite[Corollary 6.10]{Getz:Hsu:Leslie} one deduces the following lemma:

\begin{lem} \label{lem:BKF} If $(m,g) \in M^{\mathrm{ab}}(F) \times \mathrm{GSp}_{2n}(F)$ and $f \in \mathcal{S}(X(F))$, then $L(m)R(g)f \in \mathcal{S}(X(F))$.  Moreover,
\begin{align*}
\mathcal{F}_X(L(m)R(g)f)=|\nu(g)|^{n(n+1)/2}\delta^{-1}_P(m)L(m^{-1})R(\nu(g)^{-1}g)\mathcal{F}_X(f).
\end{align*} \qed
\end{lem}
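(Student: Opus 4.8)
The plan is to verify both assertions after passing to Mellin transforms, using the characterization of $\mathcal{F}_X$ in Theorem~\ref{thm:44} together with Mellin inversion. Throughout, fix $(m,g)\in M^{\mathrm{ab}}(F)\times\GSp_{2n}(F)$ and set $b:=\begin{psmatrix} I_n & \\ & \nu(g)^{-1}I_n\end{psmatrix}\in\GSp_{2n}(F)$, so that $L(m)R(g)f(x)=f(m^{-1}bxg)$ and $\nu(b)=\nu(g)^{-1}$. The element $b$ has several pleasant features that drive the computation: it centralizes $M$ (hence commutes with elements of $M^{\mathrm{ab}}$), it normalizes $N$ and $[P,P]$ while acting trivially on $M^{\mathrm{ab}}$, and $\mathrm{Ad}(b)$ preserves $\delta_P$.

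First I would compute the Mellin transform. Unwinding \eqref{Mellin} and changing variables in the $M^{\mathrm{ab}}(F)$-integral, one obtains, for a unitary character $\chi$ of $F^\times$ and $s\in\CC$,
\[
\bigl(L(m)R(g)f\bigr)_{\chi_s}(g')=\delta_P(m)^{-1/2}\chi_s(\omega(m))^{-1}\,f_{\chi_s}(bg'g)\qquad(g'\in\Sp_{2n}(F)),
\]
the right-hand side being meaningful because $\nu(bg'g)=1$. The section $g'\mapsto f_{\chi_s}(bg'g)$ of $I(\chi_s)$ equals $R'_{gb}(C_b f_{\chi_s})$, where $R'$ denotes right translation by $gb\in\Sp_{2n}(F)$ and $C_b\varphi(x):=\varphi(bxb^{-1})$ is the ``conjugation'' operator on $I(\chi_s)$, well defined because $b$ normalizes $P$ and $[P,P]$ and preserves $\delta_P$ and $\omega$. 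Now $R'_{gb}$ and $C_b$, as well as multiplication by the holomorphic, nowhere-vanishing, moderately growing factor $\delta_P(m)^{-1/2}\chi_s(\omega(m))^{-1}$, all preserve the conditions built into the seminorms \eqref{seminorm} — in the archimedean case one additionally uses the identity $D.(L(m)R(g)h)=L(m)R(g)\bigl((\mathrm{Ad}(m,g)^{-1}D).h\bigr)$ for $D\in U(\mathfrak{g})$ and the fact that $\GSp_{2n}$ normalizes $\Sp_{2n}$, so $\mathrm{Ad}(m,g)^{-1}$ stabilizes $U(\mathfrak{g})$. Hence $L(m)R(g)f\in\mathcal{S}(X(F))$ by Mellin inversion, which is the first assertion; applying it to $\mathcal{F}_X(f)$ also shows that the right-hand side of the displayed identity in the lemma lies in $\mathcal{S}(X(F))$.

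For the identity it suffices, by Theorem~\ref{thm:44} and its uniqueness clause, to check for unitary $\chi$ and $\mathrm{Re}(s)\ge 0$ that the $(\cdot)_{\chi_s}$-Mellin transform of $|\nu(g)|^{n(n+1)/2}\delta_P^{-1}(m)L(m^{-1})R(\nu(g)^{-1}g)\mathcal{F}_X(f)$ agrees with $M_{w_0}^*\bigl((L(m)R(g)f)_{\bar\chi_{-s}}\bigr)$. For the first expression, apply the Mellin formula above to $\mathcal{F}_X(f)$ with $(m^{-1},\nu(g)^{-1}g)$ in place of $(m,g)$; the argument that appears is an element of $M\cdot\Sp_{2n}(F)$ with Levi part $\begin{psmatrix} \nu(g)^{-1}I_n & \\ & \nu(g)I_n\end{psmatrix}$, whose $\delta_P^{1/2}$ is exactly cancelled by the factor $|\nu(g)|^{n(n+1)/2}=|\nu(g)|^{\dim N}$ in the lemma, leaving $\delta_P(m)^{-1/2}\chi_s(\omega(m))\chi_s(\nu(g))^{-n}\mathcal{F}_X(f)_{\chi_s}(bg'g)$. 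For the second expression, write $(L(m)R(g)f)_{\bar\chi_{-s}}=\delta_P(m)^{-1/2}\bar\chi_{-s}(\omega(m))^{-1}R'_{gb}(C_b f_{\bar\chi_{-s}})$, use the $\Sp_{2n}(F)$-equivariance of $M_{w_0}^*$, the identity $M_{w_0}^*(f_{\bar\chi_{-s}})=\mathcal{F}_X(f)_{\chi_s}$, and the fact that $M_{w_0}^*$ commutes with $C_b$ up to the scalar $\bar\chi_{-s}(\nu(g))^{n}$ — here the Jacobian arising from conjugating the unipotent integral defining $M_{w_0}$ by $b$ cancels against the $\delta_P^{1/2}$ arising from normalizing $bw_0b^{-1}$ back to $w_0$, so no surplus power of $|\nu(g)|$ appears — to obtain $\delta_P(m)^{-1/2}\bar\chi_{-s}(\omega(m))^{-1}\bar\chi_{-s}(\nu(g))^{n}\mathcal{F}_X(f)_{\chi_s}(bg'g)$. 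Since $\chi$ is unitary, $\bar\chi_{-s}(\omega(m))^{-1}=\chi_s(\omega(m))$ and $\bar\chi_{-s}(\nu(g))^{n}=\chi_s(\nu(g))^{-n}$, so the two expressions coincide, which proves the identity.

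The hard part is the constant-tracking in the last paragraph: one must carefully account for how passing between the $\Sp_{2n}$- and $\GSp_{2n}$-normalizations interacts with the modular character $\delta_P$, the normalization of the induction $I(\chi_s)$, and the measure in the integral defining $M_{w_0}$, so that the exponent $n(n+1)/2$ and the twist $g\mapsto\nu(g)^{-1}g$ emerge with the correct signs; in particular the choice of which unipotent radical the intertwining integral runs over is exactly what decides whether a surplus power of $|\nu(g)|$ survives. An alternative, which is the route indicated in the text, is to bypass the intertwining operator and instead substitute the explicit formula for $\mathcal{F}_X$ from \cite[Corollary~6.10]{Getz:Hsu:Leslie}; the identity then becomes a change of variables in that formula, the only point to check being the $M^{\mathrm{ab}}\times\GSp_{2n}$-equivariance of the defining kernel.
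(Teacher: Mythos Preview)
Your argument is correct and takes a genuinely different route from the paper.  The paper does not prove the lemma from scratch at all; it simply invokes the explicit integral formula for $\mathcal{F}_X$ established in \cite[Corollary~6.10]{Getz:Hsu:Leslie} and deduces the identity by a change of variables in that formula (this is exactly the ``alternative'' you mention in your final sentence).  Your approach instead stays within the present paper, using only the Mellin-transform characterization of $\mathcal{F}_X$ from Theorem~\ref{thm:44} and the behavior of the unnormalized intertwining operator $M_{w_0}$ under $C_b$ and right translation.  The key computation you sketch---that $M_{w_0}$ commutes with $C_b$ up to $\chi_s(\nu(g))^{n}$ because the Jacobian $|\nu(g)|^{-n(n+1)/2}$ from conjugating the $N$-integral by $b$ exactly cancels the $\delta_P^{1/2}$-factor arising from $w_0^{-1}bw_0b^{-1}\in M$---is right, and once that scalar is identified the matching of Mellin transforms goes through as you describe.

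Each approach has its merits.  The paper's route is shorter on the page but outsources the work to another reference; yours is self-contained and clarifies exactly which structural feature of $M_{w_0}^*$ produces the power $|\nu(g)|^{n(n+1)/2}$ and the twist $g\mapsto\nu(g)^{-1}g$.  One small gap to be aware of: your verification that $L(m)R(g)$ preserves $\mathcal{S}(X(F))$ is phrased in terms of the archimedean seminorms~\eqref{seminorm}, whereas in the nonarchimedean case the Schwartz space is defined differently in \cite[\S5]{Getz:Hsu:Leslie}; the same operators $R'_{gb}$, $C_b$, and scalar multiplication do preserve the nonarchimedean conditions as well, but you should say so explicitly rather than leave it implicit.
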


\noindent One checks the following lemma:
\begin{lem} \label{lem:smooth:act}
The action of $M^{\mathrm{ab}}(F) \times \mathrm{GSp}_{2n}(F)$ on $\mathcal{S}(X(F))$ is smooth.  When $F$ is Archimedean, it is continuous with respect to the Fr\'echet topology on $\mathcal{S}(X(F))$. \qed
\end{lem}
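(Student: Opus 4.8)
The plan is to combine the structural description of the Schwartz space with Lemma~\ref{lem:BKF}, treating the nonarchimedean and archimedean cases separately. That $L(m)R(g)f \in \mathcal{S}(X(F))$ for all $(m,g) \in M^{\mathrm{ab}}(F) \times \GSp_{2n}(F)$ and all $f$ is already Lemma~\ref{lem:BKF}, so only smoothness (and, archimedeanly, joint continuity) remains. The basic tool is the identity $\mathcal{S}(X(F)) = C_c^\infty(X^{\circ}(F)) + \mathcal{F}_X\big(C_c^\infty(X^{\circ}(F))\big)$ (a sum of subspaces) from \cite{Getz:Liu:BK,Getz:Hsu:Leslie}: every $f \in \mathcal{S}(X(F))$ is $\phi_1 + \mathcal{F}_X(\phi_2)$ with $\phi_i \in C_c^\infty(X^{\circ}(F))$, so one may deduce the behaviour of the action on each summand from its evident behaviour on $C_c^\infty(X^{\circ}(F))$ together with the intertwining relation of Lemma~\ref{lem:BKF}.

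Suppose $F$ is nonarchimedean, so ``smooth'' means every vector has open stabilizer. For $\phi \in C_c^\infty(X^{\circ}(F))$, a small enough compact open subgroup $U \leq M^{\mathrm{ab}}(F) \times \GSp_{2n}(F)$ fixes $\phi$. Shrinking $U$ further so that $\delta_P(m) = 1$ and $|\nu(g)| = 1$ on $U$, Lemma~\ref{lem:BKF} identifies $L(m)R(g)\mathcal{F}_X(\phi)$, for $(m,g) \in U$, with $\mathcal{F}_X$ applied to a $U$-translate of $\phi$, i.e.\ with $\mathcal{F}_X(\phi)$ itself. Thus $f = \phi_1 + \mathcal{F}_X(\phi_2)$ is fixed by a common such $U$, so every vector is smooth and the action is smooth.

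Now let $F$ be archimedean. I would argue in two steps. First, the inclusion $C_c^\infty(X^{\circ}(F)) \hookrightarrow \mathcal{S}(X(F))$ is continuous for the inductive-limit topology on the source (the seminorms \eqref{seminorm} restrict to continuous seminorms there), the natural surjection $C_c^\infty(X^{\circ}(F)) \oplus \mathcal{F}_X\big(C_c^\infty(X^{\circ}(F))\big) \twoheadrightarrow \mathcal{S}(X(F))$ is open, and the $M^{\mathrm{ab}}(F) \times \GSp_{2n}(F)$-action on $C_c^\infty(X^{\circ}(F))$ is classically jointly continuous and smooth; transporting through $\mathcal{F}_X$ by Lemma~\ref{lem:BKF} (whose scalar $|\nu(g)|^{n(n+1)/2}\delta_P^{-1}(m)$ is smooth in $(m,g)$) the same holds on the second summand, hence on the quotient $\mathcal{S}(X(F))$, and every vector, being a sum of two smooth vectors, is smooth. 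Second --- and this is what justifies the picture above, and can also be run directly on $\mathcal{S}(X(F))$ --- one controls the seminorms \eqref{seminorm} under translation: writing $D.(L(m)R(g)f) = L(m)R(g)\big((\mathrm{Ad}(m,g)^{-1}D).f\big)$, noting that $\mathrm{Ad}(m,g)^{-1}D$ stays in a bounded subset of a fixed finite-dimensional subspace of $U(\mathfrak{g})$ as $(m,g)$ ranges over a compact neighbourhood $C$ of the identity, and using that $(L(m)R(g)f)_{\eta_s}$ is obtained from $f_{\eta_s}$ by right translation within $I(\eta_s)$ (which commutes with $M_w$) up to a character value bounded uniformly over $C$ and over $s \in V_{A,B}$, one obtains
\[
\big| L(m)R(g)f \big|_{A,B,w,p_w,\Omega,D} \ \leq\ \Lambda(C,A,B) \sum_i \big| f \big|_{A,B,w,p_w,\,\Omega\cdot C,\,D_i}
\]
for a finite set $D_i \in U(\mathfrak{g})$ depending only on $D$ and $C$; this is local equicontinuity of the translation operators. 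Combined with strong continuity of the orbit maps at the identity --- verified on the dense subspace $C_c^\infty(X^{\circ}(F))$ and spread to all of $\mathcal{S}(X(F))$ by the equicontinuity just obtained --- this gives joint continuity. Finally, smoothness of a fixed vector $f$ follows by the usual bootstrap: along a one-parameter subgroup the orbit map is weakly $C^1$ with weak derivative the orbit of $\mathfrak{X}.f$, which again lies in $\mathcal{S}(X(F))$ since its seminorms \eqref{seminorm} are those of $f$ with $D$ replaced by $D\mathfrak{X}$ and its Mellin transforms inherit the required meromorphy; as this derivative is continuous and $\mathcal{S}(X(F))$ is complete, the orbit map is $C^1$, and iterating over all directions it is $C^\infty$.

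I expect the archimedean case to be the main obstacle: the uniform-in-$(s,\eta)$ bookkeeping in the displayed inequality (equivalently, checking that $C_c^\infty(X^{\circ}(F)) \hookrightarrow \mathcal{S}(X(F))$ is continuous and the quotient map is open), and the passage from joint continuity to $C^\infty$-smoothness of every vector, require the most care; the nonarchimedean assertion is essentially formal given Lemma~\ref{lem:BKF} and the decomposition $\mathcal{S}(X(F)) = C_c^\infty(X^{\circ}(F)) + \mathcal{F}_X(C_c^\infty(X^{\circ}(F)))$.
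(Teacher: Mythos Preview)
The paper does not actually prove this lemma: it is introduced by ``One checks the following lemma'' and closed with a \qed\ box, so there is no argument to compare against. Your proposal is therefore a genuine proof where the paper offers none.

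Your nonarchimedean argument is correct and is the natural one: the decomposition $\mathcal{S}(X(F)) = C_c^\infty(X^{\circ}(F)) + \mathcal{F}_X\big(C_c^\infty(X^{\circ}(F))\big)$ is the definition of the Schwartz space in \cite[\S 5]{Getz:Hsu:Leslie} for $F$ nonarchimedean, and the open-stabilizer claim then follows from Lemma~\ref{lem:BKF} exactly as you describe.

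For $F$ archimedean, a caution: the two-summand decomposition is \emph{not} how $\mathcal{S}(X(F))$ is defined in \cite{Getz:Hsu:Leslie} in this case (it is defined by the seminorm conditions \eqref{seminorm}), and I do not see it established in either reference, so your ``first step'' rests on an unproved assertion. Fortunately your ``second step'' is self-contained and is the right argument: the computation $D.(L(m)R(g)f)=L(m)R(g)\big((\mathrm{Ad}(m,g)^{-1}D).f\big)$ together with the effect of $L(m)R(g)$ on Mellin transforms (a character twist and a right translate, both uniformly controlled on compacta in $(m,g)$ and in $V_{A,B}$) gives the displayed local equicontinuity estimate directly from the seminorms \eqref{seminorm}; strong continuity at the identity on the dense subspace $C_c^\infty(X^{\circ}(F))$ (whose inclusion is continuous by Proposition~\ref{prop:schwartz:inside}) then propagates, and the smoothness bootstrap you sketch is standard for Fr\'echet representations. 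I would drop the first step entirely and present only the direct seminorm argument.
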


We observe that the inclusions (see \S \ref{ssec:containment})
$$
C_c^\infty(X^{\circ}(F)) \lto \mathcal{S}(X^{\circ}(F)) \lto \mathcal{S}(X(F))
$$
are continuous in the Archimedean case, where we give $C_c^\infty(X^{\circ}(F))$ the usual topology for compactly supported smooth functions on a real manifold and $\mathcal{S}(X^{\circ}(F))$ the topology explained in \S \ref{sec:gen:Schwartz}.  

It is useful to explicitly state and prove a refinement of \cite[Lemmas 5.1 and 5.7]{Getz:Liu:BK}.
The group $\mathrm{Sp}_{2n}$ acts on $\wedge^{n} \GG_a^{2n}$ via its action on $\GG_a^{2n}$. Let $K <\mathrm{Sp}_{2n}(F)$ be  a maximal compact subgroup. We assume $K=\mathrm{Sp}_{2n}(\OO)$ when $F$ is non-Archimedean.  Here $\OO$ is the ring of integers of $F.$ For $F$ Archimedean, choose a $K$-invariant bilinear form $(\cdot,\cdot)$ on $\wedge^n F^{2n}$  and set $|x|:=(x,x)^{[F:\RR]/2}$.  For $F$ non-Archimedean, the standard basis of $F^{2n}$ induces a canonical basis on $\wedge^n F^{2n}$ (given by wedge products of the standard basis of $F^{2n}$ in increasing order).  Define the norm $|x|$ on $\wedge^n F^{2n}$ to be the  maximum norm with respect to the induced basis. We claim that this norm is invariant under $\mathrm{Sp}_{2n}(\OO).$   Indeed, for $x \in \wedge^{n} F^{2n}-\{0\}$ we have $|x|=q^{-k}$ where $k$ is the unique integer such that $\varpi^{-k}x \in \wedge^{n}\OO^{2n}-\varpi \wedge^{n}\OO^{2n}.$  It follows that $|x|$ is invariant under $\GL(\wedge^n\OO^{2n}),$ and the action of $\mathrm{Sp}_{2n}(\OO)$ on $\wedge^{n} F^{2n}$ is the restriction of the action of $\GL(\wedge^n\OO^{2n})$ on $\wedge^{n}F^{2n}.$

In either the Archimedean or non-Archimedean case, we set
\begin{align}\label{plucknorm}
|g|:=|\mathrm{Pl}(g)|
\end{align}
where $\mathrm{Pl}:X^{\circ} \to \wedge^n \GG_a^{2n}$ is the Pl\"ucker embedding.  Note that writing $g=mk$ where $(m,k)\in M^{\mathrm{ab}}(F)\times K$, one has $|g|=|mk|=|\omega(m)|^{-1}$.

\begin{lem} \label{lem:seminorm}
Let $0\leq \beta<\tfrac{1}{2}$.  If $F$ is non-Archimedean, then any $f \in \mathcal{S}(X(F))$ satisfies
$$
|f(g)| \ll_{f,\beta} |g|^{-\tfrac{n+1}{2}+\beta}.
$$
Moreover $f(g)=0$ for $|g|$ sufficiently large in a sense depending on $f.$
If $F$ is Archimedean, then for each $N \in \ZZ_{\geq 0}$ and $D \in U(\mathfrak{g})$ there is a continuous seminorm $\nu_{D,N,\beta}$ on $\mathcal{S}(X(F))$ such that for $f \in \mathcal{S}(X(F))$ one has
$$
|D.f(g)| \leq \nu_{D,N,\beta}(f)|g|^{-N-\tfrac{n+1}{2}+\beta}.
$$
\end{lem}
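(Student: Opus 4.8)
The plan is to recover $f$ from its Mellin transforms $f_{\chi_s}$ (notation as in \eqref{Mellin}) by Mellin inversion, to shift the contour of inversion to a vertical line chosen according to $\beta$ and $N$, and to read off the size of $f(g)$ from the elementary fact that a holomorphic section of $I(\chi_s)=\mathrm{Ind}_{P}^{\mathrm{Sp}_{2n}}(\chi_s\circ\omega)$, evaluated at $g$, has absolute value comparable to a power of $|g|$.

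First I would record the dictionary between $|g|$ and $\delta_P$. Using the Iwasawa decomposition $\mathrm{Sp}_{2n}(F)=N(F)M(F)K$ (with $K=\mathrm{Sp}_{2n}(\OO)$ when $F$ is nonarchimedean, $K$ a maximal compact subgroup otherwise), every point of $X^{\circ}(F)=[P,P](F)\backslash\mathrm{Sp}_{2n}(F)$ has a representative of the form $\tilde m k$ with $\tilde m=\begin{psmatrix}A&\\&A^{-t}\end{psmatrix}$ and $k\in K$. A direct computation of the wedge of the last $n$ rows gives $\mathrm{Pl}(\tilde m k)=(\det A)^{-1}\mathrm{Pl}(k)$, and since $|\mathrm{Pl}(k)|\asymp 1$ uniformly for $k\in K$ this yields $|g|\asymp|\det A|^{-1}=|\omega(\tilde m)|^{-1}$, equivalently $\delta_P(\tilde m)^{1/2}=|\det A|^{(n+1)/2}\asymp |g|^{-(n+1)/2}$. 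Consequently, for any holomorphic section $\varphi_s$ of $I(\chi_s)$,
\[
|\varphi_s(g)|=\delta_P(\tilde m)^{1/2}\,|\omega(\tilde m)|^{\mathrm{Re}(s)}\,|\varphi_s(k)|\ \ll\ |g|^{-(n+1)/2-\mathrm{Re}(s)}\,\sup_{x\in\Omega}|\varphi_s(x)|,
\]
where $\Omega:=\{[P,P]k:k\in K\}$ is a fixed compact subset of $X^{\circ}(F)$ of exactly the kind appearing in the seminorms \eqref{seminorm}. Each $D.f$ with $D\in U(\mathfrak{g})$ again lies in $\mathcal{S}(X(F))$, and $(D.f)_{\chi_s}$ is a section with the same analytic behaviour, controlled by \eqref{seminorm} with $w=\mathrm{Id}$.

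Next I would run the Mellin inversion. Recall from \cite{Getz:Liu:BK,Getz:Hsu:Leslie} that for $f\in\mathcal{S}(X(F))$ each $f_{\chi_s}$ is holomorphic to the right of its poles, which (like those of $a_{w_0}(s,\eta)$) are confined to $\mathrm{Re}(s)\le -\tfrac12$ and located at half-integers — this being the source of the restriction $\beta<\tfrac12$ — with decay in $\mathrm{Im}(s)$ and summability over $\widehat{K}_{\GG_m}$ encoded by \eqref{seminorm}, and that Mellin inversion gives $f(g)=c\sum_{\eta\in\widehat{K}_{\GG_m}}\int_{\mathrm{Re}(s)=\sigma}f_{\eta_s}(g)\,d\mu(s)$ on any vertical line with $\sigma>-\tfrac12$ (here $d\mu(s)$ is Lebesgue measure on the line in the archimedean case and Lebesgue measure on the period $\RR/\tfrac{2\pi}{\log q}\ZZ$ in the nonarchimedean case). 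Taking $\sigma=-\beta$, applying the section bound to $D.f$, and estimating $\sum_{\eta}\int_{\mathrm{Re}(s)=-\beta}\sup_{\Omega}|(D.f)_{\eta_s}|\,d\mu(s)$ against the seminorm $|f|_{-\beta,-\beta,\mathrm{Id},(1+s^2)^M,\Omega,D}$ with $M$ large enough that $\int(1+|s|^2)^{-M}\,d\mu(s)<\infty$ (this choice of $p_{\mathrm{Id}}$ being admissible because $a_{\mathrm{Id}}$ has no pole on the line $\mathrm{Re}(s)=-\beta$), I obtain $|D.f(g)|\le\nu_{D,0,\beta}(f)\,|g|^{-(n+1)/2+\beta}$ with $\nu_{D,0,\beta}$ a continuous seminorm. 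Taking $D=1$ and observing that the $\eta$-sum is finite, this is the first assertion in the nonarchimedean case and the case $N=0$ in the archimedean case. For $|g|\ge 1$ and $N\ge 1$ I would instead take $\sigma=N-\beta$, which is again not a half-integer and still satisfies $\sigma>-\tfrac12$, so no poles intervene; this gives $|D.f(g)|\le\nu'_{D,N,\beta}(f)\,|g|^{-N-(n+1)/2+\beta}$ for another continuous seminorm, and since $|g|^{-(n+1)/2+\beta}\le|g|^{-N-(n+1)/2+\beta}$ for $|g|\le 1$, the seminorm $\nu_{D,N,\beta}:=\nu_{D,0,\beta}+\nu'_{D,N,\beta}$ works for all $g$. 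Finally, the vanishing $f(g)=0$ for $|g|$ large in the nonarchimedean case is obtained by pushing the contour all the way to $\mathrm{Re}(s)=\Sigma\to +\infty$: there are no poles in $\mathrm{Re}(s)>-\tfrac12$, the $\eta$-sum is finite, and $\sup_{\Omega}|f_{\eta_s}|$ grows at most like $R_f^{\mathrm{Re}(s)}$ as $\mathrm{Re}(s)\to +\infty$ (a Paley--Wiener-type bound visible from the explicit description of the Mellin transforms of elements of $\mathcal{S}(X(F))$), so the section bound forces $|f(g)|\ll R_f^{\Sigma}|g|^{-(n+1)/2-\Sigma}\to 0$ as $\Sigma\to\infty$ whenever $|g|>R_f$.

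The main obstacle I anticipate is the analytic bookkeeping rather than any conceptual difficulty. One must check that the family \eqref{seminorm} genuinely controls $\sum_{\eta}\int_{\mathrm{Re}(s)=\sigma}\sup_{\Omega}|(D.f)_{\eta_s}|\,d\mu(s)$ uniformly in $\eta$ — that is, that a single polynomial $p_{\mathrm{Id}}$ can be chosen so that $p_{\mathrm{Id}}a_{\mathrm{Id}}$ is pole-free on the line in question and $p_{\mathrm{Id}}$ supplies enough decay in $\mathrm{Im}(s)$ for the integral to converge — and that the Mellin inversion formula of \cite{Getz:Liu:BK} is valid on the lines $\mathrm{Re}(s)=-\beta$ and $\mathrm{Re}(s)=N-\beta$, which is where the holomorphy of $f_{\chi_s}$ on $\mathrm{Re}(s)>-\tfrac12$ and the decay estimates are actually used; the exponential control of $\sup_\Omega|f_{\eta_s}|$ as $\mathrm{Re}(s)\to+\infty$ needed for the vanishing statement should be isolated as a separate input. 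The Plücker identity $\mathrm{Pl}(\tilde m k)=(\det A)^{-1}\mathrm{Pl}(k)$ together with the uniform comparison $|\mathrm{Pl}(k)|\asymp 1$ on $K$ is routine but deserves a careful statement, since the whole argument rests on identifying $|g|$ with a power of $\delta_P$.
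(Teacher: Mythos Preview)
Your proposal is essentially correct and follows the same approach as the paper: Mellin inversion \cite[Lemma 4.3]{Getz:Liu:BK} followed by a contour shift, using that $a_{\mathrm{Id}}(s,\chi)$ is holomorphic for $\mathrm{Re}(s)>-\tfrac12$. The only cosmetic difference in the archimedean case is that the paper obtains the extra factor $|g|^{-N}$ by first multiplying $D.f(mk)$ by $(\omega\bar\omega(m))^{-N}$ (which shifts the Mellin argument to $s+2N[F:\RR]^{-1}$) and then moving the contour to $\sigma=-\beta$, whereas you move the contour directly to $\sigma=N-\beta$; these are equivalent manoeuvres. For the nonarchimedean case the paper simply invokes \cite[Lemma 5.1]{Getz:Liu:BK} rather than redoing the argument.

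Two small points to tighten. First, it is $a_{\mathrm{Id}}$, not $a_{w_0}$, whose pole locations govern those of $f_{\eta_s}$; your conclusion that the poles lie in $\mathrm{Re}(s)\le-\tfrac12$ is correct but the attribution is off. Second, your argument for the nonarchimedean vanishing by letting $\Sigma\to+\infty$ relies on the exponential bound $\sup_{\Omega}|f_{\eta_s}|\ll R_f^{\mathrm{Re}(s)}$, which you correctly flag as a separate input; it is not a consequence of the seminorms \eqref{seminorm} alone (those only give boundedness on each fixed strip), and one must appeal to the explicit structure of $\mathcal{S}(X(F))$ in the nonarchimedean case. The paper avoids this by citing \cite[Lemma 5.1]{Getz:Liu:BK} directly.
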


\begin{proof}
Assume first that $F$ is non-Archimedean.  When $\beta=0$ the lemma is just \cite[Lemma 5.1]{Getz:Liu:BK} and by inspecting the proof one obtains the refined estimate stated in the current lemma.

  For the Archimedean assertion, using Mellin inversion \cite[Lemma 4.3]{Getz:Liu:BK}, we write
\begin{align} \label{contour}
(\omega\bar{\omega}(m))^{-N} D.f(mk)=\delta^{1/2}_P(m)\sum_{\eta \in \widehat{K}_{\GG_m}} \int_{i\RR+\sigma}(D.f)_{\eta_{s+2N[F:\RR]^{-1}}}(k)\eta_s(\omega(m))\frac{ds}{4\pi^{[F:\RR]} i}
\end{align}
for $\sigma$ sufficiently large.
The factor $a_{\mathrm{Id}}(s,\chi)$ is holomorphic in the half plane $\mathrm{Re}(s)>-\tfrac{1}{2}$ and hence so is $a_{\mathrm{Id}}(s+\tfrac{2N}{[F:\RR]},\chi)$.
Thus using the fact that the seminorms \eqref{seminorm} are finite, we can shift the contour to $\sigma=-\beta$ in \eqref{contour} to see that it is bounded by
\begin{align*}
\delta^{1/2}_P(m)&\left|\sum_{\eta \in \widehat{K}_{\GG_m}} \int_{i\RR-\beta}(D.f)_{\eta_{s+2N[F:\RR]^{-1}}}(k)\eta_{s}(\omega(m))\frac{ds}{4\pi^{[F:\RR]} i}\right|\\
&\leq \frac{\delta^{1/2}_P(m)|\omega(m)|^{-\beta}}{4\pi^{[F:\RR]}}2\left(|f|_{A,B,\mathrm{Id},1,K,D} +|f|_{A,B,\mathrm{Id},s^2,K,D}\right)
\end{align*}
where $A:=-\beta-\varepsilon+\tfrac{2N}{[F:\RR]}$ and $B:=-\beta+\varepsilon+\tfrac{2N}{[F:\RR]}$ for some $\varepsilon<\tfrac{1}{2}-\beta$.
Since $|mk|=|\omega(m)|^{-1}$ and $\delta_P(m)=|m|^{-(n+1)}$, we deduce the lemma in the Archimedean case. 
\end{proof}

To prove Proposition \ref{prop:topology} below, we require a more precise version of \cite[Lemma 3.3]{Getz:Liu:BK}.   
Assume for the moment that $F$ is Archimedean and let
\begin{align} \label{mu}
\mu(z):=\frac{z}{(z\bar{z})^{1/2}}
\end{align}
where in the denominator we mean the positive square root. Then any character of $F^\times$ can be written uniquely in the form
$\chi=|\cdot|^{it}\mu^{\alpha}$ where $t \in \RR$, $\alpha \in \{0,1\}$ when $F$ is real, and $\alpha \in \ZZ$ when $F$ is complex. 
\begin{lem} \label{lem:33}
Assume $F$ is Archimedean.  Let $A<B$ be real numbers, and for $w \in \{\mathrm{Id},w_0\}$ let $p_w,p_w' \in \CC[x]$ be polynomials such that $p_w(s)a_w(s,\mu^{\alpha})$ and $p_w'(s)a_w(-s,\overline{\mu^{\alpha}})$ are holomorphic and nonvanishing for all $\alpha$ as above and all $s \in V_{A,B}$. Then the quotients 
$$
\frac{p_{\mathrm{Id}}'(s)a_{\mathrm{Id}}(-s,\overline{\mu^{\alpha}})}{p_{w_0}(s)a_{w_0}(s,\mu^{\alpha})}, \quad \frac{p_{w_0}(s)a_{w_0}(s,\mu^\alpha)}{p_{\mathrm{Id}}'(s)a_{\mathrm{Id}}(-s,\overline{\mu^{\alpha}})},\quad \frac{p_{\mathrm{Id}}(s)a_{\mathrm{Id}}(s,\mu^{\alpha})}{p_{w_0}'(s)a_{w_0}(-s,\overline{\mu^{\alpha}})}, \quad \frac{p_{w_0}'(s)a_{w_0}(-s,\overline{\mu^{\alpha}})}{p_{\mathrm{Id}}(s)a_{\mathrm{Id}}(s,\mu^{\alpha})}
$$
are bounded in $V_{A,B}$ by polynomials in $s.$  \qed
\end{lem}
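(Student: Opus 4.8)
The plan is to unwind the definitions of the archimedean $L$-factors $a_w(s,\eta)$ from \cite{Getz:Liu:BK} and reduce the statement to Stirling's asymptotic formula for the Gamma function on a vertical strip. Over an archimedean field, each of $a_{w_0}(s,\eta)$ and $a_{\mathrm{Id}}(s,\eta)$ is, up to nonvanishing elementary factors (powers of $\pi$ or $2\pi$ and Tate $\epsilon$-factors, all of modulus bounded above and below on any vertical strip), a product of $\lfloor n/2\rfloor+1$ factors of the form $\Gamma_\RR$ or $\Gamma_\CC$ evaluated at affine functions of $s$ — one factor with $s$-coefficient $\pm1$ and the remaining $\lfloor n/2\rfloor$ with $s$-coefficient $\pm2$ — where the character $\mu^\alpha$ contributes a shift by $|\alpha|/2$ in the complex case and a parity shift in the real case. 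Equivalently, by \eqref{Mw0psi}, the ratio $a_{\mathrm{Id}}(-s,\overline{\mu^\alpha})/a_{w_0}(s,\mu^\alpha)$ equals, up to such bounded factors, a finite product of Tate $\gamma$-factors $\gamma(s+\tfrac{1-n}{2},\mu^\alpha,\psi)^{\pm1}\prod_{r=1}^{\lfloor n/2\rfloor}\gamma(2s-n+2r,\mu^{2\alpha},\psi)^{\pm1}$, and the other three quotients are analogous. Since the four quotients in the statement come in two reciprocal pairs, and the second pair (built from $a_{\mathrm{Id}}(s,\mu^\alpha)$ and $a_{w_0}(-s,\overline{\mu^\alpha})$) is treated identically to the first, it suffices to show that the first quotient, call it $Q$, and its reciprocal are each bounded by a polynomial in $s$ on $V_{A,B}$.

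By the hypotheses on the $p_w$, the numerator $p_{\mathrm{Id}}'(s)a_{\mathrm{Id}}(-s,\overline{\mu^\alpha})$ and the denominator $p_{w_0}(s)a_{w_0}(s,\mu^\alpha)$ of $Q$ are each holomorphic and nonvanishing on $V_{A,B}$. The key structural point is that these two expressions are built from the same number of Gamma factors, with the same $\alpha$-dependent shifts and the same $|s|$-coefficients; consequently, applying Stirling's formula $\log|\Gamma(x+iy)|=(x-\tfrac12)\log|x+iy|-x-\tfrac{\pi}{2}|y|+O(1)$, valid uniformly for $x$ in a bounded interval and applicable since all the poles of the Gamma factors in question lie on the real axis, yields on $V_{A,B}\cap\{|\mathrm{Im}(s)|\geq1\}$ two-sided estimates $|p_{\mathrm{Id}}'(s)a_{\mathrm{Id}}(-s,\overline{\mu^\alpha})|\asymp(1+|s|)^{M}e^{-c|\mathrm{Im}(s)|}$ and $|p_{w_0}(s)a_{w_0}(s,\mu^\alpha)|\asymp(1+|s|)^{M'}e^{-c|\mathrm{Im}(s)|}$ with the \emph{same} exponential rate $c\geq0$ (the polynomial factors $p_w$ and the elementary exponential factors being harmless because $\mathrm{Re}(s)$ is bounded on $V_{A,B}$). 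On the complementary compact region $V_{A,B}\cap\{|\mathrm{Im}(s)|\leq1\}$ both expressions are holomorphic and nonvanishing, hence bounded above and below by positive constants. Taking the quotient, the exponential factors cancel and we obtain $|Q(s)|\ll(1+|s|)^{M-M'}$ and $|Q(s)|^{-1}\ll(1+|s|)^{M'-M}$ on all of $V_{A,B}$; the remaining two quotients follow in the same way.

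The main obstacle is verifying the structural matching used above: that the numerator and denominator of each quotient really are products of equally many Gamma factors with identical exponential decay rates, so that the $e^{-c|\mathrm{Im}(s)|}$ cancels exactly. (If, for instance, $a_{\mathrm{Id}}$ carried no Gamma factors, a single $1/\Gamma$ would grow exponentially along vertical lines and the lemma would be false, so the matching is genuinely needed.) It is forced by the functional equation: the relevant quotient of $L$-factors is a finite product of completed $\gamma$-factors and their inverses, and a single archimedean $\gamma(z,\chi,\psi)$ is a ratio $\Gamma_\RR(1-z+\mathfrak{c})/\Gamma_\RR(z+\mathfrak{c})$ — or the analogous ratio of $\Gamma_\CC$'s — of one numerator Gamma and one denominator Gamma with the same conductor shift $\mathfrak{c}$, so the match is automatic. (Alternatively, one can note that each quotient is holomorphic and of finite order on $V_{A,B}$, bound it on the two boundary lines $\mathrm{Re}(s)\in\{A,B\}$ by the classical estimates for $\gamma$-factors on vertical lines, and then invoke Phragm\'en--Lindel\"of after dividing by a large power of $s+\lambda$.) Over $\RR$ there is nothing further to do, since $\alpha$ ranges over the two-element set $\{0,1\}$; over $\CC$ one should additionally observe that, because $|z|\asymp1+|\alpha|+|s|$ in the Stirling estimates, the bounding polynomials can be taken of degree independent of $\alpha$ with coefficients of at most polynomial growth in $|\alpha|$, which is what the downstream seminorm bounds require. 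Once the matching is in hand, the rest is routine estimation.
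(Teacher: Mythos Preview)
Your approach is essentially the intended one: the paper gives no self-contained proof but simply records that the argument proving \cite[Lemma~3.3]{Getz:Liu:BK} already yields the stated uniformity in $\alpha$, and that argument is exactly the Stirling/$\gamma$-factor computation you outline. Your identification of the key structural point---that each quotient is, up to bounded elementary factors, a finite product of archimedean Tate $\gamma$-factors (or their inverses), so the exponential decay rates match automatically---is the heart of the matter.

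One point deserves a little more care. Your Stirling estimate $\log|\Gamma(x+iy)|=(x-\tfrac12)\log|x+iy|-x-\tfrac{\pi}{2}|y|+O(1)$ in the form you state it is the \emph{vertical-strip} version, valid when $x$ stays in a bounded interval. In the complex case the Gamma arguments are of the form $\pm s + c + |\alpha|/2$, whose real parts are \emph{not} bounded as $\alpha$ varies; in particular the $-\tfrac{\pi}{2}|y|$ term (coming from $-y\arg(z)$ with $\arg(z)\to\pm\pi/2$) is no longer accurate when $|\alpha|$ is comparable to or larger than $|\mathrm{Im}(s)|$. What you need instead is the sector version $\log\Gamma(z)=(z-\tfrac12)\log z - z + \tfrac12\log(2\pi)+O(|z|^{-1})$, uniformly for $|\arg z|\le\pi-\delta$, applied to the numerator and denominator separately and then subtracted. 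The cancellation of the dominant $\tfrac{|\alpha|}{2}\log\tfrac{|\alpha|}{2}$ terms is then what produces a bound polynomial jointly in $|s|$ and $|\alpha|$. You gesture at this in your final sentence (``$|z|\asymp 1+|\alpha|+|s|$''), so you clearly see the issue; just make the use of sector-Stirling explicit rather than invoking the strip version and patching afterwards. The compact-region argument $\{|\mathrm{Im}(s)|\le 1\}$ likewise needs to go through Stirling for large $|\alpha|$ rather than a bare compactness appeal.
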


\noindent The key difference between this lemma and \cite[Lemma 3.3]{Getz:Liu:BK} is the uniformity in $\alpha$ of the bound.  However, the proof of \cite[Lemma 3.3]{Getz:Liu:BK} actually yields the stronger assertion of Lemma \ref{lem:33}.

Let
$$
\mathcal{S}(X(F),K) < \mathcal{S}(X(F))
$$
be the subspace of $K$-finite vectors. It is dense \cite[\S 4.4.3.1]{Warner:Semisimple:I}.    We prove the following lemma for use in the proof of Theorem \ref{thm:BK:PS}:

\begin{prop} \label{prop:topology}  If $F$ is Archimedean then the Fourier transform 
$$
\mathcal{F}_X:\mathcal{S}(X(F)) \lto \mathcal{S}(X(F))
$$
is continuous.
\end{prop}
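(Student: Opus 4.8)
The plan is to prove continuity of $\mathcal{F}_X$ by unwinding the definitions of the two Fréchet topologies in terms of seminorms and estimating the target seminorms of $\mathcal{F}_X(f)$ in terms of source seminorms of $f$. Recall from \eqref{seminorm} that the topology on $\mathcal{S}(X(F))$ is generated by the seminorms $|f|_{A,B,w,p_w,\Omega,D}$, which control the Mellin components $M_w(D.f)_{\eta_s}(g)$ for $g \in \Omega$, $s \in V_{A,B}$, summed over $\eta \in \widehat{K}_{\GG_m}$, after multiplication by a polynomial $p_w$ killing the poles of $a_w(s,\eta)$. So it suffices to fix data $(A,B,w,p_w,\Omega,D)$ and bound $|\mathcal{F}_X(f)|_{A,B,w,p_w,\Omega,D}$ by finitely many seminorms of $f$.

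The key input is Theorem \ref{thm:44}, which says $\mathcal{F}_X(f)_{\chi_s} = M_{w_0}^*(f_{\bar\chi_{-s}})$ for $\mathrm{Re}(s)\geq 0$, together with the explicit normalizing factor \eqref{Mw0psi} built out of Tate $\gamma$-factors, i.e.\ ratios of the $a_w$-type $L$-factors. First I would reduce to controlling the two Mellin components $w \in \{\mathrm{Id},w_0\}$ of $\mathcal{F}_X(f)$: by the cocycle/functional-equation relations among the $M_w$ and $M_{w_0}^*$, one expresses $M_{\mathrm{Id}}(\mathcal{F}_X(f))_{\eta_s}$ and $M_{w_0}(\mathcal{F}_X(f))_{\eta_s}$ in terms of $M_{w_0}(f)_{\bar\eta_{-s}}$ and $M_{\mathrm{Id}}(f)_{\bar\eta_{-s}}$ multiplied by quotients of $a_w$-factors. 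Here is where Lemma \ref{lem:33} is essential: it guarantees that on a vertical strip $V_{A,B}$ these quotients are bounded by a \emph{single} polynomial in $s$, \emph{uniformly in} $\eta$ (equivalently in $\alpha$, since $\eta = |\cdot|^{it}\mu^\alpha$ and the $a_w$ depend on $\eta$ only through $\mu^\alpha$). Thus, for any polynomial $p_w$ admissible at $(A,B)$ on the target side, one can choose admissible polynomials $p'_{w'}$ on the source side and a fixed polynomial $q(s)$ so that $p_w(s) M_w(D.\mathcal{F}_X(f))_{\eta_s}(g)$ is bounded in absolute value by $q(s)\, p'_{w'}(s') M_{w'}(D.f)_{\bar\eta_{-s'}}(g)$ on $V_{A,B}$, where $s' = -s$ lies in $V_{-B,-A}$; note $D$ passes through $\mathcal{F}_X$ up to an automorphism of $U(\mathfrak g)$ by Lemma \ref{lem:BKF} (or rather its infinitesimal form), so $D.\mathcal{F}_X(f) = \mathcal{F}_X(D'.f)$ for a corresponding $D'$.

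Summing over $\eta$ and taking suprema over $s \in V_{A,B}$ and $g \in \Omega$, the factor $q(s)$ is absorbed into a slightly larger polynomial, so one arrives at a bound
\begin{align*}
|\mathcal{F}_X(f)|_{A,B,w,p_w,\Omega,D} \;\ll\; |f|_{-B,-A,\mathrm{Id},\tilde p,\Omega,D'} + |f|_{-B,-A,w_0,\tilde p',\Omega,D'}
\end{align*}
for suitable admissible polynomials $\tilde p,\tilde p'$, which is precisely continuity. I would then handle the remaining subtlety that Theorem \ref{thm:44} only characterizes $\mathcal{F}_X(f)_{\chi_s}$ directly for $\mathrm{Re}(s)\geq 0$: the values on the strip $V_{A,B}$ with $A<0$ are obtained by meromorphic continuation, but the seminorm data only involves polynomials $p_w$ chosen to clear poles on exactly that strip, so the estimates above are valid after continuation; one invokes Mellin inversion \cite[Lemma 4.3]{Getz:Liu:BK} and the finiteness of the seminorms \eqref{seminorm} on $\mathcal{S}(X(F))$, which were shown to be finite in the construction of the Schwartz space, to make sense of all the expressions. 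The main obstacle is the uniformity-in-$\eta$ bookkeeping: one must verify that a single choice of admissible source polynomials and a single auxiliary polynomial $q$ works for all $\eta$ simultaneously, which is exactly the content Lemma \ref{lem:33} was set up to provide, so the proof is essentially an assembly of Theorem \ref{thm:44}, Lemma \ref{lem:33}, and the infinitesimal version of Lemma \ref{lem:BKF}.
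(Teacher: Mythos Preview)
Your approach is correct and essentially the same as the paper's: both reduce to Theorem \ref{thm:44} to rewrite the Mellin components of $\mathcal{F}_X(f)$ via $M_{w_0}^*$, then invoke Lemma \ref{lem:33} (in place of \cite[Lemma 3.3]{Getz:Liu:BK}) to control the resulting $a_w$-quotients by a polynomial in $s$ uniformly in $\alpha$, which is exactly what is needed to sum over $\eta$. The only organizational difference is that the paper splits $D=D_1\otimes D_2$, uses \cite[Lemma 5.9]{Getz:Liu:BK} to turn $D_1$ into a polynomial factor $p_{\alpha,w}(s)$ in both $s$ and $\alpha$, carries the resulting $\max(1,|\alpha|)^N$ growth through the estimate, and then reabsorbs it at the end by applying a further differential operator $D'$---whereas you commute $D$ through $\mathcal{F}_X$ in one step via the infinitesimal form of Lemma \ref{lem:BKF}, which amounts to the same bookkeeping.
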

\begin{proof}
To prove the proposition it suffices to show that the seminorms
\begin{align} \label{seminorm2}
f \longmapsto |\mathcal{F}_X(f)|_{A,B,w,p_w,\Omega,D}=\sum_{\eta \in \widehat{K}_{\GG_m}}\mathrm{sup}_{g \in \Omega}|M_w(D.\mathcal{F}_X(f))_{\eta_s}(g)|_{A,B,p_w}
\end{align}
are continuous on $\mathcal{S}(X(F)).$ 
Let $w \in \{\mathrm{Id},w_0\}$.  
Assume $
D=D_1 \otimes D_2$
where $D_1 \in U(\mathrm{Lie}(M^{\mathrm{ab}}(F))$ and  $D_2 \in U(\mathfrak{sp}_{2n}(F))$. By \cite[Lemma 5.9]{Getz:Liu:BK} and Theorem \ref{thm:44},
\begin{align} \label{this}
|M_w(D.\mathcal{F}_{X}(f))_{\mu^{\alpha}_s}(k)|_{A,B,p_w}=|M_w\mathcal{F}_{X}(D_2.f)_{\mu^{\alpha}_s}(g)|_{A,B,p_{\alpha,w}}
=|M_wM_{w_0}^*(D_2.f_{(\overline{\mu^{\alpha}})_{-s}})(g)|_{A,B,
p_{\alpha,w}}.
\end{align}
Here $p_{\alpha,w}(s)$ is a polynomial function of $s$ and $\alpha$ divisible by $p_{w}(s)$ that depends on $D_1.$
In \eqref{this} we used the fact that the Mellin transform commutes with $D_2$ since $D_2$ is induced by the action of $\mathrm{Sp}_{2n}(F)$ on $X^\circ(F)$ on the right, whereas the Mellin transform is given by integrating over the action of $M^{\mathrm{ab}}(F)$ on $X^{\circ}(F)$ on the left.

  Using the argument of \cite[Lemma 3.4]{Getz:Liu:BK}, but with Lemma \ref{lem:33} replacing \cite[Lemma 3.3]{Getz:Liu:BK}, the quantity \eqref{this} is bounded by a constant depending on $A,B$
  times 
  \begin{align*}
  \max(1,|\alpha|)^N|M_{w'}(D_2f_{(\bar{\mu^{\alpha}})_{-s}})(g)|_{A,B,p_{w'}}
  \end{align*}
  for some $N$
  and an appropriate $p_{w'}$ independent of $\alpha$, where 
  \begin{align*}
  w'&=\begin{cases} w_0 &\textrm{ if }w=\mathrm{Id},\\
  \mathrm{Id}&\textrm{ if }w=w_0.
  \end{cases}
  \end{align*}
  This in turn is dominated by 
  \begin{align*}
      |M_{w'}(D'D_2f_{(\bar{\mu^{\alpha}})_{-s}})(g)|_{A,B,p_{w'}}
  \end{align*}
  for an appropriate differential operator $D'$ (see \cite[Lemma 5.9]{Getz:Liu:BK}).  Thus the seminorm \eqref{seminorm2} is dominated by 
  \begin{align*}
      f \longmapsto \sum_{\eta \in \widehat{K}_{\GG_m}}\mathrm{sup}_{g \in \Omega}|M_{w'}(D'D_2f_{(\bar{\mu^{\alpha}})_{-s}})(g)|_{A,B,p_{w'}}.
  \end{align*}
  But this is a continuous seminorm on $\mathcal{S}(X(F))$ by definition.
\end{proof}

Let
\begin{align} \label{1:def}
m(x):=\left(\begin{smallmatrix} x^{-1} & & & \\ & I_{n-1} & & \\ & & x & \\ & & & I_{n-1} \end{smallmatrix}\right).
\end{align}
Assume now that $F$ is non-Archimedean.  Then $|m(x)|=|x|.$
By the Iwasawa decomposition, a $\CC$-vector space basis for $C_c^\infty(X^{\circ}(F))^{\mathrm{Sp}_{2n}(\OO)}$ is given by the functions
\begin{align} \label{1k}
\one_{k}:=\one_{[P,P](F)m(\varpi^{k}) \Sp_{2n}(\OO)}
\end{align}
for $k \in \ZZ$.
The space $\mathcal{S}(X(F))^{\Sp_{2n}(\OO)}$ contains $C_c^\infty(X^{\circ}(F))^{\Sp_{2n}(\OO)}$ but it is larger.  It contains, for example, 
the basic function 
\begin{align} \label{bX}
b_{X}:=\sum_{(j_1,\dots,j_{\lfloor n/2 \rfloor},k) \in \ZZ^{\lfloor n/2 \rfloor+1}_{\geq 0}} q^{2j_1+4j_2+\dots+2\lfloor n/2 \rfloor j_{\lfloor n/2 \rfloor}}\one_{k+2j_1+\dots+2j_{\lfloor n/2\rfloor}}.
\end{align}
One has $\mathcal{F}_X(b_X)=b_X$ \cite[Lemma 5.4]{Getz:Liu:BK} provided that $\psi$ is unramified.

It will be convenient to isolate another family of functions in this space.  
For $c \in \ZZ$, let
\begin{align*}
\one_{\geq c}:=\sum_{k \geq c} \one_k=\one_{\varpi^c X(\OO)}.
\end{align*}
\begin{lem} \label{lem:onec:in}
One has $\one_{ \geq c} \in \mathcal{S}(X(F))^{\Sp_{2n}(\OO)}$.
\end{lem}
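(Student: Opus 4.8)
The plan is to realize $\one_{\geq c}$ as an element of the Schwartz space by exhibiting it as a convergent combination of functions already known to lie in $\mathcal{S}(X(F))^{\Sp_{2n}(\OO)}$, and to verify convergence using the Mellin-side description of the topology. The two obvious building blocks are the compactly supported functions $\one_k$ (which lie in $C_c^\infty(X^\circ(F)) \subset \mathcal{S}(X(F))$) and the basic function $b_X$ of \eqref{bX}. First I would compute the Mellin transform $(\one_k)_{\chi_s}$ evaluated on the relevant Iwasawa representatives; by \eqref{Mellin} and \eqref{1k} this is essentially a monomial $\chi_s(\varpi)^{k}$ times a normalization factor $q^{-k(n+1)/2}$ coming from $\delta_P^{1/2}$, so that on the $K$-fixed line the section $(\one_k)_{\chi_s}$ is an explicit geometric-type term in $q^{-s}$. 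Summing over $\alpha \geq c$ then gives, formally, $(\one_{\geq c})_{\chi_s} = \sum_{\alpha \geq c}(\one_\alpha)_{\chi_s}$, a geometric series in $q^{-s-(n+1)/2}$ that converges for $\mathrm{Re}(s)$ large and continues meromorphically to a rational function of $q^{-s}$ with its only pole at $s = -\tfrac{n+1}{2}$ (or wherever the geometric ratio hits $1$).

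Next I would check that this candidate Mellin transform has exactly the analytic features demanded by the definition of $\mathcal{S}(X(F))$ recalled in \S\ref{ssec:loc:Schwartz}: for $\mathrm{Re}(s) \geq 0$ the function $(\one_{\geq c})_{\chi_s}$ is holomorphic (the pole sits at $s = -\tfrac{n+1}{2} < 0$), it lies in $I(\chi_s)$, and after multiplication by a fixed polynomial clearing the finitely many possible poles it is bounded on vertical strips $V_{A,B}$, uniformly in $\chi$ — here only the unramified $\chi$ contribute since $\one_{\geq c}$ is $\Sp_{2n}(\OO)$-fixed, so the uniformity in the character is automatic and Lemma~\ref{lem:33} is not even needed. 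One also checks the parallel condition for $\mathcal{F}_X(\one_{\geq c})$ via Theorem~\ref{thm:44}: applying $M_{w_0}^*$ to $(\one_{\geq c})_{\bar\chi_{-s}}$ multiplies the rational function of $q^{-s}$ by the product of Tate $\gamma$-factors in \eqref{Mw0psi}, which in the unramified case is again a rational function of $q^{-s}$, and the resulting section still satisfies the requisite holomorphy-and-polynomial-growth bounds for $\mathrm{Re}(s) \geq 0$. Since $\mathcal{S}(X(F))$ is precisely the space of $f \in C^\infty(X^\circ(F))$ whose Mellin transforms (and those of $\mathcal{F}_X f$) satisfy these conditions, this shows $\one_{\geq c} \in \mathcal{S}(X(F))$.

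An alternative, perhaps cleaner, route avoids verifying the Mellin conditions from scratch: write $\one_{\geq c}$ directly in terms of $b_X$ and the $\one_k$. Comparing \eqref{bX} with the definition of $\one_{\geq c}$, the coefficient of $\one_m$ in $b_X$ is $\sum q^{2j_1 + 4j_2 + \dots}$ over $(j_1,\dots,j_{\lfloor n/2\rfloor},k)$ with $k + 2j_1 + \dots + 2j_{\lfloor n/2\rfloor} = m$; in particular every $\one_m$ with $m \geq 0$ appears with a positive coefficient, and the coefficients are eventually given by a fixed polynomial-in-$q^m$ pattern. Thus a suitable $\ZZ$-linear (indeed $\CC$-linear) combination of translates $L(m(\varpi^j))b_X$ — which lie in $\mathcal{S}(X(F))$ by Lemma~\ref{lem:BKF} — telescopes to $\one_{\geq c}$ plus a compactly supported correction lying in $C_c^\infty(X^\circ(F)) \subset \mathcal{S}(X(F))$. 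The bookkeeping identifying the right finite combination is the one routine calculation here. Under either approach the main obstacle is the same: controlling the tail $\sum_{\alpha \geq c}\one_\alpha$, i.e. making precise that although this sum is infinite and $\one_{\geq c}$ is genuinely non-compactly-supported, its Mellin transform is as tame as that of an honest Schwartz function because the geometric decay in $\delta_P^{1/2}$ beats the constant weight $1$; I expect this to be where the bound $\tfrac{n+1}{2} > 0$ (equivalently, absolute convergence of $\sum_\alpha q^{-\alpha(n+1)/2}$ for $\mathrm{Re}(s) = 0$ after the twist) does the real work, exactly paralleling the role of $\beta < \tfrac12$ in Lemma~\ref{lem:seminorm}.
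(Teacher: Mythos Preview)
Your second, ``cleaner'' route is exactly what the paper does, and in fact no compactly supported correction term is needed. The paper first reduces to $c=0$ via $L(m(\varpi)^{c})\one_{\geq 0}=\one_{\geq c}$ and Lemma~\ref{lem:BKF}, then records the exact telescoping identity
\[
\Big(\prod_{j=1}^{\lfloor n/2 \rfloor}\bigl(1-q^{2j}L(m(\varpi)^{2})\bigr)\Big)b_X=\one_{\geq 0},
\]
which you can read off from~\eqref{bX} by viewing $b_X$ as a product of geometric series in the shift operator $S=L(m(\varpi))$ acting by $S\one_k=\one_{k+1}$: formally $b_X=(1-S)^{-1}\prod_{r=1}^{\lfloor n/2\rfloor}(1-q^{2r}S^2)^{-1}\one_0$. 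So the ``routine bookkeeping'' you anticipate is a one-line identity, and a second application of Lemma~\ref{lem:BKF} finishes the proof.

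Your first approach via direct Mellin-side verification is a genuinely different route. It should work in principle, but it is more laborious: you must pin down the precise nonarchimedean definition of $\mathcal{S}(X(F))$ from \cite[\S 5]{Getz:Hsu:Leslie} (the paper only refers to it) and check both the $w=\mathrm{Id}$ and $w=w_0$ conditions explicitly, including the behavior of $M_{w_0}$ on your rational section. The paper's argument sidesteps all of this by leveraging the already-established fact that $b_X\in\mathcal{S}(X(F))$ together with the stability of the Schwartz space under $L(m)$.
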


\begin{proof}
Recall the action $L$ defined in \eqref{LRaction}. One has $L(m(\varpi)^{c})\one_{\geq 0}=\one_{\geq c}$, so 
by Lemma \ref{lem:BKF} it suffices to show $\one_{\geq 0} \in \mathcal{S}(X(F))^{\mathrm{Sp}_{2n}(\OO)}$.  Since
$$
\Big(\prod_{j=1}^{\lfloor n/2 \rfloor}(1-q^{2j}L(m(\varpi)^2))\Big)b_X=\one_{\geq 0},
$$
we can apply Lemma \ref{lem:BKF} again to deduce the result.
\end{proof}

The usual Schwartz space of $F$ is dense in $L^2(F,dx)$ and the Fourier transform extends to an isometry of $L^2(F,dx)$.  We now prove analogues of these statements in the current setting.  We choose a positive right $\mathrm{Sp}_{2n}(F)$-invariant Radon measure on $X^{\circ}(F)$ (it is unique up to scaling).  
Since $X^{\circ}(F) \subset X(F)$ is open and dense we extend by zero to obtain a measure on $X(F)$ and we can speak of $L^2(X(F))$.  
\begin{prop} \label{prop:cont}
One has $\mathcal{S}(X(F)) < L^2(X(F))$.  The Fourier transform $\mathcal{F}_X$ extends to an isometry of $L^2(X(F))$. 
For $f,f_1,f_2 \in L^2(X(F))$ one has
\begin{align} \label{inverse}
\mathcal{F}_X \circ \mathcal{F}_X& = L(m(-1)^{n+1})\,,\\
 \label{complex:conj}
    \overline{\mathcal{F}_X(f)}&=\mathcal{F}_X(L(m(-1))\bar{f})\,,\\
\label{Planch}
\int_{X(F)} \overline{f_1(x)}\mathcal{F}_X(f_2)(x) dx&=\int_{X(F)}\overline{\mathcal{F}_X(f_1)(x)}(L(m(-1)^{n+1})f_2)(x)dx\,.
\end{align}
\end{prop}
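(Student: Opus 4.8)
### Proof strategy for Proposition \ref{prop:cont}

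\textbf{The plan} is to first establish the $L^2$-containment and the isometry property by a Mellin-theoretic argument, working one unitary character $\chi$ at a time, and then derive the three identities \eqref{inverse}, \eqref{complex:conj}, \eqref{Planch} as formal consequences of Theorem \ref{thm:44} together with the explicit formula \eqref{Mw0psi} for the normalized intertwining operator $M_{w_0}^*$. The key point is a Plancherel-type decomposition: for $f \in \mathcal{S}(X(F))$ the Mellin transform $f_{\chi_s}$ lives in the induced representation $I(\chi_s)$, and upon restriction to the unitary line $\mathrm{Re}(s)=0$ one expects an identity of the shape
\begin{align*}
\int_{X(F)} |f(x)|^2\, dx = c \sum_{\chi \in \widehat{K}_{\GG_m}} \int_{\mathrm{Re}(s)=0} \| f_{\chi_s} \|^2_{I(\chi_s)}\, |ds|,
\end{align*}
where $\|\cdot\|_{I(\chi_s)}$ is the standard (unitary) inner product on the compact picture of the induced representation and $c>0$ depends only on normalizations. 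This would follow by combining the Iwasawa decomposition of $X^{\circ}(F)$, Fubini, and one-dimensional Mellin inversion \cite[Lemma 4.3]{Getz:Liu:BK}, with the finiteness of the seminorms \eqref{seminorm} (or Lemma \ref{lem:seminorm} in the archimedean case, which gives the decay $|f(g)| \ll |g|^{-(n+1)/2 + \beta}$ needed for square-integrability near the boundary of $X(F)$). Given such a formula, $\mathcal{S}(X(F)) < L^2(X(F))$ is immediate, and the isometry of $\mathcal{F}_X$ reduces to the statement that $M_{w_0}^*: I(\chi_s) \to I(\bar{\chi}_{-s})$ is unitary on the line $\mathrm{Re}(s)=0$ for unitary $\chi$; this is the standard fact that the \emph{normalized} standard intertwining operator is unitary at the unitary axis, which is exactly what the $\gamma$-factor normalization in \eqref{Mw0psi} is designed to achieve (the product of $\gamma$-factors has absolute value $1$ on $\mathrm{Re}(s)=0$, and the unnormalized $M_{w_0}$ composed with $M_{w_0}$ for the dual gives the Plancherel measure back). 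After establishing the isometry on the dense subspace $\mathcal{S}(X(F))$, one extends $\mathcal{F}_X$ to all of $L^2(X(F))$ by continuity, using that $\mathcal{S}(X(F))$ is dense in $L^2(X(F))$ — which itself needs a brief argument, e.g.\ that $C_c^\infty(X^\circ(F))$ is already dense in $L^2$ since $X^\circ(F) \subset X(F)$ is open dense of full measure.

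\textbf{For the three identities}, I would argue entirely on the Mellin side. For \eqref{inverse}: applying Theorem \ref{thm:44} twice, $\mathcal{F}_X(\mathcal{F}_X(f))_{\chi_s} = M_{w_0}^*\big((\mathcal{F}_X f)_{\bar\chi_{-s}}\big)^{\sim} = M_{w_0,\bar\chi_{-s}}^* M_{w_0,\chi_s}^*(f_{\chi_s})$ (being careful with which character the second $M_{w_0}^*$ is attached to). The composite $M_{w_0}^* \circ M_{w_0}^*$ of normalized intertwining operators $I(\chi_s) \to I(\bar\chi_{-s}) \to I(\chi_s)$ is a scalar by Schur, and evaluating the product of $\gamma$-factors from \eqref{Mw0psi} against the corresponding product for the return trip, together with the known value $M_{w_0} \circ M_{w_0} = $ (action of $w_0^2$), one finds the composite equals the operator $L(m(-1)^{n+1})$ on $\mathcal{S}(X(F))$; the exponent $n+1$ traces to $\delta_P(m(-1)) = |m(-1)|^{-(n+1)}$ and the action of the long Weyl element on the torus. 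Then \eqref{inverse} follows from Mellin inversion. For \eqref{complex:conj}: one checks that Mellin transform and the operation $f \mapsto \overline{f}$ interact via $(\bar f)_{\chi_s} = \overline{f_{\bar\chi_{\bar s}}}$, and that $M_{w_0}^*$ commutes with complex conjugation in the appropriate sense (the $\gamma$-factors satisfy $\overline{\gamma(s,\chi,\psi)} = \gamma(\bar s, \bar\chi, \bar\psi)$, and one uses that the self-dual measure is real); combining these with Theorem \ref{thm:44} and comparing with the Mellin transform of $\mathcal{F}_X(L(m(-1))\bar f)$ via Lemma \ref{lem:BKF} yields the identity. Finally \eqref{Planch} follows by polarizing the isometry: the isometry gives $\langle \mathcal{F}_X f_1, \mathcal{F}_X f_2\rangle = \langle f_1, f_2\rangle$, and then substituting $f_2 \mapsto \mathcal{F}_X(f_2)$ and using \eqref{inverse} to rewrite $\mathcal{F}_X \circ \mathcal{F}_X(f_2) = L(m(-1)^{n+1}) f_2$ turns this into $\langle \mathcal{F}_X f_1, L(m(-1)^{n+1}) f_2\rangle = \langle f_1, \mathcal{F}_X f_2\rangle$, which is \eqref{Planch} after taking $f = f_1$. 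One small subtlety: \eqref{Planch} as stated has $\mathcal{F}_X(f_2)$ on the left and $L(m(-1)^{n+1})f_2$ on the right, so the correct substitution order should be checked against the convention, but this is bookkeeping.

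\textbf{The main obstacle} I anticipate is establishing the Plancherel formula relating the $L^2$-norm on $X(F)$ to the integral of Mellin-side norms with the \emph{correct constant and measure}, and in particular justifying the contour being on the unitary axis $\mathrm{Re}(s) = 0$ rather than in the half-plane of absolute convergence. The Mellin transform $f_{\chi_s}$ is only given by an absolutely convergent integral for $\mathrm{Re}(s)$ large (this is where the definition \eqref{Mellin} converges); pushing to $\mathrm{Re}(s) = 0$ requires meromorphic continuation and control of the growth of $f_{\chi_s}$ in vertical strips, uniformly in the character $\chi \in \widehat{K}_{\GG_m}$ — precisely the content built into the seminorms \eqref{seminorm} and Lemma \ref{lem:33}. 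One must also verify there are no poles of $f_{\chi_s}$ on the line $\mathrm{Re}(s) = 0$ itself for $f \in \mathcal{S}(X(F))$; by the structure of the $a_w(s,\eta)$ factors (poles only in $\tfrac12\ZZ$, cf.\ \cite[(3.4)]{Getz:Liu:BK}) and the fact that the relevant $L$-factor $a_{\mathrm{Id}}(s,\chi)$ is holomorphic for $\mathrm{Re}(s) > -\tfrac12$, this should be fine, but it needs to be said. A secondary technical point is the archimedean case, where one must check that the $K$-expansion (sum over $\eta \in \widehat{K}_{\GG_m}$) and the contour integral converge and can be interchanged; the rapid decay in $\alpha$ afforded by the polynomial bounds of Lemma \ref{lem:33}, together with the Sobolev-type estimates of Lemma \ref{lem:seminorm}, should control this.
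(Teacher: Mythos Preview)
Your treatment of the containment $\mathcal{S}(X(F)) < L^2(X(F))$ and of the three identities \eqref{inverse}, \eqref{complex:conj}, \eqref{Planch} matches the paper's proof closely: the paper also gets the containment from the Iwasawa decomposition and Lemma \ref{lem:seminorm}, derives \eqref{inverse} from Theorem \ref{thm:44} together with the computation $M_{w_0}^* \circ M_{w_0}^* = \chi(-1)^{n+1}$ on $I(\chi_s)$ (this is Lemma \ref{lem:correction}, which corrects a sign in \cite{Ikeda:poles:triple}; you should be aware the scalar is not entirely trivial to pin down), obtains \eqref{complex:conj} by conjugating the defining identity of Theorem \ref{thm:44} and using $\overline{\gamma(s,\chi,\psi)} = \gamma(\bar s,\bar\chi,\bar\psi)$, and deduces \eqref{Planch} by polarization from unitarity and \eqref{inverse}.

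Where your proposal genuinely diverges is the isometry. Rather than build a Mellin-side Plancherel identity and then check that the normalized intertwiner $M_{w_0}^*$ is unitary on the line $\mathrm{Re}(s)=0$, the paper simply observes that $\mathcal{F}_X = \iota_{w_0} \circ \mathcal{F}_{P|P^{\mathrm{op}}}$ (by \cite[(5.19)]{Getz:Hsu:Leslie}), where $\mathcal{F}_{P|P^{\mathrm{op}}}: L^2([P,P]\backslash G(F)) \to L^2([P^{\mathrm{op}},P^{\mathrm{op}}]\backslash G(F))$ is the isometry already constructed by Braverman and Kazhdan in \cite{BK:normalized} and $\iota_{w_0}$ is the obvious isometry given by left translation by $w_0^{-1}$; unitarity of $\mathcal{F}_X$ is then inherited for free. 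Your route is more self-contained and would in principle work, but the contour-shifting to $\mathrm{Re}(s)=0$, the absence of poles there, and the uniformity in $\chi$ that you flag as the main obstacle are exactly the analysis that the paper outsources to \cite{BK:normalized}.
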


Before giving the proof we recall two lemmas.  The first is an identity that was stated with a typo in \cite[(1.2.3)]{Ikeda:poles:triple}:

\begin{lem} \label{lem:n1} 
Assume that $\chi:F^\times \to \CC^\times$ is a  character and that $n=1$.
The operator
\begin{align*}
    M_{w_0}^* \circ M_{w_0}^*:I(\chi_s) \lto I(\chi_s)
\end{align*}
    is the identity.  \qed
\end{lem}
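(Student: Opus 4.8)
The plan is to reduce to the rank-one Gindikin--Karpelevich / Langlands--Shahidi computation of the composite intertwining operator for $\SL_2$. When $n=1$ we have $\Sp_2=\SL_2$, the Siegel parabolic is the Borel $B$, $[P,P]=N$ is its unipotent radical, $\omega(\mathrm{diag}(t,t^{-1}))=t$, and $I(\chi_s)=\mathrm{Ind}_B^{\SL_2}(\chi_s)$ (normalized induction) is the usual principal series. Since $\lfloor n/2\rfloor=0$ and $\tfrac{1-n}{2}=0$, formula \eqref{Mw0psi} collapses to $M_{w_0}^*=\gamma(s,\chi,\psi)\,M_{w_0}$, where $M_{w_0}=\iota_{w_0}\circ\mathcal{R}_{P|P^{\mathrm{op}}}:I(\chi_s)\to I(\bar\chi_{-s})$ is the standard intertwining operator, meromorphically continued in $s$. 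Applying the same identity with $(\chi,s)$ replaced by $(\bar\chi,-s)$ --- whose target is again $I(\chi_s)$ --- yields
\[
M_{w_0}^*\circ M_{w_0}^*=\gamma(s,\chi,\psi)\,\gamma(-s,\bar\chi,\psi)\cdot\bigl(M_{w_0}\circ M_{w_0}\bigr):I(\chi_s)\lto I(\chi_s),
\]
so the whole statement reduces to identifying the scalar $M_{w_0}\circ M_{w_0}$.

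I would first note that, for $s$ in the (Zariski-dense) locus where $I(\chi_s)$ is irreducible and the intertwining integrals converge, $M_{w_0}\circ M_{w_0}$ is a scalar operator by Schur's lemma. To compute this scalar: when $\chi$ and $\psi$ are unramified one evaluates on the spherical vector, $M_{w_0}f^{\circ}=\tfrac{L(s,\chi)}{L(s+1,\chi)}f^{\circ}$, which gives $M_{w_0}\circ M_{w_0}=\tfrac{L(s,\chi)L(-s,\bar\chi)}{L(s+1,\chi)L(1-s,\bar\chi)}\,\mathrm{Id}$, and since $\gamma(s,\chi,\psi)=\tfrac{L(1-s,\bar\chi)}{L(s,\chi)}$ in this case, this scalar is precisely $\bigl(\gamma(s,\chi,\psi)\gamma(-s,\bar\chi,\psi)\bigr)^{-1}$. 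For general $\chi$ (in particular ramified, and at archimedean places) one runs the standard unfolding of $M_{w_0}$ into Tate's local zeta integral --- the $\delta_P^{1/2}$-twist of the normalized induction producing the shift by $1$ in the $L$-arguments --- and applies Tate's local functional equation; equivalently, one invokes that $\gamma(s,\chi,\psi)\gamma(-s,\bar\chi,\psi)$ is the Harish-Chandra $\mu$-factor of the $\SL_2$ principal series and $M_{w_0}\circ M_{w_0}=\mu(\chi_s)^{-1}$, i.e.\ that $M_{w_0}^*$ is, up to matching conventions, the Langlands--Shahidi normalized operator, whose self-composite is the identity by design. Substituting into the displayed identity gives $M_{w_0}^*\circ M_{w_0}^*=\mathrm{Id}$ for generic $s$; since both sides are meromorphic in $s$, the identity holds for all $s$. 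Finally a non-unitary $\chi$ may be written $\chi_0|\cdot|^{s_0}$ with $\chi_0$ unitary and $s_0$ absorbed into $s$, so there is no loss in assuming $\chi$ unitary.

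The main obstacle is normalization bookkeeping rather than anything conceptual: one must verify that the operator $M_{w_0}=\iota_{w_0}\circ\mathcal{R}_{P|P^{\mathrm{op}}}$ and the $\gamma$-factor normalization of \cite{Getz:Liu:BK} match the conventions in the standard Gindikin--Karpelevich and $\mu$-factor formulas, and in particular that the self-dual Haar measure attached to $\psi$ is in force both in the intertwining integrals and in the $\gamma$-factors. This is exactly what makes the normalizing $\gamma$-factors invert the Plancherel density on the nose, with no leftover constant and no stray $\chi(-1)$ --- note that the product occurring here is $\gamma(s,\chi,\psi)\gamma(-s,\bar\chi,\psi)$, not $\gamma(s,\chi,\psi)\gamma(1-s,\bar\chi,\psi)$, so the $\chi(-1)$ produced by iterating Tate's functional equation never enters. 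A secondary point is the ramified and archimedean cases, where there is no spherical vector to evaluate on and one must instead lean on Schur's lemma together with the known integral (or Langlands--Shahidi) formula for the $\mu$-factor before passing from generic $s$ to all $s$ by meromorphic continuation.
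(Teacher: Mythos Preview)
Your proposal is correct and is precisely the ``standard argument'' the paper alludes to; the paper does not actually supply a proof, stating only that the lemma ``is well-known and may be obtained via a standard argument.'' Your reduction to the rank-one Langlands--Shahidi/Gindikin--Karpelevich identity for the normalized $\SL_2$ intertwining operator, together with your observation that the relevant product is $\gamma(s,\chi,\psi)\gamma(-s,\bar\chi,\psi)$ (so no stray $\chi(-1)$ appears, consistent with $\chi(-1)^{n+1}=1$ for $n=1$ in Lemma~\ref{lem:correction}), is exactly what is needed.
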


\noindent This is well-known and may be obtained via a standard argument.

\begin{lem} \label{lem:correction}
 For any $n$ and any character $\chi:F^\times \to \CC^\times$, the operator 
 $$
 M_{w_0}^*\circ M_{w_0}^*:I(\chi_s) \lto I(\chi_s)
 $$
 is multiplication by $\chi(-1)^{n+1}$.
\end{lem}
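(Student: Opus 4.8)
**Proof plan for Lemma 1.18 (the computation of $M_{w_0}^* \circ M_{w_0}^*$ on $I(\chi_s)$).**

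The plan is to reduce the general-rank statement to the rank-one case handled in Lemma~\ref{lem:n1} by exploiting the product structure of the normalizing factors in \eqref{Mw0psi}. Recall that $M_{w_0}^*$ differs from the unnormalized intertwining operator $M_{w_0}$ by the scalar factor $\gamma(s+\tfrac{1-n}{2},\chi,\psi)\prod_{r=1}^{\lfloor n/2\rfloor}\gamma(2s-n+2r,\chi^2,\psi)$. The composite $M_{w_0} \circ M_{w_0}: I(\chi_s) \to I(\chi_s)$ is, by the standard theory of intertwining operators for $\Sp_{2n}$ and the fact that $w_0^2$ acts trivially on the Siegel Levi up to the longest Weyl element of $\GL_n$, a scalar; computing this scalar is the classical Gindikin--Karpelevich-type identity. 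So $M_{w_0}^* \circ M_{w_0}^*$ is a scalar, and the content is to identify it as $\chi(-1)^{n+1}$.

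The key steps, in order, are as follows. First, I would record that $M_{w_0} \circ M_{w_0}$ equals the product over the roots $\alpha$ made positive by $w_0$ of the local factors $\frac{1}{c_\alpha(s)}\cdot\frac{1}{c_\alpha(-s)}$ arising from the rank-one $\SL_2$ (or $\mathrm{SL}_2/\Sp_2$) computations — concretely, $M_{w_0}\circ M_{w_0} = \prod_\alpha \big(\text{rank-one }M_{w_0}\circ M_{w_0}\text{ scalar for }\alpha\big)$, using the decomposition of $M_{w_0}$ into a product of rank-one intertwining operators along a reduced decomposition of $w_0$. Second, for each rank-one factor the corresponding scalar is exactly the one appearing in Lemma~\ref{lem:n1}'s unnormalized form; multiplying through by the squared $\gamma$-factors in \eqref{Mw0psi} and invoking the rank-one normalized identity $M_{w_0}^*\circ M_{w_0}^* = \chi(-1)^{2} = 1$ for the long roots and $=\chi(-1)$ for... — more precisely, I would organize the roots of $\Sp_{2n}$ relative to $P$ into the $\lfloor n/2\rfloor$ short-root-type contributions (corresponding to the $\gamma(2s-n+2r,\chi^2,\psi)$ factors, each of which contributes $\chi^2(-1)=1$) and the one long-root-type contribution (corresponding to $\gamma(s+\tfrac{1-n}{2},\chi,\psi)$, contributing $\chi(-1)$), together with the remaining factors which pair up and contribute trivially. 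Third, assembling: each paired short-root factor gives $1$, each of the $\lfloor n/2\rfloor$ "doubled" factors gives $\chi^2(-1)=1$, and the single remaining factor gives $\chi(-1)^{?}$; I would track the parity carefully so that the total is $\chi(-1)^{n+1}$. A sanity check: for $n=1$ the formula gives $\chi(-1)^2 = 1$, matching Lemma~\ref{lem:n1}; and the exponent $n+1$ is consistent with the appearance of $L(m(-1)^{n+1})$ in \eqref{inverse} of Proposition~\ref{prop:cont}, which is exactly the global-to-local shadow of this lemma.

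The main obstacle I expect is bookkeeping the exact power of $\chi(-1)$: one must carefully match the normalized $\gamma$-factor product in \eqref{Mw0psi} against the Gindikin--Karpelevich factorization of $M_{w_0}$ into rank-one pieces and verify that all the "extra" rank-one factors not cancelled by the normalization contribute squares (hence trivially), leaving precisely $\chi(-1)^{n+1}$. An alternative, cleaner route that sidesteps the root combinatorics: use the functional equation $M_{w_0}^*(\chi_s) \circ M_{w_0}^*(\bar\chi_{-s}) = \mathrm{id}$ which follows from Theorem~\ref{thm:44} (since $\mathcal F_X$ is an automorphism and $\mathcal F_X \circ \mathcal F_X = L(m(-1)^{n+1})$ by \eqref{inverse}), but since \eqref{inverse} is itself being proved in Proposition~\ref{prop:cont} possibly using this lemma, I would instead derive the identity directly: apply $M_{w_0}^*$ for $\chi$ and then again, note $\overline{(\chi_s)} = \bar\chi_{-s}$ only for unitary $\chi$, and handle the general quasi-character by meromorphic continuation in $s$ and $\chi$ from the unitary locus. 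The scalar is then determined by evaluating at a single convenient point (e.g. where the representation is irreducible and one can read off the action on a spherical or lowest-weight vector), and its value $\chi(-1)^{n+1}$ drops out of the explicit rank-one formula of Lemma~\ref{lem:n1} applied coordinate-by-coordinate after the Plücker-type factorization of $X^\circ$.
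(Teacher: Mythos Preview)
Your approach---reducing to the rank-one case via a factorization of the intertwining operator---is the same strategy the paper invokes: the paper's proof simply cites Ikeda's argument for \cite[Lemma~1.1]{Ikeda:poles:triple}, observing that once the $n=1$ input is corrected to Lemma~\ref{lem:n1}, that same argument yields the scalar $\chi(-1)^{n+1}$.

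There is, however, a genuine gap in your bookkeeping. By your own accounting---one normalizing $\gamma$-factor in $\chi$ contributing $\chi(-1)$, and $\lfloor n/2\rfloor$ factors in $\chi^2$ each contributing $\chi^2(-1)=1$---you arrive at $\chi(-1)$, not $\chi(-1)^{n+1}$. The $n$-dependence of the exponent does not come from the normalizing $\gamma$-factors alone; it arises from their interaction with the unnormalized scalar $M_{w_0}\circ M_{w_0}$, which is itself a product over \emph{all} roots in the nilradical of $P$ and carries its own sign contributions depending on $n$. To complete your plan you must either carry out the full Gindikin--Karpelevich computation including the unnormalized piece, or reproduce Ikeda's reduction (which the paper cites but does not spell out). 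You are right that the alternative route through \eqref{inverse} is circular, since the proof of Proposition~\ref{prop:cont} invokes the present lemma.
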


\begin{proof}
This was stated incorrectly in  \cite[Lemma 1.1]{Ikeda:poles:triple}.  The source of the error is the typo in \cite[(1.2.3)]{Ikeda:poles:triple}.  Upon correcting the typo using Lemma \ref{lem:n1}, the same argument proves the current lemma.
\end{proof}

\begin{rem}
The typos in \cite{Ikeda:poles:triple} corrected in Lemma \ref{lem:n1} and \ref{lem:correction} do not affect the main results of \cite{Ikeda:poles:triple} or their proofs.  Moreover they do not affect \cite{Getz:Liu:BK}, which makes use of results in  \cite{Ikeda:poles:triple}, except for the statement of \cite[Lemma 4.6]{Getz:Liu:BK}.  The correct statement is in Proposition \ref{prop:cont} above.  
\end{rem}

\begin{proof}[Proof of Proposition \ref{prop:cont}]
The inclusion $\mathcal{S}(X(F)) <L^2(X(F))$ is an easy consequence of the Iwasawa decomposition and Lemma \ref{lem:seminorm}.  

For $f \in \mathcal{S}(X(F))$, assertion \eqref{inverse} is a consequence of  Theorem \ref{thm:44} and Lemma \ref{lem:correction}.
Taking the complex conjugate of the identity of Theorem \ref{thm:44}, for $\sigma \geq 0$ one has
\begin{align*}
\overline{\mathcal{F}_X(f)}_{\chi_{\sigma+it}}&=\overline{\mathcal{F}_X(f)_{\bar{\chi}_{\sigma-it}}}=\overline{M_{w_0}^*(f_{\chi_{-\sigma+it}})}\\
&=\overline{\gamma(-\sigma+it-\tfrac{n-1}{2},\chi,\psi)}\prod_{r=1}^{\lfloor n/2\rfloor}\overline{\gamma(2(-\sigma+it)-n+2r,\chi^2,\psi)}\overline{M_{w_0}(f_{\chi_{-\sigma+it}})}\\
&=\gamma(-\sigma-it-\tfrac{n-1}{2},\bar{\chi},\overline{\psi})\prod_{r=1}^{\lfloor n/2\rfloor}\gamma(2(-\sigma-it)-n+2r,\overline{\chi}^2,\overline{\psi})M_{w_0}(\overline{f}_{\bar{\chi}_{-\sigma-it}})\\
&=\chi(-1)M_{w_0}^*(\overline{f}_{\bar{\chi}_{-\sigma-it}})\,.
\end{align*}
Thus by Theorem \ref{thm:44}, we deduce assertion \eqref{complex:conj}.

Let $P^{\mathrm{op}}<\mathrm{Sp}_{2n}$ be the parabolic subgroup opposite to $P$ with respect to $M.$  In \cite{BK:normalized} Braverman and Kazhdan defined an isometry
$$
\mathcal{F}_{P|P^{\mathrm{op}}}:L^2(X(F)) \lto L^2([P^{\mathrm{op}},P^{\mathrm{op}}] \backslash \mathrm{Sp}_{2n}(F)).
$$ 
One has $\mathcal{F}_X =\iota_{w_0} \circ \mathcal{F}_{P|P^{\mathrm{op}}}$ by \cite[(5.24)]{Getz:Hsu:Leslie},
where $\iota_{w_0}$ is the isometry
\begin{align*}
\iota_{w_0}:L^2([P^{\mathrm{op}},P^{\mathrm{op}}]  \backslash \mathrm{Sp}_{2n}(F)) &\tilde{\lto} L^2(X(F))\\
f &\longmapsto \left(x \mapsto f(w_0^{-1}x)\right).
\end{align*}
It follows that $\mathcal{F}_X$ is an isometry.  The Plancherel formula \eqref{Planch} follows  from   \eqref{inverse}, the unitarity of $\mathcal{F}_X,$ and a standard argument using a polarization identity.
\end{proof}

\subsection{The summation formula}

We now revert to the global setting. Let $F$ be a number field. Recall that $\mathcal{S}(X(\mathbb{A}_F))$ is defined in \S \ref{sec:gen:Schwartz} as the restricted tensor product of $\mathcal{S}(X(F_v))$ with respect to the basic functions $b_{X,v}$.  Let $\psi:F\backslash \A_F \to \CC^\times$ be a nontrivial character. We have $\mathcal{F}_{X,\psi_v}(b_{X,v})=b_{X,v}$ if $F_v$ is non-Archimedean and $\psi_v$ is unramified. Thus we have a global Fourier transform 
$$
\mathcal{F}_X:=\mathcal{F}_{X,\psi}:=\otimes_{v}\mathcal{F}_{X,\psi_v}:\mathcal{S}(X(\A_F)) \lto \mathcal{S}(X(\A_F)).
$$

By Proposition \ref{prop:schwartz:inside} below $\mathcal{S}(X^{\circ}(F_v)) < \mathcal{S}(X(F_v))$ for all places $v.$ 
\begin{thm} \label{thm:BK:PS} Assume that for some finite places $v_1,v_2$ (not necessarily distinct) one has
$f=f_{v_1}f^{v_1}$ and $\mathcal{F}_X(f)=\mathcal{F}_X(f_{v_2})\mathcal{F}_X(f^{v_2})$ with $f_{v_1} \in C_c^\infty(X^{\circ}(F_{v_1}))$ and $\mathcal{F}_X(f_{v_2}) \in C_c^\infty(X^{\circ}(F_{v_2})).$
Then 
\begin{align*}
\sum_{\gamma \in X^\circ(F)}f(\gamma)=\sum_{\gamma \in X^\circ (F)} \mathcal{F}_X(f)(\gamma).
\end{align*}
\end{thm}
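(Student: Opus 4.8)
The plan is to prove the summation formula for the Braverman--Kazhdan space $X$ by reducing the double sum over $X(F)$ to a spectral expression via Mellin inversion and Poisson summation on $M^{\mathrm{ab}}$, using the already-established functional equation $\mathcal{F}_X(f)_{\chi_s}=M_{w_0}^*(f_{\bar\chi_{-s}})$ from Theorem \ref{thm:44}. First I would use the decomposition of $X(F)$ coming from the orbit structure of $\mathrm{Sp}_{2n}$ acting on $X$. The open orbit is $X^\circ$, whose $F$-points are $[P,P](F)\backslash\mathrm{Sp}_{2n}(F)$; by the Iwasawa decomposition and the action \eqref{act} of $M^{\mathrm{ab}}$, the sum $\sum_{\gamma\in X^\circ(F)}f(\gamma)$ is naturally a sum over $[P,P](F)\backslash \mathrm{Sp}_{2n}(F)$ which, after collapsing the $M^{\mathrm{ab}}(F)$-action (the Pl\"ucker coordinates are homogeneous), becomes a sum over $P(F)\backslash\mathrm{Sp}_{2n}(F)$ twisted by a character, i.e.\ an Eisenstein-series-type sum. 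The hypothesis that $f_{v_1}\in C_c^\infty(X^\circ(F_{v_1}))$ (resp.\ $\mathcal{F}_X(f_{v_2})\in C_c^\infty(X^\circ(F_{v_2}))$) guarantees the contributions of the non-open orbits to $\sum f(\gamma)$ (resp.\ $\sum\mathcal{F}_X(f)(\gamma)$) vanish, since those boundary points have a coordinate forced into a set avoided by the support; this is the role played by the two places $v_1,v_2$.

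Next I would, for $f$ restricted to $X^\circ(F)$, apply Mellin inversion in the $M^{\mathrm{ab}}$-variable: by \cite[Lemma 4.3]{Getz:Liu:BK} one recovers $f$ on $X^\circ(F_v)$ (and adelically) as a contour integral $\sum_{\eta}\int_{\mathrm{Re}(s)=\sigma} f_{\eta_s}(g)\,ds$ of its sections $f_{\eta_s}\in I(\eta_s)$, with $\sigma$ large enough for absolute convergence and for the Eisenstein series $E(g,f_{\eta_s})=\sum_{\gamma\in P(F)\backslash\mathrm{Sp}_{2n}(F)}f_{\eta_s}(\gamma g)$ to converge. Summing over $\gamma\in P(F)\backslash\mathrm{Sp}_{2n}(F)$ and integrating, $\sum_{\gamma\in X^\circ(F)}f(\gamma)$ becomes $\sum_\eta\int_{\mathrm{Re}(s)=\sigma}E(e,f_{\eta_s})\,ds$ up to volume constants. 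I would then shift the contour to $\mathrm{Re}(s)=0$ (or to the reflection point), picking up residues of the Eisenstein series at its poles in the strip; on the critical line the standard functional equation $E(g,f_{\eta_s})=E(g,M_{w_0}(f_{\eta_s}))$ for the degenerate Eisenstein series on $\mathrm{Sp}_{2n}$ attached to the Siegel parabolic converts the integrand into one for $M_{w_0}(f_{\eta_s})$, and after inserting the normalizing $\gamma$-factors (whose product is the scalar in \eqref{Mw0psi}) one obtains exactly $\mathcal{F}_X(f)_{\bar\eta_{-s}}$ by Theorem \ref{thm:44}. Changing variables $s\mapsto -s$, $\eta\mapsto\bar\eta$ and running Mellin inversion backwards recovers $\sum_{\gamma\in X^\circ(F)}\mathcal{F}_X(f)(\gamma)$.

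The main obstacle, and the step requiring the most care, is controlling the poles of the degenerate Eisenstein series $E(g,f_{\eta_s})$ when shifting the contour, and showing that the residual contributions coincide on the two sides. For the Siegel-parabolic degenerate Eisenstein series on $\mathrm{Sp}_{2n}$ the poles (in $s$) occur at finitely many points in $\tfrac12\ZZ$ dictated by the $L$-factors $a_w(s,\eta)$ recalled from \cite{Getz:Liu:BK}, and the residues are themselves automorphic forms whose values at the identity are controlled by the sections $f_{\eta_s}$ and their images under intertwining operators. The symmetry $s\mapsto -s$ under the functional equation matches the residue at $s=s_0$ on the left with the residue at $s=-s_0$ on the right; combined with the fact that the normalized intertwining operator $M_{w_0}^*$ realizing $\mathcal{F}_X$ is (by Lemma \ref{lem:correction}) an involution up to the harmless scalar $\chi(-1)^{n+1}=L(m(-1)^{n+1})$, the residual terms cancel in pairs or are manifestly equal. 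Thus the only surviving terms are the two $\sum_{\gamma\in X(F)}$ sums, giving the identity. The interchange of the sum over $\gamma$, the sum over $\eta$, and the contour integral is justified by the rapid decay of $f$ in the Pl\"ucker norm (Lemma \ref{lem:seminorm}) together with the compact-support hypotheses at $v_1,v_2$, which make everything absolutely convergent.
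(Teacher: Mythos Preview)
Your outline is the strategy underlying the earlier papers \cite{Getz:Liu:BK} and \cite{Getz:Liu:Triple}, not the proof given here. The paper's proof is much shorter and quite different: for $K_\infty$-finite $f_\infty$ the identity is already established in \cite[Theorem 1.1]{Getz:Liu:BK} and \cite[Theorem 10.1]{Getz:Liu:Triple}, and the only new content is the extension to arbitrary $f_\infty \in \mathcal{S}(X(F_\infty))$ by a density/continuity argument. Concretely, one approximates $f_\infty$ by $K_\infty$-finite functions, uses the continuity of $\mathcal{F}_X$ (Proposition \ref{prop:topology}) to control the approximation on both sides, and invokes the decay estimates of Lemma \ref{lem:seminorm} together with the convergence argument of \cite[Lemma 6.4]{Getz:Liu:BK} to pass to the limit in the two sums. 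The Fr\'echet topology on $\mathcal{S}(X(F_\infty))$ was introduced in \S\ref{ssec:loc:Schwartz} precisely to make this work.

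Your direct approach has a real gap at the residue step. You write that ``the residual terms cancel in pairs or are manifestly equal'' by the symmetry $s\mapsto -s$; this is not how the argument goes. The residues of the degenerate Eisenstein series produce genuine boundary terms in the summation formula for $X$ (these are the extra terms in \cite[Theorem 1.1]{Getz:Liu:BK}), and they do \emph{not} cancel by functional-equation symmetry alone. It is \cite[Theorem 10.1]{Getz:Liu:Triple} that shows these boundary terms vanish under the compact-support hypotheses at $v_1,v_2$. Relatedly, your description of the role of those hypotheses is off: they are not used to kill contributions of non-open $\mathrm{Sp}_{2n}$-orbits in $X(F)$ to the sums (the Schwartz functions already live on $X^\circ$), but rather to force the residual Eisenstein contributions to vanish. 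So while the Mellin-inversion/Eisenstein-series skeleton you describe is correct in spirit, the step you flag as ``the main obstacle'' is genuinely hard and is exactly what the cited references handle; reproducing it would amount to re-proving those theorems rather than the short continuity argument the paper actually gives.
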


\begin{proof} 
We may assume $f=f_\infty f^\infty$ with $f_\infty \in \mathcal{S}(X(F_\infty))$ and $f^\infty \in \mathcal{S}(X(\A_F^\infty))$.  
Let $K_\infty < \Sp_{2n}(F_\infty)$ be a maximal compact subgroup and let $\mathcal{S}(X(F_\infty),K_\infty) <\mathcal{S}(X(F_\infty))$ be the space of $K_\infty$-finite functions. 
Assume first that $f_\infty \in \mathcal{S}(X(F_\infty),K_\infty)$.  Then the stated identity follows from \cite[Theorem 1.1]{Getz:Liu:BK} and \cite[Theorem 10.1]{Getz:Liu:Triple}.

We now argue by continuity to deduce the identity in general.  Consider the linear form
\begin{align} \label{lin:form} \begin{split}
\mathcal{S}(X(F_\infty)) &\lto \CC\\
 f_\infty&\longmapsto \sum_{\gamma\in X^\circ (F)} f_\infty(\gamma)f^\infty(\gamma)- \sum_{\gamma\in X^\circ(F)} \mathcal{F}_X(f_\infty)(\gamma)\mathcal{F}_X(f^\infty)(\gamma). \end{split}
\end{align}
The Fourier transform is continuous by Proposition \ref{prop:topology}.  Thus following the proof of \cite[Lemma 6.4]{Getz:Liu:BK}, by replacing Lemma 5.7 in loc.~ cit. with Lemma \ref{lem:seminorm}, we see that the sums defining \eqref{lin:form} are absolutely convergent and \eqref{lin:form} is continuous.  It vanishes on the dense subspace $\mathcal{S}(X(F_\infty),K_\infty) <\mathcal{S}(X(F_\infty))$ and hence is identically zero.
\end{proof}

We remark that Theorem \ref{thm:BK:PS} was already proved in \cite{BK:normalized}, but with a different definition of the Schwartz space.  At the non-Archimedean places the two definitions yield the same space of functions \cite{CH:nonarch}.  At the Archimedean places this is less clear.  In any case, it is easier to just prove the theorem directly than to rigorously check the compatibility of the two definitions.

\subsection{Containment of Schwartz spaces} \label{ssec:containment}

In this subsection we prove the following proposition:
\begin{prop} \label{prop:schwartz:inside}
One has $\mathcal{S}(X^{\circ}(F))<\mathcal{S}(X(F))$.  In the Archimedean case the inclusion is continuous.  
\end{prop}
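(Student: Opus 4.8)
The goal is to show that the tautological inclusion $\mathcal{S}(X^{\circ}(F)) \hookrightarrow \mathcal{S}(X(F))$ is well-defined (i.e.\ every Schwartz function on the open orbit $X^\circ(F)$ extends to a Schwartz function on the affine closure $X(F)$ in the sense of \cite{Getz:Hsu:Leslie}), and that in the archimedean case it is continuous. The natural strategy is to reduce everything to the Mellin transform along the central $M^{\mathrm{ab}}(F)$-direction, since both Schwartz spaces are characterized by conditions on that transform. Concretely, for $f \in \mathcal{S}(X^\circ(F))$ (ordinary Schwartz space of the smooth affine-type scheme $X^\circ$) I would compute the Mellin transform $f_{\chi_s}$ as in \eqref{Mellin}, show it converges in a suitable right half-plane, admits meromorphic continuation to all of $\CC$, lands in $I(\chi_s)$, and satisfies the growth/pole bounds built into the seminorms \eqref{seminorm}; the analogous facts about $M_{w_0}(D.f)_{\eta_s}$ then give membership in $\mathcal{S}(X(F))$, and the explicit domination of the $\mathcal{S}(X(F))$-seminorms by finitely many $\mathcal{S}(X^\circ(F))$-seminorms gives continuity.

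\textbf{Key steps, in order.} First, in the nonarchimedean case: use the Iwasawa/Cartan decomposition to reduce to estimating $f$ on the one-parameter family $m(\varpi^k)$ (and its conjugates by $K$), note that $f \in C_c^\infty(X^\circ(F))$ means $f$ is supported on finitely many such $k$, and verify directly that the Mellin transform is then a finite Laurent polynomial in $q^{-s}$ times a standard section — hence lies in $I(\chi_s)$ with no poles — and that the decay bound of Lemma \ref{lem:seminorm} holds trivially. This handles $C_c^\infty(X^\circ(F)) \subset \mathcal{S}(X(F))$; then extend to all of $\mathcal{S}(X^\circ(F))$ by the same bookkeeping (the ordinary Schwartz space of $X^\circ$ over a nonarchimedean field is again $C_c^\infty(X^\circ(F))$, so in fact there is nothing more to do here). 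Second, in the archimedean case: the ordinary Schwartz space $\mathcal{S}(X^\circ(F))$ in the sense of \cite{AG:Nash}, \cite{Elazar:Shaviv} consists of functions of rapid decay with all derivatives, where ``decay'' is measured relative to the embedding; in particular relative to $|g| = |\mathrm{Pl}(g)|$ from \eqref{plucknorm}. For such $f$, apply Mellin inversion in the $M^{\mathrm{ab}}$-variable to write $f$ as a contour integral of $f_{\eta_s}$ and check: (a) the integral \eqref{Mellin} converges in $\mathrm{Re}(s) \gg 0$ because $f$ decays rapidly in the $\omega(m)\to\infty$ direction; (b) rapid decay in the $\omega(m)\to 0$ direction (i.e.\ toward the boundary of $X$) combined with smoothness allows integration by parts to produce holomorphic continuation of $f_{\eta_s}$ to all of $V_{A,B}$ with polynomial-in-$s$, summable-in-$\eta$ bounds; (c) the same for $M_{w_0}(D.f)_{\eta_s}$ using that $M_{w_0}$ multiplied by the appropriate $a_{w_0}$-ratio is polynomially bounded (here invoke Lemma \ref{lem:33} for uniformity in $\eta = \mu^\alpha$, exactly as in the proof of Proposition \ref{prop:topology}). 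Each of these estimates should be expressible as a bound by a fixed finite collection of the standard Schwartz seminorms of $f$ on $X^\circ(F)$, composed with finitely many of the invariant differential operators coming from the $M^{\mathrm{ab}}\times\mathrm{Sp}_{2n}$-action, which yields both the inclusion and its continuity.

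\textbf{Main obstacle.} The delicate point is step (b)--(c) in the archimedean case: one must convert ``$f$ is Schwartz on $X^\circ(F)$'' — decay measured against the Plücker coordinates, which near the boundary behave like $|\omega(m)|^{-1}\to\infty$ on one end and must be controlled on the other — into the \emph{meromorphic continuation with controlled poles and polynomial growth in vertical strips} of the Mellin transform. The subtlety is that $X^\circ(F)$ is only quasi-affine, so ``rapid decay'' of $f$ does not automatically mean $f$ vanishes to infinite order at the boundary divisor of $X$; one has to use the precise description of the $\mathrm{Sp}_{2n}$-orbit structure on $X$ (the boundary is a single lower-dimensional orbit, cf.\ \cite{WWLi:Zeta}) to see that controlling $f$ against $|g|$ is enough to push the Mellin contour all the way to the left, and then to bound the shifted integral by Schwartz seminorms. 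Repeating this after applying $M_{w_0}$ requires knowing $M_{w_0}$ does not destroy these bounds, which is where the $a_w$-factor normalization and Lemma \ref{lem:33} enter; carrying the $\eta$-uniformity through the sum over $\widehat{K}_{\GG_m}$ is the same technical maneuver as in Proposition \ref{prop:topology} and should be quoted rather than redone.
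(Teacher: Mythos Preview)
Your outline is sound through the nonarchimedean case and through step~(b) in the archimedean case: the Mellin integral $f_{\eta_s}$ of $f\in\mathcal{S}(X^\circ(F))$ in fact converges absolutely for \emph{all} $s$ (so no continuation is needed), and the $w=\mathrm{Id}$ seminorm bounds follow immediately. The gap is in step~(c). Your proposed mechanism---``$M_{w_0}$ times an $a_{w_0}$-ratio is polynomially bounded,'' plus Lemma~\ref{lem:33} ``exactly as in Proposition~\ref{prop:topology}''---does not apply here. In Proposition~\ref{prop:topology} one bounds $M_w(\mathcal{F}_X(f))_{\eta_s}=M_wM_{w_0}^*(f_{\bar\eta_{-s}})$, and the functional equation $M_{w_0}^*\circ M_{w_0}^*=\pm 1$ collapses this to $M_{w'}(f)_{\bar\eta_{-s}}$ times an explicit ratio of normalizing factors, which Lemma~\ref{lem:33} controls. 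For the present proposition there is no $\mathcal{F}_X$ to unwind: you must bound $M_{w_0}(f_{\eta_s})$ directly, and neither Lemma~\ref{lem:33} (a statement about ratios of $a_w$-factors) nor the argument of Proposition~\ref{prop:topology} gives any such bound. The intertwining integral $\int_{N(F)}f_{\eta_s}(w_0^{-1}ng)\,dn$ diverges for small $\mathrm{Re}(s)$, and its analytic continuation must be produced and estimated by hand.

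The paper's argument for the $w=w_0$ case is quite different from your sketch and supplies exactly what is missing. After the reduction to a single $\alpha$ (via the operators $D_1,\bar D_1$, which is close to what you had in mind for the $\eta$-sum), one uses a tempered partition of unity subordinate to the cell cover $X^\circ=\bigcup_k Cw_k$ of Lemma~\ref{lem:cover} to reduce to $f$ supported in a single cell. In coordinates $(A,Z)\in\GL_n\times\mathrm{Sym}^2(\GG_a^n)$ on that cell, $M_{w_0}f_{\mu_s^\alpha}(w_0)$ becomes an integral against $|\det y|^{s-(n+1)/2}$ for a symmetric block $y$ of $Z$. Integration by parts with the Cayley-type operator $\partial=\det(\partial_{ij})$ acting in the $y$-variable (not in the one-dimensional $M^{\mathrm{ab}}$-direction) shifts the exponent and yields a holomorphic continuation of $p_{m,\alpha}(s)M_{w_0}f_{\mu_s^\alpha}(w_0)$ bounded by a continuous seminorm on $\mathcal{S}(X^\circ(F))$. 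The auxiliary polynomial $p_{m,\alpha}$, whose zeros lie in $\tfrac{1}{2}\ZZ$, is then removed by the maximum modulus principle, using that $M_{w_0}f_{\mu_s^\alpha}$ is already known to be a good section \cite[Lemmas 1.2 and 1.3]{Ikeda:poles:triple}. None of these three ingredients---the cell decomposition, the multivariable integration by parts in the $N$-direction, the maximum modulus step---appear in your plan, and together they are the entire content of the hard case.
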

\noindent 
In the non-Archimedean case this is \cite[Proposition 4.7]{Getz:Liu:BK}.  

\begin{rem}In the Archimedean case the weaker statement that $K_\infty$-finite functions in $C_c^\infty(X^{\circ}(F))$ are contained in $\mathcal{S}(X(F))$ was asserted in \cite[Proposition 4.7]{Getz:Liu:BK}.  This is true, but the proof is incomplete.  It relies on \cite[Lemma A.2]{Getz:Liu:BK}, which is false.  Happily, this does not affect the rest of the results in \cite{Getz:Liu:BK} because the false assertion in \cite[Lemma A.2]{Getz:Liu:BK} is not used elsewhere in the paper.  
\end{rem}

\quash{
Let $T \leq \mathrm{Sp}_{2n}$ be the maximal torus of diagonal matrices, let $W_{\mathrm{Sp}_{2n}}$ be the Weyl group of $T$ in $\mathrm{Sp}_{2n}$ and let $W_M$ denote the Weyl group of $T$ in $M$.   Denote by $B \leq P$ the unique Borel subgroup consisting of matrices whose upper left $n \times n$ block is upper triangular.  We let $\Phi_{\mathrm{Sp}_{2n}}$ be the set of roots of $T$ in $\mathrm{Sp}_{2n}$ and we define positive roots using $B.$  
We let
 $W_n$ be the complete set of representatives for $W_{\mathrm{Sp}_{2n}} / W_M$ obtained by choosing the unique element of minimal length in each coset as follows (see \cite[\S 1]{Ikeda:poles:triple} and \cite[Part A, Lemma 5.1]{GPSR:LNM}). For each subset $\iota=\{i_1, i_2, \ldots, i_k\}$ of $\{1,2,\ldots,n\}$
let 
\begin{align} \label{J}
J := \{j_1, j_2 \ldots, j_{n-k}\} = \{1,2,\ldots, n\} - \iota\,,
\end{align}
 where $i_1<\dots<i_k$ and $j_1 <\dots <j_{n-k}$.
Define an element $w_\iota$ of $W_{\mathrm{Sp}_{2n}}$ by 
\begin{align*}
\begin{matrix}
& t_1 \mapsto t_{j_1}, &\ldots, &t_{n-k} \mapsto t_{j_{n-k}}\,,\\
& t_{n-k+1} \mapsto t^{-1}_{i_k}, & \ldots,& t_n \mapsto t^{-1}_{i_1}\,,
\end{matrix}
\end{align*}
where 
\begin{align*}\left(\begin{smallmatrix}
t_1 &&&&&\\
&\ddots &&&&\\
&&t_n &&&\\
&&&t_1^{-1}&&\\
&&&&\ddots &\\
&&&&&t_n^{-1}
\end{smallmatrix} \right)\in T(F)\,.
\end{align*}
In particular $w_0:=w_{\{1,\dots,n\}}$ is the long Weyl element.  For each $0 \leq k \leq n-1$  let $\iota_k=\{k+1,\dots,n\}$ and let $\iota_{n}=\emptyset$.  }
Let 
$$
w_k:=\begin{psmatrix} I_k & 0 &0 & 0 \\  0 & 0 &0 & - \beta_{n-k}\\ 0 & 0 & I_k & 0     \\0 & \beta_{n-k} &0 & 0\end{psmatrix}
$$
where $\beta_{n-k} \in \GL_{n-k}(\ZZ)$ is the antidiagonal matrix.

\begin{lem} \label{lem:cover} Let $C=[P,P] \backslash Pw_0Pw_0^{-1} \subset X^\circ.$  Then $Cg$ is open in $X^\circ$ for all $g \in \mathrm{Sp}_{2n}(F)$ and 
\begin{align*} 
\bigcup_{0 \leq k \leq n} \bigcup_{\tau \in \mathfrak{S}_n} C(F)w_k\begin{psmatrix}\tau & \\ & \tau \end{psmatrix} =X^\circ(F).
\end{align*}
Here the inner union is over the group of permutation matrices in $\GL_n(\ZZ).$
\end{lem}

\begin{proof}
It is well known that the big Bruhat cell $ Pw_0P$ is open in $\mathrm{Sp}_{2n}$, hence the same is true of $Cg$ in  $X^\circ$ for all $g \in \mathrm{Sp}_{2n}(F).$ 
We have an isomorphism $\phi:\GL_n \times \mathrm{Sym}^{2}(\GG_a^n) \to N\backslash Pw_0Pw_0^{-1}$ given on points in an $F$-algebra $R$ by 
\begin{align*}  \begin{split}
\phi:\GL_n(R) \times \mathrm{Sym}^2(R^n) &\lto (N\backslash Pw_0Pw_0^{-1})(R)\\
(A,Z) &\longmapsto \begin{psmatrix} A^{-t} & \\ & A \end{psmatrix}w_0 \begin{psmatrix} I_n & -\beta_n Z\beta_n \\ & I_n \end{psmatrix} w_0^{-1}=\begin{psmatrix} * & *\\  AZ & A \end{psmatrix}. \end{split}
\end{align*}
It follows that $C(F)$ may be characterized as the subset of $X^\circ(F)$ consisting of all classes of the form
\begin{align} \label{class}
\begin{psmatrix} * & *\\  B & A \end{psmatrix}
\end{align}
where $\begin{psmatrix} B & A \end{psmatrix} \in M_{n,2n}(F)$ and $\det A \neq 0.$  This in turn implies that
$$
C(F)w_{k}
$$
is the subset of $X^{\circ}(F)$ consisting of classes of the form \eqref{class} where the $n \times n$ matrix formed by the columns corresponding to
$\{i:k+1 \leq i \leq n+k\}$ is invertible.

To complete the proof, we claim that 
after multiplying by an element of $M(F)$ on the left (i.e., performing Gauss-Jordan elimination on $A$) and $\begin{psmatrix} \tau & \\ & \tau \end{psmatrix}$ with $\tau \in \mathfrak{S}_n$ on the right, a general class \eqref{class} lies in $C(F)w_k$
for some $k.$ Indeed, after multiplying on the left and right as just explained, we may assume  
$$
\begin{psmatrix} B & A\end{psmatrix} =
\begin{psmatrix}   u & D & 0 & 0\\ 
* & *& I_{k} &v \\
\end{psmatrix}
$$
for some $0 \leq k \leq n$. Here $u,v^t\in M_{n-k,k}(F)$ and  $D \in M_{n-k,n-k}(F).$  We claim $D$ is invertible so that $\begin{psmatrix} B & A\end{psmatrix}$ lies in $C(F)w_{k}$. Since the rows of $\begin{psmatrix} B & A \end{psmatrix}$ span a Lagrangian subspace, the $(i,j)$th entry of $u$ is $-1$ times the dot product of the $i$th row of $D$ and the $j$th row of $v$. In other words, $u=-Dv^t.$ Thus if $D$ is singular, then the rows of $\begin{psmatrix}
    u & D
\end{psmatrix}$ are linearly dependent, which is a contradiction.

\end{proof}

\begin{proof}[Proof of Proposition \ref{prop:schwartz:inside}]  We can and do assume $F$ is Archimedean.
If $f \in \mathcal{S}(X^{\circ}(F))$, it is easy to see that the integral defining $f_{\chi_s}$ is absolutely convergent for all $\chi:F^\times \to \CC^\times$ and $s \in \CC$.  Thus $f_{\chi_s}$ is a good section by \cite[Lemma 1.3]{Ikeda:poles:triple}.  We have to verify that for all real numbers $A<B$, $w \in \{\mathrm{Id},w_0\}$, $D \in U(\mathfrak{g}),$ any polynomials $p_{w} \in \CC[s]$ such that 
\begin{align} \label{pw}
    p_{w}(s)a_w(s,\eta) \textrm{ has no poles for all }(s,\eta) \in V_{A,B} \times \widehat{K}_{\GG_m}
\end{align}    
and compact subsets $\Omega \subset X^{\circ}(F)$, one has
$|f|_{A,B,w,p_w,\Omega,D}<\infty$
and $|f|_{A,B,w,p_w,\Omega,D}$ is continuous with respect to the topology on $\mathcal{S}(X^{\circ}(F))$.  
Since $U(\mathfrak{g})$ acts continuously on $\mathcal{S}(X^{\circ}(F))$, it suffices to verify this for $D=\mathrm{Id}$.  Note that we do not require $p_w(s)$ to depend on $\eta.$  In fact by the explicit description of Archimedean local $L$-functions we see that $a_w(s,\eta)$ can only have a pole in $V_{A,B}$ for finitely many $\eta.$

We start by reducing to an estimate involving a single $\eta$.  
Let $D_1$ (and $\bar{D}_1$ when $F$ is complex) be the generators of $U(\mathrm{Lie}(M^{\mathrm{ab}}(F)))$ given in \cite[(4.2) and (4.3)]{Getz:Liu:BK}, respectively. 
Every element of $\widehat{K}_{\GG_m}$ is in the equivalence class of $\mu^{\alpha}$ for $\alpha=0, 1$ when $F$ is real and $\alpha \in \ZZ$ when $F$ is complex.  Here $\mu$ is defined as in \eqref{mu}.  If $F$ is complex,
\begin{align*}
   & |M_w(D_1^N\overline{D}_1^{N'}.f)_{\mu^{\alpha }_s}(g)|_{A,B,p_{w}}\\&=\left|\left(\frac{\alpha}{2}+it+s+\tfrac{n+1}{2} \right)^N\left(-\frac{\alpha}{2}+it+s+\tfrac{n+1}{2} \right)^{N'}M_w(f)_{\mu^\alpha_s}(g)\right|_{A,B,p_{w}}
\end{align*}
by \cite[Lemma 5.9]{Getz:Liu:BK}.  This provides us with an estimate on $M_w(f)_{\mu^\alpha_s}(g)$ as a function of $\alpha$.  Using this estimate, we see that to prove $|f|_{A,B,w,p_w,\Omega,1}$ is finite for all $f \in \mathcal{S}(X^{\circ}(F))$, it suffices to prove that for each $p_w$ satisfying \eqref{pw}  there is a continuous seminorm $\nu$ on $\mathcal{S}(X^{\circ}(F))$  such that 
$$
\mathrm{sup}_{g \in \Omega}|M_wf_{\mu^\alpha_s}(g)|_{A,B,p_{w}}\leq \nu(f)
$$
for all $f \in \mathcal{S}(X^{\circ}(F))$ and $\alpha$.  Here and below the seminorm $\nu$ is allowed to depend on $A,B,w,p_w,\Omega$.  
In fact, it is enough to show that there is a continuous seminorm $\nu$ on $\mathcal{S}(X^{\circ}(F))$  that 
\begin{align} \label{seminorm:stuff}
    |M_wf_{\mu^\alpha_s}(w_0)|_{A,B,p_{w}}\leq \nu(f).
\end{align}
Indeed, let $\Omega' \subset \mathrm{Sp}_{2n}(F)$ be a compact set whose projection to $X^{\circ}(F)$ is $\Omega$.  Then assuming we have a seminorm $\nu$ as just described, we have
$$
\mathrm{sup}_{g \in \Omega}|M_wf_{\mu^\alpha_s}(g)|_{A,B,p_{w}}=\mathrm{sup}_{g \in w_0^{-1}\Omega'}|M_w(R(g)f)_{\mu^\alpha_s}(w_0)|_{A,B,p_{w}}\leq \sup_{g \in w_0^{-1}\Omega'}\nu(R(g)f)
$$
and $\sup_{g \in w_0^{-1}\Omega'}\nu(R(g)f)$ is another continuous seminorm.

Since \eqref{seminorm:stuff} is clear for $w=\mathrm{Id}$, we are left with the $w=w_0$ case.  We will roughly follow the strategy of \cite[\S 4]{PSRallisTriple}, but we cannot immediately reduce to functions having support in the big cell as in \cite[Lemma 4.1]{PSRallisTriple}.  Indeed, the proof of loc.~cit.~uses the irreducibility of certain principal series, and the space $\mathcal{S}(X(F))$ is reducible as a representation of $\mathrm{Sp}_6(F).$  

Let $C$ be the image of $P w_0Pw_0^{-1}$ in $X^{\circ}$.  
Choose  a tempered partition of unity subordinate to the cover of $X^{\circ}(F)$ given by Lemma \ref{lem:cover}, that is, choose tempered functions $t_{k,\tau}$ supported in $C(F)w_k\begin{psmatrix}\tau & \\ & \tau \end{psmatrix}$ such that $\sum_{k=0}^n\sum_{\tau \in \mathfrak{S}_n}t_{k,\tau}=1$  and $t_{k,\tau}f \in \mathcal{S}(C(F)w_k\begin{psmatrix} \tau & \\ & \tau \end{psmatrix})$ for all $f \in \mathcal{S}(X^{\circ}(F))$ \cite[Proposition 3.14]{Elazar:Shaviv}. 
Then
$$
|M_{w_0}f_{\mu^\alpha_s}(w_0)|_{A,B,p_w}\le \sum_{k=0}^n\sum_{\tau \in \mathfrak{S}_n}|M_{w_0}(t_{k,\tau}f)_{\mu^\alpha_s}(w_0)|_{A,B,p_{w}}.
$$
Hence we can and do assume that $f$ is supported in $C(F)w_k\begin{psmatrix} \tau & \\ & \tau \end{psmatrix}$ for some fixed $0 \leq k \leq n$ and some $\tau \in \mathfrak{S}_n.$ 
Now 
\begin{align*}
M_{w_0}f_{\mu^\alpha_s}(w_0)&=\int_{N(F)}\int_{M^{\mathrm{ab}}(F)}\delta_P(m)^{1/2}\mu^\alpha_{s}(\omega(m))f(m^{-1}w_0^{-1}nw_0)dmdn\\
&=\int_{\mathrm{Sym}^2(F^n)}\int_{(\SL_n\backslash \GL_n)(F)}
\mu^\alpha_{s+(n+1)/2}(\det A)f\begin{psmatrix} * & * \\ AZ & A \end{psmatrix}dA dZ.
\end{align*}
The notation is a reminder that the image of an element of $\mathrm{Sp}_{2n}(F)$ in $X^{\circ}(F)$ depends only on the bottom $n$ rows of the matrix. 

Write
$
Z=\tau^{-1} \begin{psmatrix} u &x\\ x^t & y \end{psmatrix}\tau$
where $(u,x,y)\in \mathrm{Sym}^2(F^k)\times M_{k \times (n-k)}(F) \times \mathrm{Sym}^2(F^{n-k})$, and let
$$
A'=A'(x,y):=\begin{psmatrix} I_k & x\beta_{n-k}\\ 0 & y\beta_{n-k} \end{psmatrix} 
.$$
Then if $A'$ is invertible, we have
\begin{align*}
\begin{psmatrix} * & *\\  AZ & A \end{psmatrix}&=\begin{psmatrix} (A\tau^{-1})^{-t} & \\ & A\tau^{-1} \end{psmatrix}\begin{psmatrix} *  & * & * & *\\ u & 0 & I_k & x\beta_{n-k}\\x^t & -\beta_{n-k} & 0 & y\beta_{n-k}
\end{psmatrix}w_k\begin{psmatrix} \tau & \\ & \tau \end{psmatrix}\\&=\begin{psmatrix} (A\tau^{-1}A')^{-t} & \\ & A\tau^{-1}A'  \end{psmatrix}\begin{psmatrix} * & * & * & *\\ u-xy^{-1}x^t & xy^{-1}\beta_{n-k} & I_k & 0 \\ \beta_{n-k}y^{-1}x^t & -\beta_{n-k}y^{-1}\beta_{n-k}  & 0 & I_{n-k}\end{psmatrix}w_k\begin{psmatrix} \tau & \\ & \tau \end{psmatrix}.
\end{align*}
Thus 
\begin{align*}
    &M_{w_0}f_{\mu_s^\alpha}(w_0)\\&=\int\mu_{s+(n+1)/2}^\alpha(\det A)\\
    &\times f\left(\begin{psmatrix} (A\tau^{-1}A')^{-t} & \\ & A\tau^{-1}A'  \end{psmatrix}\begin{psmatrix} * & * & * & *\\ u-xy^{-1}x^t & xy^{-1}\beta_{n-k} & I_k & 0 \\ \beta_{n-k}y^{-1}x^t & -\beta_{n-k}y^{-1}\beta_{n-k}  & 0 & I_{n-k}\end{psmatrix}w_k \begin{psmatrix} \tau & \\ & \tau \end{psmatrix}\right) dAdudxdy\\
    &=\int\mu_{s+(n+1)/2}^\alpha(\det A \tau)f\left(\begin{psmatrix} A^{-t} & \\ & A  \end{psmatrix}\begin{psmatrix} * & * & * & *\\ u & x& I_k & 0 \\ x^t & y^{-1}  & 0 & I_{n-k}\end{psmatrix}w_k\begin{psmatrix} \tau & \\ & \tau \end{psmatrix} \right) \frac{|y|^{k}dAdudxdy}{\mu_{s+(n+1)/2}^{\alpha}(\det \beta_{n-k} y) }\\
    &=\int\mu_{s+(n+1)/2}^\alpha(\det A\tau )f\left(\begin{psmatrix} A^{-t} & \\ & A  \end{psmatrix}\begin{psmatrix} * & * & * & *\\ u & x& I_k & 0 \\ x^t & y & 0 & I_{n-k}\end{psmatrix}w_k\begin{psmatrix} \tau & \\ & \tau \end{psmatrix} \right) \mu_{s-(n+1)/2}^{\alpha}(\det \beta_{n-k} y)dAdudxdy,
\end{align*}
where the integrals are over $(\SL_n\backslash\GL_n)(F) \times \mathrm{Sym}^2(F^k) \times M_{k \times (n-k)}(F) \times \mathrm{Sym}^2(F^{n-k})$.  Here we have used that $d(y^{-1})=|\det y|^{-n+k-1}dy$.
  Now consider the differential operator 
\begin{align*}
    \partial:=\det \left( \partial_{ij}\right)
\end{align*}
where $(\partial_{ij})$  is the unique symmetric $(n-k) \times (n-k)$ matrix of partial differential operators satisfying
$$
\partial_{ij}=\begin{cases} \frac{\partial}{\partial z_{ii}} &\textrm{ if }i=j,\\
\frac{1}{2}\frac{\partial}{\partial z_{ij}}&\textrm{ if } i>j. \end{cases}
$$
When $F$ is complex, we view these as holomorphic differential operators.  Then for $y \in \mathrm{Sym}^2(F^{n-k})$ we have 
$$
\partial (\det y)^s=\prod_{i=0}^{n-k-1}\left(s+\frac{i}{2} \right)
(\det y)^{s-1}
$$
(see, e.g.~\cite[Theorem 2.2]{Caracciolo:Sokal:Sportiello}).

Applying integration by parts $m$ times we have
\begin{align*}
   &p_{m,\alpha}(s) \int \mu_{s-(n+1)/2}^\alpha(\det y )\mu_{s+(n+1)/2}^\alpha(\det A)f\left(\begin{psmatrix} A^{-t} & \\ & A  \end{psmatrix}\begin{psmatrix} * & * & * & *\\ u & x & I_k & 0 \\ x & y  & 0 & I_{n-k}\end{psmatrix}w_k \begin{psmatrix} \tau & \\ & \tau \end{psmatrix} \right)dAdudxdy\\
   &=\int \mu_{s-(n+1)/2+m}^\alpha(\det y)\mu_{s+(n+1)/2}^\alpha(\det A)(\partial\overline{\partial})^mf\left(\begin{psmatrix} A^{-t} & \\ & A  \end{psmatrix}\begin{psmatrix} * & * & * & *\\ u & x & I_k & 0 \\ x & y  & 0 & I_{n-k}\end{psmatrix}w_k \begin{psmatrix} \tau & \\ & \tau \end{psmatrix} \right)dAdudxdy
\end{align*}
where $p_{m,\alpha}(s) \in \CC[s]$ has zeros only in $\tfrac{1}{2}\ZZ$. Here by convention $\overline{\partial}$ is the identity operator when $F$ is real, and we are letting $\partial$ and $\overline{\partial}$ act on $f$ viewed as a function of $y \in \mathrm{Sym}^{n-k}(F)$.   
 We observe that the bottom integral converges absolutely for $\mathrm{Re}(s)+m > \tfrac{n+1}{2}$, and thus provides us with a holomorphic continuation of $p_{m,\alpha}(s)M_{w_0}f_{\mu_s^{\alpha}}(w_0)$ to this range.  Moreover, if $A+m>\tfrac{n+1}{2}$, then for all $p \in \CC[s]$ one has
 \begin{align} \label{bound:stuff}
    |M_{w_0}f_{\mu^\alpha_s}(w_0)|_{A,B,pp_{m,\alpha}}\leq \nu(f)
\end{align}
for some continuous seminorm $\nu$ on $\mathcal{S}(X^\circ(F))$ depending on $p,m,A,B$. 

Assume henceforth that $A+m>\tfrac{n+1}{2}$.   Since the zeros of $p_{m,\alpha}$ are located in $\frac{1}{2}\ZZ$, by slightly decreasing $A$ and increasing $B$ if necessary, we are free to assume that no zeros of $p_{m,\alpha}$ are on the lines $\mathrm{Re}(s)=A$ or $\mathrm{Re}(s)=B$ for all $\alpha$.  
Let
 \begin{align*}
 \Omega :&=\{s \in V_{A,B}:|\mathrm{Im}(s)| < 1, A<\mathrm{Re}(s)<B \}.
 \end{align*}
Again using the fact that the zeros of $p_{m,\alpha}$ are located in $\tfrac{1}{2}\ZZ$, we have
\begin{align} \label{pmbound}
\max_{s \in V_{A,B}-\Omega} \frac{1}{p_{m,\alpha}(s)} \ll_{m} 1
\end{align}
 where the implied constant is independent of $\alpha$.  
  Assume now that $p_{w_0}$ satisfies \eqref{pw}.
Since $M_{w_0}f_{\mu^{\alpha}_s}$ is a good section \cite[Lemmas 1.2 and 1.3]{Ikeda:poles:triple}, the maximum modulus principal implies
 \begin{align*} 
 \mathrm{sup}_{s \in V_{A,B}}|p_{w_0}(s) M_{w_0}f_{\mu^\alpha_s}(w_0)| &\leq \mathrm{sup}_{s \in V_{A,B}-\Omega}|p_{w_0}(s) M_{w_0}f_{\mu^\alpha_s}(w_0)|\\
 &\leq |M_{w_0}f_{\mu^\alpha_s}(w_0)|_{A,B,p_{w_0}p_{m,\alpha}}\max_{s \in  V_{A,B}-\Omega}\frac{1}{p_{m,\alpha}(s)} \\& \leq \nu(f)
 \end{align*}
 for some continuous seminorm $\nu$ on $\mathcal{S}(X^\circ(F))$ depending on $p_{w_0},m,A,B$.  Here in the last inequality have used \eqref{bound:stuff} and \eqref{pmbound}. This implies \eqref{seminorm:stuff}.
\end{proof}

\section{Groups and orbits}
\label{sec:groups:orbits} 
 
For this section, $F$ is a field of characteristic zero.  For $1\leq i \leq 3$, let $V_i=\GG_a^{d_i}$ where $d_i$ is even and let $\mathcal{Q}_i$ be a nondegenerate quadratic form on $V_i(F)$. Let $\mathcal{Q}:=\mathcal{Q}_1+\mathcal{Q}_2+\mathcal{Q}_3.$ We put
\begin{align}\label{vcirc}
V^{\circ}_i:=V_i-\{0\} \textrm{ and }V^{\circ}:=V_1^\circ \times V_2^\circ \times V_3^{\circ}
\end{align}
and we let
$V' \subset V$
be the open subscheme consisting of $(v_1,v_2,v_3)$ such that no two $v_i$ are zero.    
For an $F$-algebra $R$, recall that
\begin{align} \label{Y}
Y(R)&=\{ (y_1,y_2,y_3) \in V(R):\mathcal{Q}_1(y_1)=\mathcal{Q}_2(y_2)=\mathcal{Q}_3(y_3)\}.
\end{align}
We observe that $Y^{\mathrm{sm}}=Y \cap V'.$
We let 
\begin{align} \label{Yani}
Y^{\mathrm{ani}}\subset Y
\end{align}
be the open complement of the vanishing locus of $\mathcal{Q}_i$ (it is independent of $i$).  

We let $\mathrm{GO}_{\mathcal{Q}_i}$ be the similitude group of $(V_i,\mathcal{Q}_i)$ and let $\nu:\mathrm{GO}_{\mathcal{Q}_{i}} \to \GG_m$ be the similitude norm.  We then set
\begin{align} \label{H}
H(R):=\left\{(h_1,h_2,h_3) \in \mathrm{GO}_{\mathcal{Q}_1}(R) \times \mathrm{GO}_{\mathcal{Q}_2}(R) \times \mathrm{GO}_{\mathcal{Q}_3}(R): \nu(h_1)=\nu(h_2)=\nu(h_3) \right\},
\end{align}
and define
\begin{align} \label{lambda} \begin{split}
\lambda:H(R) &\lto R^\times\\
(h_1,h_2,h_3) &\longmapsto \nu(h_1). \end{split}
\end{align}

Let
\begin{align} \label{tildeY0}
\widetilde{Y}_{0}(R):=\{ (y_1,y_2,y_3) \in V^\circ(R): \mathcal{Q}_1(y_1)=\mathcal{Q}_2(y_2)=\mathcal{Q}_3(y_3)=0\}
\end{align}
and let $Y_{0}$
be the (quasi-affine) quotient of $\widetilde{Y}_{0}$ by $\GG_m \times \GG_m$, acting via the restriction of the action
\begin{align} \label{act:Gm2}
\GG_m(R) \times \GG_m(R) \times V(R) &\lto V(R)\\
(a_1,a_2,(v_1,v_2,v_3)) &\longmapsto (a_1 v_1, a_2 v_2, (a_1a_2)^{-1}v_3). \nonumber
\end{align}
This quotient can be constructed by taking the affine closure $\overline{Y}_0$ of $\widetilde{Y}_0$ in $V$ and viewing $Y_0$ as an open subscheme of the GIT quotient of $\overline{Y}_0$ by $\GG_m \times \GG_m$.  We observe that $Y_0$ is a geometric quotient of $\widetilde{Y}_0$.  

For $1 \leq i \leq 3$, we define the scheme
\begin{align} \label{tildeYi}
\widetilde{Y}_{i}(R):&=\{(y_1,y_2,y_3) \in V^\circ(R):\mathcal{Q}_{i-1}(y_{i-1})=\mathcal{Q}_{i+1}(y_{i+1}) \textrm{ and } \mathcal{Q}_i(y_i)=0\}.
\end{align}
Here the indices are taken modulo $3$ in the obvious sense. Let $Y_1$ be the quotient of $\widetilde{Y}_1$ by $\GG_m$ acting via the restriction of the action
\begin{align}\label{quotient1}
    \GG_m(R)\times V(R)&\lto V(R)\\
    (a,(v_1,v_2,v_3))&\longmapsto (v_1,av_2,av_3) \nonumber.
\end{align}
This is nothing but the product over $F$ of the quasi-affine scheme cut out by $\mathcal{Q}_1$ in $V_1^\circ$ and the quasi-projective scheme cut out of $\mathbb{P}(V_2^\circ \times V_3^\circ)$ by $\mathcal{Q}_2=\mathcal{Q}_3$. 
The schemes $Y_2$ and $Y_3$ are defined similarly.
Thus
\begin{align}\label{quotient0}
Y_0:=\widetilde{Y}_0/\GG_m^2\quad \textrm{ and } \quad Y_i:=\widetilde{Y_i}/\GG_m
\end{align}
where the quotients are defined as above.  
Using Hilbert's theorem 90, we deduce the following lemma:
\begin{lem} \label{lem:Hilbert}
The maps
 $\widetilde{Y}_{0}(F)/(F^\times)^2 \to Y_{0}(F)$  and $\widetilde{Y}_i(F)/F^\times\to Y_i(F)$ are bijective. \qed
\end{lem}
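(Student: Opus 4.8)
\emph{Plan.} I would deduce the lemma from Galois descent together with Hilbert's theorem~90. Over $\bar F$ the projection $\widetilde{Y}_0(\bar F)\to Y_0(\bar F)$ identifies the target with the orbit set $\widetilde{Y}_0(\bar F)/\GG_m^2(\bar F)$, since $Y_0$ is a geometric quotient as observed above, and likewise $Y_i(\bar F)=\widetilde{Y}_i(\bar F)/\GG_m(\bar F)$. The group actions are morphisms defined over $F$, and --- the one point to check --- they are \emph{free}.

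\emph{Freeness.} Let $K$ be a field. If $(a_1,a_2)\in\GG_m^2(K)$ fixes $(v_1,v_2,v_3)\in V^{\circ}(K)$ under \eqref{act:Gm2}, then $v_1\neq 0$ has a coordinate lying in $K^{\times}$, which forces $a_1=1$, and similarly $a_2=1$. The same remark applied to the nonzero component $v_2$ (or $v_3$, for the appropriate index) of a point of $\widetilde{Y}_i(K)$ shows the $\GG_m$-action \eqref{quotient1}, and its analogues defining $Y_2$ and $Y_3$, are free.

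\emph{Descent.} Fix $y\in Y_0(F)$ and $\tilde y\in\widetilde{Y}_0(\bar F)$ lying over $y$. For $\sigma\in\Gal(\bar F/F)$ the point $\sigma\tilde y$ also lies over $\sigma y=y$, so $\sigma\tilde y=c(\sigma)\cdot\tilde y$ for a unique $c(\sigma)\in\GG_m^2(\bar F)$ by freeness. Because the action is defined over $F$, $\sigma\mapsto c(\sigma)$ is a $1$-cocycle with values in $\GG_m^2(\bar F)=(\bar F^{\times})^2$, and $H^1(\Gal(\bar F/F),(\bar F^{\times})^2)=H^1(\Gal(\bar F/F),\bar F^{\times})^2=0$ by Hilbert~90; writing $c(\sigma)=\sigma(b)b^{-1}$ with $b\in(\bar F^{\times})^2$, the point $b^{-1}\cdot\tilde y$ lies in $\widetilde{Y}_0(F)$ and maps to $y$. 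Thus $\widetilde{Y}_0(F)\to Y_0(F)$ is surjective, and the same argument with $\GG_m$ gives surjectivity of $\widetilde{Y}_i(F)\to Y_i(F)$. For injectivity, if $P,P'\in\widetilde{Y}_0(F)$ have the same image then $P'=(a_1,a_2)\cdot P$ for a unique $(a_1,a_2)\in\GG_m^2(\bar F)$; applying $\sigma\in\Gal(\bar F/F)$ gives $\sigma(a_1,a_2)\cdot P=\sigma(P')=P'=(a_1,a_2)\cdot P$, so uniqueness forces $\sigma(a_1,a_2)=(a_1,a_2)$ and hence $(a_1,a_2)\in(F^{\times})^2$. (Even more directly, \eqref{act:Gm2} exhibits $a_1$ and $a_2$ as ratios of coordinates of the $F$-points $P$ and $P'$.) The $\GG_m$-case for $\widetilde{Y}_i$ is identical, and bijectivity follows.

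\emph{Main obstacle.} There is no substantial obstacle here: once the quotients are known to be geometric and the actions free, the statement is purely formal. The only care needed is the freeness check above and the cocycle bookkeeping; for the $Y_i$ one may alternatively read the torsor structure off the description of $Y_i$ as the product of $\{\mathcal{Q}_i=0\}\subset V_i^{\circ}$ with a subscheme of $\mathbb{P}(V_{i-1}\times V_{i+1})$, $\widetilde{Y}_i\to Y_i$ being the restriction of the tautological $\GG_m$-torsor on the second factor.
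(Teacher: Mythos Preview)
Your proposal is correct and is precisely the argument the paper has in mind: the paper's proof consists of the single phrase ``Using Hilbert's theorem 90,'' and you have supplied the standard details (freeness of the torus action on $V^\circ$, the cocycle construction, and vanishing of $H^1(\Gal(\bar F/F),\GG_m^n)$) that this phrase abbreviates.
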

We often 
identify $\SL_2^3(R)$ with the subgroup
$G(R)< \mathrm{Sp}_6(R)$ defined as follows:
\begin{align} \label{Gembed}
G(R)=\left\{\left(\begin{smallmatrix} a_1 & & &b_1& & \\ & a_2 & && b_2 & \\ & & a_3 & & & b_3\\ c_1 & & & d_1 & & \\ & c_2 & & & d_2 & \\ & & c_3 & & & d_3\end{smallmatrix}\right) \in \GL_6(R) :a_id_i-b_ic_i=1 \textrm{ for }1 \leq i \leq 3 \right\}.
\end{align}
We give a set of representatives for
\begin{align*}
X^{\circ}(F)/G(F)
\end{align*}
and the corresponding stabilizers. 
Let
\begin{align}\label{eq:gammas} \begin{split}
\gamma_b:&=\left(\begin{smallmatrix} 0 & 0 & 0 & -1 & 0& 0\\ 0 & 1 & 0 & 0 & 0 &0\\
0 & 0 & 1 & 0& 0 & 0\\
1 & 1 & 1 & 0 & 0 & 0\\ 0 & 0 & 0 &-1 & 1 &  0\\
0 & 0 & 0 & -1 & 0 & 1  \end{smallmatrix}\right),\\
\gamma_{i}:&=\left(\begin{smallmatrix} 1 & 0 & 0 & 0 & 0 & 0\\
0 & 1 & 0 & 0 & 0 &0\\
0 & 0 & 0 & 0 & 0 & 1\\
0 & 0 &0 &1 & 0 & 0
\\
0& 0 & 0 & 0 & 1 & 1\\
0 & 1 & -1 & 0 & 0 & 0\end{smallmatrix}\right) \left(\begin{smallmatrix} 0 & 1 & 0 & 0 & 0 & 0\\ 0 & 0 & 1 & 0 & 0 & 0 \\ 1 &0  & 0 & 0 & 0 & 0 \\0  & 0 & 0 & 0 & 1 & 0 \\ 0& 0 & 0 & 0 & 0 & 1 \\ 0 & 0 & 0 & 1 & 0 & 0 \end{smallmatrix} \right)^{i-1} \textrm{ for }1 \leq i \leq 3. \end{split}
\end{align}
All four matrices are in $\mathrm{Sp}_6(\ZZ)$. By \cite[Lemmas 2.1 and 2.2]{Getz:Liu:Triple}, the matrices $\gamma_i$ together with the identity matrix, denoted by $\gamma_0=\mathrm{Id}$, form a complete set of representatives of $X^{\circ}(F)/G(F)$ (strictly speaking, we have chosen different representatives for the $\gamma_i$ orbits than in \cite{Getz:Liu:BK}, but this does not affect the validity of \cite[Lemmas 2.1 and 2.2]{Getz:Liu:Triple}).  For $\gamma\in X^{\circ}(F)$, let $G_{\gamma} \leq G$ be the stabilizer of $\gamma$ under the right action. 
\begin{lem}\cite[Lemma 2.3]{Getz:Liu:Triple} \label{lem:stab} One has
\begin{align*}
G_{\gamma_b}(R):&=\left\{\left(\left(\begin{smallmatrix} 1 & t_1 \\ & 1 \end{smallmatrix} \right),\left(\begin{smallmatrix}1 & t_2 \\ & 1 \end{smallmatrix} \right),\left(\begin{smallmatrix} 1 & t_3 \\ & 1 \end{smallmatrix} \right) \right):t_1,t_2,t_3 \in R, t_1+t_2+t_3=0\right\},\\
G_{\mathrm{Id}}(R):&=\left\{\left(\left(\begin{smallmatrix} b_1^{-1} & t_1 \\ & b_1 \end{smallmatrix} \right),\left(\begin{smallmatrix}b_2^{-1} & t_2 \\ & b_2 \end{smallmatrix} \right),\left(\begin{smallmatrix} b_3^{-1} & t_3 \\ & b_3 \end{smallmatrix} \right) \right):t_1,t_2,t_3 \in R, b_1,b_2,b_3 \in R^\times, b_1b_2b_3=1\right\},\\
G_{\gamma_{1}}(R):&=\left\{\left(\left(\begin{smallmatrix} 1 & t \\ & 1 \end{smallmatrix} \right),g,\left(\begin{smallmatrix}1 & \\ & -1 \end{smallmatrix} \right)g\left(\begin{smallmatrix}1 & \\ & -1 \end{smallmatrix} \right)\right) :t \in R, g \in \SL_2(R)\right\},\\
G_{\gamma_{2}}(R):&=\left\{\left(\left(\begin{smallmatrix}1 & \\ & -1 \end{smallmatrix} \right)g\left(\begin{smallmatrix}1 & \\ & -1 \end{smallmatrix} \right),\left(\begin{smallmatrix} 1 & t \\ & 1 \end{smallmatrix} \right),g\right) :t \in R, g \in \SL_2(R)\right\},
\\
G_{\gamma_{3}}(R):&=\left\{\left(g,\left(\begin{smallmatrix}1 & \\ & -1 \end{smallmatrix} \right)g\left(\begin{smallmatrix}1 & \\ & -1 \end{smallmatrix} \right),\left(\begin{smallmatrix} 1 & t \\ & 1 \end{smallmatrix} \right)\right) :t \in R, g \in \SL_2(R)\right\}.
\end{align*}
\qed
\end{lem}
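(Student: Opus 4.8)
The plan is to compute each stabilizer $G_\gamma$ by transporting the problem into $\wedge^3\GG_a^6$ via the Pl\"ucker embedding \eqref{pluckerembed}, where it reduces to matching coefficients in the monomial basis of $\wedge^3R^6$.

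First, for $\gamma\in\mathrm{Sp}_6(\ZZ)$ and $h\in G(R)$ the right action of $G$ on $X^{\circ}=[P,P]\backslash\mathrm{Sp}_6$ sends the class of $\gamma$ to the class of $\gamma h$, so $h\in G_\gamma(R)$ exactly when $\gamma h\gamma^{-1}\in[P,P](R)$. Rather than unwind this parabolic condition directly, I would use that the last three rows of $\gamma h$ are the last three rows of $\gamma$ multiplied on the right by $h$; thus $\mathrm{Pl}$ identifies $X^{\circ}(R)$ with a $G$-stable subset of $\wedge^3R^6$ on which $G$ acts by $v_1\wedge v_2\wedge v_3\mapsto v_1h\wedge v_2h\wedge v_3h$ (row vectors times $h$), and since $\mathrm{Pl}$ is a closed immersion,
$$
G_\gamma(R)=\{h\in G(R):\mathrm{Pl}(\gamma)\cdot h=\mathrm{Pl}(\gamma)\},
$$
with $\mathrm{Pl}(\gamma)$ the wedge of the last three rows of $\gamma$. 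Writing $h=(h_1,h_2,h_3)$, $h_i=\begin{psmatrix}a_i&b_i\\c_i&d_i\end{psmatrix}\in\SL_2(R)$, the embedding \eqref{Gembed} gives $e_ih=a_ie_i+b_ie_{i+3}$ and $e_{i+3}h=c_ie_i+d_ie_{i+3}$ for $1\le i\le3$, so $\mathrm{Pl}(\gamma)\cdot h$ is an explicit element of $\wedge^3R^6$ with coordinates polynomial in the $a_i,b_i,c_i,d_i$.

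Then I would run through the five cases. For $\gamma_0=\mathrm{Id}$ one has $\mathrm{Pl}(\gamma_0)=e_4\wedge e_5\wedge e_6$; expanding $(c_1e_1+d_1e_4)\wedge(c_2e_2+d_2e_5)\wedge(c_3e_3+d_3e_6)$, the coefficient of $e_4\wedge e_5\wedge e_6$ is $d_1d_2d_3$ and those of $e_1\wedge e_5\wedge e_6$, $e_2\wedge e_4\wedge e_6$, $e_3\wedge e_4\wedge e_5$ are, up to sign, $c_1d_2d_3,\ c_2d_1d_3,\ c_3d_1d_2$; hence $\mathrm{Pl}(\gamma_0)\cdot h=\mathrm{Pl}(\gamma_0)$ forces $d_1d_2d_3=1$ (so each $d_j\in R^\times$) and $c_1=c_2=c_3=0$, which recovers $G_{\mathrm{Id}}(R)$ after renaming $d_j=:b_j$. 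For $\gamma_b$ the last three rows are $e_1+e_2+e_3$, $e_5-e_4$, $e_6-e_4$, so $\mathrm{Pl}(\gamma_b)=(e_1+e_2+e_3)\wedge(e_5-e_4)\wedge(e_6-e_4)$; expanding $\mathrm{Pl}(\gamma_b)\cdot h$ and matching all monomial coefficients forces $c_i=0$, $a_i=d_i=1$ and $b_1+b_2+b_3=0$, i.e. $G_{\gamma_b}(R)$. The representatives $\gamma_1,\gamma_2,\gamma_3$ are handled identically: $\mathrm{Pl}(\gamma_i)\cdot h=\mathrm{Pl}(\gamma_i)$ cuts out a one-parameter unipotent subgroup in one $\SL_2$-factor together with a copy of $\SL_2$ embedded diagonally in the other two, twisted by $g\mapsto\begin{psmatrix}1&\\&-1\end{psmatrix}g\begin{psmatrix}1&\\&-1\end{psmatrix}$, the twist arising because $e_j$ and $e_{j+3}$ occur with opposite signs in the relevant rows of $\gamma_i$ (for $\gamma_1$, the rows $(0,1,-1,0,0,0)$ and $(0,0,0,0,1,1)$, where $\det g=1$ forces $e_4\wedge(\kappa v+\delta w)\wedge(\alpha v+\beta w)=-\det(g)\,e_4\wedge v\wedge w$ to be independent of $g=\begin{psmatrix}\alpha&\beta\\\kappa&\delta\end{psmatrix}$, with $v=e_2-e_3$, $w=e_5+e_6$).

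The main obstacle is purely bookkeeping: tracking the interleaved coordinate ordering in \eqref{Gembed}, and verifying that the coefficient-matching identities hold over an arbitrary $F$-algebra $R$, not just over a field. The one step needing care is the cancellation of entries --- e.g.\ deducing $c_1=0$ from $c_1d_2d_3=0$ once $d_1d_2d_3=1$ has shown $d_2d_3\in R^\times$ --- after which each $G_\gamma$ is cut out on the nose by the displayed equations; everything else is mechanical matrix arithmetic.
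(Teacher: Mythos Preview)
Your approach is correct and gives a self-contained proof, whereas the paper provides none: the lemma is simply quoted from \cite[Lemma~2.3]{Getz:Liu:Triple} with a \textsc{qed} and no argument. So there is nothing in the present paper to compare your method against.

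A few small remarks on the write-up. First, $\mathrm{Pl}:X^{\circ}\to\wedge^{3}\GG_a^{6}$ is a \emph{locally} closed immersion (its image has closure $X$, which is strictly larger), not a closed one; but any immersion is a monomorphism of schemes, which is all you need to pass from $\gamma h=\gamma$ in $X^{\circ}(R)$ to $\mathrm{Pl}(\gamma)\cdot h=\mathrm{Pl}(\gamma)$ in $\wedge^{3}R^{6}$. Second, for the ``only if'' direction in the $\gamma_i$ cases you should make explicit the step that recovers $h_3=\begin{psmatrix}1&\\&-1\end{psmatrix}h_2\begin{psmatrix}1&\\&-1\end{psmatrix}$ from the coefficient equations: writing $h_3'=\begin{psmatrix}1&\\&-1\end{psmatrix}h_3\begin{psmatrix}1&\\&-1\end{psmatrix}$, the four cross-term identities are exactly the entries of $h_2^{-1}h_3'=I_2$. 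Third, your verification for $\gamma_b$ that $(t_1+t_2+t_3)\,e_4\wedge e_5\wedge e_6$ is the only obstruction when $h$ is upper-unipotent is right, but the reduction to upper-unipotent (forcing $c_i=0$ and then $a_i=d_i=1$) again needs you to first extract a unit coefficient so that cancellation works over a general ring $R$; you note this issue at the end, and it goes through just as in the $G_{\mathrm{Id}}$ case. None of these points is a gap --- they are exactly the ``bookkeeping'' you flag.
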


\section{Local functions} \label{sec:loc:func}

In this section, we define the local integrals required to state our summation formula and prove some of their basic properties. 
Let $F$ be a local field of characteristic zero.  We use the conventions on Schwartz spaces explained in \S \ref{sec:gen:Schwartz}.   For each of the $5$ orbits of $G(F)$ in $X^{\circ}(F)$
given in Lemma \ref{lem:stab}, we will define a family of integrals.

For $f=f_1\otimes f_2 \in \mathcal{S}(X(F)) \otimes \mathcal{S}(V(F))$, let
\begin{align} \label{Is}\begin{split}
I(f)\left(y \right)&:=\int_{G_{\gamma_b}(F) \backslash G(F)} f_1\left(\gamma_bg\right) \rho\left(g\right)f_2(y)dg, \quad y \in Y^{\mathrm{sm}}(F),\\
I_{0}(f)\left(y \right)&:=\int_{N_2^3(F) \backslash G(F)} f_1\left(g\right) \rho\left(g\right)f_2(y)dg, \quad y \in \widetilde{Y}_0(F).
\end{split}
\end{align}
Here the stabilizer $G_{\gamma_b}$ is computed in
Lemma \ref{lem:stab}.
These are integrals attached to the $G(F)$-orbit of $\gamma_b$ and $\gamma_0=\mathrm{Id}$, respectively.  

Let
$$
\Delta_i:\SL_2 \lto G
$$
be defined by 
\begin{align} \label{Deltai}
\Delta_i(h):=\begin{cases}\left(I_2,h,\left(\begin{smallmatrix} 1 & \\ & -1 \end{smallmatrix} \right)h\left(\begin{smallmatrix} 1 & \\ & -1 \end{smallmatrix} \right)\right) &\textrm{ for }i=1,\\
\left(\left(\begin{smallmatrix} 1 & \\ & -1 \end{smallmatrix} \right)h\left(\begin{smallmatrix} 1 & \\ & -1 \end{smallmatrix} \right),I_2,h\right) &\textrm{ for }i=2,\\
\left(h,\left(\begin{smallmatrix} 1 & \\ & -1 \end{smallmatrix} \right)h\left(\begin{smallmatrix} 1 & \\ & -1 \end{smallmatrix} \right),I_2\right) &\textrm{ for }i=3.
\end{cases}
\end{align}
Moreover let
\begin{align} \label{pi} \begin{split}
p_i:G(R) &\lto \SL_2(R)\\
(g_1,g_2,g_3) &\longmapsto g_{i+1} \end{split}
\end{align}
where the indices are taken modulo $3$ in the obvious sense.  

We need one more piece of data to define the integrals attached to the 
other orbits.  Let $\Phi \in \mathcal{S}(F^2)$. For $y \in \widetilde{Y}_i(F)$, $1 \leq i \leq 3$ and $s\in \CC$ with $\mathrm{Re}(s)>0$, we define
\begin{align} \label{Ii} \begin{split}
&I_{i}(f\otimes \Phi)(y,s)\\
&:=\int_{G_{\gamma_i}(F) \backslash G(F)}f_1\left(\gamma_{i}g\right)
\int_{N_2(F) \backslash \SL_2(F)}\int_{F^\times}
\rho\left(\Delta_i(h)g\right)f_2( y)\Phi(x(0,1)hp_i(g) )|x|^{2s}dx^{\times}dhdg.\end{split}
\end{align}
We point out that all of the integrals above can be defined directly for a general $f \in \mathcal{S}(X(F) \times \mathcal{S}(V(F))$ (not just a pure tensor) but the notation is more confusing.  One can also define them indirectly for all $f \in \mathcal{S}(X(F) \times V(F))$ using the definition for pure tensors give above.  
Indeed, in the non-Archimedean case $I(f)$, $I_0(f)$, and $I_i(f \otimes \Phi)$ are defined for all $f \in \mathcal{S}(X(F) \times V(F))$ by bilinearity.
In the Archimedean case, the estimates in \S \ref{sec:bound:arch} imply that for a given $f\in \mathcal{S}(X(F)\times V(F))$ and any sequence $f_n\in \mathcal{S}(X(F))\otimes \mathcal{S}(V(F))$ converging to $f$, the functions
 \begin{align*}
     I(f):=\lim_{n\to\infty} I(f_n), \quad  I_0(f):=\lim_{n\to\infty} I_0(f_n), \quad I_i(f \otimes \Phi):=\lim_{n \to \infty}I_i(f_n \otimes \Phi)
 \end{align*}
 are well-defined (via pointwise convergence). 

In \S \ref{sec:unr} we will compute the integrals defined in this section in the unramified setting. We prove that these integrals are absolutely convergent and bound them in the non-Archimedean case in  \S\ref{sec:bound:na} and in the Archimedean case in \S\ref{sec:bound:arch}.

 \subsection{The Schwartz space of $Y$} \label{ssec:Schwartz:Y} 
In Propositions \ref{prop:na:smooth} and \ref{prop:a:smooth}, we will show that $I(f)$ is a smooth function on $Y^{\mathrm{sm}}(F)$.  With this in mind, we define
\begin{align} \label{SY}
\mathcal{S}(Y(F)):=\mathrm{Im}(I:\mathcal{S}(X(F) \times V(F)) \to C^\infty(Y^{\mathrm{sm}}(F))).
\end{align}
This is the \textbf{Schwartz space of $Y(F)$}.  
We observe that \cite[Lemma 4.3]{Getz:Liu:Triple} implies in particular that the natural action of $H(F)$ on $C^\infty(Y^{\mathrm{sm}}(F))$  preserves $\mathcal{S}(Y(F))$.  

\begin{lem} \label{lem:Iker} Let $F$ be an Archimedean local field. The kernel of the map 
$$
I:\mathcal{S}(X(F) \times V(F)) \lto C^\infty(Y^{\mathrm{sm}}(F))
$$
is closed. 
\end{lem}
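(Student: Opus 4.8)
The plan is to deduce the lemma from the continuity of $I$. Recall from \S\ref{sec:gen:Schwartz} that $\mathcal{S}(X(F) \times V(F))=\mathcal{S}(X(F))\widehat{\otimes}\mathcal{S}(V(F))$ is a Fr\'echet space, and that $C^\infty(Y^{\mathrm{sm}}(F))$, equipped with the seminorms $h \mapsto \sup_{y \in K}|D h(y)|$ indexed by compact subsets $K \subset Y^{\mathrm{sm}}(F)$ and differential operators $D$ on the real manifold $Y^{\mathrm{sm}}(F)$, is also a Fr\'echet space, and in particular Hausdorff. Hence $\{0\}\subset C^\infty(Y^{\mathrm{sm}}(F))$ is closed, and it suffices to show that
$$
I:\mathcal{S}(X(F) \times V(F)) \lto C^\infty(Y^{\mathrm{sm}}(F))
$$
is continuous; then $\ker I = I^{-1}(\{0\})$ is closed.

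To prove continuity, I would first treat pure tensors $f=f_1 \otimes f_2$. Fix a compact $K \subset Y^{\mathrm{sm}}(F)$ and a differential operator $D$ on $Y^{\mathrm{sm}}(F)$. Since $Y^{\mathrm{sm}}=Y \cap V'$ is a smooth locally closed subscheme of $V$, near each point of $K$ the operator $D$ is the restriction of a differential operator on an open subset of $V(F)$; covering $K$ by finitely many such charts and subordinate cutoffs, and using that the resulting coefficients are bounded on $K$, one reduces to estimating $\sup_{y \in K}\bigl|\bigl(\widetilde{D}\,\rho(g)f_2\bigr)(y)\bigr|$ integrated against $|f_1(\gamma_b g)|$ over $G_{\gamma_b}(F)\backslash G(F)$, where $\widetilde{D}$ is a differential operator on $V(F)$. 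Differentiation under the integral sign is justified by dominated convergence once the requisite bounds are available. The archimedean estimates on the local integrals to be established in \S\ref{sec:bound:arch} — the same estimates that make $I(f)$ well-defined on all of $\mathcal{S}(X(F)\times V(F))$ — then provide continuous seminorms $\mu$ on $\mathcal{S}(X(F))$ and $\nu$ on $\mathcal{S}(V(F))$ with
$$
\sup_{y \in K}\bigl|D\,I(f_1 \otimes f_2)(y)\bigr| \leq \mu(f_1)\,\nu(f_2).
$$
Thus the bilinear map $(f_1,f_2) \mapsto I(f_1 \otimes f_2)$ into $C^\infty(Y^{\mathrm{sm}}(F))$ is continuous, and by the universal property of the completed projective topological tensor product it extends to a continuous linear map on $\mathcal{S}(X(F) \times V(F))$, which is $I$. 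The lemma follows.

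The main obstacle here is not conceptual but one of bookkeeping: one must ensure the bounds imported from \S\ref{sec:bound:arch} are stated with enough uniformity — over $y$ in a compact set, and allowing an arbitrary differential operator along $Y^{\mathrm{sm}}$ — to conclude continuity into the Fr\'echet space $C^\infty(Y^{\mathrm{sm}}(F))$, rather than merely pointwise finiteness of $I(f)(y)$. If a more economical route is preferred, it is enough to observe that for each fixed $y \in Y^{\mathrm{sm}}(F)$ the linear functional $f \mapsto I(f)(y)$ is continuous — this is the case $K=\{y\}$, $D=\mathrm{Id}$ of the above estimate, together with the universal property of $\widehat{\otimes}$ — so that $\ker I = \bigcap_{y \in Y^{\mathrm{sm}}(F)}\ker\bigl(f \mapsto I(f)(y)\bigr)$ is an intersection of closed subspaces, hence closed.
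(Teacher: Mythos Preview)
Your ``economical route'' at the end is exactly the paper's approach: for each fixed $y\in Y^{\mathrm{sm}}(F)$ one shows the linear form $f\mapsto I(f)(y)$ is continuous, and then $\ker I$ is an intersection of closed kernels. The only difference is that the paper carries out the pointwise estimate explicitly rather than deferring to \S\ref{sec:bound:arch}: it applies Cauchy--Schwartz to split $|I(f_1\otimes f_2)(y)|$ into a factor controlled by $\norm{f_1}_2$ and a seminorm from Lemma~\ref{lem:seminorm}, and a factor controlled via the Iwasawa decomposition and (the $D=\mathrm{Id}$, $r=e_i=0$ case of) Lemma~\ref{lem:universal}. Your more ambitious main route---continuity of $I$ into $C^\infty(Y^{\mathrm{sm}}(F))$ with its Fr\'echet topology---is correct but stronger than needed here; it is essentially the content of Proposition~\ref{prop:a:smooth}, which the paper postpones.
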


\begin{proof} For any $N \geq 0,$ the Cauchy-Schwarz inequality implies that $|I(f_1\otimes f_2)|(y)$ is bounded by the product of the square-roots of the following two integrals:
\begin{align} \label{left}
& \int_{G_{\gamma_b}(F)\backslash G(F)} |f_1|^2(\gamma_b g)\max(|\gamma_bg|,1)^{2N}dg,
\\
\label{right}&\int_{G_{\gamma_b}(F)\backslash G(F)}\max(|\gamma_bg|,1)^{-2N}|\rho(g)f_2|^2(y)dg.
\end{align}
Now $G_{\gamma_b}(F) \backslash G(F)$ is dense in $X^{\circ}(F)$ and hence the right $\mathrm{Sp}_6(F)$-invariant positive Radon measure on $X^{\circ}(F)$ agrees with $dg,$ at least after scaling by a positive real constant.  We continue to denote this measure on $X^{\circ}(F)$ by $dg.$ Thus \eqref{left} is equal to 
\begin{align} \label{use:seminorm}
    \int_{X^{\circ}(F)} |f_1|^2( g)\max(|g|,1)^{2N}dg.
\end{align}
Let $\nu_{\mathrm{Id},N+1,0}$ be defined as Lemma \ref{lem:seminorm}, where $\mathrm{Id}$ is the identity in $U(\mathfrak{g}).$  Using the decomposition of the measure $dg$ afforded by the Iwasawa decomposition and Lemma \ref{lem:seminorm}, we see that \eqref{use:seminorm} is bounded by 
 $\norm{f_1}^2_2+c^2\nu_{\mathrm{Id},N+1,0}(f_1)^2$, where $c$ is a positive constant independent of $f_1$.  
 
 On the other hand, by the Iwasawa decomposition \eqref{right} equals 
\begin{align*}
    &\int_{(F^\times)^3\times F} \max(m(t,a),1)^{-2N}\int_{K^3}|\rho(k)f_2|^2(a^{-1}y)dk\left(\prod_{i=1}^3 |a_i|^{2-d_i}\right) d^\times a dt.
\end{align*}
 Here
\begin{align}\label{eq:gamma0gnorm}
    m(t,a):=\max(|ta_1a_2a_3|,|a_1|,|a_2|,|a_3|,|a_1^{-1}a_2a_3|,|a_2^{-1}a_1a_3|,|a_3^{-1}a_1a_2|)
\end{align}
(see the proof of \cite[Proposition 7.1]{Getz:Liu:Triple} for more details). 
Taking $N$ sufficiently large and applying Lemma \ref{lem:universal} in the special case $D=\mathrm{Id}$, $r=e_i=0$ we deduce that the linear form $f \mapsto I(f)(y)$ is continuous for every $y \in Y^{\mathrm{sm}}(F)$.  
The kernel in the statement of the lemma is the intersection of the kernels of these continuous linear forms.
\end{proof}
\noindent We endow $\mathcal{S}(Y(F))=\mathcal{S}(X(F) \times V(F))/\ker I$ with the quotient topology (which is Fr\'echet).  

In this Archimedean setting, there is a family of seminorms $\{\nu\}$ such that $\mathcal{S}(X(F) \times V(F))$ is the set of smooth functions $f:X^{\circ}(F) \times V(F) \to \CC$ satisfying $\nu(f)<\infty$ for all $\nu.$  The seminorms $\nu$ we have in mind are tensor products of the seminorms \eqref{seminorm} and the usual seminorms on $\mathcal{S}(V(F)).$  Since $\mathcal{S}(Y(F))$ is a topological quotient space of $\mathcal{S}(X(F) \times V(F)),$ it is then also a space of smooth functions on $Y^\circ(F)$ on which a family of seminorms are finite.   

The integrals $I(f)$ depend on the choice of additive character $\psi$ used to define the Weil representation $\rho_{\psi}$. We write
$I_{\psi}(f)$
    for $I(f)$ defined using the Weil representation  $\rho_{\psi}$. Let $\gamma(\mathcal{Q},\psi):=\prod_{i=1}^3\gamma(\mathcal{Q}_i,\psi)$ be the product of the Weil indices.

\begin{lem} \label{lem:Ipsi}
 Let $c\in F^\times$ and $\psi_c(x):=\psi(cx)$. Then
 \begin{align*}
     I_{\psi_c}(f_1\otimes f_2)(y)=\frac{\gamma(\mathcal{Q},\psi_c)}{\gamma(\mathcal{Q},\psi)}|c|^{-2+\sum_{i=1}^3 d_i/2} I_\psi\left(L(m(c^{-1})) R(\begin{smallmatrix} cI_3& \\
     & I_3
     \end{smallmatrix})f_1\otimes f_2\right)(cy).
 \end{align*}
 In particular, the Schwartz space $\mathcal{S}(Y(F))$ is independent of the choice of $\psi$. 
\end{lem}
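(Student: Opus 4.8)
The plan is to track the single place the additive character enters the definition of $I$, namely the Weil representation $\rho=\rho_1\otimes\rho_2\otimes\rho_3$, and to push the change of character onto the $\mathcal{S}(X(F))$-factor and onto the point $y$. First I would reduce to pure tensors $f_1\otimes f_2$ with $f_2=f_{2,1}\otimes f_{2,2}\otimes f_{2,3}$: in the nonarchimedean case this is bilinearity, and in the archimedean case it follows from density of pure tensors together with the continuity of $f\mapsto I_\psi(f)(y)$ (established in the proof of Lemma \ref{lem:Iker} using the bounds of \S\ref{sec:bound:arch}) and of $f_1\mapsto L(m(c^{-1}))R\begin{psmatrix} cI_3 & \\ & I_3\end{psmatrix}f_1$ (Lemma \ref{lem:smooth:act}). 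The key input is the standard transformation law for the Weil representation under $\psi\mapsto\psi_c$: writing $d_c=\begin{psmatrix} c & \\ & 1\end{psmatrix}\in\GL_2(F)$, which normalizes $\SL_2$, one has $\rho_{i,\psi_c}(h)=\rho_{i,\psi}(d_c h d_c^{-1})$ for all $h\in\SL_2(F)$. This is checked on the standard generators $\begin{psmatrix} 1 & t \\ & 1\end{psmatrix},\begin{psmatrix} a & \\ & a^{-1}\end{psmatrix},w$, using for the Weyl element the identity $\gamma(\mathcal{Q}_i,\psi_c)=\chi_{\mathcal{Q}_i}(c)\,\gamma(\mathcal{Q}_i,\psi)$ (where $\chi_{\mathcal{Q}_i}$ is the quadratic character controlling the torus action) together with $\chi_{\mathcal{Q}_i}(c)^2=1$. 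Setting $D_c:=\begin{psmatrix} cI_3 & \\ & I_3\end{psmatrix}\in\GSp_6(F)$ (with $\nu(D_c)=c$), which normalizes $G$, this gives $\rho_{\psi_c}(g)=\rho_\psi(D_c g D_c^{-1})$ for $g\in G(F)$.

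Now I would substitute $g\mapsto D_c g D_c^{-1}$ in $I_{\psi_c}(f_1\otimes f_2)(y)=\int_{G_{\gamma_b}(F)\backslash G(F)}f_1(\gamma_b g)\rho_{\psi_c}(g)f_2(y)\,dg$. Because $D_c$ normalizes $G_{\gamma_b}$ (the upper triangular unipotent group cut out by $t_1+t_2+t_3=0$, by Lemma \ref{lem:stab}) and $\det\mathrm{Ad}(D_c)$ equals $1$ on $\mathfrak g$ but $c^2$ on $\mathrm{Lie}(G_{\gamma_b})$, this substitution multiplies the invariant measure on $G_{\gamma_b}(F)\backslash G(F)$ by $|c|^{2}$, and the integrand becomes $f_1(\gamma_b D_c^{-1}gD_c)\rho_\psi(g)f_2(y)$. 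An explicit matrix computation gives $\begin{psmatrix} I_3 & \\ & cI_3\end{psmatrix}\gamma_b\begin{psmatrix} c^{-1}I_3 & \\ & I_3\end{psmatrix}=m_0\gamma_b$ for an $m_0\in M(F)$ representing the class $c^{-2}$ in $M^{\mathrm{ab}}$; since left translation by $m_0$ descends to the left $M^{\mathrm{ab}}$-action (as $[M,M]\subseteq[P,P]$), which commutes with the right $\GSp_6$-action, this yields $f_1(\gamma_b D_c^{-1}gD_c)=\bigl[L(m(c^{-1}))^2R(D_c)f_1\bigr](\gamma_b g)$. Next, since $M^{\mathrm{ab}}$ acts trivially on the finite set of geometric $G$-orbits in $X^\circ$, the open orbit is $M^{\mathrm{ab}}$-stable, so that applying $L(m(c^{-1}))$ and evaluating at $[\gamma_b]$ can be traded for a right $G(F)$-translation: one checks the relevant translation may be taken to be the torus element $t_{c^{-1}}:=\begin{psmatrix} c^{-1}I_3 & \\ & cI_3\end{psmatrix}$, i.e. $[m(c)\gamma_b]=[\gamma_b t_{c^{-1}}]$ in $X^\circ(F)$ (equivalently $m(c)\gamma_b t_c\gamma_b^{-1}\in[P,P](F)$).

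Absorbing $t_{c^{-1}}$ back into the integration variable introduces a further measure factor $|c|^{-4}$ (the same modulus computation) and produces $\rho_\psi(t_c)$ applied to $\rho_\psi(g)f_2$. Since $t_c=\begin{psmatrix} cI_3 & \\ & c^{-1}I_3\end{psmatrix}$ lies in the torus of $G$, the torus formula for each $\rho_i$ gives, for $\phi=\otimes_i\phi_i\in\mathcal{S}(V(F))$, that $\rho_\psi(t_c)\phi(y)=\bigl(\prod_i\chi_{\mathcal{Q}_i}(c)\bigr)|c|^{\sum_i d_i/2}\phi(cy)=\tfrac{\gamma(\mathcal{Q},\psi_c)}{\gamma(\mathcal{Q},\psi)}|c|^{\sum_i d_i/2}\phi(cy)$, using $\gamma(\mathcal{Q},\psi)=\prod_i\gamma(\mathcal{Q}_i,\psi)$ and the Weil-index identity above. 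Multiplying the three factors $|c|^{2}$, $|c|^{-4}$, $|c|^{\sum d_i/2}$ gives $|c|^{-2+\sum d_i/2}$ and the asserted identity, with $f_1$ replaced by $L(m(c^{-1}))R(D_c)f_1$ and $y$ by $cy$. For the last assertion, $L(m(c^{-1}))R(D_c)$ is an automorphism of $\mathcal{S}(X(F))$ by Lemma \ref{lem:BKF} (hence, tensored with the identity, of $\mathcal{S}(X(F)\times V(F))$), and $y\mapsto cy$ is the action on $Y$ of $(c\cdot\mathrm{id},c\cdot\mathrm{id},c\cdot\mathrm{id})\in H(F)$, which preserves $\mathcal{S}(Y(F))$ by \cite[Lemma 4.3]{Getz:Liu:Triple}; thus $I_{\psi_c}$ and $I_\psi$ have the same image, and since every nontrivial character of $F$ is of the form $\psi_c$, the space $\mathcal{S}(Y(F))$ is independent of $\psi$.

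The main obstacle is the bookkeeping in the middle paragraph: the explicit conjugation identities for $\gamma_b$ and $D_c$, the verification that the residual right translation is exactly $t_{c^{-1}}$ (equivalently $m(c)\gamma_b t_c\gamma_b^{-1}\in[P,P](F)$), and the careful tracking of the modulus characters on $G_{\gamma_b}(F)\backslash G(F)$ so that the powers of $|c|$ combine precisely to $|c|^{-2+\sum d_i/2}$. Everything else is formal.
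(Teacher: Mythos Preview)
Your argument is correct and takes a genuinely different route from the paper. The paper works on the open Bruhat cell: it writes $I_{\psi_c}$ as an explicit integral over $N_2^3(F)w_0B_2^3(F)$, expands $\rho_{\psi_c}$ on the generic element $\begin{psmatrix}1&t\\&1\end{psmatrix}w_0\begin{psmatrix}1&x\\&1\end{psmatrix}\begin{psmatrix}a&\\&a^{-1}\end{psmatrix}$ directly from the defining formulas of the Weil representation, and then makes the transparent changes of variables $t\mapsto ct$, $x\mapsto c^{-1}x$ to recognise $I_\psi(L(m(c^{-1}))R(D_c)f_1\otimes f_2)(cy)$. This sidesteps any modulus bookkeeping and any matrix identity in $X^\circ$: the factor $|c|^{-2}$ simply falls out of the Jacobian of $(t,x)\mapsto(ct,c^{-1}x)$.

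Your approach instead packages the change of character as the single operator identity $\rho_{\psi_c}(g)=\rho_\psi(D_cgD_c^{-1})$ and then moves $D_c$ around by conjugation. This is conceptually cleaner and explains \emph{why} the specific element $L(m(c^{-1}))R(D_c)$ appears, but the price is the two nontrivial verifications you flag: that $[m(c)\gamma_b]=[\gamma_b t_{c^{-1}}]$ in $X^\circ(F)$ (which one can confirm by comparing Pl\"ucker coordinates of the bottom three rows), and the correct modulus factors $|c|^2$ and $|c|^{-4}$ on $G_{\gamma_b}(F)\backslash G(F)$. Both check out, and both methods land on exactly the same formula; the paper's big-cell computation is shorter to write down, while yours makes the structural role of the $\GSp_6$-action more visible.
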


\begin{proof} Let $B_2 < \SL_2$ be the Borel subgroup of upper triangular matrices and let 
$$
w_0=\left(\begin{psmatrix} & 1\\ -1\end{psmatrix},\begin{psmatrix} & 1\\ -1\end{psmatrix},\begin{psmatrix} & 1\\ -1\end{psmatrix}\right) \in \SL_2^3(F).
$$
Since $N_2^3(F)w_0 B_2^3(F)$ is dense in $\SL_2^3(F)$, we have 
\begin{align*}
    &I_{\psi_c}(f_1\otimes f_2)(y)\\
    &=\int_{F\times F^3\times (F^\times)^3} f_1(\gamma_b\begin{psmatrix}1 & t \\ & 1 \end{psmatrix}  w_0 \begin{psmatrix} 1 & x \\ & 1 \end{psmatrix} \begin{psmatrix} a & \\ & a^{-1} \end{psmatrix})\rho_{\psi_c}(\begin{psmatrix}1 & t \\ & 1 \end{psmatrix}  w_0 \begin{psmatrix} 1 & x \\ & 1 \end{psmatrix} \begin{psmatrix} a & \\ & a^{-1}\end{psmatrix})f_2(y) dt dx d^\times a . 
\end{align*}
Observe that
\begin{align*} 
    &\rho_{\psi_c}(\begin{psmatrix}1 & t \\ & 1 \end{psmatrix}  w_0 \begin{psmatrix} 1 & x \\ & 1 \end{psmatrix} \begin{psmatrix} a & \\ & a^{-1}\end{psmatrix})f_2(y)\\
    &=\psi(ct\mathcal{Q}(y))\gamma(\mathcal{Q},\psi_c)\int_{V(F)}\rho_{\psi_c}(\begin{psmatrix} 1 & x \\ & 1 \end{psmatrix} \begin{psmatrix} a & \\ & a^{-1}\end{psmatrix})f_2(u)\prod_{i=1}^3\psi(cu^t_iJ_i y_i)du_i\\
    &= \psi(c^{-1}t\mathcal{Q}(cy))\gamma(\mathcal{Q},\psi_c)\int_{V(F)}\rho_{\psi}\begin{psmatrix} a & \\ & a^{-1}\end{psmatrix}f_2(u)\prod_{i=1}^3\psi(cx_i\mathcal{Q}_i(u_i))\psi(u_i^tJ_i cy_i)du_i\\
    &=  |c|^{\sum_{i=1}^3 d_i/2}\gamma(\mathcal{Q},\psi_c)\gamma(\mathcal{Q},\psi)^{-1}\rho_{\psi}(\begin{psmatrix}1 & c^{-1}t \\ & 1 \end{psmatrix}  w_0 \begin{psmatrix} 1 & cx \\ & 1 \end{psmatrix} \begin{psmatrix} a & \\ & a^{-1}\end{psmatrix})f_2(cy).
\end{align*}
Here $J_i$ is the matrix of $\mathcal{Q}_i$.  The factor of $|c|^{\sum_{i=1}^3 d_i/2}$ appears because we have to renormalize the self-dual Haar measures with respect to $\psi_c$ so that they are self-dual with respect to $\psi$.
Taking a change of variables $t\mapsto ct$, $x_i\mapsto c^{-1}x_i$, we see that $I_{\psi_c}(f_1\otimes f_2)(y)$  is $|c|^{\sum_{i=1}^3 d_i/2}\gamma(\mathcal{Q},\psi_c)\gamma(\mathcal{Q},\psi)^{-1}$ times
\begin{align*}
    &\int_{F\times F^3\times (F^\times)^3} f_1(\gamma_b\begin{psmatrix}1 & ct \\ & 1 \end{psmatrix}  w_0 \begin{psmatrix} 1 & c^{-1}x \\ & 1 \end{psmatrix} \begin{psmatrix} a & \\ & a^{-1} \end{psmatrix})\rho_{\psi}(\begin{psmatrix}1 & t \\ & 1 \end{psmatrix}  w_0 \begin{psmatrix} 1 & x \\ & 1 \end{psmatrix} \begin{psmatrix} a & \\ & a^{-1}\end{psmatrix})f_2(cy)\frac{dt dx  d^\times a}{|c|^2} \\
    &=\int_{F\times F^3\times (F^\times)^3} f_1(\gamma_b\begin{psmatrix}1 &  \\ & c^{-1} \end{psmatrix}\begin{psmatrix}1 & t \\ & 1 \end{psmatrix}  w_0 \begin{psmatrix} 1 & x \\ & 1 \end{psmatrix} \begin{psmatrix} a & \\ & a^{-1} \end{psmatrix}\begin{psmatrix}c &  \\ & 1 \end{psmatrix})\\
    &\hskip1.5in  \times \rho_{\psi}(\begin{psmatrix}1 & t \\ & 1 \end{psmatrix}  w_0 \begin{psmatrix} 1 & x \\ & 1 \end{psmatrix} \begin{psmatrix} a & \\ & a^{-1}\end{psmatrix})f_2(cy) \frac{dt dx  d^\times a}{|c|^2} \\
    &= |c|^{-2} I_\psi\left(L(m(c^{-1})) R(\begin{smallmatrix} cI_3& \\
     & I_3
     \end{smallmatrix})f_1\otimes f_2\right)(cy).
\end{align*}
The fact that the Schwartz space is preserved now follows from Lemma \ref{lem:BKF} and \cite[Lemma 4.3]{Getz:Liu:Triple}.
\end{proof}

For $F$ Archimedean or non-Archimedean, 
let
$$
\mathcal{S}:=\mathrm{Im}(\mathcal{S}(V(F))\lto C^\infty(Y^{\mathrm{sm}}(F)))
$$
where the implicit map is restriction of functions.  We observe that $C_c^\infty(Y^{\mathrm{sm}}(F) ) <\mathcal{S}.$
Moreover, we have the following result: 

\begin{lem} \label{lem:smooth}   
One has 
\begin{align*}
    \mathcal{S}&=\mathrm{Im}\left(I:C^\infty_c(\gamma_bG(F))\otimes \mathcal{S}(V(F))\lto C^\infty(Y^\mathrm{sm}(F))\right)\\
    &=\mathrm{Im}\left(I:\mathcal{S}(\gamma_bG(F) \times V(F))\lto C^\infty(Y^\mathrm{sm}(F))\right),
\end{align*}
where the tensor product is algebraic.  In particular, $\mathcal{S} <\mathcal{S}(Y(F))$.
\end{lem}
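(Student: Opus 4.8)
The strategy is to reformulate $I$ so that, on $Y^{\mathrm{sm}}(F)$, it becomes a Weil‑representation convolution operator, and then to exploit two elementary facts. First, $I(f_1\otimes f_2)(y)$ depends on $f_1$ only through its restriction to the open orbit $x_0\SL_2^3(F)=\gamma_b G(F)$, which by Lemma \ref{lem:stab} is identified $G(F)$‑equivariantly with $G_{\gamma_b}(F)\backslash G(F)$; on this orbit a Schwartz (or compactly supported) function is integrable against $dg$, so the integral formula for $I$ makes sense verbatim on $\mathcal{S}(x_0\SL_2^3(F)\times V(F))$ and on $C_c^\infty(x_0\SL_2^3(F))\otimes\mathcal{S}(V(F))$, agreeing with the original $I$ where both are defined. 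Second, by Lemma \ref{lem:stab} the group $G_{\gamma_b}$ consists of triples of unipotents $(n(t_1),n(t_2),n(t_3))$ with $t_1+t_2+t_3=0$, which acts in the Weil representation on $\eta\in\mathcal{S}(V(F))$ by multiplication by $\psi\big(\sum_i t_i\mathcal{Q}_i(\cdot)\big)$ (up to normalization conventions); since $\mathcal{Q}_1(y_1)=\mathcal{Q}_2(y_2)=\mathcal{Q}_3(y_3)$ on $Y$, this multiplier equals $1$ at every $y\in Y^{\mathrm{sm}}(F)$, so $\rho(hg)\eta(y)=\rho(g)\eta(y)$ for all $h\in G_{\gamma_b}(F)$, $g\in G(F)$, $y\in Y^{\mathrm{sm}}(F)$, $\eta\in\mathcal{S}(V(F))$.

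Combining these: if $\phi\in C_c^\infty(G(F))$ (resp.\ $\mathcal{S}(G(F))$) has fibre integral $g\mapsto\int_{G_{\gamma_b}(F)}\phi(hg)\,dh$ equal to the pullback to $G(F)$ of $f_1\in C_c^\infty(x_0\SL_2^3(F))$ (resp.\ $\mathcal{S}(x_0\SL_2^3(F))$), then unfolding the quotient integral and using the invariance above yields the \emph{master identity}
\[
I(f_1\otimes f_2)(y)=\rho(\phi)f_2(y)=\Big(\int_{G(F)}\phi(g)\rho(g)f_2\,dg\Big)(y),\qquad y\in Y^{\mathrm{sm}}(F);
\]
the integral converges in $\mathcal{S}(V(F))$, hence $\rho(\phi)f_2\in\mathcal{S}(V(F))$, because the Weil representation has moderate growth while $\phi$ decays rapidly (in the nonarchimedean case $\phi$ has compact support, so this is immediate). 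In particular $I(f_1\otimes f_2)|_{Y^{\mathrm{sm}}}\in\mathcal{S}$, which gives $\mathrm{Im}\big(I|_{C_c^\infty(x_0\SL_2^3(F))\otimes\mathcal{S}(V(F))}\big)\subseteq\mathcal{S}$, once we note that every $f_1\in C_c^\infty(x_0\SL_2^3(F))$ admits such a $\phi\in C_c^\infty(G(F))$ (standard surjectivity of $C_c^\infty$‑fibre integration along a submersion). For the reverse inclusion, given $\eta\in\mathcal{S}(V(F))$ we must realize $\eta|_{Y^{\mathrm{sm}}}$ as a value of $I$: in the nonarchimedean case take $f_1$ to be the normalized indicator function of the (compact open) image in $x_0\SL_2^3(F)$ of a compact open $K_0\le G(F)$ fixing $\eta$, so that $I(f_1\otimes\eta)(y)=\eta(y)$ on $Y^{\mathrm{sm}}(F)$ because $\rho(g)\eta=\eta$ on $Y^{\mathrm{sm}}(F)$ for all $g\in G_{\gamma_b}(F)K_0$; in the archimedean case apply Dixmier--Malliavin to the smooth Fréchet representation $(\rho,\mathcal{S}(V(F)))$ of $\SL_2^3(F)$ to write $\eta=\sum_{j=1}^n\rho(\phi_j)\eta_j$ with $\phi_j\in C_c^\infty(G(F))$, $\eta_j\in\mathcal{S}(V(F))$, and let $f_{1,j}$ be the fibre integral of $\phi_j$, so that $\eta|_{Y^{\mathrm{sm}}}=\sum_j I(f_{1,j}\otimes\eta_j)|_{Y^{\mathrm{sm}}}$. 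This proves the first equality.

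For the second equality, $\subseteq$ is immediate, and in the nonarchimedean case the two domains literally coincide. In the archimedean case I would argue that fibre integration $\mathcal{S}(G(F))\to\mathcal{S}(x_0\SL_2^3(F))$ is \emph{surjective}: the map $G(F)\to x_0\SL_2^3(F)$ is a principal $G_{\gamma_b}(F)$‑bundle whose structure group $G_{\gamma_b}(F)\cong F^2$ is contractible, hence it admits a Nash section; in a trivialization fibre integration becomes $\mathrm{id}\,\widehat{\otimes}\int_{F^2}$, which is onto. Since this is a quotient map of nuclear Fréchet spaces, applying the exact functor $-\,\widehat{\otimes}\,\mathcal{S}(V(F))$ shows every $f\in\mathcal{S}(x_0\SL_2^3(F)\times V(F))$ is the fibrewise integral (in the first variable) of some $\Phi\in\mathcal{S}(G(F)\times V(F))$; the same unfolding as before then gives $I(f)(y)=\big(\int_{G(F)}\rho(g)\,\Phi(g,\cdot)\,dg\big)(y)$ for $y\in Y^{\mathrm{sm}}(F)$, with the inner integral lying in $\mathcal{S}(V(F))$, so $I(f)|_{Y^{\mathrm{sm}}}\in\mathcal{S}$. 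Finally $\mathcal{S}<\mathcal{S}(Y(F))$: the open orbit is open in the smooth locus $X^{\circ}(F)$, so $C_c^\infty(x_0\SL_2^3(F))\subseteq C_c^\infty(X^{\circ}(F))\subseteq\mathcal{S}(X(F))$ by Proposition \ref{prop:schwartz:inside}; hence $C_c^\infty(x_0\SL_2^3(F))\otimes\mathcal{S}(V(F))\subseteq\mathcal{S}(X(F)\times V(F))$ and its image under $I$ is contained in $\mathcal{S}(Y(F))=\mathrm{Im}\big(I|_{\mathcal{S}(X(F)\times V(F))}\big)$, which combined with the first equality gives $\mathcal{S}\subseteq\mathcal{S}(Y(F))$.

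The main obstacle I anticipate is the archimedean surjectivity of fibre integration $\mathcal{S}(G(F))\twoheadrightarrow\mathcal{S}(x_0\SL_2^3(F))$ — equivalently, producing a Nash section of the principal $G_{\gamma_b}$‑bundle $G\to G_{\gamma_b}\backslash G$ over a base that is only quasi‑affine — together with the bookkeeping needed to tensor this surjectivity up over $\mathcal{S}(V(F))$; a secondary point is the absolute convergence in $\mathcal{S}(V(F))$ of the Weil‑representation integrals $\rho(\phi)f_2$ and $\int_{G(F)}\rho(g)\Phi(g,\cdot)\,dg$, which rests on weighing the moderate growth of the oscillator representation against the rapid decay of Schwartz functions on $\SL_2^3(F)$. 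The rest is formal manipulation of the defining integral plus the two elementary observations about $G_{\gamma_b}$ and $Y$.
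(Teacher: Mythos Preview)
Your proof is correct and follows essentially the same approach as the paper's: unfold $I$ via the $G_{\gamma_b}$-invariance of $\rho(\cdot)f_2(y)$ on $Y$ to identify $I(f_1\otimes f_2)|_{Y^{\mathrm{sm}}}$ with $(\rho(\phi)f_2)|_{Y^{\mathrm{sm}}}$ for a lift $\phi$ of $f_1$, and use Dixmier--Malliavin (archimedean) or a compact open stabilizer (nonarchimedean) for the inclusion $\mathcal{S}\subseteq\mathrm{Im}(I)$. The paper records the remaining inclusion $I(\mathcal{S}(\gamma_b G(F)\times V(F)))\leq\mathcal{S}$ as an ``observation'' without further argument, whereas you justify it carefully via surjectivity of Schwartz fibre integration through a Nash section of the principal $G_{\gamma_b}$-bundle; the obstacle you anticipate there is genuine but standard, and your handling of it is more complete than the paper's.
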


\begin{proof}  
Let $\gamma_b G(F)\cong G_{\gamma_b}(F) \backslash G(F)$ be the orbit of $\gamma_b$ in $X^\circ(F)$; it is open and of full measure in $X^{\circ}(F).$ We have a commutative diagram
$$
\begin{tikzcd}
    \mathcal{S}(G(F) \times V(F))\arrow[r] \arrow[d] &\mathcal{S}(V(F)) \arrow[d,"|_{Y^{\mathrm{sm}}(F)}"]\\
    \mathcal{S}(\gamma_bG(F) \times V(F)) \arrow[r,"I"]& \mathcal{S}(Y(F))
\end{tikzcd}
$$
where the top horizontal arrow is the unique (continuous) linear map sending a pure tensor 
$$
\Phi \otimes f \in \mathcal{S}(G(F)) \otimes \mathcal{S}(V(F))
$$
to $\int_{G(F)}\Phi(g)\rho(g)fdg$ and the left vertical arrow sends $f(g,v) $ to $\int_{G_{\gamma_b}(F)}f(ng,v)dn.$  The right vertical arrow is given by restriction, and hence its image is $\mathcal{S}$ by definition.

We claim the upper horizontal map is surjective. In the non-Archimedean case, this follows easily from the smoothness of the Weil representation. In the Archimedean case, it is a consequence of a well-known theorem of Dixmier-Malliavin, that the map is surjective even if the domain is restricted to $C^\infty_c(G(F))\otimes \mathcal{S}(V(F))$. Therefore, $I$ is surjective even if the domain is restricted to $C^\infty_c(\gamma_bG(F))\otimes \mathcal{S}(V(F))$ and the lemma follows.
\end{proof}

\noindent This lemma provides a robust supply of elements of the Schwartz space $\mathcal{S}(Y(F)).$

We now revert to the adelic setting, bearing in mind the conventions on Schwartz spaces explained in \S \ref{sec:gen:Schwartz}.  Let $F$ be a number field.  The obvious global analogue of \eqref{Is} yields a map $I:\mathcal{S}(X(\A_F) \times V(\A_F)) \to C^\infty(Y^{\mathrm{sm}}(\A_F))$ and we set
$$
\mathcal{S}(Y(\A_F)):=\mathrm{Im}(I:\mathcal{S}(X(\A_F) \times V(\A_F)) \to C^\infty(Y^{\mathrm{sm}}(\A_F))).
$$
To check that this is well-defined, one uses the computation of the \textbf{basic function for $Y(F_v)$}
\begin{align*}
b_{Y,v}:=I(b_{X,v} \otimes \one_{V(\OO_{v})})
\end{align*}
in Proposition \ref{prop:unram:comp} below.  
We define
$$
\mathcal{S}(Y(F_\infty)):=\mathrm{Im}(I:\mathcal{S}(X(F_\infty) \times V(F_\infty)) \to C^\infty(Y^{\mathrm{sm}}(F_\infty))).
$$
The map $I$ has closed kernel by a trivial modification of the proof of Lemma \ref{lem:Iker}  and we give $\mathcal{S}(Y(F_\infty))$ the quotient topology.  Hence $\mathcal{S}(Y(F_\infty))$ is the (completed) projective topological tensor product $\widehat{\otimes}_v \mathcal{S}(Y(F_v)).$  We then have
$$
\mathcal{S}(Y(\A_F))=\widehat{\otimes}_{v|\infty} \mathcal{S}(Y(F_v)) \otimes \bigotimes_{v\nmid \infty}{}' \mathcal{S}(Y(F_v))
$$
where the restricted tensor product is taken with respect to the $b_{Y,v}$. Indeed, we have
\begin{align*}
    \mathcal{S}(Y(\mathbb{A}_F))&=I(\mathcal{S}(X(\mathbb{A}_F)\times V(\mathbb{A}_F)))\\
    &=\bigcup_{\infty \subset S} I\left(\mathcal{S}(X(F_S)\times V(F_S))\otimes \bigotimes_{v\not\in S} (b_{X,v}\otimes \one_{V(\OO_v)})\right) \\
    &=\bigcup_{\infty \subset S} \mathcal{S}(Y(F_S))\otimes \bigotimes_{v\not\in S} b_{Y,v}\\
    &=\widehat{\otimes}_{v|\infty} \mathcal{S}(Y(F_v)) \otimes \bigotimes_{v\nmid \infty}{}' \mathcal{S}(Y(F_v)).
\end{align*}

\section{The summation formula} \label{sec:summation}

  Let $F$ be a number field. Our goal in this section is to prove our main summation formula, Theorem \ref{thm:main:intro}, modulo some convergence statements that we prove later in the paper.   
We require the following assumptions on $f=f_1\otimes f_2 \in \mathcal{S}(X(\A_F) \times V(\A_F))$: There are finite places $v_1,v_2$ of $F$ (not necessarily distinct) such that
\begin{align*}
    f_1&=f_{v_1}f_{v_2}f^{v_1v_2} \textrm{ and } f_{v_1} \in C_c^\infty(X^{\circ}(F_{v_1})),\textrm{ } \mathcal{F}_X(f_{v_2}) \in C_c^\infty(X^{\circ}(F_{v_2})),\\
    \rho(g)f_2(v)&=0\,\,\textrm{for $v\not\in V^\circ(F)$,\, for all $g\in G(\A_F)$.}
\end{align*}
We will also require that $\Phi \in \mathcal{S}(\A_F^2)$ satisfies $\widehat{\Phi}(0)\neq 0$, where 
\begin{align} \label{2dFT}
\widehat{\Phi}(x,y)=\int_{\A_F^2}\Phi(t_1,t_2)\psi(xt_1+yt_2)dt_1dt_2.
\end{align}

We prove Theorem \ref{thm:main:intro} in this section assuming the
absolute convergence statements given in Propositions \ref{prop:Id:AC} and \ref{prop:other:AC}.  We will indicate precisely when these propositions are used below.  After this section, much of the remainder of the paper is devoted to proving these convergence statements.  

Computing formally one has
\begin{align} \label{formal:comp}
\nonumber &\int_{G(F) \backslash G(\A_F)} \sum_{\gamma \in X^\circ (F)} f_1(\gamma g)\Theta_{f_2}(g)dg \\
\nonumber &=\sum_{\gamma \in X^\circ(F)/G(F)} \int_{G_{\gamma}(F) \backslash G(\A_F)} f_1(\gamma g)\Theta_{f_2}(g)dg\\
&=\sum_{\gamma \in X^\circ(F)/G(F)} \int_{G_{\gamma}(\A_F) \backslash G(\A_F)}f_1(\gamma g)\int_{[G_{\gamma}]}\Theta_{f_2}(g_1g)dg_1dg.
\end{align}
 The set $X^\circ(F)/G(F)$ has $5$ elements represented by $\gamma_b, \gamma_i$, $1 \leq i \leq 3$ and $\gamma_0=\mathrm{Id}$ in the notation of \eqref{eq:gammas}.  The stabilizers are given explicitly by Lemma \ref{lem:stab}, and we will use this lemma without further comment below.

We start with the $\gamma_b$ contribution.  It is computed as in the proof of \cite[Theorem 5.3]{Getz:Liu:Triple}:
\begin{align*}
\int_{G_{\gamma_b}(\A_F)\backslash G(\A_F)}f_1(\gamma_bg)\int_{[G_{\gamma_b}]}\Theta_{f_2}(g_1g)dg_1dg = \sum_{\xi \in Y^{\mathrm{sm}}(F)}I(f)(\xi).
\end{align*}
Strictly speaking, the proof of  \cite[Theorem 5.3]{Getz:Liu:Triple} assumed $f_{1}$ was finite under a maximal compact subgroup of $\mathrm{Sp}_{6}(F_\infty)$, but the same proof is valid given our work in \S \ref{sec:BK}.

We now turn to the $\mathrm{Id}$ term.
Using the definition of the Weil representation, we have that this term is 
\begin{align*}
\int_{G_{\mathrm{Id}}(\A_F) \backslash G(\A_F)} &f_1(
g)\int_{[\GG_m \times \GG_m]}
\sum_{\substack{\xi \in V(F)\\ \mathcal{Q}_1(\xi_1)=\mathcal{Q}_2(\xi_2)=\mathcal{Q}_3(\xi_3)=0}}\\& 
\rho\left(\left( \left(\begin{smallmatrix} a_1 & \\ & a_1^{-1}\end{smallmatrix} \right),\left(\begin{smallmatrix} a_2 & \\ & a_2^{-1} \end{smallmatrix} \right),\left(\begin{smallmatrix} (a_1a_2)^{-1} & \\ & a_1a_2 \end{smallmatrix} \right)\right)g\right) f_2( \xi)da_1^\times da_2^\times dg
\\=\int_{G_{\mathrm{Id}}(\A_F) \backslash G(\A_F)} &f_1(g)\int_{\A_F^\times \times \A_F^\times}
\sum_{\substack{\xi \in \widetilde{Y}_{0}(F)/(F^\times)^2}}\\& 
\rho\left(\left( \left(\begin{smallmatrix} a_1 & \\ & a_1^{-1}\end{smallmatrix} \right),\left(\begin{smallmatrix} a_2 & \\ & a_2^{-1} \end{smallmatrix} \right),\left(\begin{smallmatrix} (a_1a_2)^{-1} & \\ & a_1a_2 \end{smallmatrix} \right)\right)g\right) f_2( \xi )da_1^\times da_2^\times dg.
\end{align*}
Here $(F^\times)^2$ acts as in \eqref{act:Gm2}. Thus using Lemma \ref{lem:Hilbert} we conclude that 
the above is equal to 
\begin{align*}
&\int_{G_{\mathrm{Id}}(\A_F) \backslash G(\A_F)} f_1(g)\int_{\A_F^\times \times \A_F^\times}
\sum_{\substack{\xi \in Y_{0}(F)}}\\& \hspace{2.5 cm}
\rho\left(\left( \left(\begin{smallmatrix} a_1 & \\ & a_1^{-1}\end{smallmatrix} \right),\left(\begin{smallmatrix} a_2 & \\ & a_2^{-1} \end{smallmatrix} \right),\left( \begin{smallmatrix} (a_1a_2)^{-1}& \\ & a_1a_2  \end{smallmatrix} \right)\right)g\right) f_2( \xi)da_1^\times da_2^\times dg\\
&=\int_{N_2^3(\A_F) \backslash G(\A_F)} f_1(g)
\sum_{\substack{\xi \in Y_{0}(F)}} 
\rho\left(g\right) f_2( \xi) dg\\
&=\sum_{\xi \in Y_{0}(F)}I_{0}(f)(\xi)\,.
\end{align*}
This formal computation is justified by Proposition \ref{prop:Id:AC}.

We finally turn to the $\gamma_i$, $1 \leq i \leq 3$, terms. Let $\Phi \in \mathcal{S}(\A_F^2)$ be a function satisfying $\widehat{\Phi}(0)\neq 0$. 
We prove in Proposition \ref{prop:other:AC} below that the sum 
\begin{align*}
\sum_{\xi \in Y_{i}(F)}
I_i(f \otimes \Phi)(\xi,s)
\end{align*}
converges absolutely and defines a holomorphic function of $s$ for $\mathrm{Re}(s)$ sufficiently large.  Moreover, it admits a meromorphic continuation to the $s$ plane and its residue at $s=1$ is  
\begin{align*}
&\frac{\widehat{\Phi}(0)}{\kappa_F} \int_{G_{\gamma_{i}}(\A_F) \backslash G(\A_F)} f_1(\gamma_{i}g)\int_{[G_{\gamma_{i}}]}
\sum_{\xi \in V(F)}
\rho(hg)f_2( \xi)dh dg.
\end{align*}
where $\kappa_F:=\frac{2}{\mathrm{Vol}(F^\times \backslash (\A_F^\times)^1)}.$

Thus altogether we have shown that 
\begin{align*}
&\int_{G(F) \backslash G(\A_F)} \sum_{\gamma \in X^\circ(F)} f_1(\gamma g)\Theta_{f_2}(g)dg\\&=\sum_{\xi \in Y^{\mathrm{sm}}(F)} I(f)(\xi)
+\sum_{\xi \in Y_0(F)}I_0(f)(\xi)+\frac{\kappa_F}{\widehat{\Phi}(0)}
\mathrm{Res}_{s=1}\sum_{i=1}^3 \sum_{\xi \in Y_i(F)}I_i(f\otimes \Phi)(\xi,s).
\end{align*}
On the other hand by Theorem \ref{thm:BK:PS} 
\begin{align*}
\int_{G(F) \backslash G(\A_F)} \sum_{\gamma \in X^\circ(F)} f_1(\gamma g)\Theta_{f_2}(g)dg=\int_{G(F) \backslash G(\A_F)} \sum_{\gamma \in X^\circ(F)} \mathcal{F}_X(f_1)(\gamma g)\Theta_{f_2}(g)dg.
\end{align*}
Replacing $f_1$ by $\mathcal{F}_X(f_1)$ in the argument above we see that this is 
\begin{align*}
&\sum_{\xi \in Y^{\mathrm{sm}}(F)} I(\mathcal{F}_X(f))(\xi)
+\sum_{\xi \in  Y_0(F)}I_0(\mathcal{F}_X(f))(\xi)+
\frac{\kappa_F}{\widehat{\Phi}(0)}\mathrm{Res}_{s=1}\sum_{i=1}^3 \sum_{\xi \in Y_i(F)}I_i(\F_X(f)\otimes \Phi)(\xi,s).
\end{align*}
Thus assuming the absolute convergence statements in Propositions \ref{prop:Id:AC} and \ref{prop:other:AC}  we have proved Theorem \ref{thm:main:intro}.

\section{Bounds on integrals in the non-Archimedean case} \label{sec:bound:na}

Throughout this section $F$ is a  non-Archimedean local field of characteristic zero.  For $v_i \in V_i(F)=F^{d_i}$,
we let
\begin{align} \label{tensor}
\mathrm{ord}(v_i) \quad (\textrm{resp}.~|v_i|) \end{align}
be the minimum of the orders (resp.~ maximum of the  norms) of the entries of $v_i$ with respect to the standard basis on $V_i(F).$  Thus $|v_i|=q^{-\mathrm{ord}(v_i)}.$ We have natural induced bases on $V_i(F) \otimes V_j(F)$ and $V_1(F) \otimes V_2(F) \otimes V_3(F)$ and we define $|v_i \otimes v_j|,$ etc., similarly.

Fix functions
$$
(f=f_1\otimes f_2,\Phi) \in \mathcal{S}(X(F)\times V(F))  \times \mathcal{S}(F^2).
$$
We bound the integrals attached to these functions that appeared in the proof of Theorem \ref{thm:main:intro}.  These bounds will be used to deduce the absolute convergence statements of Propositions \ref{prop:Id:AC} and \ref{prop:other:AC} below.
All implicit constants in this section are allowed to depend on $f\otimes \Phi$.

First we pause to justify an assertion made in \S \ref{ssec:Schwartz:Y}:

\begin{prop}\label{prop:na:smooth} We have
$\mathcal{S}(Y(F)) <C^{\infty}(Y^\mathrm{sm}(F)).$
\end{prop}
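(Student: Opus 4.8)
The plan is to reduce to a pure tensor $f=f_1\otimes f_2\in\mathcal{S}(X(F))\otimes\mathcal{S}(V(F))$ (the general case following since $I$ is bilinear and $\mathcal{S}(Y(F))=\mathrm{Im}(I)$), and to show that for every $y\in Y^{\mathrm{sm}}(F)$ the integrand $g\mapsto f_1(\gamma_bg)\rho(g)f_2(y)$ in \eqref{Is} is a locally constant, compactly supported function on $G_{\gamma_b}(F)\backslash G(F)$, with support controlled locally uniformly in $y$. Absolute convergence of $I(f)(y)$ and local constancy of $y\mapsto I(f)(y)$ then follow immediately, which is exactly the assertion $\mathcal{S}(Y(F))\subset C^\infty(Y^{\mathrm{sm}}(F))$.

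The key step is the following claim: for every $y_0\in Y^{\mathrm{sm}}(F)$ there is a neighborhood $U\ni y_0$ and a compact $\Omega\subset G(F)$ whose image in $G_{\gamma_b}(F)\backslash G(F)$ contains $\{g:f_1(\gamma_bg)\rho(g)f_2(y)\neq0\}$ for all $y\in U$. To prove it I would use the Iwasawa decomposition $G(F)=N_2^3(F)A(F)K^3$, with $A$ the diagonal torus of $\SL_2^3$ and $K=\SL_2(\OO_F)$, together with the section $\{n(t,0,0):t\in F\}$ of $G_{\gamma_b}(F)\backslash N_2^3(F)$ (recall $G_{\gamma_b}$ from Lemma \ref{lem:stab}), so that $G_{\gamma_b}(F)\backslash G(F)$ is parametrized by $(t,a,k)\in F\times A(F)\times K^3$. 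A direct computation of the Plücker coordinates of $\gamma_b\,n(t,0,0)\,a$ (with $\gamma_b$ as in \eqref{eq:gammas} and the embedding \eqref{Gembed}) yields, up to signs, the monomials $a_i^{-1}$, the monomials $a_ia_j^{-1}a_k^{-1}$ over the reorderings of $\{1,2,3\}$, and $t\,a_1^{-1}a_2^{-1}a_3^{-1}$. Since $K^3\subset\mathrm{Sp}_6(\OO_F)$ preserves the norm used in \eqref{plucknorm}, Lemma \ref{lem:seminorm} gives $f_1(\gamma_b\,n(t,0,0)\,ak)=0$ unless $\max\bigl(|a_i|^{-1},\,|a_ia_j^{-1}a_k^{-1}|,\,|ta_1^{-1}a_2^{-1}a_3^{-1}|\bigr)\le C$ for a constant $C=C(f_1)$; in particular then $|a_i|\ge C^{-1}$, $|a_1|\le C|a_2||a_3|$ (and cyclically), and $|t|\le C|a_1a_2a_3|$. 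On the other hand $\rho(n(t,0,0)ak)f_2$, as a function of $v\in V(F)$, equals $\psi\bigl(t\mathcal{Q}_1(v_1)\bigr)\prod_i|a_i|^{d_i/2}\,\bigl(\rho(k)f_2\bigr)(a_1v_1,a_2v_2,a_3v_3)$ times a number of modulus $1$, and $\{\rho(k)f_2:k\in K^3\}$ is a finite subset of $C_c^\infty(V(F))$ (the orbit map is locally constant since $f_2$ is a smooth vector for $\rho$, and $K^3$ is compact), hence is supported in a fixed lattice $\prod_iL_i$. Thus $\rho(g)f_2(y)\neq0$ forces $|a_i||y_i|\le c_{L_i}$ for every $i$.

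Here the hypothesis $y\in Y^{\mathrm{sm}}(F)=Y\cap V'$ is essential: at most one of the $y_i$ can vanish, so at most one of the bounds $|a_i|\le c_{L_i}/|y_i|$ can fail, and the failing one (say $i=1$, so $y_{0,1}=0$) is then recovered from $|a_1|\le C|a_2||a_3|$. Since $y\mapsto|y_i|$ is locally constant on $V_i(F)\smallsetminus\{0\}$, for $y$ in a small enough $U$ all of these bounds are uniform in $y$; together with $|a_i|\ge C^{-1}$ and $|t|\le C|a_1a_2a_3|$ this confines $(t,a,k)$ to a fixed compact set, proving the claim. Granting it, the rest is formal: for $y\in U$ the integrand in \eqref{Is} is supported in the fixed compact image of $\Omega$ and is locally constant there ($f_1$ is fixed by an open compact subgroup of $\mathrm{Sp}_6(F)$ by Lemma \ref{lem:smooth:act}, and $f_2$ is a smooth vector for $\rho$), so $I(f)(y)$ is a finite sum and converges absolutely; moreover $\{\rho(g)f_2:g\in\Omega\}$ is a finite subset of $C_c^\infty(V(F))$, hence invariant under translation by a common lattice $L'\subset V(F)$, so $\rho(g)f_2(y)=\rho(g)f_2(y')$ for all $g\in\Omega$ whenever $y,y'\in U$ with $y-y'\in L'$, forcing $I(f)(y)=I(f)(y')$. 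Hence $I(f)$ is locally constant near $y_0$, and as $y_0$ was arbitrary $I(f)\in C^\infty(Y^{\mathrm{sm}}(F))$, whence $\mathcal{S}(Y(F))=\mathrm{Im}(I)\subset C^\infty(Y^{\mathrm{sm}}(F))$. The only non-formal point is the support claim, and within it the bookkeeping that combines the vanishing of $f_1$ far out (Lemma \ref{lem:seminorm}) with the compact support of $f_2$, exploiting that a smooth point of $Y$ has at most one vanishing coordinate.
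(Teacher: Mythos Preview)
Your proof is correct and follows essentially the same approach as the paper: both use the Iwasawa decomposition on $G_{\gamma_b}(F)\backslash G(F)$, the compact-support clause of Lemma~\ref{lem:seminorm}, and the observation $Y^{\mathrm{sm}}=Y\cap V'$ (so at most one $y_i$ vanishes) to confine $(t,a)$ to a compact set uniformly for $y$ near $y_0$, then deduce local constancy from the smoothness of $f_1$ and of the Weil representation. The paper's version inserts a preliminary Cauchy--Schwarz step to separate $\|f_1\|_2$ from the $f_2$-integral (yielding the support constraint $m(t,a)\le q^{-c}$ via $\one_{\ge c}$), but this is cosmetic and the underlying support analysis is the same as yours.
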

\begin{proof}
Fix $v=(v_1,v_2,v_3)\in Y^{\mathrm{sm}}(F)$ and let $v'\in Y^{\mathrm{sm}}(F)$.  By symmetry we can assume that $|v_2||v_3| \neq 0$.  It suffices to show 
$$
\int_{G_{\gamma_b}(F) \backslash G(F)} |f_1(\gamma_bg)||\rho(g)f_2(v)-\rho(g)f_2(v')|dg
$$
is zero for $|v-v'|$ sufficiently small.  
We can choose $\kappa_v \in \RR_{>0}$ such that if $|v-v'|<\kappa_v$ then $|v'_i|=|v_i|$ for $2\le i\le 3$, and $|v_1|=|v_i'|$ if $v_1 \neq 0$.  For the remainder of the proof we assume $|v-v'|<\kappa_v$.  
By the Cauchy-Schwarz inequality and  Lemma \ref{lem:seminorm}, the integral above is bounded by
\begin{align*}
    \norm{f_1}_2\left(\int_{G_{\gamma_b}(F)\backslash G(F)}\one_{\ge c}(\gamma_b g)|\rho(g)f_2(v)-\rho(g)f_2(v')|^2dg\right)^{1/2}
\end{align*}
for some $c \in \ZZ.$

By the Iwasawa decomposition, it suffices to show that for all $c \in \ZZ$ the integral
\begin{align} \label{int:above}
    \int_{m(t,a)\le q^{-c}} \left(\int_{K^3} |\rho(k)f_2(a^{-1}v)-\psi(-t\mathcal{Q}(v)+t\mathcal{Q}(v'))\rho(k)f_2(a^{-1}v')|^2dk\right) \frac{d^\times a dt}{\prod_{i=1}^3 |a_i|^{d_i-2}}
\end{align}
is zero for $|v-v'|$ sufficiently small, where $m(t,a)$ is defined as in \eqref{eq:gamma0gnorm}.
The integral \eqref{int:above} is supported in the set of $a_i$ such that $|v_i|q^{-N}\le |a_i|\le q^{-c}$ for each $i$ for some $N$ depending on $f_2$. Since $m(t,a)\le q^{-c}$, we have additionally $q^{-2N+c}|v_2||v_3|\le q^{c}|a_2||a_3|\le |a_1|\le q^{-c}$.  We have assumed that $|v_2||v_3| \neq 0;$ hence the support of the integral, as a function of $a$, lies in a compact subset of $(F^\times)^3$ independent of $v'$ (since $|v-v'|<\kappa_v$).
Thus the integral over $t$ has support in a set that is independent of $v'$.  In particular, if $|v-v'|$ is sufficiently small, then $\psi(-t\mathcal{Q}(v)+t\mathcal{Q}(v'))=1$, so \eqref{int:above} becomes
\begin{align*} 
    \int_{m(t,a)\le q^{-c}} \left(\int_{K^3} |\rho(k)f_2(a^{-1}v)-\rho(k)f_2(a^{-1}v')|^2dk\right)\Big( \prod_{i=1}^3 |a_i|^{2-d_i}\Big)  d^\times a dt.
\end{align*}
Since the vector space $\langle \rho(k)f_2 \rangle_{k\in K }$ is finite dimensional, and the integral over $a$ is supported in a compact set independent of $v'$ (since $|v-v'|<\kappa_v$), for $v'$ close enough to $v$, the integral above vanishes.
\end{proof}

\begin{prop} \label{prop:I1int:na} For $v=(v_1,v_2,v_3) \in V^{\circ}(F)$, one has
\begin{align*}
\int_{N_2^3(F) \backslash G(F)}|f_1(g)\rho(g)f_2(v)|dg   \ll \prod_{i=1}^3 |v_i|^{-d_i/2}.
\end{align*}
The integral is supported in the set of $v$ satisfying $
|v_1\otimes v_2\otimes v_3| \ll 1$.
The function $I_0(f)(v)$ satisfies the same bounds on its magnitude and support.
\end{prop}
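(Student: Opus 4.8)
The plan is to pass to the ``bottom-row'' model of $N_2^3(F)\backslash G(F)$ and reduce the estimate to an elementary lattice sum. Since $G=\SL_2^3$, sending $g=(g_1,g_2,g_3)$ to the triple $w_i:=(0,1)g_i\in F^2$ of bottom rows identifies $N_2^3(F)\backslash G(F)$ with $(F^2\setminus\{0\})^3$, and carries the $G(F)$-invariant measure $dg$ to a positive multiple of Lebesgue measure $\prod_i dw_i$, because $\SL_2(F)$ acts on row vectors preserving Lebesgue measure and transitively on $F^2\setminus\{0\}$. Under \eqref{Gembed} the last three rows of $g$ are $c_i\mathbf e_i+d_i\mathbf e_{3+i}$ for $i=1,2,3$, so two things happen: (i) $f_1(g)$, which depends only on the last three rows of a matrix in $\mathrm{Sp}_6(F)$, descends to a function of $(w_1,w_2,w_3)$; and (ii) expanding $\mathrm{Pl}(g)=\bigwedge_{i=1}^3(c_i\mathbf e_i+d_i\mathbf e_{3+i})$ into $2^3$ pairwise distinct basis monomials of $\wedge^3\GG_a^6$ gives $|g|=|\mathrm{Pl}(g)|=\prod_{i=1}^3|w_i|$, where $|w_i|$ denotes the maximum of the norms of the entries.

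Next I would record the size of each factor of the integrand on this model. By Lemma \ref{lem:seminorm} applied with $\beta=0$ and $n=3$, one has $|f_1(g)|\ll\big(\prod_i|w_i|\big)^{-2}$, and $f_1(g)=0$ unless $\prod_i|w_i|\le C_1$ for a constant $C_1$ depending on $f_1$. For the Weil representation factor I would use an Iwasawa decomposition $g_i=n_i\,\mathrm{diag}(\alpha_i,\alpha_i^{-1})\,k_i$ with $n_i\in N_2(F)$ and $k_i\in\SL_2(\OO_F)$, which forces $|\alpha_i|=|w_i|^{-1}$; the standard formulas for the Weil representations $\rho_i$ then give $|\rho(g)f_2(v)|=\prod_i|w_i|^{-d_i/2}\,\big|(\rho(k)f_2)(\alpha_1v_1,\alpha_2v_2,\alpha_3v_3)\big|$. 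Since $f_2\in C_c^\infty(V(F))$, the space $\langle\rho(k)f_2:k\in\SL_2(\OO_F)^3\rangle$ is finite dimensional, so there is a constant $C_2$ with $(\rho(k)f_2)(\alpha\cdot v)$ bounded uniformly in $k$ and vanishing unless $|w_i|\ge|v_i|/C_2$ for every $i$.

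Putting these together, $\int_{N_2^3(F)\backslash G(F)}|f_1(g)\rho(g)f_2(v)|\,dg$ is
\[
\ll\int_{(F^2\setminus\{0\})^3}\Big(\prod_i|w_i|\Big)^{-2}\prod_i|w_i|^{-d_i/2}\,\one\big[\,\prod_i|w_i|\le C_1\,\big]\prod_i\one\big[\,|w_i|\ge|v_i|/C_2\,\big]\prod_i dw_i.
\]
The two indicator constraints force $\prod_i|v_i|/C_2^3\le\prod_i|w_i|\le C_1$, hence $|v_1\otimes v_2\otimes v_3|=\prod_i|v_i|\ll1$, which is the support assertion. Dropping the upper constraint and factoring over $i$, the integral is $\le\prod_i\int_{|w_i|\ge|v_i|/C_2}|w_i|^{-2-d_i/2}\,dw_i$, and the elementary bound $\int_{|w|\ge R}|w|^{-2-e}\,dw\ll_e R^{-e}$ on $F^2$ (valid since each $d_i/2\ge1>0$) yields $\prod_i|v_i|^{-d_i/2}$. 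Finally $|I_0(f)(v)|\le\int_{N_2^3(F)\backslash G(F)}|f_1(g)\rho(g)f_2(v)|\,dg$ by the definition \eqref{Is}, so $I_0(f)$ inherits the same bounds on magnitude and support; the general (non-pure-tensor) case follows by bilinearity, the supports forming a finite union of sets of the asserted shape.

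I expect the only genuinely delicate point to be the precise matching of exponents. The danger zone is small $|w_i|$, where convergence is rescued exactly by the Weil-representation cutoff $|w_i|\ge|v_i|/C_2$; and it is this same cutoff, combined with the decay of $f_1$ at large $|g|$, that produces the product support condition. For this reason it is essential to invoke Lemma \ref{lem:seminorm} with $\beta=0$ rather than with a positive $\beta$: the latter would give only $|v_i|^{-d_i/2+\beta}$, which is strictly worse when $|v_i|<1$, a possibility not excluded by $\prod_i|v_i|\ll1$.
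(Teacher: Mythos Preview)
Your argument is correct and is essentially the same as the paper's: both pass to Iwasawa-type coordinates on $N_2^3(F)\backslash G(F)$, bound $|f_1|$ by Lemma~\ref{lem:seminorm} with $\beta=0$, and use the compact support of $K$-translates of $f_2$ to obtain the cutoff $|w_i|\gg|v_i|$ before integrating. The only cosmetic difference is that you work in the bottom-row model $(F^2\setminus\{0\})^3$ with Lebesgue measure while the paper uses the equivalent polar coordinates $(F^\times)^3\times K^3$ with $\prod_i|a_i|^2\,d^\times a_i\,dk$.
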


\begin{proof}
We decompose the Haar measure using the Iwasawa decomposition to see that the integral in the proposition is equal to 
\begin{align} \label{na:Ib2}
&\int_{N_2^3(F) \backslash G(F)} 
|f_1\left(g\right) \rho\left(g\right)
f_2(v)|dg\nonumber\\
&=\int_{(F^\times)^3 \times K^3} \left| f_1\left(\left(\begin{smallmatrix} a^{-1} & \\ & a\end{smallmatrix}\right)k\right)\rho\left(\left(\begin{smallmatrix} a^{-1} & \\ & a\end{smallmatrix}\right)k\right)f_2(v)\right|\Big(\prod_{i=1}^3 |a_i|^{2} d^\times a_i\Big) dk\nonumber\\
&=\int_{(F^\times)^3 \times K^3} \left| f_1\left(\left(\begin{smallmatrix} a^{-1} & \\ & a\end{smallmatrix}\right)k\right)
\rho\left(k\right)f_2(a^{-1} v)\right|
\Big(\prod_{i=1}^3 |a_i|^{2-d_i/2} d^\times a_i \Big) dk.
\end{align}  
Now
\begin{align*}
\left| \left(\begin{smallmatrix} a^{-1} & \\ & a\end{smallmatrix}\right)k \right| = |a_1 a_2 a_3|. 
\end{align*}
By Lemma \ref{lem:seminorm}, \eqref{na:Ib2} is bounded by a constant times 
 \begin{align} \label{int:supp}
 &  \int_{|a_1a_2a_3|\ll 1} \widetilde{f}_2(a^{-1} v)
\prod_{i=1}^3 |a_i|^{-d_i/2} d^\times a_i ,
 \end{align}
 where 
 \begin{align}\label{overk}
 \widetilde{f}_2(v):=\int_{K^3} |\rho(k)f_2(v)|dk.
 \end{align}
 Since $\widetilde{f}_2$ is compactly supported, we have that $|v_i| \ll_{f_2} |a_i|$ for $1 \leq i \leq 3$. Therefore \eqref{int:supp} is bounded by a constant times 
 \begin{align*}
     &\prod_{i=1}^3\int_{|v_i|\ll |a_i|} |a_i|^{-d_i/2} d^\times a_i \ll  \prod_{i=1}^3 |v_i|^{-d_i/2}.
 \end{align*}
Moreover, the support of \eqref{int:supp} as a function of $v$ satisfies $|v_1||v_2||v_3| \ll |a_1a_2 a_3|\ll 1$, as claimed.
\end{proof}

\begin{prop} \label{prop:other:na:bound}
 For $r>0$, as a function of $v \in V^{\circ}(F)$, the integral
\begin{align*}
&\int_{(N_2 (F) \times \Delta_1(\SL_2)(F)) \backslash G(F)}|f_1\left(\gamma_{1}g\right)|\\
&\times\int_{N_2(F) \backslash \SL_2(F)}\int_{F^\times}
|\rho\left(\left( I_2,h,\left(\begin{smallmatrix} 1 & \\ & -1 \end{smallmatrix} \right)h\left(\begin{smallmatrix} 1 & \\ & -1 \end{smallmatrix} \right)\right)g\right)f_2( v)\Phi(x(0,1)hp_1(g))||x|^{2r}d^\times xdhdg
\end{align*}
has support in $|v_1|\ll 1$.  It is bounded by a constant times
\begin{align*}
&C'^r\zeta(2r)\zeta(2r+d_2/2-1)|v_1|^{-d_1/2}|v_3|^{1-d_3/2}\max(|v_1||v_3|,|v_2|)^{1-d_2/2-2r} \\& \times \left(C+\max\left( 0,\mathrm{ord}(v_1 \otimes v_3))-\mathrm{ord}(v_2)\right)+\zeta(2r+d_2/2-1)\right).
\end{align*}
for some constant $C,C'> 0$. 
If $r=\mathrm{Re}(s)$,
the function $I_1(f\otimes\Phi)(v,s)$ satisfies the same bounds on its magnitude and support.
\end{prop}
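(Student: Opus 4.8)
The plan is to unfold the integral using the Iwasawa decomposition until it becomes an elementary integral over a product of copies of $F^\times$, bound $f_1$ via Lemma \ref{lem:seminorm}, and read off the asserted estimate. By Lemma \ref{lem:stab} one has $N_2(F)\times\Delta_1(\SL_2)(F)=G_{\gamma_1}(F)$: here $\Delta_1(\SL_2)$ sits in the second and third $\SL_2$-factors of $G=\SL_2^3$ (twisted by conjugation by $\mathrm{diag}(1,-1)$), while $N_2$ is the first-coordinate unipotent, which centralizes it. Thus the integral over $(N_2\times\Delta_1(\SL_2))(F)\backslash G(F)$ is an integral over $G_{\gamma_1}(F)\backslash G(F)\cong (N_2(F)\backslash\SL_2(F))\times(\Delta_1(\SL_2)(F)\backslash(\SL_2(F)\times\SL_2(F)))$, the second factor being identified with $\SL_2(F)$ via $(g_2,g_3)\mapsto g_3^{-1}\,\mathrm{Int}(\mathrm{diag}(1,-1))(g_2)$. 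Applying Iwasawa to each of these $\SL_2$'s and to the variable $h$, and integrating out the resulting maximal-compact variables against $f_1$, $f_2$, $\Phi$ exactly as in the proof of Proposition \ref{prop:I1int:na} (all three are fixed and compactly supported modulo $K$, and $\langle\rho(k)f_2\rangle_{k\in K}$ is finite-dimensional), the expression reduces to an integral over three copies of $F^\times$ — one torus variable $a_1$ governing the action on $v_1$, one coming from the $\SL_2$-quotient, and one coming from $h$ — together with the $x$-integral against $\Phi$ and $|x|^{2r}$. On these tori the Weil representation acts by rescaling the argument of $f_2$ and by multiplication by $\psi$ of the quadratic forms, so $\rho(\Delta_1(h)g)f_2(v)$ factors over $i=1,2,3$: the $i=1$ factor depends only on $(a_1,v_1)$ and the $i=2,3$ factors on the other two torus variables and $(v_2,v_3)$, while $p_1(g)=g_2$ couples $h$ to $v_2$ inside $\Phi$.

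With this in hand I would insert the bound $|f_1(\gamma_1 g)|\ll_{f_1,\beta}|\gamma_1 g|^{-2+\beta}$ (for any $0\le\beta<\tfrac12$, with $|\cdot|$ the Pl\"ucker norm and $-\tfrac{n+1}{2}=-2$ since $n=3$), together with the vanishing of $f_1(\gamma_1 g)$ for $|\gamma_1 g|$ large, both from Lemma \ref{lem:seminorm}. This requires the explicit shape of $|\gamma_1 g|$ in the Iwasawa coordinates just introduced; this is the analogue for the $\gamma_1$-orbit of formula \eqref{eq:gamma0gnorm} and is extracted from the geometric computations in \cite{Getz:Liu:Triple}. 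Combined with the compact support of $f_2$ — which forces $|a_1|\gg|v_1|$ and confines the remaining torus variables — the support statement $|v_1|\ll 1$ falls out, and the $a_1$-integral gives $\asymp|v_1|^{-d_1/2}$ exactly as in Proposition \ref{prop:I1int:na} (using $d_1/2\ge 1$). The $x$-integral against $\Phi$ Schwartz then produces a factor $\asymp C'^{r}\zeta(2r)$ times a negative power of the norm of the relevant vector; the integral over one of the two remaining torus variables is a geometric series producing $\zeta(2r+d_2/2-1)$; and the integral over the last torus variable splits into a bounded-range piece, whose range has length $\asymp\max(0,\ord(v_1\otimes v_3)-\ord(v_2))$ and hence contributes linearly in that length, plus a geometrically convergent tail. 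This last split is precisely why $\zeta(2r+d_2/2-1)$ appears both as a prefactor and inside the parenthesis, and why the factors $\max(|v_1||v_3|,|v_2|)^{1-d_2/2-2r}$ and $|v_3|^{1-d_3/2}$ show up.

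The hard part will be the bookkeeping in this final step: correctly locating the ``logarithmic'' variable (an integer exponent constrained to an interval rather than controlled by decay), disentangling it from the genuinely convergent contributions, and checking that every exponent emerges exactly as stated; the ranges of summation depend delicately on the relative sizes of $|v_1|$, $|v_2|$, $|v_3|$ and on the supports of $f_1$, $f_2$, $\Phi$, so the estimate must be assembled by a careful case analysis. A secondary point is the justification of the interchanges of order of integration, which one does a posteriori once the final bound is known to be finite. Finally, since $|x|^{2s}$ has absolute value $|x|^{2r}$ when $r=\mathrm{Re}(s)$, the bound on $|I_1(f\otimes\Phi)(v,s)|$ and its support are immediate consequences of the estimate for the integral with absolute values proved above.
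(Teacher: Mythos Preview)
Your outline follows the paper's strategy and correctly identifies where each factor in the bound originates, but two concrete steps are missing and without them the argument does not go through as written.

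First, you observe that $p_1(g)=g_2$ couples $h$ to $g_2$ inside $\Phi$, but the same coupling occurs inside $\rho_2$ (the second component of $\Delta_1(h)g$ is $hg_2$), and you do not say how to undo it. The paper's device is the change of variables $g_2\mapsto h^{-1}g_2$: after it, both $\Phi(x(0,1)g_2)$ and $\rho_2(g_2)f_{2,2}$ depend only on $g_2$, while $f_1$ acquires the argument $\gamma_1(g_1,h^{-1}g_2,I_2)$, which one then rewrites as $\gamma_1\bigl(g_1,g_2,\begin{psmatrix}1&\\&-1\end{psmatrix}h\begin{psmatrix}1&\\&-1\end{psmatrix}\bigr)$ using $\Delta_1(\SL_2)\le G_{\gamma_1}$. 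Without this step your Iwasawa coordinates for $h$ and $g_2$ remain entangled and the clean factorization you describe (``the $i=2,3$ factors depend on the other two torus variables'') is not available.

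Second, your count of ``three copies of $F^\times$'' drops a variable: the factor $\Delta_1(\SL_2)\backslash(\SL_2\times\SL_2)\cong\SL_2$ is a full $\SL_2$, so Iwasawa contributes a unipotent coordinate $t$ as well as a torus. This $t$ enters the Pl\"ucker norm as
\[
m'(t,a)=\max\bigl(|a_1|,\,|a_1a_2a_3^{-1}|,\,|a_1a_2^{-1}a_3|,\,|a_1a_2a_3t|\bigr),
\]
and the paper disposes of it by a shell decomposition $m'(t,a)=q^{-k}$, integrating $t$ on each shell and then absorbing $k$ into $a_1$ via $a_1\mapsto\varpi^k a_1$. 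Only after this does one arrive at the constrained three-torus integral (over $a_1,a_2,a_3$ with $|a_1|\le 1$ and $|a_2a_3^{-1}|,|a_2^{-1}a_3|\le|a_1|^{-1}$) that yields the support bound on $v_1$, the factor $|v_3|^{1-d_3/2}$ from the $a_3$-integral together with the length-of-range term $C+\max(0,\ord(v_1\otimes v_3)-\ord(v_2))$, and then $\zeta(2r+d_2/2-1)\max(|v_1||v_3|,|v_2|)^{1-d_2/2-2r}$ from the final $a_2$-integral.
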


\begin{proof}
The integral in the proposition is equal to 
\begin{align*}
&\int_{N_2(F) \backslash \SL_2(F)}\int_{N_2 (F)\backslash \SL_2(F) \times \mathrm{SL}_2(F)}|f_1\left(\gamma_{1}(g_1,g_2,I_2)\right)|\\
&\times\int_{F^\times}
|\rho\left(\left( g_1,hg_2,\left(\begin{smallmatrix} 1 & \\ & -1 \end{smallmatrix} \right)h\left(\begin{smallmatrix} 1 & \\ & -1 \end{smallmatrix} \right)\right)\right)f_2( v)\Phi(x(0,1)hg_2)||x|^{2r}d^\times xdg_1dg_2dh.
\end{align*}
We  change variables $g_2 \mapsto h^{-1}g_2$ to see that this is 
\begin{align*}
    &\int_{N_2(F) \backslash \SL_2(F)}\int_{N_2 (F)\backslash \SL_2(F) \times \mathrm{SL}_2(F)}|f_1\left(\gamma_{1}(g_1,h^{-1}g_2,I_2)\right)|\\
&\times\int_{F^\times}
|\rho\left(\left( g_1,g_2,\left(\begin{smallmatrix} 1 & \\ & -1 \end{smallmatrix} \right)h\left(\begin{smallmatrix} 1 & \\ & -1 \end{smallmatrix} \right)\right)\right)f_2( v)\Phi(x(0,1)g_2)||x|^{2r}d^\times xdg_1dg_2dh.
\end{align*}
Since $\Delta_1(\SL_2(F))$ is in the stabilizer of $\gamma_1$, this is 
\begin{align*}
    &\int_{N_2(F) \backslash \SL_2(F)}\int_{N_2 (F)\backslash \SL_2(F) \times \mathrm{SL}_2(F)}|f_1\left(\gamma_{1}(g_1,g_2,\left(\begin{smallmatrix} 1 & \\ & -1 \end{smallmatrix} \right)h\left(\begin{smallmatrix} 1 & \\ & -1 \end{smallmatrix}\right)\right)|\\
&\times\int_{F^\times}
|\rho\left(\left( g_1,g_2,\left(\begin{smallmatrix} 1 & \\ & -1 \end{smallmatrix} \right)h\left(\begin{smallmatrix} 1 & \\ & -1 \end{smallmatrix} \right)\right)\right)f_2( v)\Phi(x(0,1)g_2)||x|^{2r}d^\times xdg_1dg_2dh.
\end{align*}
Now decomposing the Haar measure using the Iwasawa decomposition, we see that this is 
\begin{align*} 
&\int_{ (F^\times)^3 \times F \times K^3\times F^\times }\left|f_1\left(\gamma_1\left(\left(\begin{smallmatrix} a_1^{-1} & \\ & a_1 \end{smallmatrix} \right)k_1,\left(\begin{smallmatrix} 1& t \\ & 1 \end{smallmatrix} \right)\left(\begin{smallmatrix} a_2^{-1}&  \\ & a_2 \end{smallmatrix} \right)k_2,\left(\begin{smallmatrix}a_3^{-1} & \\ & a_3 \end{smallmatrix} \right)k_3\right)\right)\right|\\
& \times\left|\rho\left(\left(\begin{smallmatrix} a_1^{-1} &\\  & a_1\end{smallmatrix}\right)k_1,\left(\begin{smallmatrix} 1& t \\ & 1 \end{smallmatrix} \right)\left(\begin{smallmatrix} a_2^{-1}&  \\ & a_2 \end{smallmatrix} \right)k_2,\left(\begin{smallmatrix}a_3^{-1} & \\ & a_3 \end{smallmatrix} \right)k_3\right)f_2(v)\Phi((0,xa_2)k_2)\right|\\& \times|x|^{2r}  |a_1a_2a_3|^{2} d^\times x dt da_1^\times da_2^\times da_3^\times dk_1 dk_2dk_3.
\end{align*}
We have
\begin{align*}\begin{split}
&\left| \gamma_1\left(\left(\begin{smallmatrix} a_1^{-1} & \\ & a_1 \end{smallmatrix} \right)k_1,\left(\begin{smallmatrix} 1& t \\ & 1 \end{smallmatrix} \right)\left(\begin{smallmatrix} a_2^{-1}&  \\ & a_2 \end{smallmatrix} \right)k_2,\left(\begin{smallmatrix}a_3^{-1} & \\ & a_3 \end{smallmatrix} \right)k_3\right)\right|\\ &=\max(|a_1|,|a_1a_2a_3^{-1}|, |a_1a_2^{-1}a_3|,|a_1a_2a_3t|)\\
&=:m'(t,a).\end{split}
\end{align*}
Taking a change of variable $x\mapsto xa_2^{-1}$, by Lemma \ref{lem:seminorm} the integral above is bounded by a constant times
\begin{align*}
q^{2nr}\zeta(2r)\int_{\substack{ (F^\times)^3 \times F \\ m'(t,a) \ll  1}}m'(t,a)^{-2} \widetilde{f}_2(a^{-1}v)dt |a_2|^{-2r}\prod_{i=1}^3|a_i|^{2-d_i/2}  d^\times a_i
\end{align*}
for some $n\in \ZZ$, where $\widetilde{f}_2$ is the nonnegative function defined in \eqref{overk}. For some $N \in \ZZ_{\geq 0}$ sufficiently large, we can write the integral here as
\begin{align*}
\sum_{k=-N}^\infty q^{2k} \int \widetilde{f}_2(a^{-1}v)dt|a_2|^{-2r} \prod_{i=1}^3|a_i|^{2-d_i/2}  d^\times a_i
\end{align*}
where the integral is over $t,a$ such that $m'(t,a) = q^{-k}$.  This is bounded by 
\begin{align*}
\sum_{k=-N}^\infty \int q^k  \widetilde{f}_2(a^{-1}v) |a_2|^{-2r}\prod_{i=1}^3|a_i|^{1-d_i/2}  d^\times a_i,
\end{align*}
where the integral is now over $a$ such that $m'(a):=\max(|a_1|,|a_1a_2a_3^{-1}|,|a_1a_2^{-1}a_3|) \leq q^{-k}$.
Taking a change of variables $a_1 \mapsto \varpi^ka_1$, one arrives at
\begin{align}\label{other:int:unr}
\sum_{k=-N}^\infty \int q^{kd_1/2}  \widetilde{f}_2\left( \frac{v_1}{\varpi^ka_1},\frac{v_2}{a_2}, \frac{v_3}{a_3}\right)|a_2|^{-2r} \prod_{i=1}^3|a_i|^{1-d_i/2}  d^\times a_i
\end{align}
where the integral is now over $a_1,a_2,a_3$ such that 
$$
1 \geq \max(|a_1|,|a_1a_2a_3^{-1}|,|a_1a_2^{-1}a_3|).
$$
The bound on the support as a function of $v_1$ is now obvious. 
We also deduce that if $a$ is in the support of the integral for a given $v$, then
$$
|v_3| \ll |a_3| \leq |a_1|^{-1}|a_2| \ll |v_1|^{-1}|a_2|.
$$
Thus for some $C,C'>0$ depending on $f_2$, \eqref{other:int:unr}  is bounded by a constant times
\begin{align*}
&|v_1|^{-d_1/2} \int_{|v_3| \ll |a_3| \ll |v_1|^{-1}|a_2|,\, |v_2|\ll |a_2|}|a_3|^{1-d_3/2}|a_2|^{1-d_2/2-2r}d^\times a_3d^\times a_2 \\ 
 &\ll_{d_3} |v_1|^{-d_1/2}|v_3|^{1-d_3/2} \int_{\max(|v_1||v_3|, |v_2|)\ll |a_2|}(C+\ord(v_1\otimes v_3)-\ord(a_2))|a_2|^{1-d_2/2-2r}d^\times a_2 \\
 & \leq |v_1|^{-d_1/2}|v_3|^{1-d_3/2}C'^{1-d_2/2-2r}\max(|v_1||v_3|,|v_2|)^{1-d_2/2-2r}\\& \times  \int_{1 \leq |a_2|}(C+\mathrm{ord}(v_1 \otimes v_3)-\min(\mathrm{ord}(v_1 \otimes v_3),\mathrm{ord}(v_2))-\mathrm{ord}(a_2))|a_2|^{1-d_2/2-2r}d^\times a_2
\\
&\leq |v_1|^{-d_1/2}|v_3|^{1-d_3/2}C'^{1-d_2/2-2r}\max(|v_1||v_3|,|v_2|)^{1-d_2/2-2r}\zeta(2r+d_2/2-1) \\& \times \left(C+\max\left( 0,\mathrm{ord}(v_1 \otimes v_3)-\mathrm{ord}(v_2)\right)+\zeta(2r+d_2/2-1)\right).
\end{align*}
\end{proof}

\section{The unramified calculation} \label{sec:unr} 
  
For this section, $F$ is a local field unramified over $\QQ_p$ and $\psi:F \to \CC^\times$ is an unramified nontrivial character.
Let 
\begin{align}
\chi_\mathcal{Q}(a_1,a_2,a_3):=\chi_{\mathcal{Q}_1}(a_1)\chi_{\mathcal{Q}_2}(a_2)\chi_{\mathcal{Q}_3}(a_3)
\end{align}
where $\chi_{\mathcal{Q}_i}$ is the (quadratic) character attached to $\mathcal{Q}_i$ as in \cite[\S 3.1]{Getz:Liu:Triple}. We assume that $\chi_\mathcal{Q}$ is unramified and $\one_{V(\OO)}$ is fixed under $\rho(K^3)$ where $K=\SL_2(\OO)$. Recall $\one_c$ defined in \eqref{1k} and the basic function 
$$
b_X:=\sum_{j,k=0}^\infty q^{2j}\one_{k+2j}.
$$
In this section we give formulae for the unramified functions $
I(b_X \otimes \one_{V(\OO)})(v), I_0(b_X\otimes \one_{V(\OO)})(v), 
I_{i}(b_X\otimes \one_{V(\OO)}\otimes\one_{\OO^2})(v,s)
$
for $1 \leq i \leq 3$.

\subsection{The open orbit} 
For the reader's convenience, we state the formula for
\begin{align} \label{bY}
    b_Y:=I(b_X\otimes \one_{V(\OO)})
\end{align}
given by \cite[Proposition 6.3]{Getz:Liu:Triple}:
\begin{prop} \label{prop:unram:comp} 
For $v\in Y^\mathrm{sm}(F)$, one has 
\begin{align*} 
b_Y(v)=\sum_{j=0}^\infty \int \one_{\OO}\left(\frac{\mathcal{Q}(v)}{a_1a_2a_3\varpi^{4j}} \right) \one_{V(\OO)}\left(\frac{v}{a\varpi^{2j}}\right) \chi_{\mathcal{Q}}(a)\prod_{i=1}^3 \left(\frac{|a_i|}{q^{2j}}\right)^{1-d_i/2}d^\times a
\end{align*}
where the integral is over $a_1,a_2,a_3 \in \OO$ satisfying
\begin{align*} 
\max(|a_1^{-1}a_2a_3|,|a_2^{-1}a_1a_3|,|a_3^{-1}a_1a_2|) \leq 1.
\end{align*}  \qed
\end{prop}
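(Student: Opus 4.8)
The plan is to unfold the definition of $I(b_X\otimes\one_{V(\OO)})$ and evaluate the resulting integral by the Iwasawa decomposition, the only substantive input being an explicit description of $b_X$ along the torus orbit of $\gamma_b$. First I would write
\[ b_Y(v)=\int_{G_{\gamma_b}(F)\backslash G(F)}b_X(\gamma_b g)\,\rho(g)\one_{V(\OO)}(v)\,dg. \]
By Lemma~\ref{lem:stab}, $G_{\gamma_b}$ is the subgroup of $N_2^3$ cut out by $t_1+t_2+t_3=0$, so the diagonally embedded $N_2$, namely $\Delta N_2=\{(n(t),n(t),n(t)):t\in\GG_a\}$ with $n(t)=\left(\begin{smallmatrix}1&t\\&1\end{smallmatrix}\right)$, is a complement to $G_{\gamma_b}$ in $N_2^3$. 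Combining this with the Iwasawa decomposition $\SL_2^3=N_2^3\,A^3\,K^3$ (with $A$ the diagonal torus of $\SL_2$ and $K=\SL_2(\OO)$) gives representatives $g=(n(t),n(t),n(t))\,\mathrm{diag}(a)\,k$ for $G_{\gamma_b}(F)\backslash G(F)$, where $\mathrm{diag}(a)$ is the image under \eqref{Gembed} of $\left(\left(\begin{smallmatrix}a_1^{-1}&\\&a_1\end{smallmatrix}\right),\left(\begin{smallmatrix}a_2^{-1}&\\&a_2\end{smallmatrix}\right),\left(\begin{smallmatrix}a_3^{-1}&\\&a_3\end{smallmatrix}\right)\right)$, and the invariant measure decomposes, up to a harmless positive constant, as $dt\,\prod_{i=1}^3|a_i|^2\,d^\times a_i\,dk$. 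Since $\one_{V(\OO)}$ is $\rho(K^3)$-fixed by hypothesis and $b_X$ is right $\Sp_6(\OO)$-invariant, the integral over $K^3$ is trivial, and the standard Weil-representation formulas for the diagonal torus and the upper unipotent of $\SL_2$ give
\[ \rho\big((n(t),n(t),n(t))\,\mathrm{diag}(a)\big)\one_{V(\OO)}(v)=\psi\big(t\,\mathcal{Q}(v)\big)\,\chi_{\mathcal{Q}}(a)\Big(\prod_{i=1}^3|a_i|^{-d_i/2}\Big)\one_{V(\OO)}\big(a_1^{-1}v_1,a_2^{-1}v_2,a_3^{-1}v_3\big). \]

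The heart of the proof is the computation of $b_X(\gamma_b\,(n(t),n(t),n(t))\,\mathrm{diag}(a))$. From \eqref{bX}, \eqref{1k} and the proof of Lemma~\ref{lem:onec:in} one has $b_X=\sum_{j\ge0}q^{2j}\one_{\ge 2j}$, and I would first record that for $g\in X^{\circ}(F)$ the value $\one_{\ge c}(g)$ is $1$ precisely when every Pl\"ucker coordinate of $g$ has order $\ge c$: indeed $\mathrm{Pl}(g)$ is a nonzero decomposable isotropic $3$-vector, hence equals $\varpi^{\ell}$ times a primitive such vector, with $\ell$ the minimum of the orders of its coordinates, while $[P,P]m(\varpi^\alpha)\Sp_6(\OO)$ is exactly the $\mathrm{Pl}$-preimage of $\Sp_6(\OO)\cdot\varpi^{\alpha}(e_4\wedge e_5\wedge e_6)$, i.e.\ of those vectors of the above shape with $\ell=\alpha$. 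A direct matrix multiplication then shows the last three rows of $\gamma_b\,(n(t),n(t),n(t))\,\mathrm{diag}(a)$ are
\[ \begin{psmatrix}a_1^{-1}&a_2^{-1}&a_3^{-1}&a_1t&a_2t&a_3t\\ 0&0&0&-a_1&a_2&0\\ 0&0&0&-a_1&0&a_3\end{psmatrix}, \]
whose Pl\"ucker coordinates are, up to signs and the integer $3$ (a unit away from residue characteristic $3$, a case one handles with a cognate computation using a slightly different representative), the quantities $a_i$, the three cyclic products $a_i^{-1}a_ja_k$, and $ta_1a_2a_3$. Hence $\one_{\ge 2j}$ of this element is $1$ if and only if $\ord(a_i)\ge 2j$ for all $i$, $\ord(a_i^{-1}a_ja_k)\ge 2j$ for all cyclic triples, and $\ord(ta_1a_2a_3)\ge 2j$.

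Finally I would assemble the pieces. Writing $a_i=\varpi^{2j}a_i'$ — forced by $\ord(a_i)\ge2j$, with $a_i'\in\OO$ — the surviving constraints become $a_i'\in\OO$, $\max_i|a_i'^{-1}a_j'a_k'|\le1$ and $t\in\varpi^{-4j}(a_1'a_2'a_3')^{-1}\OO$; and since $\one_{V(\OO)}(v/(a'\varpi^{2j}))$ forces $2j\le\ord(v_i)$ whenever $v_i\ne0$, while $v\in Y^{\mathrm{sm}}(F)$ has some $v_i\ne0$, only finitely many $j$ contribute for a given $v$ and the interchange of $\sum_j$ with the integrals is immediate. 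Performing the $t$-integral against the $\psi$-self-dual measure (recall $\psi$ is unramified) yields $q^{4j}\big(\prod_i|a_i'|^{-1}\big)\,\one_\OO\!\big(\mathcal{Q}(v)/(a_1'a_2'a_3'\varpi^{4j})\big)$. Folding in the $q^{2j}$ from $b_X$, the $\prod_i|a_i|^2$ from the measure and the $\prod_i|a_i|^{-d_i/2}$ from the Weil representation, and using $\chi_{\mathcal{Q}}(a)=\chi_{\mathcal{Q}}(\varpi^{2j}a')$ together with $(a_1^{-1}v_1,a_2^{-1}v_2,a_3^{-1}v_3)=v/(a'\varpi^{2j})$, the net weight becomes $\chi_{\mathcal{Q}}(\varpi^{2j}a')\prod_i(|a_i'|/q^{2j})^{1-d_i/2}$; relabelling $a'$ as $a$ recovers the asserted formula. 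The one genuinely delicate step is getting the Pl\"ucker computation and the description of the support of $\one_{\ge c}$ on $X^{\circ}(F)$ exactly right; everything after that is bookkeeping with Haar measure normalizations (together with the minor nuisances of residue characteristics $2$ and $3$). This reproduces the argument of \cite[Proposition 6.3]{Getz:Liu:Triple}.
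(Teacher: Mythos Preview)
Your proposal is correct and is precisely the computation carried out in \cite[Proposition~6.3]{Getz:Liu:Triple}, which is all the present paper invokes in lieu of a proof. Your observation about the stray factor of $3$ in the $(4,5,6)$ Pl\"ucker coordinate is legitimate (and is implicitly suppressed in the paper's own formula \eqref{eq:gamma0gnorm}); away from residue characteristic $3$ it is a unit and the argument goes through verbatim.
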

One can also write the basic function $b_Y(v)$ as
\begin{align*}
   \one_{Y(\mathcal{O})}(v) \sum_{\substack{0\le k_i\le \ord (v_i)\\ k_i\le k_{i+1}+k_{i-1}\\
   k_1+k_2+k_3\le \ord \mathcal{Q}(v)}}  \frac{1-q^{\left(\min_i\left(\left\lfloor \frac{\ord \mathcal{Q}(v)-\sum_{i=1}^3 k_i}{4}\right\rfloor,\left\lfloor\frac{\ord(v_i)-k_i}{2}\right\rfloor\right)+1\right)(d_1+d_2+d_3-6)}}{(1-q^{d_1+d_2+d_3-6})\prod_{i=1}^3\chi_{\mathcal{Q}_i}(\varpi^{k_i})}q^{\sum_{i=1}^3 k_i(d_i/2-1)}.
\end{align*}

\subsection{The identity orbit}

By a minor modification of the proof of Proposition \ref{prop:I1int:na} above, we obtain the following proposition:

\begin{prop} \label{prop:I1comp}
 Suppose $v=(v_1,v_2,v_3)\in \widetilde{Y_0}(F)$. Assume moreover that $|v_1|=|v_2|=1$.
One has
\begin{align*} 
I_0(b_X\otimes \one_{V(\OO)})(v)=\sum_{k=0}^\infty \sum_{j=0}^\infty  
q^{2j}\int
\one_{V(\OO)}(a^{-1}v)\chi_\mathcal{Q}(a)\prod_{i=1}^3|a_i|^{2-d_i/2}d^\times a_i 
\end{align*}
where the integral is over those $a_1^{-1},a_2^{-1},a_3 \in \OO$ such that $
|a_1a_2a_3|=q^{-k-2j}.$
As a function of $v_3$, the integral is supported in $V_3(\OO)$.
 Let $\epsilon>0$. For $v\in V^\circ (F)$ with $|v_1|=|v_2|=1$, we have that
\begin{align*}
\int_{N_2^3(F) \backslash G(F)} |b_X\left(g\right) \rho\left(g\right)\one_{V(\OO)}(v)|dg \leq C|v_3|^{-d_3/2-\epsilon}\one_{V_3(\OO)}(v_3)
\end{align*}
for some constant $C>0$ depending on $\epsilon$, which equals $1$ for $q$ sufficiently large. \qed
\end{prop}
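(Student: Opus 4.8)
The plan is to retrace the proof of Proposition~\ref{prop:I1int:na}, this time retaining the quadratic character $\chi_{\mathcal{Q}}$ that was discarded there when passing to absolute values, and then to evaluate the resulting torus integral by hand using the special shapes of $b_X$ and $\one_{V(\OO)}$. Decomposing the Haar measure on $N_2^3(F)\backslash G(F)$ by the Iwasawa decomposition exactly as in that proof, using that $\one_{V(\OO)}$ is fixed by $\rho(K^3)$ and that $b_X$ is right $\Sp_6(\OO)$-invariant, and normalizing so that $\OO^\times$ and $K$ have volume one, one obtains
\begin{align*}
I_0(b_X\otimes\one_{V(\OO)})(v)=\int_{(F^\times)^3} b_X\!\left(\left(\begin{smallmatrix} a^{-1} & \\ & a\end{smallmatrix}\right)\right)\one_{V(\OO)}(a^{-1}v)\,\chi_{\mathcal{Q}}(a)\prod_{i=1}^3|a_i|^{2-d_i/2}\,d^\times a_i,
\end{align*}
where $\left(\begin{smallmatrix} a^{-1} & \\ & a\end{smallmatrix}\right)$ denotes, as in the proof of Proposition~\ref{prop:I1int:na}, the image in $G(F)$ of the diagonal element with diagonal entries $a_1^{-1},a_2^{-1},a_3^{-1},a_1,a_2,a_3$.

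The second step is to evaluate $b_X\!\left(\left(\begin{smallmatrix} a^{-1} & \\ & a\end{smallmatrix}\right)\right)$. This element lies in the Levi $M(F)$ and maps to $(a_1a_2a_3)^{-1}$ under $\omega$, so it represents the same $\Sp_6(\OO)$-orbit in $X^{\circ}(F)$ as $m(\varpi^{\ord(a_1a_2a_3)})$; hence by \eqref{1k} and \eqref{bX},
\begin{align*}
b_X\!\left(\left(\begin{smallmatrix} a^{-1} & \\ & a\end{smallmatrix}\right)\right)=\sum_{\substack{j,k\geq0\\ k+2j=\ord(a_1a_2a_3)}}q^{2j},
\end{align*}
which vanishes when $\ord(a_1a_2a_3)<0$. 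Inserting this into the integral and interchanging the sum with the integral — licit by the absolute-convergence estimate of the last paragraph — gives the stated formula, once one observes that $|v_1|=|v_2|=1$ forces $\one_{V_i(\OO)}(a_i^{-1}v_i)=\one_{\OO}(a_i^{-1})$ for $i=1,2$, and that $\ord(a_1),\ord(a_2)\leq 0$ together with $\ord(a_1a_2a_3)=k+2j\geq0$ already forces $a_3\in\OO$. The support assertion is then immediate: a nonzero contribution requires $a_3^{-1}v_3\in V_3(\OO)$ with $a_3\in\OO$, whence $\ord(v_3)\geq\ord(a_3)\geq 0$, i.e.\ $v_3\in V_3(\OO)$.

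For the bound, set $e_i:=d_i/2-1\geq0$ and $M:=\ord(v_3)$. The same Iwasawa-decomposition computation, applied to $|b_X(g)\rho(g)\one_{V(\OO)}(v)|$ for arbitrary $v\in V^{\circ}(F)$ (so $\chi_{\mathcal{Q}}$ plays no role), integrating over the shells $\ord(a_i)=m_i$ — each of $d^\times a_i$-volume one — and substituting $m_i=-n_i$ for $i=1,2$, shows that $\int_{N_2^3(F)\backslash G(F)}|b_X(g)\rho(g)\one_{V(\OO)}(v)|\,dg$ equals
\begin{align*}
\sum_{k,j\geq0}q^{2j}\sum_{\substack{n_1,n_2\geq0,\ n_1+n_2\leq m_3\leq M\\ m_3-n_1-n_2=k+2j}}q^{-n_1e_1-n_2e_2+m_3e_3-(k+2j)},
\end{align*}
which vanishes unless $M\geq0$. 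Summing the geometric progression in $(j,k)$ (uniformly bounded by $(1-q^{-2})^{-1}q^{k+2j}$) and then the geometric progressions in $m_3,n_1,n_2$ leaves a bound of the form $c_q\,(M+1)^{r}\,q^{Me_3}$, where $c_q$ is a finite product of factors $(1-q^{-\ell})^{-1}$ and $r\leq 3$ (in fact $r=\#\{i:d_i=2\}$); this polynomial factor $(M+1)^r$ is exactly why the exponent $-d_3/2-\epsilon$ appears in place of $-d_3/2$, and it is absorbed into $q^{M\epsilon}$ for any fixed $\epsilon>0$. Since $q^{Me_3}=|v_3|^{1-d_3/2}$ and $q^{-M}\leq1$ for $M\geq0$, this yields $C|v_3|^{-d_3/2-\epsilon}\one_{V_3(\OO)}(v_3)$; and for $q$ large in terms of $\epsilon$ one may take $C=1$, using that $c_q\to1$, that the $M=0$ contribution is the single term $1$, and that for $M\geq1$ the surplus factor $q^{-M}\leq q^{-1}$ dominates $c_q(M+1)^r$.

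The content of all this is bookkeeping rather than ideas: the one place to be careful is the Cartan-type evaluation $b_X\!\left(\left(\begin{smallmatrix} a^{-1} & \\ & a\end{smallmatrix}\right)\right)=\sum_{k+2j=\ord(a_1a_2a_3)}q^{2j}$ via the Pl\"ucker embedding, and, in the final estimate, the precise tracking of the polynomial factor $(M+1)^r$ together with the claim that $C=1$ once $q$ is large. I do not anticipate a conceptual obstacle.
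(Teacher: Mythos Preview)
The proposal is correct and takes essentially the same approach as the paper, which itself simply says ``by a minor modification of the proof of Proposition~\ref{prop:I1int:na}.'' You have carried out exactly that modification in detail: retaining the character $\chi_{\mathcal{Q}}$ in the Iwasawa computation, evaluating $b_X$ on the torus via the Pl\"ucker embedding to obtain $\sum_{k+2j=\ord(a_1a_2a_3)}q^{2j}$, and then bounding the resulting lattice sum; the observation that the $M=0$ term is exactly $1$ is the right way to secure $C=1$ for large $q$.
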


 Evaluating at $v\in \widetilde{Y}_0(F)$ with $|v_1|=|v_2|=1$, one can rewrite $I_0(b_X\otimes \one_{V(\OO)})(v)$ as
\begin{align*}
    \one_{V_3(\OO)}(v_3)\sum_{\substack{0\le k_i\le \ord(v_3)\\ k_1+k_2\le k_3}} \frac{1-q^{2+2\lfloor \frac{k_3-k_1-k_2}{2}\rfloor}}{1-q^2} q^{k_1(2-d_1/2)+k_2(2-d_2/2)-k_3(2-d_3/2)} \prod_{i=1}^3\chi_{\mathcal{Q}_i}(\varpi^{k_i}).
\end{align*}

\subsection{The other orbits}

The following assertions can be proved by an easy refinement of the argument proving Proposition \ref{prop:other:na:bound}:

\begin{prop} \label{prop:other:comp}
Suppose $v=(v_1,v_2,v_3) \in  \widetilde{Y_1}(F)$. For $\mathrm{Re}(s)>0$ one has
\begin{align*}
&I_{1}(b_X\otimes \one_{V(\OO)}\otimes \one_{\OO^2})(v,s)
\\&=\zeta(2s)\sum_{j=0}^\infty \int \one_{\OO}\left( \frac{\mathcal{Q}_2(v_2)}{a_1a_2a_3} \right)\one_{V(\OO)}\left(\frac{v_1}{\varpi^{2j}a_1},\frac{v_2}{a_2},\frac{v_3}{a_3}\right) \frac{\chi_\mathcal{Q}(a)}{|a_2|^{2s}q^{2j(1-d_1/2)}}\prod_{i=1}^3|a_i|^{1-d_i/2}  d^\times a_i
\end{align*}
where the integral is over $a_1 \in F^\times \cap \OO, a_2,a_3\in F^\times $ such that $
|a_1|^{-1} \geq \max\left( |a_2a_3^{-1}|,|a_2^{-1}a_3| \right).$ \qed
\end{prop}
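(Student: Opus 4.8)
The plan is to rerun the proof of Proposition~\ref{prop:other:na:bound} with $f_1=b_X$, $f_2=\one_{V(\OO)}$, $\Phi=\one_{\OO^2}$, upgrading every estimate coming from Lemma~\ref{lem:seminorm} and from the crude majorant $\widetilde f_2$ to an exact evaluation. First I would repeat the opening reductions of that proof verbatim: use Lemma~\ref{lem:stab} to write $G_{\gamma_1}=N_2\times\Delta_1(\SL_2)$, choose representatives $(g_1,g_2,I_2)$ for $G_{\gamma_1}(F)\backslash G(F)$ with $g_1\in N_2(F)\backslash\SL_2(F)$ and $g_2\in\SL_2(F)$, substitute $g_2\mapsto h^{-1}g_2$, invoke the $\Delta_1(\SL_2(F))$-invariance of $\gamma_1$, and then apply the Iwasawa decomposition, writing $g_1=\left(\begin{smallmatrix}a_1^{-1}&\\&a_1\end{smallmatrix}\right)k_1$, $g_2=\left(\begin{smallmatrix}1&t\\&1\end{smallmatrix}\right)\left(\begin{smallmatrix}a_2^{-1}&\\&a_2\end{smallmatrix}\right)k_2$, $h=\left(\begin{smallmatrix}a_3^{-1}&\\&a_3\end{smallmatrix}\right)k_3$. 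Here I record that $\left(\begin{smallmatrix}1&\\&-1\end{smallmatrix}\right)h\left(\begin{smallmatrix}1&\\&-1\end{smallmatrix}\right)=\left(\begin{smallmatrix}a_3^{-1}&\\&a_3\end{smallmatrix}\right)k_3'$ with $k_3'\in\SL_2(\OO)$, because $\left(\begin{smallmatrix}1&\\&-1\end{smallmatrix}\right)$ is diagonal, lies in $\GL_2(\OO)$, and conjugation by it preserves $\SL_2(\OO)$; so the third $\SL_2$-component keeps the shape (diagonal)$\cdot$(element of $\SL_2(\OO)$). This puts $I_1(b_X\otimes\one_{V(\OO)}\otimes\one_{\OO^2})(v,s)$ in exactly the form of the integral occurring in the proof of Proposition~\ref{prop:other:na:bound}, i.e. an integral over $(a_1,a_2,a_3,t)\in(F^\times)^3\times F$, $(k_1,k_2,k_3)\in\SL_2(\OO)^3$ and $x\in F^\times$.

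Next I would perform the integrals that are now elementary. Since $b_X$ is right $\mathrm{Sp}_6(\OO)$-invariant, $\one_{V(\OO)}$ is $\rho(\SL_2(\OO)^3)$-invariant, and $\one_{\OO^2}$ is $\SL_2(\OO)$-invariant, the $\SL_2(\OO)^3$-integral is trivial. By the standard formulas for the Weil representation on the diagonal torus and the unipotent radical, $\rho$ of the remaining diagonal$\times$unipotent element acts on $\one_{V(\OO)}$ by $\chi_\mathcal{Q}(a)\prod_{i=1}^3|a_i|^{-d_i/2}\,\psi(t\mathcal{Q}_2(v_2))\,\one_{V(\OO)}(v_1/a_1,v_2/a_2,v_3/a_3)$, with no Weil index because the residual $\SL_2(\OO)$-factors act trivially on $\one_{V(\OO)}$ --- this is precisely the hypothesis that $\one_{V(\OO)}$ is $\rho(\SL_2(\OO)^3)$-fixed, encoding that the $\mathcal{Q}_i$ are unramified; I also use that $d_i$ is even so that $\rho_i$ is a genuine representation of $\SL_2(F)$. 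Since $(0,1)g_2=(0,a_2)k_2$, the substitution $x\mapsto x/a_2$ converts the $x$-integral into $\int_{F^\times}\one_\OO(xa_2)|x|^{2s}d^\times x=\zeta(2s)|a_2|^{-2s}$, producing the prefactor $\zeta(2s)$ and the $|a_2|^{-2s}$ in the integrand.

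It remains to evaluate $b_X$ and integrate out $t$. Using $b_X=\sum_{j\geq0}q^{2j}\one_{\geq2j}$ together with the identity $\one_{\geq2j}=L(m(\varpi)^{2j})\one_{\geq0}$ from the proof of Lemma~\ref{lem:onec:in}, and the computation of the Pl\"ucker norm $|\gamma_1(g_1,g_2,\left(\begin{smallmatrix}1&\\&-1\end{smallmatrix}\right)h\left(\begin{smallmatrix}1&\\&-1\end{smallmatrix}\right))|$ already carried out in the proof of Proposition~\ref{prop:other:na:bound} and there denoted $m'(t,a)=\max(|a_1|,|a_1a_2a_3^{-1}|,|a_1a_2^{-1}a_3|,|a_1a_2a_3t|)$, one gets $b_X(\gamma_1(\cdots))=\sum_{j\geq0}q^{2j}\one[m'(t,a)\leq q^{-2j}]$. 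The $t$-integral is then the Tate-type integral $\int_F\psi(t\mathcal{Q}_2(v_2))\,\one[|a_1a_2a_3t|\leq q^{-2j}]\,dt=q^{-2j}|a_1a_2a_3|^{-1}\,\one_\OO(\varpi^{2j}\mathcal{Q}_2(v_2)/(a_1a_2a_3))$, whose factor $q^{-2j}$ cancels the $q^{2j}$ from $b_X$; combining $|a_1a_2a_3|^{-1}$ with the Iwasawa modular factor $|a_1a_2a_3|^2$ and the Weil factor $\prod_i|a_i|^{-d_i/2}$ gives $\prod_i|a_i|^{1-d_i/2}$. Finally, substituting $a_1\mapsto\varpi^{2j}a_1$ turns the remaining constraints $\max(|a_1|,|a_1a_2a_3^{-1}|,|a_1a_2^{-1}a_3|)\leq q^{-2j}$ into $a_1\in F^\times\cap\OO$ and $|a_1|^{-1}\geq\max(|a_2a_3^{-1}|,|a_2^{-1}a_3|)$, replaces $v_1/a_1$ by $v_1/(\varpi^{2j}a_1)$, clears the $\varpi^{2j}$ in the $\one_\OO$-term, multiplies $|a_1|^{1-d_1/2}$ by $q^{-2j(1-d_1/2)}$, and leaves $\chi_\mathcal{Q}(a)$ unchanged because $\chi_{\mathcal{Q}_1}$ is quadratic; assembling these yields the claimed identity on $\widetilde{Y}_1(F)$. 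The main obstacle is exactly this last round of bookkeeping --- correctly reading off the support conditions of $\one_{\geq2j}(\gamma_1 g)$ in terms of $(a,t)$ and then tracking every power of $q$, $|a_i|$ and $\varpi^{2j}$ through the change of variables --- but it is routine once the companion norm computation in the proof of Proposition~\ref{prop:other:na:bound} is in hand.
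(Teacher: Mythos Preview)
Your proposal is correct and is exactly the ``easy refinement of the argument proving Proposition~\ref{prop:other:na:bound}'' that the paper invokes in lieu of a proof: you rerun the same sequence of reductions (representative $(g_1,g_2,I_2)$, substitution $g_2\mapsto h^{-1}g_2$, $\Delta_1$-invariance, Iwasawa decomposition) and replace the majorizations by exact evaluations using $K$-invariance, the identity $b_X=\sum_{j\ge0}q^{2j}\one_{\ge 2j}$, and the already-computed norm $m'(t,a)$. The only slip is expository---in the $x$-integral step the displayed integrand should read $\one_\OO(x)$ rather than $\one_\OO(xa_2)$ after the substitution---but the conclusion $\zeta(2s)|a_2|^{-2s}$ is correct.
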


One can alternatively write $I_{1}(b_X\otimes \one_{V(\OO)}\otimes \one_{\OO^2})(v,s)$ as
\begin{align*}
 \one_{V_1(\OO)}(v_1)\zeta(2s)\sum_{\substack{k_i\le \ord(v_i)\\ k_1\ge |k_2-k_3|\\ k_1+k_2+k_3\le \ord(\mathcal{Q}_2(v_2))}} q^{2k_2s+\sum_{i=1}^3 k_i(d_i/2-1)}\left(\sum_{j=0}^{\lfloor \frac{\ord(v_1)-k_1}{2}\rfloor} q^{j(d_1-2)}\right)\prod_{i=1}^3\chi_{\mathcal{Q}_i}(\varpi^{k_i}).
\end{align*}

\begin{lem} \label{lem:otherbound}
For $v=(v_1,v_2,v_3)\in V^{\circ}(F)$  and $r>0$, the integral 
\begin{align*}
&\int_{(N_2 (F) \times \Delta_1(\SL_2)(F)) \backslash G(F)}|b_X\left(\gamma_{1}g\right)|\\
&\times
\int_{N_2(F) \backslash \SL_2(F)}
|\rho\left(\left( I_2,h,\left(\begin{smallmatrix} 1 & \\ & -1 \end{smallmatrix} \right)h\left(\begin{smallmatrix} 1 & \\ & -1 \end{smallmatrix} \right)\right)g\right)\one_{V(\OO)}( v)|\one_{\OO^2}((0,x)hp_1(g))|x|^{2r}d^\times xdhdg
\end{align*}
vanishes unless $v_1 \in V_1(\OO)$. It is bounded by
\begin{align*}
 &C\zeta(2r)\zeta(2r+d_2/2-1)|v_1|^{-d_1/2}|v_3|^{1-d_3/2}
 \max\left(|v_1\otimes v_3|,|v_2|\right)^{1-d_2/2-2r}\\& \times 
  \left(\max(0,\mathrm{ord}(v_1 \otimes v_3)-\mathrm{ord}(v_2))
+\zeta(2r+d_2/2-1)\right)
\end{align*}
 for some constant $C>0$ which equals $1$ for $q$ sufficiently large.
 Thus if $\mathrm{Re}(s)=r$, the function $I_1(b_X \otimes \one_{V(\OO)} \otimes \one_{\OO^2})(v,s)$ admits the same bounds on its magnitude and support.  \qed
\end{lem}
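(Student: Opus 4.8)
The plan is to run the proof of Proposition~\ref{prop:other:na:bound} with the specific choices $f_1=b_X$, $f_2=\one_{V(\OO)}$, $\Phi=\one_{\OO^2}$, keeping every implied constant explicit. Three features of these functions make the argument ``unramified''. First, $\one_{V(\OO)}$ is fixed by $\rho(\SL_2(\OO)^3)$ by hypothesis, so after the Iwasawa decomposition the integrals over $\SL_2(\OO)^3$ are trivial and the function $\widetilde{f}_2$ from the proof of Proposition~\ref{prop:other:na:bound} may be replaced by $\one_{V(\OO)}$ itself; on the diagonal torus the Weil representation acts by $\rho(\mathrm{diag}(a_i^{-1},a_i))\one_{V_i(\OO)}(w)=\chi_{\mathcal{Q}_i}(a_i)|a_i|^{-d_i/2}\one_{V_i(\OO)}(a_i^{-1}w)$. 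Second, $\one_{\OO^2}$ is fixed by $\SL_2(\OO)$ acting on row vectors, so that after the Iwasawa decomposition the $x$-integral becomes the exact Tate integral $\int_{F^\times}\one_{\OO}(xa_2)|x|^{2r}\,d^\times x=\zeta(2r)|a_2|^{-2r}$ with no fudge factor, which accounts for the clean $\zeta(2r)$ in the statement. Third, $b_X=\sum_{j,k\ge 0}q^{2j}\one_{k+2j}$ with $\one_c$ as in \eqref{1k}, so $b_X$ is supported on $\{|g|\le 1\}$ and there $b_X(g)\le(1-q^{-2})^{-1}|g|^{-1}$ when $|g|=q^{-c}$; alternatively one substitutes the sum over $j$ directly into the integral, which is precisely what produces the extra $j$-summation in Proposition~\ref{prop:other:comp}.

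With these substitutions I would repeat the manipulations in the proof of Proposition~\ref{prop:other:na:bound} verbatim: change $g_2\mapsto h^{-1}g_2$, use that $\Delta_1(\SL_2(F))$ stabilizes $\gamma_1$ (Lemma~\ref{lem:stab}), decompose each $\SL_2$-factor by the Iwasawa decomposition, and compute $|\gamma_1(\cdots)|=\max(|a_1|,|a_1a_2a_3^{-1}|,|a_1a_2^{-1}a_3|,|a_1a_2a_3t|)$. Carrying out the $t$-integral (supported on $|a_1a_2a_3t|\le1$), the $x$-integral as above, and the $b_X$-summation, one is left with an integral of the same shape as \eqref{other:int:unr}, equivalently the integral appearing in Proposition~\ref{prop:other:comp}. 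The support constraints $v_1/(\varpi^{2j}a_1)\in V_1(\OO)$ and $a_1\in\OO$ force $v_1\in V_1(\OO)$, which gives the vanishing statement, and they also bound the relevant summation index, so that summing the ensuing geometric progression produces the factor $|v_1|^{-d_1/2}$ (this estimate is not sharp, but suffices). The remaining $a_3$- and $a_2$-integrals I would estimate exactly as in the last display of the proof of Proposition~\ref{prop:other:na:bound}: the $a_3$-integral over $|v_3|\le|a_3|\le|v_1|^{-1}|a_2|$ yields $|v_3|^{1-d_3/2}$ times a count of the form $\max(0,\ord(v_1\otimes v_3)-\ord(v_2))+O(1)$ (a convergent geometric series bounded in terms of $\zeta(d_3/2-1)$ when $d_3>2$, a genuine linear count when $d_3=2$), while the $a_2$-integral over $\max(|v_1\otimes v_3|,|v_2|)\le|a_2|$ yields $\max(|v_1\otimes v_3|,|v_2|)^{1-d_2/2-2r}\zeta(2r+d_2/2-1)$ and multiplies the preceding count by a further $\zeta(2r+d_2/2-1)$. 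Assembling the factors gives the asserted bound, and the corresponding bound for $I_1(b_X\otimes\one_{V(\OO)}\otimes\one_{\OO^2})(v,s)$ with $r=\mathrm{Re}(s)$ follows by moving absolute values inside the integrals, just as in Proposition~\ref{prop:other:na:bound}.

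The part requiring genuine care, beyond what is in Proposition~\ref{prop:other:na:bound}, is the assertion that the accumulated constant $C$ can be taken to equal $1$ once $q$ is large. In the general case the implied constants, as well as the $C,C'$ in the final bound, depend on the supports of $f_1,f_2,\Phi$; here, by contrast, every multiplicative fudge factor that enters is an explicit finite product of geometric expressions in $q$ — the $(1-q^{-2})^{-1}$ from bounding $b_X$, the measures $1-q^{-1}$ of the unit sphere in $F$ and of the compacts appearing in the Iwasawa decomposition, and the tails of the geometric series summed along the way — each of the form $1+O(q^{-1})$. Checking that there are boundedly many of them and that their product is dominated by the $q$-independent shape displayed in the statement once $q$ is large is the main (purely bookkeeping) obstacle; one also has to verify that the constraint $\one_{\OO}(\mathcal{Q}_2(v_2)/(a_1a_2a_3))$ in Proposition~\ref{prop:other:comp} — the one feature distinguishing this integral from a product of three independent copies of $\SL_2$ — does not worsen the estimate. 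A cleaner alternative, which I would probably prefer in practice, is to skip the re-derivation and instead bound the explicit formula of Proposition~\ref{prop:other:comp} term by term; the same three integrals over $a_1,a_2,a_3$ appear and the same bound results.
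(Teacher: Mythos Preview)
Your proposal is correct and matches the paper's approach: the paper gives no separate proof for this lemma, stating only that it (together with Proposition~\ref{prop:other:comp}) ``can be proved by an easy refinement of the argument proving Proposition~\ref{prop:other:na:bound},'' which is precisely what you outline. Your observation that one may alternatively bound the explicit formula of Proposition~\ref{prop:other:comp} directly is also consistent with the paper's ordering of results.
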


\noindent An expression for the integrals $I_2$ and $I_3$ and corresponding bounds and supports can be obtained by symmetry.

\section{Bounds on integrals in the Archimedean case}
\label{sec:bound:arch}

In this section $F$ is an Archimedean local field. We estimate the local integrals defined in \S \ref{sec:loc:func}.  The bounds obtained in this section will be used to prove Propositions \ref{prop:Id:AC} and \ref{prop:other:AC}, the absolute convergence statements used in the proof of Theorem \ref{thm:main:intro}.  As usual, the bounds in the archimedian case are slightly harder to prove than in the nonarchimedian case, but the basic outline of the proofs is the same.  We let
\begin{align}
    |(a_1,\dots,a_{d_i})|:=\max\{|a_j|: 1 \leq j \leq d_i\}
\end{align}
for $(a_1,\dots,a_{d_i}) \in V_i(F).$
We moreover fix
$$
(f_1\otimes f_2,\Phi) \in \mathcal{S}(X(F)\times V(F)) \times \mathcal{S}(F^2)
$$
We will bound integrals involving the pure tensor $f_1 \otimes f_2$ in this section.   In each case, the bounds will be continuous in $f_1$ and $f_2$ with respect to the Fr\'echet topologies on $\mathcal{S}(X(F))$ and $\mathcal{S}(V(F)).$  Thus the bounds extend by continuity to all $f \in \mathcal{S}(X(F) \times V(F)).$

 The following is a rephrasing of \cite[Lemma 8.1]{Getz:Liu:Triple}:
 \begin{lem} \label{lem:basic:bound}
 Let $A,B \in \RR_{>0}$, $C \in \RR_{\geq 0}$ and let $x \in F^\times$.  Assume $A>B$ and $A\neq B+C$. One has
 \begin{align*}
 \int_{F^\times} \max(|a^{-1}x|,1)^{-A}|a|^{-B}\max(|a|,1)^{-C}da^\times \ll_{A,B,C}\max(|x|,1)^{-\min(A-B,C)}|x|^{-B}.
 \end{align*}\qed
 \end{lem}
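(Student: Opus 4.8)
The plan is to reduce the integral to a single elementary one-variable integral and then to dissect its domain. First I would observe that the integrand depends on $a$ only through $|a|$, and that the pushforward of the Haar measure $da^\times$ under the map $a\mapsto|a|$ is a fixed positive constant times $dt/t$ on $\RR_{>0}$ (uniformly in whether $F$ is real or complex). Hence, writing $y:=|x|>0$, it is enough to prove
\[
J(y):=\int_0^\infty \max(t^{-1}y,1)^{-A}\,t^{-B}\,\max(t,1)^{-C}\,\frac{dt}{t}\ \ll_{A,B,C}\ \max(y,1)^{-\min(A-B,C)}\,y^{-B}.
\]

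Next I would split $\RR_{>0}$ at the two breakpoints $t=1$ and $t=y$, giving at most four subintervals whose configuration depends only on whether $y\le1$ or $y>1$. On each subinterval every one of the three $\max$-factors resolves either to $1$ or to a monomial in $t$ (times a power of $y$), so the integrand becomes $c\,y^{e_1}t^{e_2-1}$ for explicit exponents $e_1,e_2$, and the integral over that subinterval is elementary. Concretely: on $t\le\min(1,y)$ the integrand is $y^{-A}t^{A-B-1}$, so the hypothesis $A>B$ is exactly what makes this piece converge at $t=0$; on $t\ge\max(1,y)$ the integrand is $t^{-B-C-1}$, which is integrable at $\infty$ since $B+C>0$ (because $B>0$); on $\min(1,y)<t\le1$ (nonempty only when $y<1$) the integrand is $t^{-B-1}$, contributing $\ll y^{-B}$; and on $1<t\le y$ (nonempty only when $y>1$) the integrand is $y^{-A}t^{A-B-C-1}$, where the hypothesis $A\ne B+C$ guarantees that no logarithmic term appears, so this piece is $\ll\max(y^{-A},y^{-B-C})$.

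Finally I would collect the contributions. For $y\le1$ the total is $\asymp y^{-B}$, which agrees with the claimed bound since $\max(y,1)=1$. For $y>1$ the total is $\asymp\max(y^{-A},y^{-B-C})=y^{-\min(A,B+C)}$, and this equals $\max(y,1)^{-\min(A-B,C)}\,y^{-B}$ because $\min(A,B+C)=B+\min(A-B,C)$; this is the desired estimate. I do not anticipate any genuine obstacle: the argument is a routine case analysis, and the only points needing care are keeping the implied constants uniform and correctly recording which of the four subintervals is empty in each of the regimes $y\le1$ and $y>1$. (Alternatively, since the present statement is merely a reformulation of \cite[Lemma 8.1]{Getz:Liu:Triple}, one may instead deduce it directly from that lemma.)
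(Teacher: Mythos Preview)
Your proof is correct. The paper itself does not supply a proof of this lemma: it simply records that the statement is a rephrasing of \cite[Lemma 8.1]{Getz:Liu:Triple} and places a \qed\ after the statement. Your elementary case analysis---pushing forward to $\RR_{>0}$ via $a\mapsto|a|$, splitting at the breakpoints $t=1$ and $t=y$, and collecting the four monomial contributions---fills in exactly the routine computation that the citation is meant to stand for, and your final parenthetical remark already acknowledges this.
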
 
\noindent This will be used several times below.
 
\subsection{The open orbit}  Recall that $V'(F) \subset V(F)$ is the subset of vectors $(v_1,v_2,v_3)$ such that no two $v_i$ are zero. In order to bound the function $I(f_1 \otimes f_2) \in \mathcal{S}(Y(F))$ and its various derivatives it is convenient to first prove the following bound:  

\begin{lem}\label{lem:universal}  Given $r,e_i,N\in \RR_{\ge 0}$, $D\in U(\mathrm{Lie}(V(F)))$, let $M:(F^{\times})^{3}\times F\times K^3\times V(F)\to \RR$ be the function
\begin{align}
    M(a,t,k,v):=\max(m(t,a),1)^{-2N}|t|^{r}|D\rho(k)f_2|^2(a^{-1}v)\left(\prod_{i=1}^3 |a_i|^{2-d_i-e_i}\right),
\end{align}
where $m(t,a)$ is defined as \eqref{eq:gamma0gnorm}. 
For $v\in V'(F)$, there is a compact neighborhood $U$ of $v$, and a continuous integrable function $M'$ on $(F^{\times})^{3}\times F\times K^3$ such that for all $v'\in U$ 
\begin{align*}
    M(a,t,k,v')\le M'(a,t,k). 
\end{align*}
Moreover, given $N_i\in \ZZ_{\ge 0}$ there exists a continuous seminorm $\nu'$ on $\mathcal{S}(V(F))$ such that
\begin{align*}
     &\int_{(F^\times)^3\times F\times K^3}M(a,t,k,v)dkd^\times a dt\nonumber
     \\ &\le \nu'(f_2)^2\left\{\begin{array}{ll}
      \prod_{i=1}^3 \max(|v_i|,1)^{-2N_i}|v_i|^{1-d_i-e_i-r}    &  \textrm {if $v\in V^{\circ}(F)$,}\\
       \prod_{i\neq j} \max(|v_i|,1)^{-2N_i}|v_i|^{2-d_i-e_i-2r-d_j-e_j}  & \textrm {if $v_j=0$},\\
     \end{array}\right.
\end{align*}
provided $N\ge 5\max_i \{N_i, d_i+e_i+r\}$.
\end{lem}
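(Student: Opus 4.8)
```latex
\textbf{Proof proposal for Lemma \ref{lem:universal}.}

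The plan is to split the analysis according to the stratum in which $v$ lies (all $v_i \neq 0$ versus exactly one $v_j = 0$) and, in both cases, to reduce the estimate to an application of Lemma \ref{lem:basic:bound} after first trading derivatives and polynomial factors in $v$ for the Gaussian-type decay furnished by a Schwartz seminorm on $\mathcal{S}(V(F))$. First I would record the elementary but crucial observation underlying the domination statement: the map $(a,t,k,v) \mapsto m(t,a)$ is continuous and, on a small compact neighborhood $U$ of a fixed $v \in V'(F)$, the function $|D\rho(k)f_2|^2(a^{-1}v')$ is controlled uniformly in $v' \in U$ by $\sup_{v' \in U}|D\rho(k)f_2|^2(a^{-1}v')$, which is a continuous function of $(a,k)$ because $\langle \rho(k)f_2 \rangle_{k \in K^3}$ spans a finite-dimensional space of Schwartz functions and $U$ is compact; combined with the bound on $\int M$ proved in the second half of the lemma (which shows integrability for a slightly larger neighborhood after shrinking $N_i$), this produces the desired dominating function $M'$. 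So the domination claim is essentially a corollary of the quantitative bound, and I would prove the quantitative bound first.

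For the quantitative bound I would proceed as follows. Use the Cartan/Iwasawa structure only implicitly; the integral is already in the coordinates $(a,t,k)$. First integrate in $t$: since $m(t,a) \geq \max(|ta_1a_2a_3|, 1)$, the factor $\max(m(t,a),1)^{-2N}$ dominates $\max(|ta_1a_2a_3|,1)^{-2N}$, so $\int_F |t|^r \max(|ta_1a_2a_3|,1)^{-2N} dt \ll |a_1a_2a_3|^{-1-r}$ provided $2N > 1+r$, by a trivial rescaling $t \mapsto t/(a_1a_2a_3)$. This is where the hypothesis $N \geq 5\max_i\{N_i, d_i+e_i+r\}$ is first used (with lots of room to spare). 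Next, for the remaining factor $\max(m(t,a),1)^{-2N}$ one still has the six other entries in \eqref{eq:gamma0gnorm}, in particular $\max(|a_i|,1)^{-2N/6}$ and $\max(|a_i^{-1}a_ja_k|,1)^{-2N/6}$ for each $i$; retaining a convenient subset of these and bounding $|D\rho(k)f_2|^2(a^{-1}v)$ by $\nu(f_2)^2 \prod_i \max(|a_i^{-1}v_i|,1)^{-2M_i}$ for a Schwartz seminorm $\nu$ and any $M_i$ we like (choosing $M_i$ comparable to $N_i$ plus the needed margin), the $k$-integral contributes only a constant. We are left with a product over $i$ of one-variable integrals of exactly the shape $\int_{F^\times} \max(|a_i^{-1}v_i|,1)^{-A}|a_i|^{-B}\max(|a_i|,1)^{-C}\, d^\times a_i$ appearing in Lemma \ref{lem:basic:bound}, with $B = d_i+e_i+r-2$ (or, in the $v_j = 0$ stratum, with the exponent modified because the $|a_j|$-integral now only sees the factor $|a_j|^{2-d_j-e_j-2r}$ coming from having absorbed two copies of the $t$-type gain into the $j$-th slot). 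Applying Lemma \ref{lem:basic:bound} in each variable, collecting the powers $\max(|v_i|,1)^{-2N_i}$ and $|v_i|^{1-d_i-e_i-r-e}$, and arranging the perturbation parameter to avoid the excluded equality $A = B+C$, yields the two displayed bounds; the side condition on $N$ guarantees that in each invocation $A > B$ and $C$ is as large as demanded.

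The main obstacle I expect is bookkeeping rather than conceptual: one must carefully apportion the single large exponent $2N$ among the various factors of $m(t,a)$ so that (i) the $t$-integral converges, (ii) each of the three one-variable $a_i$-integrals has an $A$-exponent strictly exceeding its $B$-exponent by the prescribed amount, and (iii) in the boundary stratum $v_j = 0$ the two ``missing'' units of decay (since $a_j^{-1}v_j \equiv 0$ contributes nothing) are correctly rerouted through the cross-terms $|a_j^{-1}a_ia_k|$ and through a second rescaling in $t$, which is exactly why the exponent changes from $1-d_j-e_j-r$ to $2-d_j-e_j-2r$ and why the remaining two slots pick up an extra $-2r$. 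Handling the degenerate cases where Lemma \ref{lem:basic:bound}'s genericity hypothesis $A \neq B+C$ fails is dealt with, as in \cite{Getz:Liu:Triple}, by an $\varepsilon$-perturbation of the exponents absorbed into the seminorm. Finally, one tracks that all constants are in fact continuous seminorms in $f_2$: each estimate on $|D\rho(k)f_2|$ is, by definition of the Schwartz topology on $\mathcal{S}(V(F))$, bounded by such a seminorm, and the supremum over the compact $K^3$ and the finite collection of differential operators produced by $D$ preserves this property, giving the claimed $\nu'(f_2)^2$.
```
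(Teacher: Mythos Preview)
Your overall strategy is right---reduce to the one-variable integrals of Lemma \ref{lem:basic:bound} after trading the Schwartz decay of $f_2$ for factors $\max(|a_i^{-1}v_i|,1)^{-2C_i}$---but there is a genuine gap in how you use the factor $\max(m(t,a),1)^{-2N}$. You first bound it above by $\max(|ta_1a_2a_3|,1)^{-2N}$ and integrate in $t$ to get $|a_1a_2a_3|^{-1-r}$, and then write ``for the remaining factor $\max(m(t,a),1)^{-2N}$ one still has the six other entries.'' There is no remaining factor: you have spent the entire exponent $2N$ on the $t$-integration, and nothing is left to supply the $\max(|a_i|,1)^{-C}$ weights that Lemma \ref{lem:basic:bound} needs in order to output any decay $\max(|v_i|,1)^{-2N_i}$. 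Without those weights the $a_i$-integrals still converge (since $B=d_i+e_i+r-1>0$; note your stated value $B=d_i+e_i+r-2$ is off by one), but you obtain no large-$|v_i|$ decay at all. The fix is to \emph{split} the exponent via an inequality such as
\[
\max(m(t,a),1)^{4}\ \ge\ \max(|ta_1a_2a_3|,1)\,\prod_{i=1}^3\max(|a_i|,1),
\]
and, in the $v_j=0$ stratum, the analogous fifth-power inequality that also extracts a factor $\max(|a_j^{-1}a_{j+1}a_{j-1}|,1)$. This is precisely why the hypothesis reads $N\ge 5\max_i\{N_i,d_i+e_i+r\}$: the $5$ is the number of factors among which $2N$ must be apportioned in the degenerate case.

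The paper organizes the same ingredients slightly differently: it splits the \emph{domain} into $m(t,a)\le 1$ and $m(t,a)>1$. On the first piece the constraints $|a_i|\le 1$ and $|t|\le|a_1a_2a_3|^{-1}$ do the work directly (and the factor $\max(|a_i|,1)^{-2N_i}$ can be inserted for free, being identically $1$ there); on the second piece the product inequalities above are invoked. Your global approach, once the exponent-splitting is made explicit, would also work and is arguably more uniform; but as written the double use of the full $2N$ is a real error, not just loose language.
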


This bound will be used in the proof of Proposition \ref{prop:a:smooth} below.

\begin{proof}  By the continuity of the Weil representation and compactness of $K$, for any $C_1,C_2,C_3\in \ZZ_{\ge 0}$, there exists a continuous seminorm $\nu_{D,C_1,C_2,C_3}$ on $\mathcal{S}(V(F))$  such that for all $(k,v)\in  K^3\times V(F)$ we have
\begin{align*}
    |D\rho(k)f_2|(v)\le \nu_{D,C_1,C_2,C_3}(f_2)\left(\prod_{i=1}^3\max(|v_i|,1)^{-C_i}\right).
\end{align*}
Let $U$ be a compact neighborhood of $v$ such that for $v'\in U,$ if $v_i\neq 0$ then $v'_i\neq 0$. Choose $v'\in U$ with minimum norm. 
Put
\begin{align} \label{M'}M'(a,t,k):=\nu_{D,C_1,C_2,C_3}(f_2)^2\max(m(t,a),1)^{-2N}|t|^{r}\prod_{i=1}^3 \max(|a_i^{-1}v'_i|,1)^{-2C_i}|a_i|^{2-d_i-e_i}.
\end{align}
Then $M(a,t,k,v) \leq M'(a,t,k)$ for all $v \in U$.  Thus to prove the lemma it suffices to show that for all $v \in V'(F)$ one has
\begin{align}\label{eq:open:a:universal} \begin{split}
   & \int_{(F^\times)^3 \times F} \max(m(t,a),1)^{-2N}|t|^{r} \left(\prod_{i=1}^3 \max(|a_i^{-1}v_i|,1)^{-2C_i}|a_i|^{2-d_i-e_i}d^\times a_i\right)dt \\
   &\le   \left\{\begin{array}{ll}
      \prod_{i=1}^3 \max(|v_i|,1)^{-2N_i}|v_i|^{1-d_i-e_i-r}    &  \textrm {if $v\in V^{\circ}(F)$,}\\
       \prod_{i\neq j} \max(|v_i|,1)^{-2N_i}|v_i|^{2-d_i-e_i-2r-d_j-e_j}  & \textrm {if $v_j=0$},\\
     \end{array}\right. \end{split}
\end{align}
provided that $N\ge 5\max_i \{N_i, d_i+e_i+r\}$.
We break the integral into $m(t,a)\le 1$ and $m(t,a)> 1$. Suppose $v\in V^{\circ}(F)$. In the range $m(t,a)\le 1$, we have $|t|\le |a_1a_2a_3|^{-1}$ and $|a_i|\le 1$ for all $i.$ Therefore the integral is bounded by
\begin{align}\label{eq:ultbound:a}
    &\int_{|a|\le 1}\prod_{i=1}^3 \max(|a_i^{-1}v_i|,1)^{-2C_i}|a_i|^{1-d_i-e_i-r} d^\times a\nonumber \\
    &\le \prod_{i=1}^3 \int_{F^\times}\max(|a_i|,1)^{-2N_i}\max(|a_i^{-1}v_i|,1)^{-2C_i}|a_i|^{1-d_i-e_i-r} d^\times a_i.
\end{align}
In the range $m(t,a)> 1$, applying the inequality
\begin{align*}
m(t,a)^4\ge \max(|ta_1a_2a_3|,1)\max(|a_1|,1)\max(|a_2|,1)\max(|a_3|,1),
\end{align*}
the contribution of this part of the integral is bounded by
\begin{align*}
    &\int_{(F^\times)^3\times F} \max(|ta_1a_2a_3|,1)^{-N/2}|ta_1a_2a_3|^{r}\\
    &\times \left(\prod_{i=1}^3\max(|a_i|,1)^{-N/2}\max(|a_i^{-1}v_i|,1)^{-2C_i}|a_i|^{2-d_i-e_i-r} \right) d^\times a dt\\
    &\ll_{N,r} \prod_{i=1}^3 \int_{F^\times} \max(|a_i|,1)^{-N/2}\max(|a_i^{-1}v_i|,1)^{-2C_i}|a_i|^{1-d_i-e_i-r}  d^\times a_i,
\end{align*}
  provided $N>2r+2$.
  Since $N\ge 4\max_i N_i$ by assumption the integral is bounded by \eqref{eq:ultbound:a}. 
The assertion then follows from Lemma \ref{lem:basic:bound} by setting $A=2C_i, B=r+d_i+e_i-1, C=2N_i$ and choosing $C_i$ so that $A-B>C$ for each $i$.

Now assume $v \in V'(F)-V^{\circ}(F)$.  By symmetry we may assume $v_1=0$. Let 
$$
|a|:=\max_{1 \leq i \leq 3}(|a_i|).
$$
If $m(t,a)\le 1$ then $|t|\le |a_1a_2a_3|^{-1}, |a|\le 1, |a_2a_3|\le |a_1|$. Therefore the contribution of $|m(t,a)| \le 1$ to the integral \eqref{eq:open:a:universal} is bounded by
\begin{align}\label{eq:ultbound:a:zero}
    &\int_{|a|\le 1, |a_2a_3|\le |a_1|} |a_1|^{1-d_1-e_1-r}\prod_{i=2}^3 \max(|a_i^{-1}v_i|,1)^{-2C_i}|a_i|^{1-d_i-e_i-r}  d^\times a_1d^\times a_2d^\times a_3\nonumber \\
    &\le  \prod_{i=2}^3\int_{F^\times} \max(|a_i|,1)^{-2N_i}\max(|a_i^{-1}v_i|,1)^{-2C_i}|a_i|^{2-d_i-e_i-2r-d_1-e_1} d^\times a_i.
\end{align}
For $m(t,a)>1$ we have the inequality
$$ m(t,a)^5\ge \max(|ta_1a_2a_3|,1)\max(|a_1|,1)\max(|a_2|,1)\max(|a_3|,1)\max(|a_1^{-1}a_2a_3|,1).$$ Thus the contribution of  $m(t,a)> 1$ to \eqref{eq:open:a:universal} is
bounded by a constant depending on $N$ times
\begin{align} \label{the:int}
     &\int_{(F^\times)^3} \max(|a_1^{-1}a_2a_3|,1)^{-2N/5} \left(\prod_{i=1}^3\max(|a_i|,1)^{-2N/5}\max(|a_i^{-1}v_i|,1)^{-2C_i}|a_i|^{1-d_i-e_i-r} \right) d^\times a
\end{align}
since $N>5r+5$.
The contribution of $|a_2a_3|\le |a_1|$ to \eqref{the:int} is dominated by  \eqref{eq:ultbound:a:zero} since $N\ge 5\max_i N_i$. In the range $|a_2a_3|\ge |a_1|$, one has that
\begin{align*}
    \int_{|a_1|\le |a_2a_3|} \max(|a_1|,1)^{-2N/5}|a_1|^{1-d_1-e_1-r+2N/5}d^\times a_1\ll_{N,e_1,r} \min(|a_2a_3|,1)^{1-d_1-e_1-r+2N/5}.
\end{align*}
Since $2N/5\ge d_1+e_1+r$, we deduce that the integral \eqref{the:int} is also dominated by \eqref{eq:ultbound:a:zero}. The assertion now follows from Lemma \ref{lem:basic:bound} by setting $A=2C_i, B=2r+d_1+e_1+d_i+e_i-2, C=2N_i$ and choosing $C_i$ so that $A-B>C$ for $i=2,3$.
\end{proof}

\begin{prop}\label{prop:a:smooth}
We have
$\mathcal{S}(Y(F))< C^{\infty}(Y^{\mathrm{sm}}(F))$. Moreover, for $f\in \mathcal{S}(Y(F))$ and $D\in U(\mathrm{Lie}(V(F)))$,
    $$ |Df(v)|\left(\prod_{i=1}^3\max(|v_i|,1)^{N_i}|v_i|^{(d_i+[F:\RR]^{-1}\deg D-1)/2}\right)$$
is bounded on $Y^{\mathrm{ani}}(F)$ for all $N_i \in \ZZ$.
\end{prop}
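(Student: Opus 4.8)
The plan is to reduce to pure tensors, prove smoothness by differentiating under the integral sign (with differentiation justified via Lemma~\ref{lem:universal}), and then deduce the quantitative bound from Lemma~\ref{lem:universal}, Lemma~\ref{lem:seminorm}, and a Cauchy--Schwarz estimate of the kind used in the proof of Lemma~\ref{lem:Iker}. First I would reduce to the case $f=I(f_1\otimes f_2)$ with $f_1\otimes f_2\in\mathcal{S}(X(F))\otimes\mathcal{S}(V(F))$: every element of $\mathcal{S}(X(F)\times V(F))$ is an absolutely convergent series $\sum_n\lambda_n\,f_{1,n}\otimes f_{2,n}$ with $f_{1,n}\to0$ and $f_{2,n}\to0$ in the respective Fr\'echet spaces, and since the estimates below are phrased in terms of continuous seminorms of $f_1$ and $f_2$ they pass to the general case by summation. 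For a pure tensor I would coordinatize $G_{\gamma_b}(F)\backslash G(F)$ by the Iwasawa decomposition of $G(F)=\SL_2^3(F)$ exactly as in the proof of Lemma~\ref{lem:Iker}: because $G_{\gamma_b}$ lies inside the unipotent radical $N_2^3$, the quotient is parametrized by $(t,a,k)\in F\times(F^\times)^3\times K^3$, the invariant measure is $dt\,\prod_i|a_i|^2\,d^\times a_i\,dk$, the quantity $\max(|\gamma_b g|,1)$ equals $\max(m(t,a),1)$ with $m(t,a)$ as in \eqref{eq:gamma0gnorm}, and $\rho(g)f_2(v)$ equals, up to a unitary character factor, $\psi\big(t\,Q(v)\big)\,\big(\prod_i|a_i|^{-d_i/2}\big)\,(\rho(k)f_2)(a^{-1}v)$ for a fixed quadratic form $Q$ on $V$ built from $\mathcal{Q}_1,\mathcal{Q}_2,\mathcal{Q}_3$.

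For $D\in U(\mathrm{Lie}(V(F)))$, Leibniz's rule in the $v$-variable expands $D\big(\rho(g)f_2\big)(v)$, in these coordinates, into a finite sum of terms of the form
$$
\psi\big(t\,Q(v)\big)\,t^{l}\,\pi(v)\,\Big(\prod_i|a_i|^{-d_i/2-b_i}\Big)\,(D'\rho(k)f_2)(a^{-1}v),
$$
where $D'\in U(\mathrm{Lie}(V(F)))$, the integers $l,b_i\ge0$ satisfy $\deg D'+\sum_ib_i+(\text{order of the derivatives falling on }\psi)=\deg D$, and $\pi$ is a polynomial which is a product of factors homogeneous in each $v_i$, arising from derivatives of $Q$, so that $\pi$ vanishes at $v_i=0$ to order equal to its degree in $v_i$; these are exactly the building blocks of the function $M$ of Lemma~\ref{lem:universal}. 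Applying Cauchy--Schwarz as in the proof of Lemma~\ref{lem:Iker} and using the integrable majorant $M'$ furnished by Lemma~\ref{lem:universal} on a compact neighborhood $U$ of a given point of $Y^{\mathrm{sm}}(F)\subset V'(F)$, one may differentiate under the integral sign termwise, so $Df$ is continuous on $Y^{\mathrm{sm}}(F)$. Hence $f\in C^{\infty}(Y^{\mathrm{sm}}(F))$, and, by the reduction above, the same holds for general $f\in\mathcal{S}(Y(F))$; this settles the point deferred from \S\ref{ssec:Schwartz:Y}.

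For the decay estimate I would apply Cauchy--Schwarz to $Df(v)=\int_{G_{\gamma_b}(F)\backslash G(F)}f_1(\gamma_b g)\,D\big(\rho(g)f_2\big)(v)\,dg$ as in the proof of Lemma~\ref{lem:Iker}: the $f_1$-factor contributes $\int_{X^{\circ}(F)}|f_1|^2(g)\max(|g|,1)^{2N}\,dg\le\norm{f_1}_2^2+c^2\nu_{\mathrm{Id},N+1,0}(f_1)^2$ by the computation there, and the remaining factor is the square root of $\int\max(|\gamma_b g|,1)^{-2N}\big|D(\rho(g)f_2)(v)\big|^2\,dg$. Substituting the Leibniz sum and passing to Iwasawa coordinates, this last integral becomes a finite sum of terms $|\pi(v)|^2\int_{(F^\times)^3\times F\times K^3}M(a,t,k,v)\,dk\,d^\times a\,dt$ with $M$ built from the data $(D',r,e_i)=(D',2l,2b_i)$; choosing $N$ and all $N_i$ large enough that $N\ge5\max_i\{N_i,d_i+e_i+r\}$, Lemma~\ref{lem:universal} bounds each such term, for $v\in V^{\circ}(F)$, by $\nu'(f_2)^2\,|\pi(v)|^2\prod_i\max(|v_i|,1)^{-2N_i}|v_i|^{1-d_i-e_i-r}$, and on the locus where some $v_j=0$ one uses the second alternative of Lemma~\ref{lem:universal} instead. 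Taking square roots, using the homogeneity of $\pi$ to absorb the factors $|\pi(v)|$ into the powers $|v_i|^{(1-d_i-e_i-r)/2}$, and using that the $N_i$ are unconstrained to absorb growth as $|v_i|\to\infty$, yields $|Df(v)|\ll\prod_i\max(|v_i|,1)^{-N_i}|v_i|^{-(d_i+\deg D-1)/2}$ for every choice of $N_i$, which is the assertion.

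The hard part will be the exponent bookkeeping in this last step. One must verify that, for every term of the Leibniz expansion, the degree of $\pi$ in $v_i$ exactly compensates the loss encoded by $e_i=2b_i$ together with the power $r=2l$, so that the net exponent of $|v_i|$ in the bound for $|Df(v)|^2$ is never less than $1-d_i-\deg D$; this is tight, and a careless application of Cauchy--Schwarz --- bounding $|\pi(v)|$ by $\prod_i(1+|v_i|)^{\deg_{v_i}\pi}$ rather than exploiting its homogeneity --- loses ground, because the power of $t$ produced by differentiating $\psi(t\,Q(v))$ couples to all three $a_i$-integrations simultaneously. I expect that, beyond homogeneity, one will also need an integration by parts in $t$ trading this $t$-growth against the $m(t,a)^{-2N}$ decay, or, equivalently, the rewriting $D(\rho(g)f_2)=\rho(g)\big(\rho(g)^{-1}D\rho(g)\,f_2\big)$ together with the observation that $\rho(g)^{-1}D\rho(g)$, for $g=n(t)ak$, is a differential operator depending polynomially on $t$, monomially on $a$, and boundedly on $k\in K^3$, so that each of its contributions matches the majorant $M$ with the sharp parameters.
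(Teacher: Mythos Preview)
Your proposal is correct and follows essentially the same approach as the paper's proof: Iwasawa coordinates on $G_{\gamma_b}(F)\backslash G(F)$, Cauchy--Schwarz to separate $f_1$ from $D\rho(g)f_2$ exactly as in Lemma~\ref{lem:Iker}, the majorant $M'$ from Lemma~\ref{lem:universal} to justify the Leibniz integral rule, and the quantitative bound from Lemma~\ref{lem:universal} combined with Lemma~\ref{lem:seminorm}. The paper is equally terse about the exponent bookkeeping (it simply says ``keeps track of which parameters $r$ and $e_i$ are required''), so your caution on this point is well placed but does not reflect a divergence in method; your suggested rewriting $D(\rho(g)f_2)=\rho(g)\bigl(\rho(g)^{-1}D\rho(g)\,f_2\bigr)$ is precisely what is implicit when the paper says $M$ is ``defined using various parameters $f_2$, $e_i$, $r$'': polynomial factors in $v$ are absorbed into a modified Schwartz function, and the resulting $a$- and $t$-weights into the parameters $e_i$ and $r$.
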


\begin{proof}
    Let $v_0 \in Y^{\mathrm{sm}}(F)$ and
      $D\in U(\mathrm{Lie}(V(F)))$.
      Let $\Delta:F \to F^3$ be the diagonal embedding.  Using the notation of Lemma \ref{lem:universal}  there is a neighborhood $U$ of $v_0$ such that for $v \in U$ the expression
      \begin{align*}
      |f_1(\gamma_b\begin{psmatrix} 1 & \Delta(t)\\ & 1 \end{psmatrix}\begin{psmatrix}a^{-1} & \\ & a\end{psmatrix}k)D\rho(\begin{psmatrix} 1 & \Delta(t)\\ & 1 \end{psmatrix}\begin{psmatrix}a^{-1} & \\ & a\end{psmatrix}k)f_2(v)| 
      \end{align*}
      is dominated by a finite sum of functions of the form
      $$
      |f_1(\gamma_b\begin{psmatrix} 1 & \Delta(t)\\ & 1 \end{psmatrix}\begin{psmatrix}a^{-1} & \\ & a\end{psmatrix}k)|\max(m(t,a),1)^{N} M(a,t,k,v)^{1/2}|a|^{-1} 
      $$
      where $M(a,t,k,v)$ is defined using various parameters $f_2$, $e_i$, $r$ depending on $D$. We recall that 
      $$
      m(t,a)=|\gamma_b\begin{psmatrix} 1 & \Delta(t)\\ & 1 \end{psmatrix}\begin{psmatrix}a^{-1} & \\ & a\end{psmatrix}k|
      $$
      by \eqref{eq:gamma0gnorm}.
      Thus applying the Cauchy-Schwarz inequality we have
      \begin{align*}
         & \int_{F \times (F^\times)^3 \times K}|f_1(\gamma_b\begin{psmatrix} 1 & \Delta(t)\\ & 1 \end{psmatrix}\begin{psmatrix}a^{-1} & \\ & a\end{psmatrix}k)\max(m(t,a),1)^{N}| M(a,t,k,v)^{1/2}|a|^{-1} |a|^2 d^\times a dtdk\\
          &\le  \left(\int_{G_{\gamma_b}(F) \backslash G(F)}|f_1(\gamma_bg)\max(|\gamma_b g|,1)^{N}|^2dg \right)^{1/2} \left(\int_{F \times (F^\times)^3 \times K}M(a,t,k,v) d^\times a dtdk\right)^{1/2}.
          \end{align*}
 The left integral converges by the argument in the proof of Lemma \ref{lem:Iker}, and the right converges by Lemma \ref{lem:universal}.  To obtain the bound in the lemma one simply keeps track of which parameters $r$ and $e_i$ are required in the argument above in terms of $\deg D$.  
 
 To prove that $\mathcal{S}(Y(F)) <C^\infty(Y^{\mathrm{sm}}(F))$ we apply the Leibniz integral rule.  
 To justify its application, we require a bound on $\int_{G_{\gamma_b}(F) \backslash G(F)}|f_1(\gamma_bg)  D\rho(g)f_2(v)|dg$ that is uniform in a small neighborhood of a given $v \in Y^{\mathrm{sm}}(F).$  Choose a compact neighborhood $U$ of $v$ as in the proof of Lemma \ref{lem:universal}.  Then by Lemma \ref{lem:universal} for $v \in U$ one has $M(a,t,k,v) \leq M'(a,t,k),$ defined as in \eqref{M'}.  
It suffices to show 
 $$
 \int_{F \times (F^\times)^3 \times K}|f_1(\gamma_b\begin{psmatrix} 1 & \Delta(t)\\ & 1 \end{psmatrix}\begin{psmatrix}a^{-1} & \\ & a\end{psmatrix}k)\max(m(t,a),1)^{N}| M'(a,t,k)^{1/2}|a|^{-1} |a|^2 d^\times a dtdk<\infty.
 $$
 This follows from \eqref{eq:open:a:universal} and the argument above.
 \end{proof}

\begin{rem}
By mimicking the proof above one can also bound $Df(v)$ when $v_i=0$ for some $i$.
\end{rem}

\subsection{The identity orbit}

\begin{prop} \label{a:prop:I1int}  Let $v \in V^{\circ}(F)$. Given a positive integer $N'$ and $\epsilon>0$, there are continuous seminorms $\nu$ on $\mathcal{S}(X(F))$ and $\nu'$ (depending on $N',\epsilon$) on $\mathcal{S}(V(F))$ such that one has the bound
\begin{align*}
 \int_{N_2^3(F) \backslash G(F)}|f_1(g)\rho(g)f(v)|dg  \le \nu(f_1)\nu'(f_2) \max\{|v_1||v_2||v_3|,1\}^{-N'} \prod_{i=1}^3 |v_i|^{-d_i/2-\epsilon}.
\end{align*} 
The function $I_0(f)(v)$ admits the same bound.
\end{prop}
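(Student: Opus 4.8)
The plan is to imitate the nonarchimedean Proposition~\ref{prop:I1int:na}, replacing the vanishing of Schwartz functions ``at infinity'' used there by the quantitative decay estimates of Lemma~\ref{lem:seminorm}, and the compact support of $f_2$ by the seminorm bounds on the Weil representation orbit that were used in the proof of Lemma~\ref{lem:universal}. Since $|I_0(f)(v)|$ is at most the integral in question, it suffices to bound the latter, and one does so for all $f=f_1\otimes f_2$ by bilinearity.

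First I would apply the Iwasawa decomposition $\SL_2(F)=N_2(F)A(F)K$ in each of the three factors together with the explicit formula for the action of the diagonal torus under the Weil representation, exactly as in the first display of the proof of Proposition~\ref{prop:I1int:na}, to rewrite the integral as
\begin{align*}
\int_{(F^\times)^3\times K^3}\Big|f_1\Big(\Big(\begin{smallmatrix} a^{-1} & \\ & a\end{smallmatrix}\Big)k\Big)\Big|\,\big|\rho(k)f_2(a^{-1}v)\big|\prod_{i=1}^3 |a_i|^{2-d_i/2}\,d^\times a_i\,dk,
\end{align*}
where $\big|\big(\begin{smallmatrix} a^{-1} & \\ & a\end{smallmatrix}\big)k\big|=|a_1a_2a_3|$ in the notation of \eqref{plucknorm}. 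Next I would bound $f_1$ using Lemma~\ref{lem:seminorm} with $n=3$: for every $N_1\in\ZZ_{\ge0}$ and $\beta\in[0,\tfrac12)$ there is a continuous seminorm on $\mathcal{S}(X(F))$ dominating $|f_1(g)||g|^{N_1+2-\beta}$, and by choosing $N_1$ large when $|g|\ge1$ and $N_1=0$ when $|g|\le1$ one gets $|f_1(g)|\le\nu(f_1)|a_1a_2a_3|^{-2+\beta}\max(|a_1a_2a_3|,1)^{-N_1}$. Similarly I would bound $f_2$ by $|\rho(k)f_2(w)|\le\nu_{C_1,C_2,C_3}(f_2)\prod_i\max(|w_i|,1)^{-C_i}$ for any $C_i$, using continuity of the Weil representation and compactness of $K$ as in the proof of Lemma~\ref{lem:universal}. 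The crucial bookkeeping point, just as in the nonarchimedean case, is that the factor $|g|^{-(n+1)/2+\beta}$ coming from $f_1$ cancels the $\prod|a_i|^2$ from the measure, leaving only the harmless exponent $-d_i/2+\beta$; this is precisely why a single $\epsilon$ absorbs all the slack in the statement.

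These reductions leave the elementary estimate
\begin{align*}
\int_{(F^\times)^3}\max(|a_1a_2a_3|,1)^{-N_1}\prod_{i=1}^3\max(|a_i^{-1}v_i|,1)^{-C_i}\,|a_i|^{-d_i/2+\beta}\,d^\times a_i\ll\max\{|v_1v_2v_3|,1\}^{-N'}\prod_{i=1}^3|v_i|^{-d_i/2-\epsilon},
\end{align*}
valid for $v\in V^\circ(F)$ once $\beta<\tfrac12$ is fixed small and $N_1,C_1,C_2,C_3$ are taken large in terms of $N',\epsilon$ and the $d_i$. To prove it I would split $F^\times$ in each variable $a_i$ into the ranges $|a_i|\ge|v_i|$ and $|a_i|<|v_i|$, and split the domain into $|a_1a_2a_3|\le1$ versus $|a_1a_2a_3|>1$, giving finitely many pieces; on each piece $\max(|a_1a_2a_3|,1)^{-N_1}$ is either $1$ or $\prod_i|a_i|^{-N_1}$, so the integrand decouples into a product of one-dimensional integrals, each controlled by Lemma~\ref{lem:basic:bound} with $B=d_i/2-\beta>0$ (and $B$ replaced by $N_1+d_i/2-\beta$ where the factor $\prod|a_i|^{-N_1}$ is active). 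Choosing $C_i>d_i/2$ yields $|v_i|^{-d_i/2+\beta}\le|v_i|^{-d_i/2-\epsilon}$ on each piece, and choosing $N_1>N'+\beta+\epsilon$ supplies the extra $\max\{|v_1v_2v_3|,1\}^{-N'}$ decay on the pieces where $|a_1a_2a_3|$ is large; borderline dimensions $d_i\in\{2,4\}$ produce at worst a power of $\log\max(|v_i|^{-1},2)$, again absorbed into the $\epsilon$.

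The hard part will be this last region analysis: the three-dimensional integral does not factor before the splitting, and one must keep the constraint coming from $\max(|a_1a_2a_3|,1)^{-N_1}$ rather than naively dominating by a product of one-dimensional integrals, since otherwise the ``large $v$'' regime yields an artificial mixture of powers of the $|v_i|$ instead of the clean polynomial decay $\max\{|v_1v_2v_3|,1\}^{-N'}$. Simultaneously, near the cone point of $X$ (where $|a_1a_2a_3|\to0$) and near $v_i=0$, one must balance the singularity $|g|^{-(n+1)/2+\beta}$ of $f_1$ against the Schwartz decay of $f_2$, which is exactly where the improvement $\beta>0$ over the nonarchimedean exponent is used; the rest of the argument is a routine refinement of the proofs of Proposition~\ref{prop:I1int:na} and Lemma~\ref{lem:universal}.
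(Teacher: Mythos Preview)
Your reduction via the Iwasawa decomposition and Lemma~\ref{lem:seminorm} matches the paper exactly, and you arrive at the same three--variable integral
\[
\int_{(F^\times)^3}\max(|a_1a_2a_3|,1)^{-N}\prod_{i=1}^3\max(|a_i^{-1}v_i|,1)^{-N_i}\,|a_i|^{-d_i/2}\,d^\times a_i
\]
(the paper takes $\beta=0$). The divergence begins after this point.

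Your splitting into $2^4$ regions and ``decoupling'' is not carried out, and as you yourself note, the naive product bound loses exactly the information needed for the $\max\{|v_1v_2v_3|,1\}^{-N'}$ decay. Concretely, on the region $|a_1a_2a_3|\le1$ with, say, $|a_1|\ge|v_1|,|a_2|\ge|v_2|,|a_3|<|v_3|$, dropping the product constraint gives only $\prod_i|v_i|^{-d_i/2+\beta}$, which is not dominated by the target for large $|v_i|$; to recover the decay one must use that the constraint forces $|a_3|\le|v_1v_2|^{-1}$, and similar derived inequalities on every piece. You never say how to do this, and Lemma~\ref{lem:basic:bound} does not apply to the resulting coupled domains without further work.

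The paper sidesteps the whole region analysis by a sequence of substitutions that \emph{decouple the integrand rather than the domain}. First $a_1\mapsto(a_2a_3)^{-1}a_1$ turns $\max(|a_1a_2a_3|,1)^{-N}$ into $\max(|a_1|,1)^{-N}$, so Lemma~\ref{lem:basic:bound} handles the $a_1$--integral directly and outputs a factor $\max(|a_2a_3v_1|,1)^{-N_1+d_1/2}|v_1|^{-d_1/2}$. Then $a_2\mapsto a_3^{-1}a_2$ reduces the $a_3$--integral to an explicit four--range computation yielding a piecewise function $F(a_2,v_2,v_3)$, and a final application of Lemma~\ref{lem:basic:bound} (with $B=\epsilon$) to the $a_2$--integral produces the stated bound. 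The $\epsilon$ in the exponent comes from this last step, absorbing the logarithm that appears when $d_2=d_3$; it has nothing to do with a positive $\beta$ in Lemma~\ref{lem:seminorm}, and indeed taking $\beta>0$ only weakens the decay at infinity. Your remark that $\beta>0$ is ``exactly where the improvement\ldots is used'' is therefore mistaken for this proposition (it is used, in a different way, in Proposition~\ref{prop:betabound}).
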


\begin{proof}
By symmetry, we may assume $d_1\ge d_2\ge d_3$. Recall the seminorms $\nu_{D,N,\beta}$ mentioned in Lemma \ref{lem:seminorm}.  Arguing as in the proof of Proposition \ref{prop:I1int:na}, we see that the integral in the proposition is bounded by $\max(\nu_{\mathrm{Id},N,0}(f_1),\nu_{\mathrm{Id},0,0}(f_1))$ times
\begin{align*}
 & \  \int_{(F^{\times})^3} |a_1 a_2 a_3|^{-2}\max(|a_1a_2a_3|,1)^{-N}  \widetilde{f}_2(a^{-1} v)
\prod_{i=1}^3 |a_i|^{2-d_i/2} d^\times a_i,
 \end{align*}
 where $\widetilde{f}_2$ is defined as in \eqref{overk}.
 By the continuity of Weil representation and compactness of $K$, for any $N_1,N_2,N_3 \in \ZZ_{\geq 0}$, there exists a continuous seminorm $\nu'$ depending on $N_1,N_2,N_3$ such that the integral above is bounded by $\nu'(f_2)$ times 
\begin{align*}
\begin{split}
& \ \int_{(F^\times)^3 }\max(|a_1a_2a_3|,1)^{-N} \prod_{i=1}^3\max(|a_i^{-1}v_i|,1)^{-N_i}|a_i|^{-d_i/2}da_i^\times\\
&=\ \int_{(F^\times)^3}\max(|a_1|,1)^{-N}\max(|a_1^{-1}(a_2a_3)v_1|,1)^{-N_1}|a_1|^{-d_1/2} |a_2a_3|^{d_1/2}
\\
 &\hskip1in\times \prod_{i=2}^3\max(|a_i^{-1}v_i|,1)^{-N_i}|a_i|^{-d_i/2}da_i^\times.
\end{split}
\end{align*}
Here we have taken a change of variables $a_1 \mapsto (a_2a_3)^{-1}a_1$.  For the remainder of the proof all implicit constants are allowed to depend on $N_1,N_2,N_3,N,$ and we assume
\begin{align*} 
    N_i-d_i/2>N_{i-1}-d_{i-1}/2
\end{align*}
for each $i$ where $N_0=d_0=0$.

Taking $N>N_1+d_1/2$ and applying Lemma \ref{lem:basic:bound} with $A=N_1$, $B=d_1/2$ and $C=N$ to the $a_1$ integral we see that the above is bounded by
\begin{align} \nonumber
&\int_{(F^\times)^2}\max(|a_2a_3v_1|,1)^{-N_1+d_1/2}|a_2a_3v_1|^{-d_1/2}
\prod_{i=2}^3\max(|a_i^{-1}v_i|,1)^{-N_i}|a_i|^{(d_1-d_i)/2}d^\times a_i\\ \begin{split}
&=|v_1|^{-d_1/2}\int_{(F^\times)^2}\max(|a_2v_1|,1)^{-N_1+d_1/2}|a_2|^{-d_2/2}\\ &\hspace{2cm}\times
\max(|a_2^{-1}a_3v_2|,1)^{-N_2}\max(|a_3^{-1}v_3|,1)^{-N_3} |a_3|^{(d_2-d_3)/2}da_2^\times da_3^\times.\end{split} \label{tag}
\end{align}
Here we have taken a change of variables $a_2 \mapsto a_3^{-1}a_2$. 
The integral 
\begin{align} \label{a3:int}
& \int_{F^\times}
\max(|a_2^{-1}a_3v_2|,1)^{-N_2}\max(|a_3^{-1}v_3|,1)^{-N_3}|a_3|^{(d_2-d_3)/2} da_3^\times
\end{align}
breaks into the sum of four integrals
\begin{align*}
    \int_{|a_3|\le \min\left(\frac{|a_2|}{|v_2|},|v_3|\right)}+ \int_{ \min\left(\frac{|a_2|}{|v_2|},|v_3|\right)< |a_3|\le \frac{|a_2|}{|v_2|}}+\int_{\frac{|a_2|}{|v_2|} < |a_3|\le \max\left(\frac{|a_2|}{|v_2|},|v_3|\right)}+ \int_{ \max\left(\frac{|a_2|}{|v_2|},|v_3|\right)\le |a_3|}
\end{align*}
and this is bounded by a constant (depending only on $N_2,N_3$) times
\begin{align*}
    & |v_3|^{-N_3}\min\left(\tfrac{|a_2|}{|v_2|},|v_3|\right)^{N_3+(d_2-d_3)/2}+G_{d_2,d_3}(a_2,v_2,v_3)
    + \left(\frac{|a_2|}{|v_2|}\right)^{N_2}\\\times& \Bigg(|v_3|^{-N_3}\left(\max\left(\tfrac{|a_2|}{|v_2|},|v_3|\right)^{N_3-N_2+(d_2-d_3)/2}-\left(\tfrac{|a_2|}{|v_2|}\right)^{N_3-N_2+(d_2-d_3)/2}\right)\\
    &+\max\left(\tfrac{|a_2|}{|v_2|},|v_3|\right)^{-N_2+(d_2-d_3)/2}\Bigg)
\end{align*}
where
\begin{align*}
    G_{d_2,d_3}(a_2,v_2,v_3)=\left\{\begin{array}{lc}
    \left(\tfrac{|a_2|}{|v_2|}\right)^{(d_2-d_3)/2}-\min\left(\tfrac{|a_2|}{|v_2|},|v_3|\right)^{(d_2-d_3)/2} &  \textrm{if $d_2\neq d_3$},\\
    \log{\left(\tfrac{|a_2|}{|v_2|}\right)} -\log{\min\left(\tfrac{|a_2|}{|v_2|},|v_3|\right)}\  & \textrm{if $d_2=d_3$}.
    \end{array}\right.
\end{align*}
Thus \eqref{a3:int} is bounded by a constant times 
\begin{align*}
    F(a_2,v_2,v_3):=\left\{\begin{array}{lc}
    |v_3|^{(d_2-d_3)/2}\left(\frac{|a_2|}{|v_2||v_3|}\right)^{N_2}  & \textrm{if $|a_2|<|v_2||v_3|,$} \\|v_3|^{(d_2-d_3)/2}
   \left( \tfrac{|a_2|}{|v_2||v_3|}\right)^{(d_2-d_3)/2}  & \textrm{if $|a_2|\ge |v_2||v_3|,$} \textrm{ and }d_2 \neq d_3,\\
   1+\log\left(\tfrac{|a_2|}{|v_2||v_3|}\right)  & \textrm{if $|a_2|\ge |v_2||v_3|$} \textrm{ and }d_2 = d_3.
    \end{array}\right.
\end{align*}

Thus the original integral \eqref{tag} is bounded by a constant times
\begin{align*}
    &|v_1|^{-d_1/2}\int_{F^\times} \max(|a_2v_1|,1)^{-N_1+d_1/2}|a_2|^{-d_2/2}F(a_2,v_2,v_3)da_2^\times\\
    &=\left(\prod_{i=1}^3|v_i|^{-d_i/2}\right) \int_{F^\times} \max(|a_2|c,1)^{-N_1+d_1/2}|a_2|^{-d_2/2}\min(|a_2|,1)^{N_2}F'(a_2) da_2^\times,
\end{align*}
where $c=|v_1||v_2||v_3|$, and
\begin{align*}
    F'(a_2):=\left\{\begin{array}{ll}
       \max(|a_2|,1)^{(d_2-d_3)/2}&  \textrm{if $d_2\neq d_3,$}\\
        \log(\max(|a_2|,1))+1&   \textrm{if $d_2=d_3$}.
    \end{array}\right.
\end{align*}
Here we have changed variables $a_2\mapsto a_2(|v_2||v_3|)$. The assertion of the proposition now follows from taking a change of variable $a_2\mapsto a_2^{-1}$ and applying Lemma \ref{lem:basic:bound} with $A=N_1-d_1/2,B=\epsilon<1/2, C=N_2-d_2/2-\epsilon$.
\end{proof}

\subsection{The other orbits}

\begin{prop} \label{prop:arch:other}
Let $r=\mathrm{Re}(s)>0$ and $N \in \ZZ_{> 0}$, and assume  $N>\max(2r+d_2/2-2,
d_3/2-2)$.
For $v \in V^{
\circ}(F)$, there are continuous seminorms $\nu$ on $\mathcal{S}(X(F))$ and $\nu'$  on $\mathcal{S}(V(F))$, depending on $N$, such that 
\begin{align*}
&\int_{(N_2 (F) \times \Delta_1(\SL_2)(F)) \backslash G(F)}|f_1\left(\gamma_{1}g\right) |\\
&\times\int_{N_2(F) \backslash \SL_2(F)}\int_{F^\times}
|\rho\left(\left( I_2,h,\left(\begin{smallmatrix} 1 & \\ & -1 \end{smallmatrix} \right)h\left(\begin{smallmatrix} 1 & \\ & -1 \end{smallmatrix} \right)\right)g\right)f_2( v)\Phi((0,x)hp_1(g))||x|^{2r}d^\times xdhdg\\
&\le \Psi(2r)\nu(f_1)\nu'(f_2)|v_1|^{-N-d_1/2}\max\left(|v_2|,|v_3|\right)^{-2r-d_2/2-d_3/2+2}
\end{align*}
 where $\Psi:\RR_{>0} \to \RR$ is an analytic function.
 The function $I_1(f\otimes \Phi)(v,s)$ admits the same bound.
\end{prop}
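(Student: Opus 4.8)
The plan is to run the proof of Proposition~\ref{prop:other:na:bound} in the archimedean setting, replacing each nonarchimedean estimate by its archimedean analogue established above. First I would carry out the same group-theoretic reductions used there: change variables $g_2\mapsto h^{-1}g_2$, use that $\Delta_1(\SL_2)$ lies in the stabilizer of $\gamma_1$, and then apply the Iwasawa decomposition on each of the three $\SL_2$-factors. As in loc.~cit., this presents the integral in the proposition as an integral over $(a_1,a_2,a_3,t)\in(F^\times)^3\times F$, $(k_1,k_2,k_3)\in K^3$ and $x\in F^\times$, in which the argument of $f_1$ has Pl\"ucker norm comparable to
$$
m'(t,a):=\max\bigl(|a_1|,\,|a_1a_2a_3^{-1}|,\,|a_1a_2^{-1}a_3|,\,|a_1a_2a_3t|\bigr),
$$
the argument of $f_2$ is $a^{-1}v$ up to the compact variables and scalars (producing a factor $\prod_i|a_i|^{-d_i/2}$ alongside $\prod_i|a_i|^2$ from the Haar measure), and the argument of $\Phi$ is $(0,xa_2)k_2$.

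Next I would estimate the three test-function factors. For $f_1$, Lemma~\ref{lem:seminorm} (with $n=3$, so $\tfrac{n+1}{2}=2$) furnishes a continuous seminorm $\nu$ on $\mathcal{S}(X(F))$, a fixed $\beta\in(0,\tfrac12)$, and for each $N\in\ZZ_{\ge0}$ the bound $|f_1(\gamma_1\cdot)|\le\nu(f_1)\,m'(t,a)^{-2+\beta}\max(m'(t,a),1)^{-N}$, encoding both the behavior at the boundary $|g|\to0$ and the rapid decay at infinity. For the Weil representation factor, continuity of $\rho$ and compactness of $K$ give, for any $C_1,C_2,C_3\in\ZZ_{\ge0}$, a continuous seminorm $\nu'$ on $\mathcal{S}(V(F))$ with $\int_{K^3}|\rho(k)f_2(w)|\,dk\le\nu'(f_2)\prod_{i=1}^3\max(|w_i|,1)^{-C_i}$, which we apply at $w=a^{-1}v$. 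For $\Phi$, after the substitution $x\mapsto xa_2^{-1}$ already used in the nonarchimedean proof, the inner $x$-integral factors as $|a_2|^{-2r}\int_{F^\times}|x|^{2r}\widetilde\Phi(x)\,d^\times x$ with $\widetilde\Phi(x):=\int_K|\Phi((0,x)k)|\,dk$ a Schwartz function of $x$, so this integral equals an analytic function $\Psi(2r)$ of $r>0$ (depending on $\Phi$) --- the archimedean avatar of the product of $\zeta$-factors in Proposition~\ref{prop:other:na:bound}. Collecting these, the quantity in question is dominated by $\Psi(2r)\,\nu(f_1)\,\nu'(f_2)$ times
$$
\int_{(F^\times)^3\times F} m'(t,a)^{-2+\beta}\max(m'(t,a),1)^{-N}\,|a_2|^{-2r}\prod_{i=1}^3\Bigl(\max(|a_i^{-1}v_i|,1)^{-C_i}\,|a_i|^{2-d_i/2}\Bigr)\,dt\,d^\times a.
$$

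Finally I would bound this integral, which is the heart of the matter. Integrating out $t$ via $m'(t,a)=\max(m'(a),|a_1a_2a_3t|)$, where $m'(a):=\max(|a_1|,|a_1a_2a_3^{-1}|,|a_1a_2^{-1}a_3|)=|a_1|\,\max(|a_2|,|a_3|)/\min(|a_2|,|a_3|)\ge|a_1|$, produces a bound $\ll|a_1a_2a_3|^{-1}m'(a)^{-1+\beta}\max(m'(a),1)^{-N}$. The variable $a_1$ then occurs only through $|a_1|^{1-d_1/2}m'(a)^{-1+\beta}\max(m'(a),1)^{-N}\max(|a_1^{-1}v_1|,1)^{-C_1}$, and bounding $m'(a)\ge|a_1|$ and applying Lemma~\ref{lem:basic:bound} with $C_1$ large yields the factor $|v_1|^{-N-d_1/2}$ (the exponent $N$ of Lemma~\ref{lem:seminorm} being arbitrary). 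The remaining $(a_2,a_3)$-integral is treated by exploiting the leftover factor $(\min(|a_2|,|a_3|)/\max(|a_2|,|a_3|))^{N+1}$ from $m'(a)^{-N-1}$, splitting into $|a_2|\ge|a_3|$ and $|a_2|<|a_3|$, and applying Lemma~\ref{lem:basic:bound} once in each variable as in the analogous part of the proof of Proposition~\ref{a:prop:I1int}; the hypothesis $N>\max(2r+d_2/2-2,\,d_3/2-2)$ is precisely what makes the relevant one-variable Tate integrals converge, and the net output is $\max(|v_2|,|v_3|)^{-2r-d_2/2-d_3/2+2}$. The assertion for $I_1(f\otimes\Phi)(v,s)$ then follows since its absolute value is dominated by the displayed integral with $r=\mathrm{Re}(s)$. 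The main obstacle is exactly this last step: disentangling the four-dimensional integral over $(a_1,a_2,a_3,t)$ into a nested chain of one-variable integrals controlled by Lemma~\ref{lem:basic:bound}, while checking that every intermediate constant remains a continuous seminorm in $f_1$ times a continuous seminorm in $f_2$, uniformly in $v\in V^\circ(F)$ and in $s$ in the stated range.
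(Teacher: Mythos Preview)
Your overall strategy matches the paper's: the same group-theoretic unfolding, the same Iwasawa decomposition producing $m'(t,a)$, and the same reliance on Lemma~\ref{lem:seminorm} and Lemma~\ref{lem:basic:bound}. Two points in your writeup do not go through as written.

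First (minor): you bound $\int_{K^3}|\rho(k)f_2(w)|\,dk$ and $\widetilde\Phi(x)=\int_K|\Phi((0,x)k)|\,dk$ as independent factors, but both involve the \emph{same} $k_2$. The paper separates them by Cauchy--Schwarz in $k_2$; alternatively replace your $K^3$-integral bound on $f_2$ by a supremum bound, which is still a continuous seminorm by compactness.

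Second (a genuine gap): your use of $\beta>0$ via the split $m'(t,a)^{-2+\beta}\max(m'(t,a),1)^{-N}$ breaks the clean separation of variables you then rely on. After your $t$-integration you have $m'(a)^{-1+\beta}\max(m'(a),1)^{-N}$ with $m'(a)=|a_1|R$, $R=\max(|a_2|,|a_3|)/\min(|a_2|,|a_3|)$, and the factor $\max(|a_1|R,1)^{-N}$ does \emph{not} factor as a function of $a_1$ times a function of $R$. Your step ``bounding $m'(a)\ge|a_1|$'' replaces $m'(a)^{-1+\beta}\max(m'(a),1)^{-N}$ by $|a_1|^{-1+\beta}\max(|a_1|,1)^{-N}$, which throws away \emph{all} of the $R$-dependence; there is then no ``leftover factor $(\min/\max)^{N+1}$'' to couple $a_2$ and $a_3$, and the remaining $(a_2,a_3)$-integral only yields $|v_2|^{1-2r-d_2/2}|v_3|^{1-d_3/2}$ rather than $\max(|v_2|,|v_3|)^{2-2r-d_2/2-d_3/2}$. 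The paper avoids this by taking $\beta=0$ in Lemma~\ref{lem:seminorm} (allowed there), i.e.\ the single bound $|f_1(g)|\le\nu(f_1)|g|^{-2-N}$, and then factoring $m'(t,a)=|a_1|\,m''(t,a)$ with $m''(t,a)=\max(1,|a_2a_3^{-1}|,|a_2^{-1}a_3|,|a_2a_3t|)$ \emph{before} any integration. Then $m'(t,a)^{-2-N}=|a_1|^{-2-N}m''(t,a)^{-2-N}$ separates exactly, the $a_1$-integral is literally a factor giving $|v_1|^{-N-d_1/2}$, and the remaining $(a_2,a_3,t)$-integral carries the full decay $m''(t,a)^{-2-N}$, after which your final-step outline (split $|a_2|\gtrless|a_3|$, integrate $t$, then two applications of Lemma~\ref{lem:basic:bound}) goes through and uses the hypothesis $N>\max(2r+d_2/2-2,\,d_3/2-2)$ precisely where you indicate.
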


\begin{proof}  
By Lemma \ref{lem:seminorm} one has $|f_1(g)| \le \nu_{\mathrm{Id,N,0}}(f_1)|g|^{-2-N}$ for any $N \geq 0$.  Thus  arguing as in Proposition \ref{prop:other:na:bound}, we see that the 
integral is bounded by
\begin{align}\label{eq:firsteq}\begin{split}
&\nu_{\mathrm{Id},N,0}(f_1)\int_{(F^\times)^3 \times F }m'(t,a)^{-2-N}\left(\int_{K^3\times F^{
\times}}\left|\rho(k_1,k_2,k_3)f_2(a^{-1}v)\Phi((0,x)k_2)\right||x|^{2r}d^\times xdk_1dk_2dk_3\right)\\
&\hspace{2cm}\times  dt |a_2|^{-2r}\prod_{i=1}^3|a_i|^{2-d_i/2}  d^\times a_i, \end{split}
\end{align}
where $m'(t,a)=\max(|a_1|,|a_1a_2a_3^{-1}|, |a_1a_2^{-1}a_3|,|a_1a_2a_3t|)$. For simplicity we assume $f_2=\otimes_{i=1}^3 f_{2i}$.  The general case merely requires more annoying notation. Applying the Cauchy-Schwarz inequality on the second copy of $K$, the inner integral is bounded by
\begin{align*}
    &\left(\prod_{i=1,3}\int_{K}|\rho_i(k_i)f_{2i}(a_i^{-1}v_i)|dk_i\right)\left(\int_{K}|\rho(k_2)f_{22}(a_2^{-1}v_2)|^2dk_2\right)^{1/2}\\& \times \int_{F^\times}\left(\int_{K}|\Phi((0,x)k_2)|^2 dk_2\right)^{1/2}|x|^{2r}d^\times x.
\end{align*}
The last factor is $\Psi(2r)$ for an appropriate analytic function $\Psi:\RR_{>0} \to \RR$. By the continuity of the Weil representation and compactness of $K$,
for any $N_1,N_2,N_3 \in \ZZ_{> 0}$, there exists a continuous seminorm $\nu'_{N_1,N_2,N_3}$ on $\mathcal{S}(V(F))$ such that the integral in \eqref{eq:firsteq} is bounded by $\Psi(2r)$ times
\begin{align*} 
&\nu'_{N_1,N_2,N_3}(f_2)\int_{(F^\times)^3 \times F} m'(t,a)^{-N-2}|a_2|^{-2r}\prod_{i=1}^3\max(|a_i^{-1}v_i|,1)^{-N_i}|a_i|^{2-d_i/2}da^\times dt.
\end{align*}
From now on all implicit constants are allowed to depend on $N_1,N_2,N_3,N$.  
 Let 
 $$
 m''(t,a):=\max(1,|a_2a_3^{-1}|,|a_2^{-1}a_3|,|a_2a_3t|)=\max (|a_2a_3^{-1}|,|a_2^{-1}a_3|,|a_2a_3t|).
 $$
We assume without loss of generality that $N_1>N+d_1/2$. We then write the integral above  as the product of 
 \begin{align*}
     \int_{F^\times} |a_1|^{-N-d_1/2}\max(|a_1^{-1}v_1|,1)^{-N_1}da_1^\times \ll |v_1|^{-N-d_1/2}
 \end{align*}
and
\begin{align} \label{int23}
\int_{(F^\times)^2 \times F}m''(t,a)^{-N-2}|a_2|^{-2r}\prod_{i=2}^3\max(|a_i^{-1}v_i|,1)^{-N_i}|a_i|^{2-d_i/2}da_2^\times da_3^\times dt.
\end{align}

Now consider \eqref{int23}.  We write it as the sum of 
\begin{align} \label{231}
&\int_{|a_2| \geq |a_3|}|a_2|^{-N-d_2/2-2r}\max(|a_2^{-1}v_2|,1)^{-N_2}
\\& \times 
\int_F \max(|a_3^{-1}|,|a_3t|)^{-N-2}\max(|a_3^{-1}v_3|,1)^{-N_3}|a_3|^{2-d_3/2}da_3^\times da_2^\times dt \nonumber
\end{align}
and
\begin{align} \label{232}
&\int_{|a_3|>|a_2|}|a_3|^{-N-d_3/2} \max(|a_3^{-1}v_3|,1)^{-N_3} \\&\times\int_{F} \max(|a_2^{-1}|,|a_2t|)^{-N-2}\max(|a_2^{-1}v_2|,1)^{-N_2}|a_2|^{2-d_2/2-2r}da_3^\times da_2^\times dt. \nonumber
\end{align}
Executing the $t$ integral in \eqref{231}, we see that it is bounded by a constant times
\begin{align} \label{231prime}\begin{split}
&\int_{|a_2| \geq |a_3|}|a_2|^{-N-d_2/2-2r}\max(|a_2^{-1}v_2|,1)^{-N_2}
\max(|a_3^{-1}v_3|,1)^{-N_3}|a_3|^{2+N-d_3/2}da_3^\times da_2^\times\\
&= \int_{1 \geq |a_3|}|a_2|^{2-d_2/2-d_3/2-2r}\max(|a_2^{-1}v_2|,1)^{-N_2}
\max(|a_3^{-1}a_2^{-1}v_3|,1)^{-N_3}|a_3|^{2+N-d_3/2}da_3^\times da_2^\times,
\end{split}
\end{align}
where the latter equation is obtained by taking a change of variables $a_3\mapsto a_2a_3$.  
 Similarly \eqref{232} is bounded by a constant times 
\begin{align} \label{232prime}
\begin{split}
&\int_{|a_3|>1}|a_3|^{-N-d_3/2} \max(|a_2^{-1}a_3^{-1}v_3|,1)^{-N_3} 
\max(|a_2^{-1}v_2|,1)^{-N_2}|a_2|^{2-d_2/2-d_3/2-2r}da_3^\times da_2^\times .
\end{split}
\end{align}
Carrying out the integral over $a_3$ directly in \eqref{231prime} we see that it is bounded by a constant times
\begin{align*}
\int_{F^\times }|a_2|^{2-d_2/2-d_3/2-2r}\max(|a_2^{-1}v_2|,1)^{-N_2}\max(|a_2^{-1}v_3|,1)^{-N_3}da_2^{\times}
\end{align*}
provided $N-d_3/2+2>0$. This bound is also valid for \eqref{232prime} provided $N+d_3/2>N_3$. The integral above is 
bounded by a constant times
\begin{align*}
  \max\left(|v_2|,|v_3|\right)^{-2r-d_2/2-d_3/2+2}
\end{align*}
provided $N_i>2r+d_2/2+d_3/2-2$ for $i=2,3$.
\end{proof}

\section{Absolute convergence} \label{sec:AC}

In this section, we prove the absolute convergence statements 
that make the proof of the summation formula in \S \ref{sec:summation} rigorous. Fix a number field $F.$
For the remainder of the section, we fix 
\begin{align*}
(f=f_1\otimes f_2,\Phi) \in \mathcal{S}(X(\A_F)\times V(\A_F)) \times \mathcal{S}(\A_F^2).
\end{align*}
All implicit constants are allowed to depend on $f\otimes \Phi$. For $y_i \in V^\circ_i(\A_F)$, we let
\begin{align*}
    |y_i|:=\prod_v|y_i|_v.
\end{align*}

\begin{lem}\label{lem:globalIdbound}
Let $1/2>\epsilon>0$ and a finite set of places $S$ containing the infinite places be given. For $y\in V^{\circ}(\A_F)$ such that  $|y_1|_v=|y_2|_v=1$ for all $v\not\in S$, there exists a Schwartz function $\Psi\in \mathcal{S}((V_1\otimes V_2\otimes V_3)(\A_F))$ (depending on $S,\epsilon$) such that
\begin{align*}
\int_{(N_2)^3(\A_F) \backslash G(\A_F)} 
\left|f_1(g)
\rho\left(g\right) f_2(y) \right|dg  \le \Psi(y_1\otimes y_2\otimes y_3)\prod_{i=1}^3|y_i|^{-d_i/2-\epsilon}.
\end{align*}
The function  $I_0(f)$ satisfies the same bound.
\end{lem}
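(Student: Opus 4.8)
The plan is to write the adelic integral as an Euler product of the local integrals already estimated in Propositions~\ref{prop:I1int:na}, \ref{prop:I1comp} and \ref{a:prop:I1int}, and then to reassemble these local bounds into a single global Schwartz function of $y_1\otimes y_2\otimes y_3$.

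By bilinearity we may assume $f=f_1\otimes f_2$ with $f_1=\otimes_v f_{1,v}$ and $f_2=\otimes_v f_{2,v}$ factorizable over the places (the archimedean components being allowed to be arbitrary Schwartz functions, which is all Propositions~\ref{prop:I1int:na} and \ref{a:prop:I1int} require). Enlarge $S$ to a finite set $S'$ that in addition contains every place at which $f_{1,v}\neq b_{X,v}$ or $f_{2,v}\neq\one_{V(\OO_v)}$, every place above $2$, every place at which $F_v/\QQ_p$, $\psi_v$ or $\chi_{\mathcal{Q},v}$ ramifies, and every place of small residue characteristic, so that the constants in Proposition~\ref{prop:I1comp} equal $1$ outside $S'$. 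Since $\bigl((N_2\backslash\SL_2)(\A_F)\bigr)^3$ is a restricted direct product and the integrand factors through the places, Tonelli's theorem gives
\begin{align*}
\int_{(N_2)^3(\A_F)\backslash G(\A_F)}\bigl|f_1(g)\rho(g)f_2(y)\bigr|\,dg=\prod_v L_v(y_v),\qquad L_v(y_v):=\int_{(N_2)^3(F_v)\backslash G(F_v)}\bigl|f_{1,v}(g)\rho_v(g)f_{2,v}(y_v)\bigr|\,dg.
\end{align*}
By the unramified formula of Proposition~\ref{prop:I1comp} one checks $L_v(y_v)=1$ whenever $v\notin S'$ and $|y_{1,v}|_v=|y_{2,v}|_v=|y_{3,v}|_v=1$, hence for all but finitely many $v$, so the product is finite.

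I will bound each factor as $L_v(y_v)\le\Psi_v(y_{1,v}\otimes y_{2,v}\otimes y_{3,v})\prod_{i=1}^3|y_{i,v}|_v^{-d_i/2-\epsilon}$ with $\Psi_v\in\mathcal{S}\bigl((V_1\otimes V_2\otimes V_3)(F_v)\bigr)$ and $\Psi_v=\one_{(V_1\otimes V_2\otimes V_3)(\OO_v)}$ for $v\notin S'$; multiplying over $v$, using $|y_{1,v}\otimes y_{2,v}\otimes y_{3,v}|_v=|y_{1,v}|_v|y_{2,v}|_v|y_{3,v}|_v$ and $|y_i|=\prod_v|y_{i,v}|_v$, then yields the lemma with $\Psi:=\otimes_v\Psi_v$, a legitimate element of $\mathcal{S}\bigl((V_1\otimes V_2\otimes V_3)(\A_F)\bigr)$. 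For $v\notin S'$ the hypothesis gives $|y_{1,v}|_v=|y_{2,v}|_v=1$, so Proposition~\ref{prop:I1comp} applies and gives $L_v(y_v)\le|y_{3,v}|_v^{-d_3/2-\epsilon}\one_{V_3(\OO_v)}(y_{3,v})$, which is exactly $\Psi_v(y_{1,v}\otimes y_{2,v}\otimes y_{3,v})\prod_i|y_{i,v}|_v^{-d_i/2-\epsilon}$ once one notes that $|y_{1,v}|_v=|y_{2,v}|_v=1$ forces $y_{3,v}\in V_3(\OO_v)\iff y_{1,v}\otimes y_{2,v}\otimes y_{3,v}\in(V_1\otimes V_2\otimes V_3)(\OO_v)$. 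For a finite place $v\in S'$, Proposition~\ref{prop:I1int:na} bounds $L_v$ by $\prod_i|y_{i,v}|_v^{-d_i/2}$ with support in a fixed ball $|y_{1,v}\otimes y_{2,v}\otimes y_{3,v}|_v\le q_v^{c_v}$; absorbing the factor $|y_{1,v}\otimes y_{2,v}\otimes y_{3,v}|_v^{\epsilon}\le q_v^{\epsilon c_v}$ on that ball lets us take $\Psi_v$ a scalar multiple of the characteristic function of the ball, which lies in $C_c^\infty$.

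The main point is the archimedean places, where the decay in $y_1\otimes y_2\otimes y_3$ that Proposition~\ref{a:prop:I1int} supplies is only polynomial once the parameter there is fixed. For each $v\mid\infty$ and every $N'\in\ZZ_{>0}$ that proposition gives $L_v(y_v)\le C_{N',v}\max\{|y_{1,v}\otimes y_{2,v}\otimes y_{3,v}|_v,1\}^{-N'}\prod_i|y_{i,v}|_v^{-d_i/2-\epsilon}$ with $C_{N',v}$ independent of $y$. Hence the function $z_v\mapsto\sup\bigl\{L_v(y_v)\prod_i|y_{i,v}|_v^{d_i/2+\epsilon}:y_{1,v}\otimes y_{2,v}\otimes y_{3,v}=z_v\bigr\}$ is bounded (take $N'=1$) and decays faster than every power of $|z_v|_v$ (let $N'\to\infty$), and a routine argument then produces a Schwartz function $\Psi_v$ on $(V_1\otimes V_2\otimes V_3)(F_v)$ dominating it; this is the one place where the freedom to take $N'$ arbitrarily large is indispensable, since no single $N'$ yields a Schwartz majorant. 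Combining the local estimates over all places gives the asserted bound for the integral. The statement for $I_0(f)$ follows by the same factorization, since for pure tensors $|I_0(f_1\otimes f_2)(y)|$ is majorized by the integral just bounded and the general case results from decomposing $f$ into a finite sum of pure tensors and adding the corresponding functions $\Psi$.
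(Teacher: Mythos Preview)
Your argument is correct and follows the same route as the paper's proof, which simply reads ``This follows from the local bounds in Propositions~\ref{prop:I1int:na}, \ref{prop:I1comp}, and \ref{a:prop:I1int}.'' You have filled in the details of how these three local estimates combine: factoring the adelic integral, applying Proposition~\ref{prop:I1comp} at unramified places (where the hypothesis $|y_1|_v=|y_2|_v=1$ is available), Proposition~\ref{prop:I1int:na} at the bad finite places, and Proposition~\ref{a:prop:I1int} at the archimedean places, then assembling the local majorants into a global $\Psi$. Your observation that one must let $N'\to\infty$ and invoke the existence of a Schwartz majorant for a rapidly decreasing function is the only point not made explicit in the paper's one-line proof, and it is indeed the step that requires a moment's thought.
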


\begin{proof} This follows from the local bounds in Propositions \ref{prop:I1int:na},  \ref{prop:I1comp}, and  \ref{a:prop:I1int}.
\end{proof}

Let $\GG_m^2$ act on $V^\circ$ via the restriction of the action \eqref{act:Gm2}.

\begin{prop} \label{prop:Id:AC}
The sum
\begin{align*}
\sum_{\xi \in V^\circ(F)/(F^\times)^2}\int_{N_2^3(\A_F) \backslash G(\A_F)} 
\left|f_1(g)
\rho\left(g\right) f_2( \xi) \right|dg
\end{align*}
is finite.
\end{prop}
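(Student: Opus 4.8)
The plan is to reduce the whole sum to the place‑by‑place estimate already packaged in Lemma~\ref{lem:globalIdbound}, and then to recognize the resulting majorant as a convergent sum of a Schwartz function over the rational points of a vector space. First I would note that by $\CC$‑bilinearity of $I_0$ and the definition of the adelic Schwartz space it is enough to treat a pure tensor $f=\otimes_v f_v$, and I would fix a finite set $S$ of places of $F$ containing the archimedean ones, large enough that $f_v=b_{X,v}\otimes\one_{V(\OO_v)}$ with $\psi_v$ and every $\chi_{\mathcal Q_i,v}$ unramified for $v\notin S$, and also large enough that the $S$‑class group of $F$ is trivial; I would also fix $0<\epsilon<\tfrac12$.

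The first real step is a choice of orbit representatives. Given an orbit in $V^\circ(F)/(F^\times)^2$ for the action \eqref{act:Gm2}, I would let $\mathfrak{c}_i$ be the content ideal of $\xi_i$ away from $S$, write $\mathfrak{c}_i=a_i\OO_S$ (possible since the $S$‑class group is trivial), and replace $\xi$ by $(a_1^{-1}\xi_1,a_2^{-1}\xi_2,a_1a_2\xi_3)$, thereby arranging $|\xi_1|_v=|\xi_2|_v=1$ for all $v\notin S$; I would fix one such representative for each orbit. Lemma~\ref{lem:globalIdbound} then supplies a nonnegative Schwartz function $\Psi\in\mathcal{S}((V_1\otimes V_2\otimes V_3)(\A_F))$, independent of $\xi$, with
\[
\int_{N_2^3(\A_F)\backslash G(\A_F)}|f_1(g)\,\rho(g)f_2(\xi)|\,dg\ \le\ \Psi(\xi_1\otimes\xi_2\otimes\xi_3)\prod_{i=1}^3|\xi_i|^{-d_i/2-\epsilon}.
\]
As a preliminary I would record that the left‑hand integral really is $(F^\times)^2$‑invariant in $\xi$, so that the bound does not depend on which representative of the orbit was chosen: for $\gamma\in(F^\times)^2$ the scaling $\xi\mapsto\gamma\cdot\xi$ is implemented by left translation of the integration variable by the diagonal torus element of $G$ corresponding to $\gamma$, whose image in $\Sp_6$ lies in $[P,P](F)$ — under which $f_1$ is left invariant — while the Weil‑representation cocycle is trivial on $F$‑points and the diagonal torus normalizes $N_2^3$ with trivial modulus on $F$‑points.

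The remaining step is the summation, which is short. For a nonzero $\xi_i\in V_i(F)$ the product formula gives $|\xi_i|=\prod_v|\xi_i|_v\ge\prod_v|\xi_{i,j}|_v=1$ for any nonzero coordinate $\xi_{i,j}$, so $\prod_i|\xi_i|^{-d_i/2-\epsilon}\le1$. Since $\xi\in V^\circ$, every $\xi_i$ is nonzero, hence $\xi_1\otimes\xi_2\otimes\xi_3$ is a nonzero (decomposable) element of $(V_1\otimes V_2\otimes V_3)(F)$; and two triples in $V^\circ(F)$ with equal such tensor differ by the action \eqref{act:Gm2}, since a decomposable tensor over a field determines its factors up to scalars with product one. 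Thus $\xi\mapsto\xi_1\otimes\xi_2\otimes\xi_3$ is injective on $V^\circ(F)/(F^\times)^2$, and summing the displayed bound over orbits yields
\[
\sum_{\xi\in V^\circ(F)/(F^\times)^2}\int_{N_2^3(\A_F)\backslash G(\A_F)}|f_1(g)\rho(g)f_2(\xi)|\,dg\ \le\ \sum_{\xi\in V^\circ(F)/(F^\times)^2}\Psi(\xi_1\otimes\xi_2\otimes\xi_3)\ \le\ \sum_{w\in(V_1\otimes V_2\otimes V_3)(F)}\Psi(w),
\]
which is finite because a Schwartz function on the adelic points of a vector space is summable over the rational points. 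The only genuinely nontrivial input is Lemma~\ref{lem:globalIdbound} itself, proved from the local estimates of \S\ref{sec:bound:na}, \S\ref{sec:unr}, and \S\ref{sec:bound:arch}; the one point of technique beyond it is the passage to representatives that are primitive outside $S$ (and the check of $(F^\times)^2$‑invariance that makes this legitimate), which is what converts the majorant into the manifestly convergent sum $\sum_w\Psi(w)$ over a vector space rather than over an orbit space.
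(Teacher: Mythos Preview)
Your proof is correct and follows essentially the same strategy as the paper: normalize representatives so that Lemma~\ref{lem:globalIdbound} applies with a fixed $S$, invoke the product formula to discard the factor $\prod_i|\xi_i|^{-d_i/2-\epsilon}$, and then majorize by a sum of a Schwartz function over rational points of a vector space. The only cosmetic differences are that you enlarge $S$ to kill the $S$-class group (the paper instead sums over a finite set of ideal class representatives and fixes $\gcd(\xi_1),\gcd(\xi_2)$), and you make the injectivity of $\xi\mapsto\xi_1\otimes\xi_2\otimes\xi_3$ on $(F^\times)^2$-orbits explicit, which is a clean way to pass to the majorizing sum over $(V_1\otimes V_2\otimes V_3)(F)$.
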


\begin{proof}
Let 
$$
\{\mathfrak{a}_j \subset \OO:1 \leq j \leq k\}
$$ 
be a set of representatives for the ideal classes of $\OO,$ the ring of integers of $F.$  For every $\xi_i \in V_i(F)$, we can choose an $\alpha \in F^\times$ such that $\alpha \xi_i \in V_i(\OO)$ and
the greatest common denominator $\mathrm{gcd}(\alpha\xi_i)$ of the coefficients of $\alpha \xi_i$ is $\mathfrak{a}_j$ for some $1 \leq j \leq k$.  Using this observation, we see that the sum in the proposition is bounded by a constant times
\begin{align*}
\sum_{j_1,j_2=1}^k\sum_{\substack{\xi \in (V_1(\OO) \times V_2(\OO) \times V_3^\circ(F))/ (\OO^\times)^2\\ \gcd(v_1)=\mathfrak{a}_{j_1},\,\gcd(v_2)=\mathfrak{a}_{j_2}}}\int_{N_2^3(\A_F) \backslash G(\A_F)} 
\left|f_1(g)
\rho\left(g\right) f_2( \xi) \right|dg.
\end{align*}
Here $(\OO^\times)^2 < (F^\times)^2$ acts via the action \eqref{act:Gm2}.
Thus it suffices to fix a pair of ideals $\mathfrak{b}_1$ and $\mathfrak{b}_2$ and prove convergence of the sum
\begin{align*}
\sum_{\substack{\xi \in (V_1(\OO) \times V_2(\OO) \times V_3^\circ(F))/ (\OO^\times)^2\\ \gcd(\xi_1)=\mathfrak{b}_{1},\,\gcd(\xi_2)=\mathfrak{b}_{2}}}\int_{N_2^3(\A_F) \backslash G(\A_F)} 
\left|f_1(g)
\rho\left(g\right) f_2( \xi) \right|dg.
\end{align*}
Let $S$ be a finite set of places including the 
infinite places such that $\mathfrak{b}_i\widehat{\OO}^S=\widehat{\OO}^S$ for each $i$.  Then by Lemma \ref{lem:globalIdbound}, there exists $\Psi \in \mathcal{S}((V_1 \otimes V_2 \otimes V_3)(\A_F))$ such that the sum above is bounded by 
\begin{align*}
\sum_{\substack{\xi \in (V_1(\OO) \times V_2(\OO) \times V_3^{\circ}(F))/ (\OO^\times)^2\\ \gcd(\xi_1)=\mathfrak{b}_{1},\,\gcd(\xi_2)=\mathfrak{b}_{2}}}\Psi(\xi_1\otimes \xi_2\otimes \xi_3)\prod_{i=1}^3|\xi_i|^{-d_i/2-\epsilon}\le \sum_{\xi\in (V_1 \otimes V_2\otimes V_3)(F)} \Psi(\xi).
\end{align*}
Here we have used the fact that, by the product rule, 
\begin{align} \label{weak:prod}
|\xi_i|\geq 1.
\end{align}
for $\xi_i \in V^{\circ}_i(F)$.
\end{proof}

\begin{lem}\label{lem:globalotherbound}
Let constants $c>1/2>\epsilon>0$ be given. For $1/2+\epsilon<r<c$ and an integer 
$$
N>\max(d_1/2+d_3/2-2,2r+d_1/2+d_2/2-2),
$$
there exists a Schwartz function $\Psi\in \mathcal{S}(V_1(\A_F))$ (depending on $\epsilon,c$) such that
\begin{align*}
\begin{split}
    &\int_{G_{\gamma_1}(\A_F)\backslash G(\A_F)} |f_1(\gamma_1 g)|\int_{N_2(\A_F)\backslash \SL_2(\A_F)}\left|
\rho\left(\left( I_2,h,\left(\begin{smallmatrix} 1 & \\ & -1 \end{smallmatrix} \right)h\left(\begin{smallmatrix} 1 & \\ & -1 \end{smallmatrix} \right)\right)g\right)f_2( y)\Phi((0,x)hp_1(g))|x|^{2r}\right|d^\times xdhdg\\ 
&\le\Psi(y_1)|y_1|^{-N}|y_3|^{1-d_3/2-\epsilon} \prod_{v}\max\left(|y_2|_v,|y_3|_v\right)^{1-2r-d_2/2+\epsilon}.
\end{split}
\end{align*}
The function $I_1(f\otimes \Phi)(y,s)$ defines a holomorphic function of $s$ in the strip $\tfrac{1}{2}+\epsilon <\mathrm{Re}(s)<c$ for each $y$ and admits the same bound with $r=\mathrm{Re}(s)$.
\end{lem}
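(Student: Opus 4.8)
The plan is to exploit the fact that the integral is Eulerian. After reducing to pure tensors $f_1=\otimes_v f_{1,v}$, $f_2=\otimes_v f_{2,v}$, $\Phi=\otimes_v\Phi_v$ by bilinearity, the left-hand side factors as a product over the places $v$ of $F$ of the local integrals estimated in \S\ref{sec:bound:na}--\S\ref{sec:bound:arch}. Fix a finite set $S$ of places of $F$ containing all archimedean places and all nonarchimedean places at which $f_{1,v}$, $f_{2,v}$ or $\Phi_v$ is not the relevant basic function, or at which $\psi$ or $\chi_{\mathcal{Q}}$ is ramified. At $v\in S$ nonarchimedean I would bound the local factor using Proposition \ref{prop:other:na:bound}; at $v\in S$ archimedean, using Proposition \ref{prop:arch:other} (the integer $N$ of the present lemma satisfies the hypothesis there, since $d_1/2\ge 1$); and at $v\notin S$, using Lemma \ref{lem:otherbound}.

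Then I would multiply the local bounds. The support statements globalize: the product vanishes unless $y_1\in V_1(\OO^S)$, and at the finite places of $S$ one has $|y_{1,v}|_v\ll 1$; together with the factor $|y_{1,v}|_v^{-N-d_1/2}$ at the archimedean places and $|y_{1,v}|_v^{-d_1/2}$ at the finite places, the total dependence on $y_1$ is absorbed into $\Psi(y_1)\,|y_1|^{-N}$ for a suitable $\Psi\in\mathcal{S}(V_1(\A_F))$ (locally constant of compact support at the finite places, rapidly decreasing at infinity). For the dependence on $y_2,y_3$: at a place $v\notin S$ at which $y_{1,v},y_{2,v},y_{3,v}$ are all units --- hence at all but finitely many $v$ --- the bound of Lemma \ref{lem:otherbound} reduces to $\zeta_v(2r)\,\zeta_v(2r+d_2/2-1)^2$, and since $r>\tfrac12+\epsilon$ and each $d_i$ is an even integer (so $d_i\ge 2$), the product $\prod_v\zeta_v(2r)\,\zeta_v(2r+d_2/2-1)^2$ converges to a ratio of Dedekind zeta values that is bounded uniformly for $r\in(\tfrac12+\epsilon,c)$, hence absorbed into the overall constant. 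At the remaining finitely many places one collects the explicit factors $|y_{3,v}|_v^{1-d_3/2}$, $\max(|y_{1,v}\otimes y_{3,v}|_v,|y_{2,v}|_v)^{1-d_2/2-2r}$ (or $\max(|y_{2,v}|_v,|y_{3,v}|_v)^{-2r-d_2/2-d_3/2+2}$ at infinity), together with the arithmetic terms $\max(0,\ord_v(y_{1,v}\otimes y_{3,v})-\ord_v(y_{2,v}))$; the latter are dominated by a further local $\zeta$-factor since they grow only logarithmically in the local norms, and the norm factors globalize --- after using $|y_{1,v}|_v\le 1$ for $v\notin S$ and trading a power $\epsilon$ of the norms to preserve the convergence just discussed --- to $|y_3|^{1-d_3/2-\epsilon}\prod_v\max(|y_2|_v,|y_3|_v)^{1-2r-d_2/2+\epsilon}$. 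This gives the asserted bound.

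Finally, the holomorphy of $s\mapsto I_1(f\otimes\Phi)(y,s)$ on the strip $\tfrac12+\epsilon<\mathrm{Re}(s)<c$ follows from the bound just established: it shows the integral defining $I_1(f\otimes\Phi)(y,s)$ converges absolutely and locally uniformly in $s$ on the strip, and since the only $s$-dependence of the integrand is the holomorphic factor $|x|^{2s}$, Morera's theorem (or differentiation under the integral sign) yields holomorphy; the bound then holds with $r=\mathrm{Re}(s)$ by construction.

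I expect the main obstacle to be the bookkeeping in the middle step: one must check that, after inserting the three families of local bounds, the infinite product over $v\notin S$ really converges in the stated range of $r$ --- this is exactly what forces $r>\tfrac12+\epsilon$ --- and that the arithmetic factors $\max(0,\ord_v(y_1\otimes y_3)-\ord_v(y_2))$ coming from the unramified computation do not spoil convergence, which one arranges by absorbing them into an additional local $\zeta$-factor at the cost of the $\epsilon$ in the exponents. The rest is a routine consequence of the local estimates of Sections \ref{sec:bound:na}, \ref{sec:unr} and \ref{sec:bound:arch}.
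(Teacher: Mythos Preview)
Your proposal is correct and follows essentially the same approach as the paper: factor the integral over places, apply Proposition~\ref{prop:other:na:bound}, Proposition~\ref{prop:arch:other}, and Lemma~\ref{lem:otherbound} at the nonarchimedean-$S$, archimedean, and unramified places respectively, absorb the logarithmic terms $\max(0,\ord_v(y_1\otimes y_3)-\ord_v(y_2))$ into an $\epsilon$-power of local norms, and use the support constraint $|y_{1,v}|_v\ll 1$ at finite places to pass from $\max(|y_1\otimes y_3|_v,|y_2|_v)$ to $\max(|y_2|_v,|y_3|_v)$. The paper carries this out without splitting off the places where the $y_{i,v}$ are units, instead giving a single uniform local bound at all finite places, but the substance is the same.
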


\begin{proof} Let $S$ be a finite set of places including the infinite places  such that $f_1^{S}=b_X^{S}$, $f_2^{S}=\one_{V(\widehat{\OO}^{S})}$ is fixed by $\rho(\SL_2^3(\widehat{\OO}^S))$ and $\Phi^{S}=\one_{(\widehat{\OO}^S)^{2}}$.  Assume moreover that $\psi_v$ is unramified for $v \not \in S$ and $F/\QQ$ is unramified at places of $\QQ$ not dividing places of $S.$ Using Lemma \ref{lem:otherbound} and Propositions \ref{prop:other:na:bound} and \ref{prop:arch:other}, for any given integers $N_1> 0$, $N> \max(d_1/2+d_3/2-2,2r+d_2/2+d_1/2-2)$, there exist $\Psi^\infty\in \mathcal{S}(V_1(\A_F^\infty))$ and a positive constant $C$ depending on $\epsilon$ and $c$, such that the integral is bounded by a constant depending on $N_1,N,\epsilon,c$ times
\begin{align}  \label{to:bound} \begin{split}
\Psi^{\infty}(y_1)&\prod_{v|\infty} \max(|y_1|_v,1)^{-N_1}|y_1|_v^{-N}\max\left(|y_2|_v,|y_3|_v\right)^{-2r-d_2/2-d_3/2+2}\\
    \times&\prod_{v\nmid\infty} \zeta_v(2r)\zeta_v(2r+d_2/2-1)|y_1|_v^{-d_1/2}|y_3|_v^{1-d_3/2}\max\left(|y_1\otimes y_3|_v,|y_2|_v\right)^{1-d_2/2-2r}\\
    \times& (C_v+\max(0,\ord_v(y_1\otimes y_3)-\ord_v(y_2))+\zeta_v(2r+d_2/2-1)) \end{split}
\end{align}
where $C_v\in \{C,0\}$ and $C_v=0$ for almost all $v$. Since $1/2+\epsilon <r $, we have
\begin{align*}
    &C_v+\max(0,\ord_v(y_1\otimes y_3)-\ord_v(y_2))+\zeta_v(2r+d_2/2-1)\\
    &\ll  \min(|y_1\otimes y_3|_v/|y_2|_v,1)^{-\epsilon}\zeta_v(2r+d_2/2-1)\\&=|y_1|_v^{-\epsilon}|y_3|_v^{-\epsilon}\max(|y_1\otimes y_3|_v,|y_2|_v)^{\epsilon}\zeta_v(2r+d_2/2-1).
\end{align*}
for all finite $v$.  Here the implied constant is equal to $1$ for $q_v$ sufficiently large in a sense independent of $y$.
 Thus \eqref{to:bound} is bounded by a constant depending on $c$ and $\epsilon$ times
\begin{align} \begin{split} \label{to:bound2}
\Psi^{\infty}(y_1)&\prod_{v|\infty} \max(|y_1|_v,1)^{-N_1}|y_1|_v^{-N}\max\left(|y_2|_v,|y_3|_v\right)^{-2r-d_2/2-d_3/2+2}\\
    \times&\prod_{v\nmid\infty} |y_1|_v^{-d_1/2-\epsilon}|y_3|_v^{1-d_3/2-\epsilon}\max\left(|y_1\otimes y_2|_v,|y_3|_v\right)^{1-d_2/2-2r+\epsilon}.  \end{split}
\end{align}
 For a finite place $v$, if $\Psi^{\infty}(y_1) \neq 0$ then $|y_1|_v\le C'_v$ for some constant $C'_v\ge 1$, which is $1$ for almost all $v$, and hence 
\begin{align*}
    \max\left(|y_1\otimes y_3|_v,|y_2|_v\right)&\ge {C'}_v^{-1}|y_1|_v\max(|y_3|_v,|y_2|_v).
\end{align*}
Thus \eqref{to:bound2} is bounded by a constant times
\begin{align*} \begin{split} 
\Psi^{\infty}(y_1)&\prod_{v|\infty} \max(|y_1|_v,1)^{-N_1}|y_1|_v^{-N}\max\left(|y_2|_v,|y_3|_v\right)^{-2r-d_2/2-d_3/2+2}\\
    \times&\prod_{v\nmid\infty} |y_1|_v^{1-d_1/2-d_2/2-2r}|y_3|_v^{1-d_3/2-\epsilon}\max\left(| y_2|_v,|y_3|_v\right)^{1-d_2/2-2r+\epsilon}.  \end{split}
\end{align*}
The desired inequality follows from $\max(|y_2|_v,|y_3|_v)\ge |y_3|_v$.
\end{proof}

\begin{prop} \label{prop:other:AC}
If  $f_2$ satisfies  \eqref{awayfromVcirc0} and $r\gg 1$, the sum
\begin{align*} 
&\sum_{\xi \in V_1^\circ(F) \times \mathbb{P}(V_2 \times V_3)(F)} \int_{G_{\gamma_1}(\A_F) \backslash G(\A_F)}|f_1(\gamma_1g)|\\
&\times\int_{N_2(\A_F) \backslash \SL_2(\A_F)}\int_{\A_F^\times}\left|
\rho\left(\left( I_2,h,\left(\begin{smallmatrix} 1 & \\ & -1 \end{smallmatrix} \right)h\left(\begin{smallmatrix} 1 & \\ & -1 \end{smallmatrix} \right)\right)g\right)f_2( \xi)\Phi((0,x)hp_1(g))\right||x|^{2r}d^\times xdhdg
\end{align*}
is finite. Therefore,
\begin{align*}
    \sum_{\xi\in Y_1(F)} I_1(f\otimes \Phi)(\xi,s) 
\end{align*}
defines a holomorphic function for $\mathrm{Re}(s)\gg 1$. Moreover, it extends to a meromorphic function of $\CC$, holomorphic except for possible simple poles at $s=0$ and $s=1$.  One has
\begin{align*}
    &\mathrm{Res}_{s=1} \sum_{\xi\in Y_1(F)}I_1(f\otimes \Phi)(\xi,s)\\&=\frac{\mathrm{Vol}(F^\times \backslash (\A_F^\times)^1)\widehat{\Phi}(0)}{2} \int_{G_{\gamma_{1}}(\A_F) \backslash G(\A_F)} f_1(\gamma_{1}g)\int_{[G_{\gamma_{1}}]}
\sum_{\xi \in V(F)}
\rho(hg)f_2( \xi)dh dg.
\end{align*}
and 
\begin{align} \label{is:abs:conv}
    \int_{G_{\gamma_{1}}(\A_F) \backslash G(\A_F)} \left|f_1(\gamma_{1}g)\right|\int_{[G_{\gamma_{1}}]}
\left|\sum_{\xi \in V(F)}
\rho(hg)f_2( \xi)dh \right|dg
\end{align}
is absolutely convergent.
\end{prop}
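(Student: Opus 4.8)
The plan is to split the proposition into three tasks: (i) absolute convergence of the full triple sum-integral for $\mathrm{Re}(s)=r \gg 1$; (ii) meromorphic continuation with the claimed pole structure; (iii) identification of the residue at $s=1$ and absolute convergence of \eqref{is:abs:conv}. The bulk of the work is (i), for which Lemma \ref{lem:globalotherbound} already supplies the crucial local-to-global bound
\[
I_1(f\otimes \Phi)(y,s) \ \ll\ \Psi(y_1)\,|y_1|^{-N}\,|y_3|^{1-d_3/2-\epsilon}\prod_v \max(|y_2|_v,|y_3|_v)^{1-2r-d_2/2+\epsilon},
\]
valid for $\tfrac12+\epsilon<r<c$ and $N$ large, with $\Psi\in\mathcal S(V_1(\A_F))$. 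So what remains for (i) is to sum this bound over $\xi=(\xi_1,[\xi_2:\xi_3])$ in $V_1^\circ(F)\times\mathbb P(V_2\times V_3)(F)$ and show finiteness once $r$ is large enough; the hypothesis \eqref{awayfromVcirc} guarantees the $V_1$-component is supported on $V_1^\circ$, so the summand genuinely decays. First I would reduce the $V_1$-sum, exactly as in the proof of Proposition \ref{prop:Id:AC}: choose ideal-class representatives for $\OO$, clear denominators so that $\xi_1\in V_1(\OO)$ with prescribed content ideal, and bound $\sum_{\xi_1} \Psi(\xi_1)|\xi_1|^{-N}$ by a convergent lattice sum since $\Psi$ is Schwartz and $|\xi_1|\geq 1$ by the product rule. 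The genuinely new ingredient is the sum over $\mathbb P(V_2\times V_3)(F)$. Here I would use Lemma \ref{lem:uptounit}: a point of $\mathbb P(V_2\times V_3)(F)$ lifts to $(\xi_2,\xi_3)\in (V_2(\OO)\times V_3(\OO))$, and scaling by units from a suitable finite-index subgroup $\Lambda\subset\OO^\times$ lets me arrange $|\xi_2|_v,|\xi_3|_v$ to all be comparable to fixed powers of the norm at each infinite place, so that the archimedean factor $\prod_{v|\infty}\max(|\xi_2|_v,|\xi_3|_v)^{-2r-d_2/2-d_3/2+2}$ becomes a negative power of $N(\xi_2,\xi_3)$; this then controls the number of lattice points on each "sphere". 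One must check that the finite-place factors $\prod_{v\nmid\infty}\max(|\xi_2|_v,|\xi_3|_v)^{1-2r-d_2/2+\epsilon}$ are bounded (they are, since the exponent is negative and $\max(|\xi_2|_v,|\xi_3|_v)\geq 1$ by the product rule applied to the primitive lift), and the $|\xi_3|^{1-d_3/2-\epsilon}$ factor is likewise $\leq 1$. Counting projective points of bounded height then gives a convergent series provided $r$ is large, which is (i).

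For (ii), the meromorphic continuation, I would not try to continue the sum term-by-term; instead I would recognize $\sum_\xi I_1(f\otimes\Phi)(\xi,s)$ as (after unfolding) an integral over $G_{\gamma_1}(\A_F)\backslash G(\A_F)$ of $f_1(\gamma_1 g)$ against an expression built from the Weil representation applied to $f_2$ and from the global zeta integral $\int_{\A_F^\times}\theta_\Phi(\cdots)|x|^{2s}d^\times x$ attached to $\Phi$. That idele integral is an Eisenstein/Tate-type object: $\int_{[\GG_m]}$ of a theta kernel $\sum_{0\neq v\in F^2}\Phi(xvh)$ twisted by $|x|^{2s}$, whose Mellin transform has exactly simple poles at $s=0$ and $s=1$ with residues proportional to $\widehat\Phi(0)$ and $\Phi(0)$ via the Poisson summation formula on $F^2$ (this is the standard Tate computation, and the normalization $\widehat\Phi(0)=2\mathrm{Vol}(F^\times\backslash(\A_F^\times)^1)^{-1}$ is chosen precisely so the residue at $s=1$ comes out clean). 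The collapsing of the $\mathbb P(V_2\times V_3)(F)$-sum with the $[\GG_m]$-integration is what converts $\widetilde Y_1/\GG_m$ into the desired projective object; one uses Lemma \ref{lem:Hilbert} here. The pointwise holomorphy and bound in the strip $\tfrac12+\epsilon<\mathrm{Re}(s)<c$ from Lemma \ref{lem:globalotherbound}, combined with Morera/dominated convergence over the (now absolutely convergent) sum, upgrades term-by-term holomorphy to holomorphy of the sum on that strip; together with the Tate continuation of the $\Phi$-integral this gives the meromorphic continuation to all of $\CC$ with at worst simple poles at $s\in\{0,1\}$.

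For (iii), I would take the residue at $s=1$ inside the (absolutely convergent, by the bound from Lemma \ref{lem:globalotherbound} which persists near $s=1$ after extracting the pole) sum-integral. Residuing the Tate integral $\int_{\A_F^\times}(\cdots)|x|^{2s}d^\times x$ at $s=1$ replaces $\Phi(x(0,1)hp_1(g))$ by its value "at infinity", i.e.\ produces the factor $\tfrac{\mathrm{Vol}(F^\times\backslash(\A_F^\times)^1)\widehat\Phi(0)}{2}$ and removes the $x$-integration, while the remaining $h$-integral over $N_2(\A_F)\backslash\SL_2(\A_F)$ together with the leftover sum reassembles the period $\int_{[G_{\gamma_1}]}\sum_{\xi\in V(F)}\rho(hg)f_2(\xi)\,dh$ over the stabilizer, using the explicit shape of $G_{\gamma_1}$ from Lemma \ref{lem:stab} (which contains $\Delta_1(\SL_2)$ and a unipotent $N_2$). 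The absolute convergence \eqref{is:abs:conv} follows from the same estimates, since after the residue the surviving object is dominated by the $r=1$ specialization of the bound established in part (i), which we have shown to be summable when $r\gg1$ — so one needs $r=1$ to already lie in the convergent range, i.e.\ the threshold "$r\gg 1$" in the statement must be checked to be $\leq 1$; if not, one instead argues by continuing the \emph{absolutely convergent} majorant, noting that the residue extraction only improves convergence because it strips a factor of $\zeta(2s)$-type growth. The main obstacle is precisely this bookkeeping in parts (i) and (iii): making the projective-point counting over $\mathbb P(V_2\times V_3)(F)$ rigorous (controlling units via Lemma \ref{lem:uptounit} and the product formula simultaneously at archimedean and non-archimedean places), and verifying that the convergence range is wide enough to legitimately take the residue at $s=1$ inside the sum. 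The rest is a now-familiar unfolding built on the machinery already developed in \S\ref{sec:bound:na}, \S\ref{sec:bound:arch}, and the earlier sections.
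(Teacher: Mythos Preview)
Your strategy for (i) and (ii) is essentially the paper's: reduce the lattice sum by clearing denominators and using Lemma~\ref{lem:uptounit}, then invoke Lemma~\ref{lem:globalotherbound} to bound the summands; for the continuation, unfold and run the Tate argument (split the $x$-integral at $|x|=1$, apply Poisson summation on $F^2$ to the small-$|x|$ piece). Two cosmetic differences: the paper clears denominators on the $(V_2\times V_3)$-component rather than on $V_1$ (the $V_1$-sum is controlled directly by the Schwartz function $\Psi$), and the paper literally writes out the split at $|x|=1$ rather than packaging it as ``the Tate zeta integral of a theta kernel''. Neither changes anything of substance.

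The genuine gap is in your justification of \eqref{is:abs:conv}. You propose two routes: either the majorant from part (i) is already summable at $r=1$, or one ``continues the absolutely convergent majorant'' and notes the residue ``strips a $\zeta(2s)$-type factor''. Neither works as stated. The bound in Lemma~\ref{lem:globalotherbound} contributes $\prod_v\max(|\xi_2|_v,|\xi_3|_v)^{1-2r-d_2/2+\epsilon}$, and the projective-point count in (i) needs this exponent very negative; there is no reason the threshold is below $r=1$. And an absolute-value majorant is not a meromorphic object, so ``continuing'' it is not meaningful. The paper's device is different and cleaner: rerun the identical Tate splitting argument but with the inner theta sum replaced by its absolute value, i.e.\ start from
\[
\int_{G_{\gamma_1}(\A_F)\backslash G(\A_F)}|f_1(\gamma_1 g)|\int_{[G_{\gamma_1}]}\Big|\sum_{\xi\in V(F)}\rho(hg)f_2(\xi)\Big|\Big(\int\sum_{\delta}\Phi(x(0,1)\delta hp_1(g))|x|^{2s}d^\times x\Big)dh\,dg.
\]
For $\mathrm{Re}(s)\gg 1$ this converges (same estimates as before), and the same split at $|x|=1$ plus Poisson summation expresses it as (entire pieces) plus $\tfrac{\mathrm{Vol}(F^\times\backslash(\A_F^\times)^1)}{2}\big(\tfrac{\widehat\Phi(0)}{s-1}-\tfrac{\Phi(0)}{s}\big)$ times exactly \eqref{is:abs:conv}. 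Evaluating at any large real $s$ gives an equality of finite quantities, forcing \eqref{is:abs:conv} to be finite. This also makes the residue computation in (iii) honest: once \eqref{is:abs:conv} is known to converge, the polar term $\tfrac{\widehat\Phi(0)}{s-1}\cdot(\text{that integral})$ is a genuine meromorphic function and the residue is read off directly.
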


\noindent The corresponding assertions for the integrals $I_2(f \otimes \Phi)$ and $I_3(f \otimes \Phi)$ are valid by symmetry. 
\begin{proof} Choose $\epsilon$ sufficiently small. By Lemma \ref{lem:globalotherbound} and \eqref{weak:prod}, there exists $\Psi \in \mathcal{S}(V_1(\A_F))$ such that the sum is  bounded by a constant times
\begin{align*}
\sum_{(\xi_1,\xi) \in  V_1(F)  \times \mathbb{P}(V_2 \times V_3)(F)} \Psi(\xi_1)|\xi|^{1-d_2/2-2r+\epsilon}
 \ll_{\Psi} \sum_{\xi \in \mathbb{P}(V_2 \times V_3)(F)}|\xi|^{1-d_2/2-2r+\epsilon}.
\end{align*}
The right hand side is the height zeta function of $\mathbb{P}(V_2 \times V_3)(F).$  It converges for $r$ sufficiently large \cite[\S 3]{ChambertLoir:Lectures}.
This completes the proof of the first claim of the proposition, and we deduce that $   \sum_{\xi\in Y_1(F)} I_1(f\otimes \Phi)(\xi,s)$
is holomorphic for $\mathrm{Re}(s) \gg 1$.

To obtain the meromorphic continuation, we break down the integral $\sum_{\xi\in Y_1(F)} I_1(f\otimes \Phi)(\xi,s)$ into two sums of the form 
\begin{align} \label{before:break} \begin{split}
    &\int_{G_{\gamma_1}(\A_F)\backslash G(\A_F)}f_1(\gamma_1 g)\int_{[\SL_2]}
\sum_{\substack{\xi \in V(F)\\ \mathcal{Q}_1(\xi_1)=0}}
\rho\left(\left( I_2,h,\left(\begin{smallmatrix} 1 & \\ & -1 \end{smallmatrix} \right)h\left(\begin{smallmatrix} 1 & \\ & -1 \end{smallmatrix} \right)\right)g\right)f_2( \xi)\\
& \times\int\sum_{\delta \in B_2(F) \backslash \SL_2(F) }\Phi(x(0,1)\delta hp_1(g))|x|^{2s}d^\times x dhdg \end{split}
\end{align}
where the unspecified integral is over $|x| \geq 1$ or $|x| \leq 1$.  The contribution of $|x| \geq 1$ converges for $\mathrm{Re}(s)$ large and hence converges for all $s$.  
Using the Poisson summation formula on $F^2$, the contribution of $|x| \leq 1$ equals 
\begin{align*}
    &\int_{G_{\gamma_1}(\A_F)\backslash G(\A_F)}f_1(\gamma_1 g)\int_{[\SL_2]}
\sum_{\substack{\xi \in V(F)\\ \mathcal{Q}_1(\xi_1)=0}}
\rho\left(\left( I_2,h,\left(\begin{smallmatrix} 1 & \\ & -1 \end{smallmatrix} \right)h\left(\begin{smallmatrix} 1 & \\ & -1 \end{smallmatrix} \right)\right)g\right)f_2( \xi)\\
& \times\int_{|x|\le 1}\Bigg(\sum_{\delta \in B_2(F) \backslash \SL_2(F) }\widehat{\Phi}(x^{-1}(1,0)\delta^{-t} h^{-t}p_1(g)^{-t})|x|^{2s-2}d^\times x +\widehat{\Phi}(0)|x|^{2s-2}-\Phi(0)|x|^{2s}d^{\times }x\Bigg)dh dg.
\end{align*}
An argument similar to the argument proving the holomorphy of the $|x|>1$ contribution implies that the contribution of the sum over $\delta$  defines an entire function of $s.$ For $\mathrm{Re}(s)\gg 1$, the remaining contribution is
$$
\frac{\mathrm{Vol}(F^\times \backslash (\A_F^\times)^1)}{2}\left(\frac{\widehat{\Phi}(0)}{s-1}-\frac{\Phi(0)}{s}\right)\int_{G_{\gamma_{1}}(\A_F) \backslash G(\A_F)} f_1(\gamma_{1}g)\int_{[G_{\gamma_{1}}]}\sum_{\xi \in V(F)}
\rho(hg)f_2( \xi)dh dg.$$
Assuming that \eqref{is:abs:conv} is convergent, this term admits a meromorphic continuation to the $s$ plane, holomorphic except at $s \in \{0,1\}$ with poles and residues as specified.  To obtain the convergence of \eqref{is:abs:conv} one begins with
\begin{align*}
    &\int_{G_{\gamma_1}(\A_F)\backslash G(\A_F)}\left|f_1(\gamma_1 g)\right|\int_{[G_{\gamma_1}]}
\left|\sum_{\substack{\xi \in V(F)}}
\rho\left(hg\right)f_2( \xi)\right|\\
& \times\int\sum_{\delta \in B_2(F) \backslash \SL_2(F) }\Phi(x(0,1)\delta hp_1(g))|x|^{2s}d^\times x dhdg
\end{align*}
instead of \eqref{before:break},
argues as before, and then observes that one obtains an equality between \eqref{is:abs:conv} times 
$\frac{\mathrm{Vol}(F^\times \backslash (\A_F^\times)^1)}{2}\left(\frac{\widehat{\Phi}(0)}{s-1}-\frac{\Phi(0)}{s}\right)$
and a sum that converges for $\mathrm{Re}(s)$ large.  The absolute convergence statement follows.
\end{proof}

\section{The $L^2$-theory}\label{sec:l2theory}
 We now discuss the $L^2$-theory. Let $F$ be a local field of characteristic zero. We assume throughout this section that $Y^{\mathrm{sm}}(F) \subset Y(F)$ is nonempty, and hence is dense in the Hausdorff topology \cite[Remark 3.5.76]{Poonen:Rational}.
 
 We first improve the bound in \cite[Propositions 7.1 and 8.2]{Getz:Liu:Triple}:
 
 \begin{prop}  \label{prop:betabound} 
Assume $\frac{1}{2}>\beta\ge 0$ and $v \in V^{\circ}(F)$. Let 
$$
f=f_1 \otimes f_2 \in \mathcal{S}(X(F)) \otimes\mathcal{S}(V(F)).
$$
If $F$ is non-Archimedean then
\begin{align*} 
\int_{G_{\gamma_b}(F) \backslash G(F)} |f_1(\gamma_bg)\rho(g)f_2(v)|dg \ll_{f_1,f_2} 
  \prod_{i=1}^3|v_i|^{\beta/3-d_i/2+2/3}.
\end{align*}
The integral as a function of $v$ has support in $\omega^{-N}V(\OO)$ for some $N\in \ZZ$.  If $F$ is Archimedean then, given $N>0$, there is a continuous seminorm $\nu_{\beta,N}$ on $\mathcal{S}(X(F)\times V(F))$ such that
\begin{align*}
&\int_{G_{\gamma_b}(F)\backslash G(F)} |f_1(\gamma_bg)\rho(g)f_2(v)|dg \le  \nu_{\beta,N}(f_1\otimes f_2)
      \prod_{i=1}^3 \max(|v_i|,1)^{-N}|v_i|^{\beta/3-d_i/2+2/3}.
\end{align*}
 The function $I(f)$ satisfies the same bound and support constraint.
\end{prop}

\begin{proof} 
Assume for the moment that $F$ is non-Archimedean.  The bound on the support of the integral is part of \cite[Proposition 7.1]{Getz:Liu:Triple}, so we only require the bound on the magnitude.  By Lemma \ref{lem:seminorm} and Iwasawa decomposition, the integral is bounded by a constant depending on $\beta$ and $f_1$ times 
\begin{align} \label{nonarch1}
&\int_{m(t,a)\le c}m(t,a)^{-2+\beta} \widetilde{f}_2(a^{-1}v)\left(\prod_{i=1}^3 |a_i|^{2-d_i/2}\right) d^\times a dt
\end{align}
for some constant $c>0$, where $\widetilde{f}_2$
and $m(t,a)$ are defined as in \eqref{overk} and \eqref{eq:gamma0gnorm} respectively. Observe that
\begin{align*}
    m(t,a)&= |a_1a_2a_3|\max\left(|t|,|a_1|^{-2},|a_2|^{-2},|a_3|^{-2}\right).
\end{align*}
Thus \eqref{nonarch1} is equal to 
\begin{align} \label{former2}
&\int_{m(t,a)\le c}\max\left(|t|,|a_1|^{-2},
    |a_2|^{-2},|a_3|^{-2}\right)^{\beta-2}\widetilde{f}_2(a^{-1}v)\left(\prod_{i=1}^3 |a_i|^{\beta-d_i/2}\right) d^\times a dt.
\end{align}
 Since 
 \begin{align}\label{t:int}
     &\int_F \max(|t|,|a_1|^{-2},|a_2|^{-2},|a_3|^{-2})^{-2+\beta}dt \ll_\beta \min(|a_1|,|a_2|,|a_3|)^{2-2\beta}\leq  |a_1a_2a_3|^{2/3-2\beta/3},
 \end{align}
 the integral \eqref{former2} is bounded by a constant times
\begin{align*}
\int_{(F^\times)^3} \widetilde{f}_2(a^{-1}v)\left(\prod_{i=1}^3 |a_i|^{\beta/3-d_i/2+2/3}\right)d^\times a.
\end{align*}
Taking a change of variables
$a_i\mapsto a_i\varpi^{\ord(v_i)}$, the above is bounded by a constant times 
\begin{align*}
\prod_{i=1}^3 |v_i|^{\beta/3-d_i/2+2/3}\int_{q^{-N}\le |a_i|} \left(\prod_{i=1}^3 |a_i|^{\beta/3-d_i/2+2/3}\right)d^\times a
\end{align*}
for some $N$ depending on $f_2$.
The integral converges as $d_i\geq 2$ for all $i$.

Now assume $F$ is Archimedean. 
By Lemma \ref{lem:seminorm} and the argument above, for any $N' \in \RR$ there is a continuous seminorm $\nu_{\beta,N'}$ on $\mathcal{S}(X(F))$  such that the integral is bounded by $\nu_{\beta,N'}(f_1)$ times
\begin{align} \label{arch:case}
\int_{(F^\times)^3\times F} \max\left(|t|,
    |a_1|^{-2},|a_2|^{-2},|a_3|^{-2}\right)^{\beta-2}\max(m(t,a),1)^{-N'}\widetilde{f_2}(a^{-1}v)\left(\prod_{i=1}^3 |a_i|^{\beta-d_i/2}\right)dt d^\times a.
\end{align}
 Choose $N'$ so that $N'>3N$, and observe \begin{align*}
m(t,a)^3 \geq \max(|a_1|,1)\max(|a_2|,1)\max(|a_3|,1).
\end{align*}
Then for any $M>0$, there is a continuous seminorm $\nu'_{M}$ on $\mathcal{S}(V(F))$ such that \eqref{arch:case} is bounded by $\nu'_{M}(f_2)$ times
\begin{align*}
    &\int_{(F^\times)^3\times F} \max\left(|t|,|a_1|^{-2},|a_2|^{-2},|a_3|^{-2}\right)^{\beta-2}\\
    &\times \left(\prod_{i=1}^3 \max(|a_i|,1)^{-N}\max(|a_i^{-1}v_i|,1)^{-M} |a_i|^{\beta-d_i/2}\right)dt d^\times a.
\end{align*}
The proposition then follows from \eqref{t:int} (which is still valid for $F$ Archimedean) and Lemma \ref{lem:basic:bound}.
\end{proof}
 
Let $\Omega_V$ be a top degree form on $V(F)$ such that $|\Omega_V|$ is the Haar measure. 
We endow $Y^{\mathrm{sm}}(F)$ with the unique positive measure $dy=|\Omega_Y|$ such that $d(\mathcal{Q}_1-\mathcal{Q}_2) \wedge d(\mathcal{Q}_2-\mathcal{Q}_3) \wedge \Omega_Y$ is  $\Omega_V$ on $V(F)$. Since $Y^{\mathrm{sm}}(F) \subset Y(F)$ is dense, we can consider the $L^p$ space
\begin{align*}
L^p(Y(F)):=L^p(Y(F),dy):=L^p(Y^{\mathrm{sm}}(F),dy).
\end{align*}
We observe that for $r \in F^\times$, one has
\begin{align*}
r^{d_1+d_2+d_3}\Omega_V&=\Omega_V(rv)\\&=
d(\mathcal{Q}_1-\mathcal{Q}_2)(rv) \wedge d(\mathcal{Q}_2-\mathcal{Q}_3)(rv) \wedge \Omega_Y(rv)\\
&=r^4d(\mathcal{Q}_1-\mathcal{Q}_2) \wedge d(\mathcal{Q}_2-\mathcal{Q}_3) \wedge \Omega_Y(rv).
\end{align*}
Thus
\begin{align} \label{homog}
d(ry)=|r|^{d_1+d_2+d_3-4}dy.
\end{align}

\begin{prop}\label{prop:L2} Let $0<p \leq 2$.  One has $\mathcal{S}(Y(F)) < L^p(Y(F))$ and the inclusion is continuous if $F$ is Archimedean.
\end{prop}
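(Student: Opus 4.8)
The plan is to combine the pointwise bounds of Proposition~\ref{prop:betabound} with a disintegration of $dy$ along the common quadratic value. Write $Y^\circ:=Y\cap V^\circ$ for the open locus where all three coordinates are nonzero. First observe that $Y^{\mathrm{sm}}(F)\setminus Y^\circ(F)$ is contained in $\bigcup_{i}\bigl(\{v_i=0\}\cap Y^{\mathrm{sm}}(F)\bigr)$, a finite union of analytic subsets of $Y^{\mathrm{sm}}(F)$ of strictly smaller dimension, hence $dy$-null; thus $\|h\|_{L^p(Y(F))}^p=\int_{Y^\circ(F)}|h(v)|^p\,dy$ for every $h\in\mathcal{S}(Y(F))$. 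Since every such $h$ is a finite sum (in the archimedean case, a limit of finite sums) of functions $I(f_1\otimes f_2)$, it suffices to bound $\|I(f_1\otimes f_2)\|_{L^p}$ and to pass to limits at the end via the estimate produced below. Fix $\beta\in(0,\tfrac12)$ and set $a_i:=\tfrac{\beta}{3}-\tfrac{d_i}{2}+\tfrac23<0$. By Proposition~\ref{prop:betabound}, for $v\in V^\circ(F)$ one has $|I(f_1\otimes f_2)(v)|\ll\prod_{i=1}^3|v_i|^{a_i}$, where in the nonarchimedean case the support is contained in $\varpi^{-N}V(\OO)$ for some $N=N(f)$, and in the archimedean case the implied constant is a continuous seminorm $\nu_{\beta,N}(f_1\otimes f_2)$ at the cost of an extra factor $\prod_i\max(|v_i|,1)^{-N}$ with $N$ as large as we wish. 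It therefore suffices to prove $\int_{Y^\circ(F)}\prod_{i=1}^3|v_i|^{p a_i}\,dy<\infty$ (with the cutoff resp.\ decay factors inserted).

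To evaluate this integral, recall $d(\mathcal{Q}_1-\mathcal{Q}_2)\wedge d(\mathcal{Q}_2-\mathcal{Q}_3)\wedge\Omega_Y=\Omega_V$. Writing $\Omega_{V_i}=d\mathcal{Q}_i\wedge\omega_{i,t}$ for the Gelfand--Leray forms and changing coordinates on the base from $(\mathcal{Q}_1,\mathcal{Q}_2,\mathcal{Q}_3)$ to $(\mathcal{Q}_1-\mathcal{Q}_2,\mathcal{Q}_2-\mathcal{Q}_3,\mathcal{Q}_3)$, one finds $\Omega_Y=\pm\,dt\wedge\omega_{1,t}\wedge\omega_{2,t}\wedge\omega_{3,t}$. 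Hence on $Y^{\mathrm{ani}}(F)$ the measure disintegrates along $v\mapsto t:=\mathcal{Q}_i(v_i)$ as $dy=dt\,\prod_{i=1}^3|\omega_{i,t}|$, and since $Y^\circ(F)\setminus Y^{\mathrm{ani}}(F)=\widetilde{Y}_0(F)$ is $dy$-null,
\[
\int_{Y^\circ(F)}\prod_{i=1}^3|v_i|^{p a_i}\,dy=\int_{F^\times}\prod_{i=1}^3 g_i(t)\,dt,\qquad
g_i(t):=\int_{\mathcal{Q}_i^{-1}(t)(F)}|v_i|^{p a_i}\chi_i(v_i)\,|\omega_{i,t}|,
\]
where $\chi_i$ is $\one_{|v_i|\le q^N}$ in the nonarchimedean case and $\max(|v_i|,1)^{-pN}$ in the archimedean case.

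It remains to estimate $g_i(t)$. Using the homogeneity $\mathcal{Q}_i(\lambda v)=\lambda^2\mathcal{Q}_i(v)$, under which $\omega_{i,\lambda^2 t}$ pulls back to $|\lambda|^{d_i-2}\omega_{i,t}$, together with the standard bound $\int_{\mathcal{Q}_i^{-1}(1)(F),\,|w|\le R}|\omega_{i,1}|\ll R^{d_i-2}(1+\log R)$ for the level-one quadric (obtained by summing the fibre estimate $\ll Q^{d_i-2}$ over the shells $|v_i|\asymp Q$ with $Q^2\gg|t|$), one obtains for $0<|t|\le 1$
\[
g_i(t)\ll|t|^{\min\left(0,\,\tfrac{p a_i+d_i-2}{2}\right)}\bigl(1+|{\log|t|}|\bigr),
\]
while $g_i(t)=0$ for $|t|$ large in the nonarchimedean case and $g_i(t)\ll_M|t|^{-M}$ for every $M$ as $|t|\to\infty$ in the archimedean case (take $pN$ large). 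This quadric estimate, with its explicit dependence on $t$ near $0$ and $\infty$, is the main obstacle; it refines the computations underlying \cite[Propositions 7.1 and 8.2]{Getz:Liu:Triple}.

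Finally, $\int_{F^\times}\prod_i g_i(t)\,dt$ converges: the behaviour at $\infty$ is immediate, and near $t=0$ the integrand is $\ll|t|^{\sigma}(1+|{\log|t|}|)^3$ with $\sigma:=\sum_{i=1}^3\min\bigl(0,\tfrac{p a_i+d_i-2}{2}\bigr)$, so it is enough that $\sigma>-1$. Now $\tfrac{p a_i+d_i-2}{2}=\tfrac12\bigl(\tfrac{p\beta}{3}+\tfrac{2p}{3}-2+d_i(1-\tfrac p2)\bigr)$ is a nondecreasing function of $d_i$ for $0<p\le 2$; its least value, attained at $d_i=2$, equals $\tfrac{p(\beta-1)}{6}$, which lies in $(-\tfrac13,0)$ for all $\beta\in(0,\tfrac12)$ and $0<p\le 2$. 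Hence each summand exceeds $-\tfrac13$, so $\sigma>-1$, and a fixed choice of $\beta\in(0,\tfrac12)$ works for all $p\le 2$ at once; this gives $\mathcal{S}(Y(F))<L^p(Y(F))$. In the archimedean case the argument yields $\|I(f_1\otimes f_2)\|_{L^p(Y(F))}\ll\nu_{\beta,N}(f_1\otimes f_2)$; since the bound has the form $\mu(f_1)\lambda(f_2)$ for continuous seminorms $\mu,\lambda$, it is dominated by a continuous seminorm on the projective tensor product $\mathcal{S}(X(F)\times V(F))$, which (by density of finite sums of pure tensors and Fatou's lemma) controls $\|I(f)\|_{L^p(Y(F))}$ for all $f$ and therefore descends to a continuous seminorm on the quotient $\mathcal{S}(Y(F))$, proving continuity of the inclusion.
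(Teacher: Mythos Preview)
Your proof is correct and arrives at the same conclusion, but by a genuinely different route from the paper.  Both arguments start from the pointwise bound of Proposition~\ref{prop:betabound}, then diverge.  The paper exploits the homogeneity~\eqref{homog} of $dy$ to collapse the $L^p$-integral to a compact shell $\{1\le|y|<2\}$, then diagonalises the $\mathcal{Q}_i$, picks coordinates $(y_{1j},y_{2k})$ where the Jacobian $|\partial(\mathcal{Q}_1-\mathcal{Q}_2,\mathcal{Q}_2-\mathcal{Q}_3)/\partial(y_{1j},y_{2k})|=|4c_{1j}c_{2k}y_{1j}y_{2k}|$ is simplest, and bounds the resulting elementary integral by hand.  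You instead disintegrate $dy$ along the fibration $Y^{\mathrm{ani}}\to F$, $v\mapsto t=\mathcal{Q}_i(v_i)$, so that the integrand factors as $\prod_i g_i(t)$ with each $g_i$ an integral over a single quadric level set; you then control each $g_i(t)$ near $t=0$ and $t=\infty$ by a shell decomposition and finish with the one-variable integral in $t$.  Your approach is more conceptual and separates the three quadratic spaces cleanly (so it would generalise to more factors without new ideas), but it leans on the uniform quadric-fibre bound $\int_{\mathcal{Q}_i^{-1}(t),\,|w|\le R}|\omega_{i,t}|\ll R^{d_i-2}$, which you have only sketched; the paper's coordinate computation is entirely self-contained.

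Two minor remarks.  First, the reference to \cite[Propositions~7.1 and~8.2]{Getz:Liu:Triple} for the quadric-fibre estimate is misplaced: those results are the analogues of Proposition~\ref{prop:betabound}, not measure bounds on level sets of a single quadratic form.  Second, your final paragraph's factorisation $\nu_{\beta,N}(f_1\otimes f_2)=\mu(f_1)\lambda(f_2)$ and the density/Fatou step are unnecessary, since Proposition~\ref{prop:betabound} already furnishes $\nu_{\beta,N}$ as a continuous seminorm on the full projective tensor product $\mathcal{S}(X(F)\times V(F))$; the bound therefore holds for all $f$ immediately and descends to the quotient $\mathcal{S}(Y(F))$.
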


\begin{proof} 
Let $f \in \mathcal{S}(Y(F))$. Since $\mathcal{S}(Y(F)) < C^\infty(Y^{\mathrm{sm}}(F))$ by Propositions \ref{prop:na:smooth} and \ref{prop:a:smooth}, we have
$$
\int_{Y^{\mathrm{sm}}(F)}|f(y)|^pdy=\int_{Y^{\mathrm{ani}}(F)}|f(y)|^pdy.
$$
We will therefore bound the integral on the right. 

Fix $\frac{1}{2}>\beta >0$.
  Assume first that $F$ is non-Archimedean.
By Proposition \ref{prop:betabound}, for some $c\in \ZZ$  one has
\begin{align*}
&\int_{Y^{\mathrm{ani}}(F)} |f(y)|^p dy\ll_f \int_{Y^{\mathrm{ani}}(F)\cap \varpi^{-c} V(\OO)}\left(\prod_{i=1}^3|y_i|^{p(\beta/3-d_i/2+2/3)}\right)  dy.
\end{align*}
Let $\alpha:=p\beta+2p-4+\sum_{i=1}^3(1-\tfrac{p}{2})d_i\ge 2\beta >0$.  
Using the homogeneity property
\eqref{homog}, the integral above is bounded by a constant depending on $c$ times
\begin{align*}
& \zeta(\alpha)\int_{\left\{y \in Y^{\mathrm{ani}}(F):1\le |y| <2\right\}}  \left(\prod_{i=1}^3|y_i|^{p(\beta/3-d_i/2+2/3)}\right) dy.
\end{align*}
Here we could just write $|y|=1$, but we have written $1\le |y| <2$ so that we can use the same formula in both Archimedean and non-Archimedean cases.

Now assume that $F$ is Archimedean. Fix $N>\alpha$.   Then by Proposition \ref{prop:betabound} there is a continuous seminorm $v_\beta$ on $\mathcal{S}(Y(F))$ such that
\begin{align*}
    \int_{Y^{\mathrm{ani}}(F)} |f(y)|^pdy \le  &\nu_\beta(f)^p \sum_{j=1}^\infty \int_{\{y \in Y^{\mathrm{ani}}(F):2^{-j}\leq |y| < 2^{1-j}\}} 
    \left( \prod_{i=1}^3|y_i|^{p(\beta/3-d_i/2+2/3)}\right) dy \\
    +&\nu_\beta(f)^p\sum_{j=0}^\infty \int_{\{y \in Y^{\mathrm{ani}}(F):2^j\leq  |y| <2^{j+1}\}
    } |y|^{-N}\left(\prod_{i=1}^3|y_i|^{p(\beta/3-d_i/2+2/3)}\right) dy.
    \end{align*}
Using the homogeneity property \eqref{homog} again, we see that this is
    \begin{align*}
    &\nu_\beta(f)^p\left(\sum_{j=1}^\infty 2^{-\alpha j}+\sum_{j=0}^\infty 2^{(\alpha-N)j}\right)\int_{\{y \in Y^{\mathrm{ani}}(F):1\le |y| <2\}} \left( \prod_{i=1}^3|y_i|^{p(\beta/3-d_i/2+2/3)} \right)dy.
\end{align*}

Thus for any $F$, we are reduced to showing that 
\begin{align*} 
    \int_{\{y \in Y^{\mathrm{ani}}(F):1\le |y| <2\}} \left( \prod_{i=1}^3|y_i|^{p(\beta/3-d_i/2+2/3)}\right) dy
\end{align*}
is finite.  By symmetry, it suffices to show that the integral
\begin{align} \label{is:finite}
    \int_{\{y \in Y^{\mathrm{ani}}(F):
    \max(|y_1|,|y_2|)\leq |y_3|,
    1\le |y_3| <2\}}  \left( \prod_{i=1}^3|y_i|^{p(\beta/3-d_i/2+2/3)}\right) dy
\end{align}
is finite. After a change of variables, we can assume that $\mathcal{Q}_i(v_i)=v^tc_iv$
where 
$$
c_i= \begin{psmatrix} c_{i1} & & \\ & \ddots & \\ & & c_{id_i}\end{psmatrix}$$
is diagonal. 
 Write $v_i=(v_{i1},\ldots,v_{id_i})$. For any $1 \leq j \leq d_1$, $1 \leq k \leq d_2$, we have
\begin{align*}
    \left|\det \begin{psmatrix}
        \partial_{v_{1j}} (\mathcal{Q}_1-\mathcal{Q}_2) & \partial_{v_{2k}} (\mathcal{Q}_1-\mathcal{Q}_2) \\
        \partial_{v_{1j}} (\mathcal{Q}_2-\mathcal{Q}_3) & \partial_{v_{2k}} (\mathcal{Q}_2-\mathcal{Q}_3) 
    \end{psmatrix}\right|= \left|\det \begin{psmatrix}
        2c_{1j}v_{1j} & -2c_{2k}v_{2k}\\
        &  2c_{2k}v_{2k}
    \end{psmatrix}\right|=|4c_{1j}c_{2k}||v_{1j}||v_{2k}|.
\end{align*}
Then for any $j$ and $k$ as above we have
\begin{align*}
    \left( \prod_{i=1}^3|y_i|^{p(\beta/3-d_i/2+2/3)}\right) dy=\frac{\left( \prod_{i=1}^3|y_i|^{p(\beta/3-d_i/2+2/3)}\right)}{|4c_{1j}c_{2k}||y_{1j}y_{2k}|}\frac{dy_1 dy_2 dy_3}{dy_{1j}dy_{2k}}
\end{align*}
outside a set of measure zero with respect to $dy$.  
Here $\frac{dy_i}{dy_{i1}}:=dy_{i2}\dots dy_{id_1}$, etc. and the values of $|y_{1j}|$ and $|y_{2k}|$ are given implicitly in terms of the other entries of $y$.  We can assume that $j$ and $k$ are chosen so that $|y_{1j}|=|y_1|$ and $|y_{2k}|=|y_{2}|$.
Therefore, setting $t_1=|y_1|$ and $t_2=|y_2|$, we see that \eqref{is:finite} is bounded by a constant times
\begin{align*}
&\int_{0}^2\int_{0}^2\left(\int_{(x_1,x_2)\in F^{d_1-1}\times F^{d_2-1}, |x_1|\le t_1,|x_2|\le t_2}  t_1^{p(\beta/3-d_1/2+2/3)-1}t_2^{p(\beta/3-d_2/2+2/3)-1}dx_1 dx_2\right) dt_1dt_2,
\end{align*}
which is finite.
\end{proof}

 Suppose $F$ is non-Archimedean. Let $K \leq \mathrm{Sp}_6(\OO)$ be a compact open subgroup and let $L^2(X^{\circ}(F))^K$ denote the space of functions on $X^{\circ}(F)$ that are right $K$-invariant and square-integrable.

\begin{lem} \label{lem:L2:est}
For $f \in L^2(X^{\circ}(F))^K$ we have 
$$
|f(x)| \leq \frac{\norm{f}_2}{|x|^2\mathrm{meas}(K)^{1/2}}
$$
for any $x\in X^{\circ}(F)$.
\end{lem}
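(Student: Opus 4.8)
\noindent The plan is as follows. Since $f$ is right $K$-invariant, it is constant, equal to the value $f(x)$, on the single $K$-orbit $xK$, which is an open (in particular measurable) subset of $X^{\circ}(F)$. Hence
\[
\norm{f}_2^2 \;=\; \int_{X^{\circ}(F)}|f(y)|^2\,dy \;\ge\; \int_{xK}|f(y)|^2\,dy \;=\; |f(x)|^2\,\mathrm{vol}(xK),
\]
so $|f(x)| \le \norm{f}_2\,\mathrm{vol}(xK)^{-1/2}$, and it suffices to show $\mathrm{vol}(xK) \ge |x|^{\,n+1}\mathrm{meas}(K)$ with $n=3$. I would deduce this by combining a scaling (homogeneity) property of the invariant measure with a base case.

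For the homogeneity, recall the left action \eqref{act} of $M^{\mathrm{ab}}(F)\cong F^{\times}$ on $X^{\circ}(F)$. A lift of $t\in F^{\times}$ to $M(F)$ is $\mathrm{diag}(t,I_{n-1},t^{-1},I_{n-1})$, so a direct computation with the Pl\"ucker embedding \eqref{pluckerembed} gives $|\mathrm{Pl}(t\cdot x)| = |t|^{-1}|\mathrm{Pl}(x)|$, i.e.\ $|t\cdot x| = |t|^{-1}|x|$; and left translation by $M^{\mathrm{ab}}(F)$ scales the invariant measure by $\delta_P^{-1} = |\omega(\cdot)|^{-(n+1)}$, since $\delta_P(m)=|m|^{-(n+1)}$ as recalled in the proof of Lemma \ref{lem:seminorm} (alternatively this follows from Lemma \ref{lem:BKF} together with the fact, Proposition \ref{prop:cont}, that $\mathcal{F}_X$ is an $L^2$-isometry). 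Thus $d(t\cdot y) = |t|^{-(n+1)}\,dy$. Since the left $M^{\mathrm{ab}}$-action commutes with the right $\mathrm{Sp}_{2n}$-action, choosing $t$ with $|t|=|x|^{-1}$ and writing $x=t\cdot x'$ with $|x'|=1$ yields $xK = t\cdot(x'K)$, hence $\mathrm{vol}(xK) = |x|^{\,n+1}\mathrm{vol}(x'K)$. So I would be reduced to the base case $\mathrm{vol}(x'K)\ge \mathrm{meas}(K)$ whenever $|x'|=1$.

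For the base case, I would identify $X^{\circ}(F) = [P,P](F)\backslash\mathrm{Sp}_{2n}(F)$ and take $dy$ to be the quotient of the Haar measure on $\mathrm{Sp}_{2n}(F)$ (the one underlying $\mathrm{meas}(K)$) by the Haar measure on $[P,P](F)$ normalized so that $\mathrm{meas}([P,P](\OO))=1$. Unwinding the quotient measure together with the relation $[P,P](F)gk=[P,P](F)gk'\iff k(k')^{-1}\in g^{-1}[P,P](F)g$ gives, for $x'=[P,P](F)g$,
\[
\mathrm{vol}(x'K) \;=\; \mathrm{meas}(K)\,\big/\,\mathrm{meas}\!\left(K\cap g^{-1}[P,P](F)g\right),
\]
with normalization constant $1$, as one checks by specializing to $K=\mathrm{Sp}_{2n}(\OO)$, $g=e$. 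By the Iwasawa decomposition $\mathrm{Sp}_{2n}(F)=P(F)\mathrm{Sp}_{2n}(\OO)$ and the homogeneity above, $|x'|=1$ lets me take $g\in\mathrm{Sp}_{2n}(\OO)$; then, using that $\mathrm{Sp}_{2n}(F)$ is unimodular and $gKg^{-1}\subseteq\mathrm{Sp}_{2n}(\OO)$,
\[
\mathrm{meas}\!\left(K\cap g^{-1}[P,P](F)g\right) = \mathrm{meas}\!\left(gKg^{-1}\cap[P,P](F)\right) \le \mathrm{meas}\!\left(\mathrm{Sp}_{2n}(\OO)\cap[P,P](F)\right)=\mathrm{meas}([P,P](\OO))=1,
\]
so $\mathrm{vol}(x'K)\ge\mathrm{meas}(K)$, as needed. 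The part I expect to require the most care is the bookkeeping of the compatible normalizations of the Haar measure on $\mathrm{Sp}_{2n}(F)$ and the invariant measure on $X^{\circ}(F)$ — in particular pinning down that the homogeneity exponent is exactly $n+1$ and that the constant in the displayed formula for $\mathrm{vol}(x'K)$ is $1$; the remaining steps are routine.
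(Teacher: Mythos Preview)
Your proof is correct and follows essentially the same idea as the paper's: bound $|f(x)|^2$ by $\norm{f}_2^2/\mathrm{vol}(xK)$ and then compute the orbit volume. The paper does the volume computation in one line by writing the $L^2$-norm as the sum $\sum_{\gamma\in X^\circ(F)/K}|f(\gamma)|^2\mathrm{meas}(K)\,\delta_P(\gamma)^{-1}=\sum_\gamma|f(\gamma)|^2\mathrm{meas}(K)\,|\gamma|^4$ (i.e.\ it invokes the Iwasawa decomposition of the invariant measure directly), whereas you derive the same orbit-volume bound via the $M^{\mathrm{ab}}$-homogeneity of the measure together with a base case at $|x'|=1$; your route is a spelled-out version of the same computation.
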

\begin{proof}
For $f \in L^2(X^{\circ}(F))^K$ we  have 
\begin{align*}
    \norm{f}_2^2=\sum_{\gamma \in X^{\circ}(F)/K}\frac{|f(\gamma)|^2\mathrm{meas}(K)}{\delta_P(\gamma)}=\sum_{\gamma \in X^{\circ}(F)/K}|f(\gamma)|^2\mathrm{meas}(K)|\gamma|^4.
\end{align*}
\end{proof}

\begin{prop} \label{prop:I:Lloc}
For $F$ non-Archimedean and $f_2 \in \mathcal{S}(V(F))$, the map $I(\cdot \otimes f_2)$ extends to a continuous map
$$
I(\cdot \otimes f_2):L^2(X^{\circ}(F))^K\lto L^{2}_{\mathrm{loc}}(Y^{\mathrm{ani}}(F)).
$$
\end{prop}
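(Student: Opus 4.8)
The plan is to derive a pointwise bound on $I(f_1 \otimes f_2)(y)$ from the $L^2$-bound of Lemma~\ref{lem:L2:est} together with the estimate in the proof of Proposition~\ref{prop:betabound}, and then to integrate this bound over compact subsets of $Y^{\mathrm{ani}}(F)$. Since $C_c^\infty(X^\circ(F))^K$ is dense in $L^2(X^\circ(F))^K$ and, by Proposition~\ref{prop:schwartz:inside}, is contained in $\mathcal{S}(X(F))^K$, and since $L^2(\Omega,dy)$ is complete for each compact $\Omega \subset Y^{\mathrm{ani}}(F)$, it suffices to exhibit for each such $\Omega$ a constant $C_{f_2,\Omega}$ with $\|I(f_1 \otimes f_2)\|_{L^2(\Omega)} \leq C_{f_2,\Omega}\|f_1\|_2$ for every $f_1 \in \mathcal{S}(X(F))^K$; letting $\Omega$ range over a compact exhaustion of $Y^{\mathrm{ani}}(F)$ then produces the desired continuous extension to $L^2(X^\circ(F))^K$.

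First I would apply Lemma~\ref{lem:L2:est} to write $|f_1(\gamma_b g)| \leq \mathrm{meas}(K)^{-1/2}\|f_1\|_2\,|\gamma_b g|^{-2}$ for all $g$. Here $n = 3$, so $-2 = -\tfrac{n+1}{2}$, and this is exactly the $\beta = 0$ case of the decay bound of Lemma~\ref{lem:seminorm} that is fed into the proof of Proposition~\ref{prop:betabound}. Running that argument with $\beta = 0$, the only property of $f_1$ used being the displayed pointwise bound, so that the constant $\mathrm{meas}(K)^{-1/2}\|f_1\|_2$ replaces the $f_1$-dependent constant there, one obtains, for every $y \in V^\circ(F)$,
\[
\int_{G_{\gamma_b}(F)\backslash G(F)}\bigl|f_1(\gamma_b g)\,\rho(g)f_2(y)\bigr|\,dg \;\leq\; C_{f_2}\,\mathrm{meas}(K)^{-1/2}\|f_1\|_2\prod_{i=1}^3|y_i|^{-d_i/2+2/3}.
\]
Since $f_1$ is not assumed compactly supported, we drop the cutoff $m(t,a) \leq c$ appearing in that proof; this only enlarges the integral, the $a_i$-integrals at infinity still converge because $-d_i/2 + 2/3 \leq -\tfrac{1}{3} < 0$ for $d_i \geq 2$, and we forfeit only the support statement of Proposition~\ref{prop:betabound}, which is not needed here. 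In particular $I(f_1 \otimes f_2)(y)$ is given by an absolutely convergent integral obeying the same bound.

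Next, fix a compact $\Omega \subset Y^{\mathrm{ani}}(F)$. Each $|\mathcal{Q}_i(y_i)|$ is continuous and nowhere vanishing on $Y^{\mathrm{ani}}(F)$, and $|\mathcal{Q}_i(y_i)| \ll |y_i|^2$, so $0 < \inf_{y\in\Omega}|y_i| \leq \sup_{y\in\Omega}|y_i| < \infty$; in particular $\Omega \subset V^\circ(F)$ and $\prod_i|y_i|^{-d_i/2+2/3}$ is bounded on $\Omega$, say by $M_\Omega$. Combining this with the previous step, $\|I(f_1 \otimes f_2)\|_{L^2(\Omega)} \leq C_{f_2}\,\mathrm{meas}(K)^{-1/2}M_\Omega\,\mathrm{meas}(\Omega)^{1/2}\|f_1\|_2$, which is the required estimate. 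A routine approximation argument, using the absolute convergence above to identify the $L^2$-extension with the integral formula on all of $L^2(X^\circ(F))^K$, then completes the proof.

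The crucial point, more an observation than an obstacle, is that right $K$-invariance is indispensable: for a general $f_1 \in L^2(X^\circ(F))$ the defining integral for $I(f_1\otimes f_2)(y)$ need not converge, and a naive Cauchy-Schwartz estimate fails because the unipotent direction is non-compact (the $t$-integral diverges against any positive power of $\max(|\gamma_b g|,1)$). Lemma~\ref{lem:L2:est} is precisely what converts square-integrability into the pointwise decay $|f_1(x)| \ll \|f_1\|_2|x|^{-2}$, which sits at the exact threshold demanded by the argument of Proposition~\ref{prop:betabound}. A secondary subtlety, which explains the restriction to $Y^{\mathrm{ani}}$, is that the bound $\prod_i|y_i|^{-d_i/2+2/3}$ is only locally bounded away from the common zero locus of the $\mathcal{Q}_i$; handling all of $Y^{\mathrm{sm}}(F)$ would require the finer near-isotropic analysis behind Proposition~\ref{prop:L2}, which we do not pursue here.
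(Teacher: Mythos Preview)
Your proof is correct and follows essentially the same approach as the paper: apply Lemma~\ref{lem:L2:est} to convert the $L^2$-norm of $f_1$ into the pointwise decay $|f_1(x)|\le \mathrm{meas}(K)^{-1/2}\|f_1\|_2|x|^{-2}$, then run the argument of Proposition~\ref{prop:betabound} with $\beta=0$. You supply additional detail the paper leaves implicit, in particular the verification that dropping the support cutoff $m(t,a)\le c$ is harmless because the $a_i$-integrals still converge at infinity, and the density/completeness argument for the continuous extension.
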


\begin{proof} Assume $y \in Y^{\mathrm{ani}}(F)$.
By Lemma \ref{lem:L2:est}, for any $f_1 \in \mathcal{S}(X(F))$ the integral $I(f_1 \otimes f_2)(y)$
is bounded by $\mathrm{meas}(K)^{-1/2}$ times
\begin{align*}
    \norm{f_1}_2\int_{G_{\gamma_b}(F)\backslash G(F)} |\gamma_bg|^{-2}
   |\rho(g)f_2|(y)dg.
\end{align*}
The proposition thus follows from the argument proving Proposition \ref{prop:betabound} in the special case $\beta=0$.
\end{proof}

\section{The Fourier transform} \label{sec:appendix}

Let $F$ be a number field.

\begin{thm} \label{thm:FY}  Let $v$ be a place of $F$ and assume that $Y^\mathrm{sm}(F_v)$ is nonempty. 
There is a unique $\CC$-linear isomorphism $\mathcal{F}_Y:\mathcal{S}(Y(F_v)) \to \mathcal{S}(Y(F_v))$ such that $ \mathcal{F}_Y \circ I=I \circ \mathcal{F}_X$.  It is continuous if $v$ is Archimedean.  In particular there is a commutative diagram
\medskip
\begin{center}
\begin{tikzcd}[column sep=large]
\mathcal{S}(X(F_v)\times V(F_v))
\arrow[d, two heads, "I"]
\arrow[r, "\mathcal{F}_X"] &\mathcal{S}(X(F_v) \times V(F_v)) \arrow[d,two heads, "I"]\\
\mathcal{S}(Y(F_v)) \arrow[r,"\mathcal{F}_Y"]
& \mathcal{S}(Y(F_v)).
\end{tikzcd}
\end{center}
\medskip
\end{thm}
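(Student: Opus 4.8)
The plan is to reduce Theorem \ref{thm:FY} to the single assertion that $\mathcal{F}_X$ preserves $\ker I$, and then to prove that assertion by a global-to-local argument built on the summation formula of Theorem \ref{thm:main:intro}. I expect the genuinely hard point to be the bookkeeping of the boundary terms $I_0,I_i$, which are not determined by $I$ and must therefore be forced to vanish globally rather than locally.

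First I would record the formal reduction. By construction $\mathcal{S}(Y(F_v))=\mathcal{S}(X(F_v)\times V(F_v))/\ker I$, carrying the quotient topology when $v$ is archimedean (recall $\ker I$ is closed by Lemma \ref{lem:Iker}). Hence a $\CC$-linear map $\mathcal{F}_Y$ with $\mathcal{F}_Y\circ I=I\circ\mathcal{F}_X$ exists and is unique precisely when $\mathcal{F}_X(\ker I)\subseteq\ker I$, in which case $\mathcal{F}_Y$ is the map induced on the quotient and the displayed diagram commutes. Granting this inclusion one gets the rest for free: since $n=3$, $\mathcal{F}_X\circ\mathcal{F}_X=L(m(-1)^{n+1})=L(m(-1)^4)=\mathrm{Id}$ on $\mathcal{S}(X(F_v))$ by \eqref{inverse}, and the same holds on $\mathcal{S}(X(F_v)\times V(F_v))$ because $\mathcal{F}_X$ touches only the first tensor factor; thus $\mathcal{F}_X$ is an involution, the inclusion is an equality, and $\mathcal{F}_Y$ is an involution, hence an isomorphism. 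Continuity of $\mathcal{F}_Y$ in the archimedean case is then automatic from the quotient topology, since $I$ is continuous and $\mathcal{F}_X$ is continuous on $\mathcal{S}(X(F_v))$ by Proposition \ref{prop:topology}. So everything comes down to $\mathcal{F}_X(\ker I)\subseteq\ker I$.

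For this I would globalize. Given $f\in\mathcal{S}(X(F_v)\times V(F_v))$ with $I(f)=0$, choose a number field $F$ with a place (again written $v$) with completion $F_v$, and quadratic forms $\mathcal{Q}_1,\mathcal{Q}_2,\mathcal{Q}_3$ over $F$ whose base changes to $F_v$ are isometric to the given local ones — possible by weak approximation on a diagonalization, matching discriminant, Hasse invariant and signature — arranging also $Y^{\mathrm{sm}}(F)\ne\emptyset$. The integrals $I$, $I_0$, $I_i$ of \S\ref{sec:loc:func} are Euler products over places, $I(\widetilde f)(\xi)=\prod_w I_w(\widetilde f_w)(\xi_w)$. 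I would then form a global $\widetilde f=f\otimes f_{v_1}\otimes f_{v_2}\otimes f^{vv_1v_2}$ (with an auxiliary $\Phi$, $\widehat\Phi(0)=2\mathrm{Vol}(F^\times\backslash(\A_F^\times)^1)^{-1}$) for finite places $v_1,v_2,w_1$ distinct from $v$, chosen so that the $X$-component at $v_1$ lies in $C_c^\infty(\gamma_b G(F_{v_1}))$, the $X$-component at $v_2$ is $\mathcal{F}_{X,v_2}$ of an element of $C_c^\infty(\gamma_b G(F_{v_2}))$, the $V$-component at $w_1$ lies in $\mathcal{S}_{0w_1}$ (nonzero by Lemma \ref{lem:nontriv}), and the components are basic functions at almost all places. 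Then $\widetilde f$ satisfies \eqref{cptsupport0} and \eqref{awayfromVcirc0} — the latter because $\mathcal{S}_{0w_1}$-vectors and all their Weil translates vanish wherever a coordinate is $0$ — so Theorem \ref{thm:main:intro} applies. Crucially, every boundary integral at $v_1$ vanishes, because a function compactly supported in the open orbit $\gamma_b G(F_{v_1})$ vanishes identically on the other $G$-orbits (those through $\mathrm{Id},\gamma_1,\gamma_2,\gamma_3$, see Lemma \ref{lem:stab}); the same observation applied to $\mathcal{F}_{X,v_2}$ of the $X$-component at $v_2$ kills every boundary integral of $\mathcal{F}_X(\widetilde f)$ at $v_2$. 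Since $I(\widetilde f)(\xi)=I_v(f)(\xi_v)\prod_{w\ne v}I_w(\widetilde f_w)(\xi_w)=0$, Theorem \ref{thm:main:intro} collapses to
\begin{align}\label{collapsed:plan}
\sum_{\xi\in Y(F)}I_v(\mathcal{F}_{X,v}(f))(\xi_v)\prod_{w\ne v}I_w(\mathcal{F}_{X,w}(\widetilde f_w))(\xi_w)=0,
\end{align}
the sum being absolutely convergent by Proposition \ref{prop:betabound} (used as in the proof of Proposition \ref{prop:L2}).

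Finally I would localize out of \eqref{collapsed:plan}. If $I_v(\mathcal{F}_{X,v}(f))$ were not identically $0$ on $Y^{\mathrm{sm}}(F_v)$, then by continuity (Propositions \ref{prop:na:smooth}, \ref{prop:a:smooth}) and weak approximation for $Y^{\mathrm{sm}}$ (Theorem \ref{thm:CTS}) there would be $\xi^0\in Y^{\mathrm{sm}}(F)$ with $I_v(\mathcal{F}_{X,v}(f))(\xi^0_v)\ne 0$. Using the discreteness of $Y^{\mathrm{sm}}(F)$ in $Y^{\mathrm{sm}}(\A_F)$, that distinct $F$-points have distinct images at each place, and the richness of the Schwartz spaces and of their images under $I$ and $\mathcal{F}_X$ (Lemma \ref{lem:smooth}, \S\ref{sec:BK}, \S\ref{sec:unr}) — subject to the open-orbit and cuspidality constraints at $v_1,v_2,w_1$, which still leave ample freedom — I would choose the auxiliary data so as to detect the term of \eqref{collapsed:plan} indexed by $\xi^0$, deriving a contradiction; the uniform bounds of \S\ref{sec:bound:na}, \S\ref{sec:bound:arch} and the $L^2$-theory of \S\ref{sec:l2theory} (in the nonarchimedean case one can instead use Propositions \ref{prop:I:Lloc} and \ref{prop:L2} directly) are what make this limiting argument rigorous. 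This forces $I_v(\mathcal{F}_{X,v}(f))\equiv 0$, i.e.\ $\mathcal{F}_X(\ker I)\subseteq\ker I$, completing the proof. As noted, the main obstacle is the handling of the boundary terms: they cannot be killed locally at $v$, so one must arrange their global vanishing by the open-orbit support condition at $v_1$ (for the left side of Theorem \ref{thm:main:intro}) and at $v_2$ (for the right side) — precisely the reason the flexible summation formula with boundary terms is needed — and then the localization step must be carried out with enough uniformity, drawn from the $L^2$- and smoothness theory, to survive the passage to the limit.
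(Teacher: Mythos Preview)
Your proposal is correct and follows the same global-to-local strategy as the paper: reduce to $\mathcal{F}_X(\ker I)\subseteq\ker I$, globalize, apply Theorem \ref{thm:main:intro} with open-orbit support conditions at auxiliary finite places to kill the boundary terms on both sides, and then isolate a single rational point to force $I(\mathcal{F}_X(f))$ to vanish there.

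The only step you leave genuinely sketchy is the nonvanishing at your place $v_1$ after applying $\mathcal{F}_X$: you must produce $f_{1v_1}\in C_c^\infty(\gamma_bG(F_{v_1}))$ with $I(\mathcal{F}_X(f_{1v_1})\otimes f_{2v_1})(\xi^0_{v_1})\ne 0$, and since $\mathcal{F}_X(C_c^\infty(\gamma_bG))$ has no direct description this is not immediate. The paper packages this as Lemmas \ref{lem:totvanish} and \ref{lem:nonzero}: one approximates $\mathcal{F}_X^{-1}(f_1')$ in $L^2(X^\circ)$ by functions in $C_c^\infty(\gamma_bG)$, uses unitarity of $\mathcal{F}_X$ (Proposition \ref{prop:cont}) and the continuity $I(\cdot\otimes f_2):L^2\to L^2_{\mathrm{loc}}(Y^{\mathrm{ani}})$ (Proposition \ref{prop:I:Lloc}) to obtain nonvanishing in $L^2_{\mathrm{loc}}$, and then converts this to pointwise nonvanishing at the prescribed $\xi^0$ via the $H$-equivariance of $I\circ\mathcal{F}_X$ --- which requires choosing that auxiliary place so the $\mathcal{Q}_i$ split there, making $H$ act transitively on $Y^{\mathrm{ani}}$. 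Two small adjustments to your outline: take $\xi^0\in Y^{\mathrm{ani}}(F)$ rather than merely $Y^{\mathrm{sm}}(F)$, since the $\mathcal{S}_0$-constraint at $w_1$ forces the local factor there to vanish whenever some $\xi^0_i=0$; and note that the paper merges your $w_1$ with its analogue of your $v_1$ into a single place, which is why Lemma \ref{lem:nonzero} is stated with the extra $f_2\in\mathcal{S}_{0v}$ constraint.
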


 We should pause to explain why this theorem is not obvious. Let
$$
C:=\mathcal{S}(X(F_v) \times V(F_v))_{\SL_2^3(F_v)}
$$
denote the space of coinvariants.  It is clear that the map $I$ factors through $C$ and yields a surjection $C\to \mathcal{S}(Y(F_v))$.  Since $\mathcal{F}_X$ is equivariant under the action of $\mathrm{SL}_2^3(F_v)< \mathrm{Sp}_6(F_v)$, it is clear that $\mathcal{F}_X$ descends to define an automorphism of $C$.  However, it is not clear that the map $C \to \mathcal{S}(Y(F_v))$ is injective.  
For instance, there are several orbits of $\SL_2^3(F_v)$ on $X(F_v)$, but the map $I$ depends only on the restriction of a function in $\mathcal{S}(X(F_v) \times V(F_v))$ to one of these orbits. Moreover, $\mathcal{S}(X(F_v))/\mathcal{S}(X^{\circ}(F_v))$ is infinite-dimensional as a representation of $\mathrm{Sp}_6(F_v)$ and not even of finite length in the Archimedean case.  The situation is even more complicated when we restrict to $\mathrm{SL}_2^3(F_v).$  Finally, based on the example of \cite{Getz:Quadrics} and its appendix, we expect that there are more complicated subquotients of $C$ that are the local analogues of the hypothetical global boundary terms that we have excluded from our treatment using our assumption \eqref{awayfromVcirc0} (see the paragraph containing \eqref{Theta}).  The injectivity of the map $C \to \mathcal{S}(Y(F_v))$ is more or less equivalent to the assertion that all of these  complicated subquotients can be recovered from $\mathcal{S}(Y(F_v)).$  
Fortunately, with the global-to-local proof we give below, we can completely avoid the issue of describing the subquotients of $C.$

  Using Theorem \ref{thm:FY}, many prior results can be stated more transparently.  
    For example, by \cite[Lemma 4.3]{Getz:Liu:Triple} we have
\begin{cor} For any place $v$ of $F$, $f \in \mathcal{S}(Y(F_v))$, and $h \in H(F_v)$ one has that
\begin{align*}
    \mathcal{F}_Y(L(h)f)=|\lambda(h)|^{\sum_{i=1}^3d_i/2-2}L\left(\frac{h}{\lambda(h)} \right)\mathcal{F}_Y(f).
\end{align*} \qed
\end{cor}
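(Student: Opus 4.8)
The plan is to lift the asserted identity from $\mathcal{S}(Y(F_v))$ to $\mathcal{S}(X(F_v)\times V(F_v))$ via the surjection $I$ and the commutative square of Theorem~\ref{thm:FY}, and then to combine the explicit description of the $H(F_v)$-action on $I$-images recorded in \cite[Lemma 4.3]{Getz:Liu:Triple} with the $\mathrm{GSp}_6$-covariance of $\mathcal{F}_X$ from Lemma~\ref{lem:BKF}. First I would write $f=I(f_1\otimes f_2)$; by bilinearity and, in the archimedean case, continuity of $I$, of $\mathcal{F}_Y$ (Theorem~\ref{thm:FY}), and of the $H(F_v)$-action, it suffices to treat pure tensors.

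The key input is \cite[Lemma 4.3]{Getz:Liu:Triple}, which --- in direct analogy with the computation of Lemma~\ref{lem:Ipsi}, where changing $\psi$ rescales the quadratic forms and produces the very exponent $\sum_{i=1}^3 d_i/2-2$ --- expresses $L(h)\circ I$ as $I$ composed with an operator $\Theta(h)$ on $\mathcal{S}(X(F_v)\times V(F_v))$ of the shape
\begin{align*}
\Theta(h)(f_1\otimes f_2)=|\lambda(h)|^{a}\,L(m(\lambda(h)^{-1}))R(\mathrm{diag}(\lambda(h)I_3,I_3))f_1\ \otimes\ \pi(h)f_2,
\end{align*}
where $\pi(h)$ is the natural pullback action of $H(F_v)$ on $\mathcal{S}(V(F_v))$ --- which it preserves since the orthogonal part commutes with $\rho$ and the similitude scaling behaves as in Lemma~\ref{lem:Ipsi} --- and $L,R$ are as in \eqref{LRaction}; this identity is exactly what underlies the statement, quoted in \S\ref{ssec:Schwartz:Y}, that $H(F_v)$ preserves $\mathcal{S}(Y(F_v))$, and in particular it shows $\Theta(h)$ maps $\ker I$ into itself and descends to $L(h)$ on $\mathcal{S}(Y(F_v))$. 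Granting this, Theorem~\ref{thm:FY} gives $\mathcal{F}_Y(L(h)f)=\mathcal{F}_Y\circ I(\Theta(h)(f_1\otimes f_2))=I\circ\mathcal{F}_X(\Theta(h)(f_1\otimes f_2))$, and since $\mathcal{F}_X$ acts only on the $X$-factor one applies Lemma~\ref{lem:BKF} with $n=3$, $\nu(\mathrm{diag}(\lambda(h)I_3,I_3))=\lambda(h)$, $n(n+1)/2=6$, and $\delta_P(m(\lambda(h)^{-1}))=|\lambda(h)|^{4}$ to rewrite $\mathcal{F}_X(\Theta(h)(f_1\otimes f_2))$ as a power of $|\lambda(h)|$ times $\Theta(h/\lambda(h))(\mathcal{F}_X(f_1)\otimes f_2)$, using $\lambda(h/\lambda(h))=\lambda(h)^{-1}$ and that $h$ and $h/\lambda(h)$ act on $V$ by the same linear map up to the scalar $\lambda(h)$, whose effect on the $Y$-factor is governed by the homogeneity \eqref{homog}. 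Pushing back through $I$ with $I\circ\Theta(h/\lambda(h))=L(h/\lambda(h))\circ I$ and $I\circ\mathcal{F}_X=\mathcal{F}_Y\circ I$ once more then yields $\mathcal{F}_Y(L(h)f)=|\lambda(h)|^{\sum_{i=1}^3 d_i/2-2}L(h/\lambda(h))\mathcal{F}_Y(f)$.

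The only genuinely delicate point is the scalar bookkeeping: one must check that the exponent $a$ coming from \cite[Lemma 4.3]{Getz:Liu:Triple}, the factor $|\nu(g)|^{n(n+1)/2}\delta_P(m)^{-1}$ produced by Lemma~\ref{lem:BKF}, and the homogeneity weight $d_1+d_2+d_3-4$ of the measure on $Y^{\mathrm{sm}}(F_v)$ from \eqref{homog} combine to exactly $|\lambda(h)|^{\sum_{i=1}^3 d_i/2-2}$, and that the residual twist on the $X$-factor (an $M^{\mathrm{ab}}$-element together with a right translation by $\mathrm{diag}(I_3,\lambda(h)^{-1}I_3)$) collapses, after applying $I$, to $L(h/\lambda(h))$ acting on $Y^{\mathrm{sm}}(F_v)$. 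Since all of this is carried out in \cite[Lemma 4.3]{Getz:Liu:Triple}, the cleanest exposition is to cite that lemma for the identity $L(h)\circ I=I\circ\Theta(h)$ and supply only the commutation of $\mathcal{F}$ past $\Theta(h)$ via Lemma~\ref{lem:BKF} and Theorem~\ref{thm:FY}.
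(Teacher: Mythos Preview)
Your proposal is correct and is essentially the paper's own approach: the paper simply states that the corollary follows from \cite[Lemma 4.3]{Getz:Liu:Triple} together with Theorem~\ref{thm:FY}, and your write-up spells out exactly this mechanism---lifting through $I$, invoking the $H$-equivariance of $I$ from \cite[Lemma 4.3]{Getz:Liu:Triple}, commuting $\mathcal{F}_X$ past the $M^{\mathrm{ab}}\times\mathrm{GSp}_6$-action via Lemma~\ref{lem:BKF}, and descending via $\mathcal{F}_Y\circ I=I\circ\mathcal{F}_X$. The scalar bookkeeping you flag as delicate is precisely what \cite[Lemma 4.3]{Getz:Liu:Triple} already packages, so your deferral to that lemma at the end is exactly right and matches the paper's one-line citation.
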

\noindent Unlike in other sections in this paper, we have not abbreviated $F_v$ by $F$.  We really require both $F$ and $F_v$ in this section because we will use a global-to-local argument to prove Theorem \ref{thm:FY}.    The global-to-local argument is fairly simple, and we invite the reader to skip to the proof of Theorem \ref{thm:FY} to see the basic idea.  

There is a somewhat hidden assumption on the base change $Y_{F_v}$ of $Y$ to $F_v$ in the statement of Theorem \ref{thm:FY}. Namely, we are assuming that {$Y_{F_v}$}
is the base change to $F_v$ of the scheme cut out of a triple of quadratic spaces over the number field $F$ by the simultaneous values of three quadratic forms.  This is no loss of generality since every characteristic zero local field is a localization of a number field  \cite[§25, Theorem 2]{Lorenz}, and every quadratic form over a local field is equivalent to the localization of a quadratic form over the corresponding number field.  Indeed, the latter assertion follows from the fact that every quadratic form over $F_v$ may be diagonalized \cite[\S I.2]{Lam} together with the fact that the natural map $F^\times \to F_v^\times/(F_v^\times)^2$ is surjective since $F^\times$ is dense in $F_v^\times.$

We claim, moreover, that upon replacing $F$ by another number field if necessary, we can assume that $Y^{\mathrm{sm}}(F) \neq \emptyset$.
By a change of basis, we may assume $\mathcal{Q}_i$ is associated to the diagonal matrix $\mathrm{diag}(c_{i1},\ldots,c_{id_i})$.  In the Archimedean case, we can assume $c_{ij}\in\{\pm 1\}$. If $F_v=\mathbb{C},$ take $F=\mathbb{Q}(i)$ and if $F_v=\RR,$ take $F=\mathbb{Q};$ in either case one checks that $Y^{\mathrm{sm}}(F) \neq \emptyset.$ Now suppose $F_v$ is non-Archimedean.   Let $\mathfrak{p}$ be the prime ideal corresponding to $v$.  We may assume $c_{ij}\in \mathcal{O}_{v},$ the ring of integers of $F_v,$ for all $i,j$. As $Y^{\mathrm{sm}}(F_v)$ is nonempty,  $Y^{\mathrm{sm}}(F_v)\cap V(\mathcal{O}_{v})$ is nonempty. Note that $A:=F^{\mathrm{sep}}\cap \mathcal{O}_{v}$ is the henselization of an excellent discrete valuation ring $\mathcal{O}_{\mathfrak{p}}$ whose completion is $\mathcal{O}_v$ (see e.g., \cite[tag 07QS]{stacks-project},
    \cite[Example 8.3.34]{LiuAG}).  Thus $A$ has the approximation property by \cite[Theorem 1.10]{Artin}. In particular, there exists $(y_1,y_2,y_3)\in Y^{\mathrm{sm}}(F_v)$ such that each coordinate of $y_i$ is algebraic over $F$. Let $E$ be the field extension of $F$ obtained by adjoining the coordinates of the $y_i$. Then $E_{w}=F_v$ for some $w|v$ and $Y^{\mathrm{sm}}(E)\neq \emptyset$. This justifies our claim.  
    
    Thus in proving Theorem \ref{thm:FY}, we can and do assume $Y^{\mathrm{sm}}(F) \neq \emptyset$.

\begin{thm} \label{thm:CTS} If $Y^{\mathrm{sm}}(F_v) \neq \emptyset$ for all $v$, then 
$Y^{\mathrm{sm}}(F)$ is nonempty and has dense image in $Y^{\mathrm{sm}}(F_v)$ for all $v.$ 
\end{thm}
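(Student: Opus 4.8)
The assertion combines a Hasse principle with a weak approximation statement for the smooth quasi-affine variety $Y^{\mathrm{sm}}$, and the plan is to deduce both from the theorem of Colliot--Th\'el\`ene and Sansuc \cite{Colliot1982} on quadric bundles, after putting $Y^{\mathrm{sm}}$ in the right shape. First I would reduce to the open subscheme $Y^{\mathrm{ani}}\subset Y^{\mathrm{sm}}$, the complement of the closed subscheme where the $\mathcal{Q}_i$ vanish. Since $Y^{\mathrm{sm}}=Y\cap V'$ is smooth and $Y^{\mathrm{ani}}$ is open dense in it, for every place $v$ the set $Y^{\mathrm{ani}}(F_v)$ is dense in $Y^{\mathrm{sm}}(F_v)$ in the $v$-adic topology: a proper closed subvariety of a smooth variety over a local field has empty interior among the local points. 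Hence the hypothesis $Y^{\mathrm{sm}}(F_v)\neq\emptyset$ for all $v$ forces $Y^{\mathrm{ani}}(F_v)\neq\emptyset$ for all $v$, and it suffices to prove that $Y^{\mathrm{ani}}$ satisfies the Hasse principle and weak approximation: if $Y^{\mathrm{ani}}(F)$ is dense in $Y^{\mathrm{ani}}(F_v)$ then, being contained in $Y^{\mathrm{sm}}(F)$, it is a fortiori dense in $Y^{\mathrm{sm}}(F_v)$, so in particular $Y^{\mathrm{sm}}(F)\neq\emptyset$.

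Next I would exhibit $Y^{\mathrm{ani}}$ as a smooth quadric bundle. Put
\[
Y_{12}^{\mathrm{ani}}:=\{(v_1,v_2)\in V_1\times V_2: \mathcal{Q}_1(v_1)=\mathcal{Q}_2(v_2)\neq 0\}.
\]
This is open dense in the smooth locus of the affine quadric cut out by the nondegenerate form $\mathcal{Q}_1\oplus(-\mathcal{Q}_2)$ of rank $d_1+d_2\geq 4$; affine quadrics of this kind satisfy the Hasse principle by Hasse--Minkowski and weak approximation by the classical theory (they are homogeneous spaces of orthogonal groups), and both properties pass to the open subvariety $Y_{12}^{\mathrm{ani}}$. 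Projection to $(v_1,v_2)$ identifies $Y^{\mathrm{ani}}$ with the total space of the affine quadric bundle $\{\mathcal{Q}_3(v_3)=f\}$ over $Y_{12}^{\mathrm{ani}}$, where $f:=\mathcal{Q}_1\circ\mathrm{pr}_1$ is a nowhere-vanishing regular function on the base. Because $\mathcal{Q}_3$ is nondegenerate and $f$ never vanishes, this bundle is smooth over $Y_{12}^{\mathrm{ani}}$ and every geometric fibre is a smooth affine quadric (indeed geometrically rational); in particular there are no degenerate fibres at all.

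Finally I would invoke \cite{Colliot1982}: a quadric bundle whose base satisfies the Hasse principle and weak approximation, whose generic fibre is a smooth quadric, and all of whose degenerate fibres lie over $F$-rational points of the base --- conditions that hold here, the last one vacuously --- again satisfies the Hasse principle and weak approximation; applied to $Y^{\mathrm{ani}}\to Y_{12}^{\mathrm{ani}}$ this is exactly the required statement. When $d_3\geq 4$ the fibre is a quadric of dimension $\geq 3$ and the conclusion is comparatively soft; the delicate case, and the one for which \cite{Colliot1982} is really needed, is $d_3=2$, where the fibres are affine conics --- torsors under the norm-one torus of the discriminant algebra of $\mathcal{Q}_3$, which fail weak approximation individually. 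Here the mechanism is that the vertical Brauer classes of the conic bundle, the quaternion symbols $(\mathrm{disc}\,\mathcal{Q}_3,\,f)$, restrict to the \emph{constant} class given by the Hasse invariant of $\mathcal{Q}_3$ on $Y^{\mathrm{ani}}$ (since $f=\mathcal{Q}_3(v_3)$ is a value of $\mathcal{Q}_3$ there), so they produce no Brauer--Manin obstruction. I expect the main work of the write-up to lie precisely in checking the hypotheses of the cited theorem --- that smoothness of the bundle really does remove all conditions on degenerate fibres; that the fibres are locally soluble over the base points supplied by the global local-solubility assumption (which follows from smoothness together with the reduction above); and the routine bookkeeping needed when some $\mathcal{Q}_i$ is isotropic (so the corresponding fibre degenerates to a split $\GG_m$-torsor) and when one must permute the roles of $\mathcal{Q}_1,\mathcal{Q}_2,\mathcal{Q}_3$.
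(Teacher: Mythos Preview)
Your proposal is correct and rests on the same source as the paper, but the paper's proof is a one-line citation: the Corollaire in \S 4 of \cite{Colliot1982} is stated precisely for the smooth locus of varieties of the form $\{\mathcal{Q}_1(v_1)=\cdots=\mathcal{Q}_r(v_r)\}$ with each $\mathcal{Q}_i$ nondegenerate in at least two variables, so the authors simply invoke it with $r=3$ and the standing hypotheses $d_i\ge 2$. What you have written is essentially an outline of the \emph{proof} of that Corollaire --- the inductive fibration that peels off one quadratic form at a time, together with the observation that in the binary case $d_3=2$ the vertical Brauer class $(\mathrm{disc}\,\mathcal{Q}_3,f)$ becomes constant on the total space because there $f$ is a value of $\mathcal{Q}_3$. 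Your version is therefore longer but more self-contained; the paper's version trades that exposition for a direct appeal to the packaged result.
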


\begin{proof}
    Since we have assumed $\dim V_i\ge 2$ and $\mathcal{Q}_i$ is nondegenerate for each $i$, this is a direct consequence of \cite[Corollaire in \S 4]{Colliot1982}.
\end{proof}

 For a place $v$ of $F$, consider the linear map
\begin{align*}
    T:\mathcal{S}(V_i(F_v))&\lto C^\infty(F_v)\\
   f &\longmapsto T(f)(\alpha):=\int_{V_i(F_v)} f(v)\psi(\alpha \mathcal{Q}_i(v)) dv.
\end{align*}
Let 
\begin{align}\label{eq:Tvanish}
    \mathcal{S}_{iv}=\{f\in \mathcal{S}(V_i(F_v))\,|\, \textrm{$T(f)=0$ and $f(0)=0$}\}.
\end{align}
Note that $\eqref{eq:Tvanish}$ coincides with $\eqref{intro:Siv}$ by the definition of the Weil representation. We set $\mathcal{S}_{0v}=\mathcal{S}_{1v}\otimes \mathcal{S}_{2v}\otimes \mathcal{S}_{3v}$. Clearly, $\mathrm{supp}\left(\rho(g)f\right)< V^\circ(F_v)$ for all $f\in \mathcal{S}_{0v}$ and $g\in \SL_2^3(F_v)$. The following lemma implies, in particular, $\mathcal{S}_{0v}$ is nontrivial when $v$ is a finite place above an odd prime.

\begin{lem}\label{lem:nontriv}
    If $v$ is a finite place lying above an odd prime then the kernel of $T$ is infinite dimensional.
\end{lem}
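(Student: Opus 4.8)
The plan is to produce an explicit infinite-dimensional subspace of $\ker T$ by exploiting the symmetry of $T$ under the orthogonal group. The key observation is that $T$ is manufactured solely out of $\mathcal{Q}_i$ and the Haar measure $dv$ on $V_i(F_v)$, and both are preserved by the orthogonal group $\mathrm{O}_{\mathcal{Q}_i}(F_v)$ of $(V_i,\mathcal{Q}_i)$ (its elements have determinant $\pm1$, hence preserve $dv$). Writing $\rho_0(g)f(v):=f(g^{-1}v)$ for the geometric action, a change of variables $v\mapsto gv$ shows $T(\rho_0(g)f)=T(f)$ for all $g\in\mathrm{O}_{\mathcal{Q}_i}(F_v)$ and all $f\in\mathcal{S}(V_i(F_v))=C_c^\infty(V_i(F_v))$; in particular $f-\rho_0(g)f\in\ker T$ for every $f$ and $g$.

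First I would specialize to $g=-\mathrm{Id}$, which lies in $\mathrm{O}_{\mathcal{Q}_i}(F_v)$ since $\mathcal{Q}_i$ is homogeneous of degree $2$, and which is a nontrivial involution because $F_v$ has characteristic zero. Fixing a nonzero $v_0\in V_i(F_v)$, I would choose $N$ large enough that the ball $U:=v_0+\varpi^N\OO_v^{d_i}$ is disjoint from $-U$; this is possible because $2v_0\neq0$, and the hypothesis that $v$ lies above an odd prime makes it especially transparent, as then $|2v_0|_v=|v_0|_v$, so it suffices to take $U$ of radius strictly smaller than $|v_0|_v$. Then I would consider the linear map $C_c^\infty(U)\to\ker T$, $f\mapsto f-\rho_0(-\mathrm{Id})f$, noting that $\rho_0(-\mathrm{Id})f$ is supported on $-U$, so the image lies in $C_c^\infty(V_i(F_v))=\mathcal{S}(V_i(F_v))$. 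This map is injective: if $f=\rho_0(-\mathrm{Id})f$ then $\mathrm{supp}(f)\subseteq U\cap(-U)=\varnothing$, so $f=0$. Since $C_c^\infty(U)$ is infinite dimensional — it contains the characteristic functions of a countable family of disjoint sub-balls of $U$, which have disjoint supports and are therefore linearly independent — the space $\ker T$ is infinite dimensional, which is the assertion.

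I do not expect a genuine obstacle here; the argument is short, and the only points needing (routine) care are the $\mathrm{O}_{\mathcal{Q}_i}(F_v)$-invariance of $T$ under $\rho_0$ and the disjointness $U\cap(-U)=\varnothing$. As a remark, the same argument works verbatim at any place and with $-\mathrm{Id}$ replaced by any nontrivial $g\in\mathrm{O}_{\mathcal{Q}_i}(F_v)$ together with a small enough open $U$ satisfying $gU\cap U=\varnothing$; an alternative, slightly heavier route would be to apply the nonarchimedean implicit function theorem to put $\mathcal{Q}_i$ into the form ``first coordinate'' on a polydisk around a point where $d\mathcal{Q}_i\neq0$ and then take functions $\mathbf{1}_B(y_1)h(y_2,\dots,y_{d_i})$ with $\int h=0$, but the reflection argument avoids this entirely.
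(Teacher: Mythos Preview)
Your proof is correct. Both your argument and the paper's ultimately rest on the same symmetry $v\mapsto -v$, but the packaging differs. The paper diagonalizes $\mathcal{Q}_i$ to reduce to the one-variable operator $T'(f)(\alpha)=\int_{F_v}f(x)\psi_v(\alpha x^2)\,dx$, then uses the odd residue characteristic to compute $T'(\mathbbm{1}_{U_a})$ explicitly via the bijection $x\mapsto x^2$ on $U_a=a+\varpi_v\OO_v$, concluding that $\mathbbm{1}_{U_1}-\mathbbm{1}_{U_{-1}}$ is a nonzero element of $\ker T'$; infinite-dimensionality is then obtained by dilation $f\mapsto f(\,\cdot\,\varpi_v^n)$. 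You instead invoke the $\mathrm{O}_{\mathcal{Q}_i}(F_v)$-invariance of $T$ directly, avoid any diagonalization or explicit computation, and produce an injection $C_c^\infty(U)\hookrightarrow\ker T$ via $f\mapsto f-\rho_0(-\mathrm{Id})f$. Your route is more conceptual and, as you observe, does not actually require the residue characteristic to be odd (one only needs $2v_0\neq 0$, which holds in characteristic zero); the paper's explicit computation, on the other hand, gives a concrete formula for $T'(\mathbbm{1}_{U_a})$ that your argument does not.
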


\begin{proof}By diagonalizing the quadratic form $\mathcal{Q}_i$ \cite[Corollary 2.4]{Lam}, we see it suffices to show the kernel of the linear map 
\begin{align*}
T':\mathcal{S}(F_v)&\lto C^\infty(F_v)\\
f &\longmapsto T'(f)(\alpha):=\int_{F_v} f(x)\psi_v(\alpha x^2)dx
\end{align*}
is infinite dimensional. Observe that if a nonzero $f$ lies in the kernel of $T'$, then so does the infinite dimensional vector space spanned by
$$
\{x \mapsto f(x \varpi_v^n):n \in \ZZ\} .
$$
 Therefore, it suffices to show $T'$ has nontrivial kernel. For $a\in \OO_v^\times$, let $U_{a}:=a+\varpi_v\OO_v$. Since $2$ does not divide the residual characteristic of $F_v$, the map $x\mapsto x^2$ induces a bijection $U_{a}\to U_{a^2}$. Therefore, 
 $$
 T'(\one_{U_{a}})(\alpha)=\frac{dx(\OO_{v})}{q_v}\psi_v(a^2\alpha ) \one_{\varpi_v^{N-1}\OO_v}(\alpha),
 $$
 where $N$ is the smallest integer such that $\psi_v$ is trivial on $\varpi_v^N\OO_v$. In particular, $T'(\one_{U_{1}})=T'(\one_{U_{-1}})$. Since $U_{1}$ and $U_{-1}$ are disjoint, the function $\one_{U_{1}}-\one_{U_{-1}}$ is nonzero and lies in the kernel of $T'$.   
\end{proof}

 Recall $Y^{\mathrm{ani}} \subset Y$ defined in \eqref{Yani}.

\begin{lem}\label{lem:totvanish}
Let $v$ be a place where $\mathcal{Q}_{iv}$ splits for all $1 \leq i \leq 3$. Suppose there exists $y_0\in Y^{\mathrm{ani}}(F_v)$ such that $I(\mathcal{F}_X(f_1)\otimes f_2)(y_0)=0$ for all $f_1\otimes f_2\in C^\infty_c( \gamma_b G(F_{v})) \otimes \mathcal{S}_{0v}$. Then 
\begin{equation*}
    I(\F_X(f_1)\otimes f_2)=0
\end{equation*}
for any $f_1\otimes f_2\in C^\infty_c( \gamma_b G(F_{v})) \otimes \mathcal{S}_{0v}.$ 
\end{lem}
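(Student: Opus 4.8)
The plan is to leverage the homogeneity of the integral $I$ under the action of the torus and the group $H(F_v)$ to spread the vanishing at a single point $y_0 \in Y^{\mathrm{ani}}(F_v)$ to a full orbit, and then use density to conclude. First I would recall from Lemma \ref{lem:smooth} that every element of $\mathcal{S}$ arising from $C_c^\infty(\gamma_b G(F_v)) \otimes \mathcal{S}_{0v}$ is a smooth function on $Y^{\mathrm{sm}}(F_v)$, and that restrictions of elements of $\mathcal{S}_{0v}$ to $Y^{\mathrm{sm}}(F_v)$ land in $\mathcal{S}(Y(F_v))$. The key structural point is that $I(\mathcal{F}_X(f_1) \otimes f_2)$ transforms in a controlled way under $H(F_v)$ by \cite[Lemma 4.3]{Getz:Liu:Triple} (equivalently the Corollary following Theorem \ref{thm:FY}), and under the $\GG_m$-scaling $y \mapsto ry$ via the homogeneity \eqref{homog}; combined with the fact that $C_c^\infty(\gamma_b G(F_v)) \otimes \mathcal{S}_{0v}$ is stable (up to the relevant twists) under translating $f_1$ by $R(g)$ for $g \in G(F_v)$ and under the $H(F_v)$-action on $f_2$, one sees that the vanishing set of the family of functions $\{I(\mathcal{F}_X(f_1) \otimes f_2) : f_1 \otimes f_2 \in C_c^\infty(\gamma_b G(F_v)) \otimes \mathcal{S}_{0v}\}$ is invariant under the natural group actions on $Y^{\mathrm{ani}}(F_v)$.

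Next I would identify these group actions with enough of $Y^{\mathrm{ani}}(F_v)$ to fill out a dense subset. The hypothesis that $\mathcal{Q}_{iv}$ splits for all $i$ means the orthogonal similitude groups $\mathrm{GO}_{\mathcal{Q}_i}(F_v)$ are as large as possible; the orbit of a point $y_0 \in Y^{\mathrm{ani}}(F_v)$ under $H(F_v)$ together with the overall scaling action should be (a union of) open subsets whose closure contains $Y^{\mathrm{ani}}(F_v)$. Concretely, since each $\mathcal{Q}_{iv}$ is split and isotropic, $\mathrm{GO}_{\mathcal{Q}_i}(F_v)$ acts transitively on each nonzero level set $\{v_i : \mathcal{Q}_i(v_i) = c\}$ for $c \in F_v^\times$ (this is the standard Witt-type transitivity for split forms in dimension $\geq 2$, up to the similitude factor which lets one also move between level sets), so $H(F_v)$ acts transitively on $Y^{\mathrm{ani}}(F_v)$ after accounting for the common similitude constraint $\nu(h_1) = \nu(h_2) = \nu(h_3)$. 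Thus the $H(F_v)$-orbit of $y_0$ is all of $Y^{\mathrm{ani}}(F_v)$, and the vanishing of $I(\mathcal{F}_X(f_1) \otimes f_2)$ at $y_0$ for all admissible $f_1 \otimes f_2$ forces vanishing at every point of $Y^{\mathrm{ani}}(F_v)$.

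Then I would upgrade "vanishing at every point of $Y^{\mathrm{ani}}(F_v)$" to "vanishing as a function on $Y^{\mathrm{sm}}(F_v)$". Since each $I(\mathcal{F}_X(f_1) \otimes f_2)$ is a continuous (indeed smooth) function on $Y^{\mathrm{sm}}(F_v)$ by Proposition \ref{prop:na:smooth}, Proposition \ref{prop:a:smooth}, and Lemma \ref{lem:smooth}, and since $Y^{\mathrm{ani}}(F_v)$ is open and dense in $Y^{\mathrm{sm}}(F_v)$ (it is the nonvanishing locus of $\mathcal{Q}_i$, which is nonempty because $Y^{\mathrm{ani}}(F_v) \neq \emptyset$ follows from the orbit description, or one reduces to the anisotropic part by a limiting argument), vanishing on the dense subset $Y^{\mathrm{ani}}(F_v)$ implies vanishing on all of $Y^{\mathrm{sm}}(F_v)$. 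This gives $I(\mathcal{F}_X(f_1) \otimes f_2) = 0$ identically, as claimed.

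I expect the main obstacle to be the precise bookkeeping in the second step: verifying that the group actions under which the vanishing-locus is invariant are genuinely large enough to reach all of $Y^{\mathrm{ani}}(F_v)$ from $y_0$, i.e.\ establishing the transitivity of $H(F_v)$ on $Y^{\mathrm{ani}}(F_v)$ in the split case while keeping track of the similitude-factor compatibility across the three factors, and checking that translating $f_1$ by $R(g)$ and applying $H(F_v)$ to $f_2$ indeed preserves the class $C_c^\infty(\gamma_b G(F_v)) \otimes \mathcal{S}_{0v}$ up to the harmless scalar twists coming from Lemma \ref{lem:BKF} and the definition of $\mathcal{S}_{0v}$ (in particular that the conditions $T(f_i) = 0$ and $f_i(0) = 0$ defining $\mathcal{S}_{i,v}$ are stable under the relevant operators). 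The equivariance of $\mathcal{F}_X$ under $\SL_2^3(F_v) \hookrightarrow \mathrm{Sp}_6(F_v)$, which is what makes $I(\mathcal{F}_X(f_1) \otimes f_2)$ behave well under moving $f_1$, is already available from the explicit formula cited in Theorem \ref{thm:44} and Lemma \ref{lem:BKF}, so this should be routine once the orbit geometry is pinned down.
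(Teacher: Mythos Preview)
Your proposal is correct and follows essentially the same approach as the paper: use the transitivity of the $H(F_v)$-action on $Y^{\mathrm{ani}}(F_v)$ in the split case together with \cite[Lemma 4.3]{Getz:Liu:Triple} to propagate vanishing from $y_0$ to all of $Y^{\mathrm{ani}}(F_v)$, then invoke continuity (Propositions \ref{prop:na:smooth} and \ref{prop:a:smooth}) and density of $Y^{\mathrm{ani}}(F_v)$ in $Y^{\mathrm{sm}}(F_v)$. The additional ingredients you mention (the $R(g)$-translation of $f_1$ and the separate $\GG_m$-scaling) are not needed, since the $H(F_v)$-equivariance alone already preserves the class $C_c^\infty(\gamma_b G(F_v)) \otimes \mathcal{S}_{0v}$ and gives transitivity on $Y^{\mathrm{ani}}(F_v)$.
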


\begin{proof}
Given $y\in Y^{\textrm{ani}}(F_{v})$, choose $h\in H(F_{v})$ such that $\lambda(h) h^{-1}y_0=y$. Then for $f_1\otimes f_2\in C^\infty_c( \gamma_b G(F_{v})) \otimes \mathcal{S}_{0v}$,
\begin{align*}
    I(\F_X(f_1)\otimes f_2)(y)&=L\left(\frac{h}{\lambda(h)}\right)I(\F_X(f_1)\otimes f_2)(y_0)=|\lambda(h)|^{-\sum_{i=1}^2 d_i/2}I(\F_X(f))(y_0)
\end{align*}
for some $f\in C^\infty_c( \gamma_b G(F_{v})) \otimes \mathcal{S}_{0v}$ by \cite[Lemma 4.3]{Getz:Liu:Triple}. Thus our hypothesis implies $I(\F_X(f_1)\otimes f_2)(y)=0$.  Since $I(\F_X(f_1)\otimes f_2)$ is continuous on $Y^{\mathrm{sm}}(F_v)$ by Propositions \ref{prop:na:smooth} and \ref{prop:a:smooth}, and  $Y^{\mathrm{ani}}(F_v) \subset Y^{\mathrm{sm}}(F_v)$ is dense by \cite[Remark 3.5.76]{Poonen:Rational}, we deduce the lemma.
\end{proof}

\begin{lem}\label{lem:nonzero}
Let $v$ be a finite place where $\mathcal{Q}_{iv}$ splits for $1 \leq i \leq 3$ and $\mathcal{S}_{0v}$ is nontrivial. For a given $y\in Y^{\mathrm{ani}}(F_v)$, there exists $f_1 \otimes f_2 \in C_c^\infty(\gamma_bG(F_v)) \otimes \mathcal{S}_{0v}$ such that  $I(\mathcal{F}_X(f_1)\otimes f_2)(y) \neq 0$.
\end{lem}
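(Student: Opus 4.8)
I would prove this by contradiction, and the point worth stressing at the outset is why it is not immediate: the non‑vanishing of $I$ on $C_c^\infty(\gamma_bG(F_v))\otimes\mathcal{S}_{0v}$ is elementary (it follows from Lemma~\ref{lem:smooth}), but $\mathcal{F}_X$ does \emph{not} preserve $C_c^\infty(\gamma_bG(F_v))$ — it spreads support onto the non‑open $G$‑orbits and onto the boundary of $X$ — so one cannot simply transport that non‑vanishing through $\mathcal{F}_X$. So suppose $I(\mathcal{F}_X(f_1)\otimes f_2)(y)=0$ for every $f_1\otimes f_2\in C_c^\infty(\gamma_bG(F_v))\otimes\mathcal{S}_{0v}$. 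Since each $\mathcal{Q}_{iv}$ splits and $y\in Y^{\mathrm{ani}}(F_v)$, Lemma~\ref{lem:totvanish} upgrades this to $I(\mathcal{F}_X(f_1)\otimes f_2)\equiv 0$ on all of $Y^{\mathrm{sm}}(F_v)$, for all such $f_1,f_2$. The plan is to deduce from this that $\mathcal{F}_X(f_1)=0$ for every $f_1\in C_c^\infty(\gamma_bG(F_v))$, which contradicts the injectivity of $\mathcal{F}_X$ (Theorem~\ref{thm:44}) together with $C_c^\infty(\gamma_bG(F_v))\neq 0$.

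The mechanism is a Fourier analysis in the unipotent direction. By Lemma~\ref{lem:stab}, $G_{\gamma_b}(F_v)$ is the hyperplane $\{t_1+t_2+t_3=0\}$ inside $N_2^3(F_v)$, so by the Iwasawa decomposition $G_{\gamma_b}(F_v)\backslash G(F_v)$ is coordinatized by $(s,a,k)\in F_v\times(F_v^\times)^3\times K^3$, $K=\SL_2(\mathcal{O}_v)$, where $s$ runs over a section of $N_2^3(F_v)/G_{\gamma_b}(F_v)$, say $s\mapsto (n_s,1,1)$. For $y'\in Y^{\mathrm{sm}}(F_v)$ one has $\mathcal{Q}_1(y'_1)=\mathcal{Q}_2(y'_2)=\mathcal{Q}_3(y'_3)=:c'$, and this section acts on $\rho(g)f_2$ evaluated at $y'$ through the additive character $\psi(sc')$. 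Because $\mathcal{F}_X(f_1)\in\mathcal{S}(X(F_v))$ is locally constant and vanishes where $|\cdot|$ is large — in particular on the set where $|s\,a_1a_2a_3|\gg 1$ — the $s$‑integral may be carried out first, and the relation $I(\mathcal{F}_X(f_1)\otimes f_2)(y')=0$ becomes
\begin{align*}
\int_{(F_v^\times)^3\times K^3}\widehat{\mathcal{F}_X(f_1)}^{\,s}(c';a,k)\,w(a)\,\rho(k)f_2(a^{-1}y')\,dk\,d^\times a=0,
\end{align*}
where $w$ is a fixed nowhere‑vanishing function of $a$ coming from the Weil representation and the Haar measure, and $\widehat{\mathcal{F}_X(f_1)}^{\,s}(c';a,k):=\int_{F_v}\mathcal{F}_X(f_1)\big(\gamma_b n_s\,\mathrm{diag}(a^{-1},a)\,k\big)\psi(sc')\,ds$; here the $a$‑integral is over a compact subset of $(F_v^\times)^3$ because $f_2$ is compactly supported away from $0$.

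Finally one checks that this forces $\widehat{\mathcal{F}_X(f_1)}^{\,s}(c';\cdot)\equiv 0$ for every $c'\in F_v$. Fixing $c'$ and letting $f_2$ run through $\mathcal{S}_{0v}$ and $y'$ through the fiber $\{\mathcal{Q}_i(y'_i)=c'\}$, the test functions $(a,k)\mapsto w(a)\rho(k)f_2(a^{-1}y')$ span a dense subspace of $C_c^\infty((F_v^\times)^3\times K^3)$: the $a$‑support can be localized near any prescribed point by taking $f_2$ supported near the corresponding point of $V^\circ(F_v)$, while the $k$‑dependence is governed by matrix coefficients of $\rho|_{K^3}$, which are dense by Peter–Weyl. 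This density — which uses the $\rho(\SL_2^3(F_v))$‑ and $\mathrm{O}_{\mathcal{Q}_i}(F_v)$‑stability of $\mathcal{S}_{0v}$ together with its size from Lemma~\ref{lem:nontriv} (so that the constraints $T(f_2)=0$, $f_2(0)=0$ can be maintained by an auxiliary correction supported off the relevant loci) — is the technical heart of the argument and the step I expect to require the most care. Granting it, $\widehat{\mathcal{F}_X(f_1)}^{\,s}(c';a,k)=0$ for all $c',a,k$, so for each $(a,k)$ the function $s\mapsto\mathcal{F}_X(f_1)(\gamma_b n_s\,\mathrm{diag}(a^{-1},a)\,k)$, which lies in $C_c^\infty(F_v)$, has identically vanishing Fourier transform and is therefore zero. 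Hence $\mathcal{F}_X(f_1)$ vanishes on the open orbit $\gamma_bG(F_v)$, hence on its dense closure $X^\circ(F_v)$ by continuity, hence $\mathcal{F}_X(f_1)=0$ in $\mathcal{S}(X(F_v))$, completing the contradiction.
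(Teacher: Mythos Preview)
Your approach differs substantially from the paper's and has a genuine gap at precisely the step you flag as ``the technical heart.'' The density claim --- that the functions $(a,k)\mapsto w(a)\rho(k)f_2(a^{-1}y')$ for $f_2\in\mathcal{S}_{0v}$ and $y'$ on the fibre span enough of $C_c^\infty((F_v^\times)^3\times K^3)$ to detect $\widehat{\mathcal{F}_X(f_1)}^{\,s}(c';\cdot)$ --- is asserted but not proved, and the heuristic you give does not hold up. First, $\rho(k)$ involves the Fourier transform on $V_i(F_v)$ whenever $k$ passes through the Weyl element, so for generic $k\in K$ the support of $\rho(k)f_2$ is \emph{not} localized near that of $f_2$; thus you cannot localize the $a$-support uniformly in $k$ as claimed. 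Second, the Peter--Weyl appeal only yields matrix coefficients of the $K^3$-types occurring in $\mathcal{S}_{0v}$, evaluated between $f_2$ and point-evaluation functionals, which is far from all of $C^\infty(K^3)$. To turn this into an actual proof you would need to control the $K$-type content of $\mathcal{F}_X(f_1)$ and match it against that of $\mathcal{S}_{0v}$, and then still handle the coupling between the $a$- and $k$-variables; none of this is done.

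The paper sidesteps the issue entirely by an $L^2$-approximation argument. One first picks $f_1'\otimes f_2\in\mathcal{S}(X(F_v))\otimes\mathcal{S}_{0v}$ with $I(f_1'\otimes f_2)\neq 0$ (elementary: take $f_1'$ supported near $\gamma_b$). Fixing a compact open $K\leq\mathrm{Sp}_6(\mathcal{O}_v)$ under which $f_1'$ is invariant, one approximates $\mathcal{F}_X^{-1}(f_1')$ in $L^2(X^\circ(F_v))^K$ by a sequence $f_{1n}\in C_c^\infty(\gamma_bG(F_v))^K$ (possible since the open orbit has full measure). Since $\mathcal{F}_X$ is an $L^2$-isometry (Proposition~\ref{prop:cont}), $\mathcal{F}_X(f_{1n})\to f_1'$ in $L^2$; since $I(\cdot\otimes f_2):L^2(X^\circ(F_v))^K\to L^2_{\mathrm{loc}}(Y^{\mathrm{ani}}(F_v))$ is continuous (Proposition~\ref{prop:I:Lloc}), $I(\mathcal{F}_X(f_{1n})\otimes f_2)\to I(f_1'\otimes f_2)\neq 0$, so some $f_{1n}$ works. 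Lemma~\ref{lem:totvanish} then transports non-vanishing to the given $y$. This trades your hard injectivity/density statement for soft functional-analytic input already established earlier in the paper.
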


\begin{proof} 
Choose $f_1' \otimes f_2 \in \mathcal{S}(X(F_v)) \otimes \mathcal{S}_{0v}$ such that $I(f_1'\otimes f_2)\neq 0$.  For example, we could take $f_1'$ to be the characteristic function of a sufficiently small neighborhood of $\gamma_b$ in $\gamma_bG(F_v)$.  
Choose a compact open subgroup $K \leq \mathrm{Sp}_{6}(\OO_v)$ such that $f_1'$ is fixed by $K$.  
Finally, choose $f_{1n} \in C_c^\infty(\gamma_bG(F_v))^K$ indexed by $n \in \ZZ_{>0}$ such that
$$
\lim_{n \to \infty} f_{1n}  = \mathcal{F}_X^{-1}(f_1')
$$
in $L^2(X^{\circ}(F_v))^K$.  Then since $\mathcal{F}_X$ is an isometry of $L^2(X^{\circ}(F_v))^K$, we have $\mathcal{F}_X(f_{1n}) \to f_1'$ in $L^2(X^{\circ}(F_v))^K$.  Since
$$
I(\cdot \otimes f_2):L^2(X^{\circ}(F_v))^K \lto L^{2}_{\mathrm{loc}}(Y^{\mathrm{ani}}(F_v))
$$
is well-defined and continuous by Proposition \ref{prop:I:Lloc}, we deduce that
$$
I(\mathcal{F}_X(f_{1n}) \otimes f_2) \to I(f'_1 \otimes f_2) 
$$
in $L^2_{\mathrm{loc}}(Y^{\mathrm{ani}}(F_v))$ and hence $I(\mathcal{F}_X(f_{1n}) \otimes f_2) \neq 0$ for $n$ large enough. The statement thus follows from Lemma \ref{lem:totvanish}.
\end{proof}

\begin{proof}[Proof of Theorem \ref{thm:FY}]
We first prove that if $I(f_{v})=0$ then $I(\mathcal{F}_X(f_{v}))=0$.  Choose finite places $v_1$ and $2\nmid v_2$ distinct from $v$ such that $\mathcal{Q}_{iv_2}$ splits for $1\le i\le 3$. Suppose that $f_{v_1} \in \mathcal{S}(X(F_{v_1}) \times V(F_{v_1}))$ is chosen so that $\mathcal{F}_X(f_{v_1}) \in C_c^\infty(\gamma_b G(F_{v_1}) \times V(F_{v_1}))$ and $I(\mathcal{F}_X(f_{v_1})) \in C_c^\infty(Y^{\mathrm{sm}}(F_{v_1}))$ and that $f_{v_2} \in C_c^\infty(\gamma_bG(F_{v_2})) \otimes \mathcal{S}_{0v_2}$.
 Moreover, choose $f^{v_1v_2v} \in \mathcal{S}(X(\A_F^{v_1v_2v}) \times V(\A_F^{v_1v_2v}))$. Then applying Theorem \ref{thm:main:intro}, we obtain
$$
0=\sum_{y \in Y^{\textrm{sm}}(F)}I(\mathcal{F}_X(f_{v}f_{v_1}f_{v_2}f^{vv_1v_2}))(y).
$$
In particular, since $\mathcal{F}_X(f_{v_1}) \in C_c^\infty(\gamma_b G(F_{v_1}) \times V(F_{v_1}))$ and $f_{v_2} \in C_c^\infty(\gamma_bG(F_{v_2})) \otimes \mathcal{S}_{0v_2}$, all of the boundary terms in the formula vanish.  

We observe that $Y(F)$ is discrete in $Y(\A_F)$.  Let $y_0 \in Y^{\mathrm{ani}}(F).$ We claim that we can choose $f_{v_1}f_{v_2}f^{vv_1v_2}$ so that the right hand side is equal to $I(\F_X(f_{v_1}f_{v_2}f^{v_1v_2}))(y_0)$  where $I(\F_X(f_{v_1}f_{v_2}f^{vv_1v_2}))(y_0) \neq 0$. 
Indeed, by Lemma \ref{lem:smooth}, we can choose $f_{v_1}$ so that $I(\mathcal{F}_X(f_{v_1}))$ is any function in $C_c^\infty(Y^{\mathrm{sm}}(F_{v_1}))$.
Combining this with Lemma \ref{lem:smooth}, the computation of the basic function  in Proposition \ref{prop:unram:comp}, and Lemma \ref{lem:nonzero}, we deduce the claim.  

The claim implies that $I(\mathcal{F}_X(f_v))(y)=0$ for all $y \in Y^{\mathrm{ani}}(F)$.  
Since $Y^{\mathrm{ani}}(F)$ is dense in $Y^{\mathrm{sm}}(F_v)$ by \cite[Remark 3.5.76]{Poonen:Rational} and Theorem \ref{thm:CTS}, we can use the continuity of $I(\mathcal{F}_X(f_v))$ (Propositions \ref{prop:na:smooth} and \ref{prop:a:smooth}) to deduce that $I(\mathcal{F}_X(f_v))=0$.

We have shown that $\mathcal{F}_X(\ker I) \leq \ker I.$ On the other hand $\mathcal{F}_X \circ \mathcal{F}_X=\mathrm{Id}$ by Proposition \ref{prop:cont}, so  $\ker I=\mathcal{F}_X \circ \mathcal{F}_X(\ker I) \leq \mathcal{F}_X (\ker I),$ hence $\mathcal{F}_X(\ker I)=\ker I.$  This implies the theorem.
\end{proof}

The Fourier transform $\mathcal{F}_{X,\psi}$ and $I:=I_\psi$ depend on a choice of additive character $\psi$.  The dependence of $I$ on $\psi$ is through its dependence on the Weil representation $\rho=\rho_\psi.$   Thus $\mathcal{F}_Y$ also depends on $\psi.$  We write $\mathcal{F}_{Y,\psi}$ when we need to indicate this dependence.  Thus $\mathcal{F}_{Y,\psi}$ is determined by the relation
    \begin{align}
        \mathcal{F}_{Y,\psi} \circ I_{\psi}=I_{\psi}\circ \mathcal{F}_{X,\psi}.
    \end{align}

\begin{cor}
For $f\in \mathcal{S}(Y(F_v))$, we have
    \begin{align}\label{Fourierinversion}
        \F_Y^2(f)(v)&=f(v),\\
\label{Conjugation}
        \overline{\F_{Y,\psi}(f)}&=\F_{Y,\overline\psi}(\overline{f}),\\
        \label{same} \F_{Y,\psi}&=\F_{Y,\overline{\psi}}.
    \end{align}
\end{cor}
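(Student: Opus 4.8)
The three identities are all restatements, at the level of $\mathcal{S}(Y(F_v))$, of facts already established for $\mathcal{F}_X$. The mechanism is uniform: $I = I_\psi$ is surjective onto $\mathcal{S}(Y(F_v))$ and $\mathcal{F}_Y = \mathcal{F}_{Y,\psi}$ is characterized by $\mathcal{F}_{Y,\psi} \circ I_\psi = I_\psi \circ \mathcal{F}_{X,\psi}$, so any operator identity for $\mathcal{F}_X$ that is suitably compatible with $I$ descends to $\mathcal{F}_Y$. Throughout I will use that $\overline{\rho_\psi(g)h} = \rho_{\overline{\psi}}(g)\overline{h}$ for $g \in \SL_2^3(F_v)$ and $h \in \mathcal{S}(V(F_v))$ — this is checked termwise on the standard generators of $\SL_2(F_v)$, using that each $\chi_{\mathcal{Q}_i}$ is real-valued and $\overline{\gamma(\mathcal{Q}_i,\psi)} = \gamma(\mathcal{Q}_i,\overline{\psi})$ — whence $\overline{I_\psi(h_1 \otimes h_2)} = I_{\overline{\psi}}(\overline{h_1} \otimes \overline{h_2})$. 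For \eqref{Fourierinversion}: composing the defining relation with itself, $\mathcal{F}_Y^2 \circ I = \mathcal{F}_Y \circ I \circ \mathcal{F}_X = I \circ \mathcal{F}_X^2$; by \eqref{inverse} with $n = 3$ one has $\mathcal{F}_X^2 = L(m(-1)^{n+1}) = L(m(-1)^4) = \mathrm{Id}$ since $m(-1)^2 = \mathrm{Id}$, so $\mathcal{F}_Y^2 \circ I = I$, and surjectivity of $I$ forces $\mathcal{F}_Y^2 = \mathrm{Id}$.

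For \eqref{Conjugation} I first record a minor sharpening of \eqref{complex:conj}: rerunning the computation in the proof of Proposition \ref{prop:cont} but leaving $\overline{\psi}$ inside the $\gamma$-factors (rather than converting back to $\psi$ at the cost of a factor $\chi(-1)$) yields $\overline{\mathcal{F}_{X,\psi}(g)} = \mathcal{F}_{X,\overline{\psi}}(\overline{g})$ on $\mathcal{S}(X(F_v))$. Then, for $f = I_\psi(f_1 \otimes f_2) \in \mathcal{S}(Y(F_v))$,
\[
\overline{\mathcal{F}_{Y,\psi}(f)} = \overline{I_\psi(\mathcal{F}_{X,\psi}(f_1) \otimes f_2)} = I_{\overline{\psi}}\big(\mathcal{F}_{X,\overline{\psi}}(\overline{f_1}) \otimes \overline{f_2}\big) = \mathcal{F}_{Y,\overline{\psi}}\big(I_{\overline{\psi}}(\overline{f_1} \otimes \overline{f_2})\big) = \mathcal{F}_{Y,\overline{\psi}}(\overline{f}),
\]
which is \eqref{Conjugation}.

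For \eqref{same}, the point is the $\psi$-dependence of $I_\psi$ recorded in Lemma \ref{lem:Ipsi}. Two inputs on the $X$-side: from Theorem \ref{thm:44} together with $\gamma(s,\chi,\overline{\psi}) = \chi(-1)\gamma(s,\chi,\psi)$ and $\chi^2(-1) = 1$ one gets $\mathcal{F}_{X,\overline{\psi}} = L(m(-1)) \circ \mathcal{F}_{X,\psi}$ on $\mathcal{S}(X(F_v))$; and Lemma \ref{lem:Ipsi} with $c = -1$ (so $\psi_c = \overline{\psi}$ and $|c| = 1$), combined with the fact that $y \mapsto -y$ on $Y^{\mathrm{sm}}(F_v)$ is the action of $\sigma := (-I_{d_1}, -I_{d_2}, -I_{d_3}) \in H(F_v)$ with $\lambda(\sigma) = 1$, gives
\[
I_{\overline{\psi}} = \tfrac{\gamma(\mathcal{Q},\overline{\psi})}{\gamma(\mathcal{Q},\psi)}\, L(\sigma) \circ I_\psi \circ L(m(-1)) R(w),
\]
where $w := \begin{psmatrix} -I_3 & \\ & I_3 \end{psmatrix}$ (so $\nu(w) = -1$), with $L(m(-1)), R(w)$ acting on the $X$-factor and $L(\sigma)$ on $\mathcal{S}(Y(F_v))$. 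By surjectivity of $I_{\overline{\psi}}$ it suffices to check that $\mathcal{F}_{Y,\psi}$ satisfies $\mathcal{F}_{Y,\psi} \circ I_{\overline{\psi}} = I_{\overline{\psi}} \circ \mathcal{F}_{X,\overline{\psi}}$. Substituting the two displayed formulas, commuting $\mathcal{F}_{X,\psi}$ past $L(m(-1))R(w)$ by Lemma \ref{lem:BKF} (all modulus and similitude factors equal $1$, as $|m(-1)| = |\nu(w)| = 1$), and using that $\mathcal{F}_{Y,\psi}$ commutes with $L(\sigma)$ by the $H(F_v)$-equivariance of $\mathcal{F}_Y$ (the corollary preceding this one, with $\lambda(\sigma) = 1$), both sides reduce — after the elementary identities $L(m(-1))^2 = \mathrm{Id}$ and $R(-I_6) = L(m(-1))$ on $X^{\circ}(F_v)$, the latter because $-I_6$ is central in $\mathrm{Sp}_6$ and $[P,P](-I_6) = [P,P]m(-1)$ — to the single operator $\tfrac{\gamma(\mathcal{Q},\overline{\psi})}{\gamma(\mathcal{Q},\psi)}\, L(\sigma) \circ I_\psi \circ R(w) \circ \mathcal{F}_{X,\psi}$. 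Hence $\mathcal{F}_{Y,\psi} = \mathcal{F}_{Y,\overline{\psi}}$.

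The main obstacle is the operator bookkeeping in \eqref{same}: keeping straight, on the $X$-factor, how the left $M^{\mathrm{ab}}$-action $L(m(-1))$, the right $\mathrm{GSp}_6$-action $R(w)$, the passage $\psi \leftrightarrow \overline{\psi}$, and their images under $I$ interact, while invoking Lemma \ref{lem:BKF} and the $\SL_2^3$-intertwining of the Weil representation with the correct normalizations. No single computation is hard; the content is that every stray operator either squares to the identity or is absorbed into the $H(F_v)$-equivariance of $\mathcal{F}_Y$, so that the two sides genuinely collapse to the same expression.
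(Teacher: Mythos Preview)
Your proof is correct and uses essentially the same ingredients as the paper: parts \eqref{Fourierinversion} and \eqref{Conjugation} match the paper's arguments (you derive $\overline{\mathcal{F}_{X,\psi}(f_1)}=\mathcal{F}_{X,\overline\psi}(\overline{f_1})$ from the proof of Proposition~\ref{prop:cont} rather than citing the explicit formula in \cite{Getz:Hsu:Leslie}, which is fine). For \eqref{same} you verify that $\mathcal{F}_{Y,\psi}$ satisfies the defining relation of $\mathcal{F}_{Y,\overline\psi}$, whereas the paper computes $\mathcal{F}_{Y,\psi}\circ\mathcal{F}_{Y,\overline\psi}=\mathrm{Id}$ directly; both arguments unwind via Lemma~\ref{lem:Ipsi} (with $c=-1$), Lemma~\ref{lem:BKF}, and the coset identity $[P,P](F_v)(-I_6)=[P,P](F_v)m(-1)$, so this is a reorganization rather than a different route.
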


\begin{proof}
The first equation \eqref{Fourierinversion} is immediate from Proposition \ref{prop:cont}. As for \eqref{Conjugation},  by the explicit formula for $\mathcal{F}_{X,\psi}$ given in \cite[Corollary 6.11]{Getz:Hsu:Leslie}, for any $f_1 \in \mathcal{S}(X(F_v))$ one has
    \begin{align} \label{last:tag}
    \overline{\mathcal{F}_{X,\psi}(f_1)}=\mathcal{F}_{X,\overline{\psi}}(\overline{f}_1).
    \end{align}
Moreover, we claim that for $f_2 \in \mathcal{S}(V(F_v))$ one has 
\begin{align} \label{weil:claim}
\overline{\rho_{\psi}(g)f_2}=\rho_{\overline{\psi}}(g)\overline{f}_2
\end{align}
for all $g \in \SL_2^3(F_v)$. By the second corollary to \cite[Th\'eor\`eme 2]{Weil:Certains:groupes}, the Weil index $\gamma(\mathcal{Q}_i,\psi)$ satisfies the relation $\overline{\gamma(\mathcal{Q}_i,\psi)}=\gamma(\mathcal{Q}_i,\overline{\psi})$. Using this fact, one checks \eqref{weil:claim} by checking it on the same set of generators for $\SL_2^3(F_v)$ traditionally used to define the Weil representation (see \cite[\S 3.1]{Getz:Liu:Triple}, for example).
 Thus \eqref{weil:claim} is valid. Hence for $f_1 \otimes f_2 \in \mathcal{S}(X(F_v) \times V(F_v))$ one has 
    \begin{align*}
    \overline{I_{\psi}(\mathcal{F}_{X,\psi}(f_1) \otimes f_2)}=I_{\overline{\psi}}(\overline{\mathcal{F}_{X,\psi}(f_1) \otimes f_2})=I_{\overline{\psi}}(\mathcal{F}_{X,\overline{\psi}}(\overline{f}_1) \otimes \overline{f}_2).
    \end{align*}
    This implies \eqref{Conjugation}.
The space $\mathcal{S}(Y(F_v))$ is independent of the character $\psi$ by Lemma \ref{lem:Ipsi}.  Thus to show \eqref{same}, by \eqref{Fourierinversion} it suffices to show $\F_{Y,\psi}\circ \F_{Y,\overline{\psi}}(f)=f$ for functions $f$ of the form $I_{\overline{\psi}}(f_1\otimes f_2)$.  We compute
    \begin{align*}
        &\F_{Y,\psi}\circ \F_{Y,\overline{\psi}}(I_{\overline{\psi}}(f_1\otimes f_2))\\
        &= \F_{Y,\psi}(I_{\overline{\psi}}(\F_{X,\overline{\psi}}(f_1)\otimes f_2))  \\
        &= \frac{\gamma(\mathcal{Q},\overline{\psi})}{\gamma(\mathcal{Q},\psi)}\F_{Y,\psi}\circ L(-1)I_{\psi}(L(m(-1))R(\begin{smallmatrix} -I_3& \\
     & I_3
     \end{smallmatrix})\F_{X,\overline{\psi}}(f_1)\otimes f_2) \textrm{ (Lemma \ref{lem:Ipsi})}\\
        &= \frac{\gamma(\mathcal{Q},\overline{\psi})}{\gamma(\mathcal{Q},\psi)}\F_{Y,\psi}\circ I_{\psi}(L(m(-1))R(\begin{smallmatrix} -I_3& \\
     & I_3
     \end{smallmatrix})\F_{X,\overline{\psi}}(f_1)\otimes L(-1)f_2)\\
        &= \frac{\gamma(\mathcal{Q},\overline{\psi})}{\gamma(\mathcal{Q},\psi)}I_{\psi}\circ \F_{X,\psi} (L(m(-1))R(\begin{smallmatrix} -I_3& \\
     & I_3
     \end{smallmatrix})\F_{X,\overline{\psi}}(f_1)\otimes L(-1)f_2)\\
     &=\frac{\gamma(\mathcal{Q},\overline{\psi})}{\gamma(\mathcal{Q},\psi)}I_{\psi}\left(R(\begin{smallmatrix} I_3& \\
     & -I_3
     \end{smallmatrix})\F_{X,\psi} (L(m(-1))\F_{X,\overline{\psi}}(f_1)\otimes L(-1)f_2\right) \textrm{ (Lemma \ref{lem:BKF})}
     \\
        &= \frac{\gamma(\mathcal{Q},\overline{\psi})}{\gamma(\mathcal{Q},\psi)}I_{\psi}(R(\begin{smallmatrix} I_3& \\
     & -I_3
     \end{smallmatrix})\F_{X,\bar{\psi}} \circ \F_{X,\overline{\psi}}(f_1)\otimes L(-1)f_2) \textrm{ (Proposition \ref{prop:cont} and \eqref{last:tag})}\\
        &=\frac{\gamma(\mathcal{Q},\overline{\psi})}{\gamma(\mathcal{Q},\psi)}I_{\psi}(R(\begin{smallmatrix} I_3& \\
     & -I_3
     \end{smallmatrix})f_1\otimes L(-1)f_2) \textrm{ (Proposition \ref{prop:cont})}\\
     &= I_{\overline{\psi}}(L(m(-1))R(-I_6)f_1\otimes f_2) \textrm{ (Lemma \ref{lem:Ipsi})}\\
     &= I_{\overline{\psi}}(f_1\otimes f_2).
    \end{align*}
Here the last equality follows from the fact that
$m(-1)[P,P](F_v) =(-I_6)[P,P](F_v)$. 
\end{proof}

 We now explain how to deduce Theorem \ref{thm:PS} from Theorem \ref{thm:main:intro} and Theorem \ref{thm:FY}.

\begin{proof}[Proof of Theorem \ref{thm:PS}]
Given such $f$, we can choose 
$f_{1v_i} \in C_c^\infty(\gamma_b G(F_{v_i}))$ for $i=1,2$
such that $I(f_{1v_1} \otimes f_{v_1})=f_{v_1}$ and 
$I(f_{1v_2} \otimes f_{2v_2})=\mathcal{F}_Y(f_{v_2})$
where $f_{2v_2}|_{Y^{\mathrm{sm}}(F_{v_2})}=\mathcal{F}_Y(f_{v_2})$.
Indeed,  we can take $f_{1v_i}$ to be a scalar multiple of the characteristic function of a sufficiently small neighborhood of $\gamma_b$ in $\gamma_bG(F_{v_i})$.  

Moreover, choose $f'^{v_1v_2} \in \mathcal{S}(X(\A_F^{v_1v_2})\times V(\A_F^{v_1v_2}))$ such that $
I(f'^{v_1v_2})=f^{v_1v_2}.$
To deduce the theorem, we now apply Theorem \ref{thm:main:intro} to $f'=(f_{1v_1} \otimes f_{v_1})(\mathcal{F}_X^{-1}(f_{1v_2}) \otimes f_{v_2})f'^{v_1v_2}$.  Assumption \eqref{cptsupport0} is clearly valid, and \eqref{awayfromVcirc0} is valid by our hypotheses on $f_{v_1}$ and $\mathcal{F}_Y(f_{v_2})$.
By construction, the boundary terms vanish and the theorem is proved.
\end{proof}

\section*{List of symbols}

\begin{center}
\begin{longtable}{l c r}

$b_X$ &  basic function on $X$ & \eqref{bX} \\
$b_Y$ & basic function on $Y$ & \eqref{bY}\\

$f_{\chi_s}$ & local Mellin transform & \eqref{Mellin}\\
$|f|_{A,B,p}$ & seminorm & \eqref{semi:normf}\\
$|f|_{A,B,w,p_w,\Omega,D}$ & seminorm & \eqref{seminorm}\\
$\F_X$ & Fourier transform on $\mathcal{S}(X(F))$ &\S \ref{ssec:loc:Schwartz}\\
$\F_Y$ & Fourier transform on $\mathcal{S}(Y(F))$ &\S \ref{sec:appendix}\\

$G$ & $\SL_2^3$ &\eqref{Gembed}\\
$\mathfrak{g}$ & Lie algebra of $M^{\mathrm{ab}}\times \mathrm{Sp}_{2n}$ & \S \ref{ssec:loc:Schwartz}\\
$\gamma_i$ &  representatives of $X^{\circ}(F)/G(F)$ & \eqref{eq:gammas}\\
$G_{\gamma_i}$ & stabilizer of $\gamma_i$ in $G$ &\S \ref{sec:groups:orbits}\\
$|g|$ & the norm of $g$ under the Pl\"ucker embedding& \eqref{plucknorm}\\

$H$ & subgroup of a similitude group on $V$ & \eqref{H}\\

$I$ & integral operator attached to the representative $\gamma_b$ & \eqref{Is}\\
$I_0$ & integral  operator attached to the representative $\mathrm{Id}=I_6$ & \eqref{Is}\\
$I_i$ & integral  operator attached to the representative $\gamma_i$ & \eqref{Ii}\\
$L(m)R(g)$ & action of  $M^{\mathrm{ab}}(F) \times \mathrm{GSp}_6(F)$ on $\mathcal{S}(X(F))$& \eqref{LRaction}\\
\quash{$L(h)$ & left translation by $H$ & \\}
$\lambda(h)$ & similitude norm of $h$ & \eqref{lambda}\\

$m$ & an isomorphism $M^{\mathrm{ab}}(F)\to F^\times$ & \eqref{1:def}\\
$M$ & Levi subgroup of $P$ & \eqref{eq:levi}\\

$N$ & unipotent radical of $P$ & \eqref{eq:levi}\\

$N_2$ & standard maximal unipotent subgroup in $\SL_2$ & \S \ref{ssec:bt}\\\

$\omega$ & character of $M$ & \eqref{omega}\\
$\one_k$ & characteristic function of $\one_{[P,P](F)m(\varpi^k) \Sp_6(\OO)}$ & \eqref{1k}\\

$P$ & Siegel parabolic & \S \ref{sec:BK}\\
$\mathrm{Pl}$ & Pl\"ucker embedding & \eqref{pluckerembed}\\ 

$(V,\mathcal{Q})$ & $\prod_{i=1}^3 (V_i,\mathcal{Q}_i)$ & \S \ref{sec:intro}\\

$V^{\circ}$ & $\prod_{i=1}^3(V_i-\{0\})$ & \eqref{vcirc}\\
$(V_i,\mathcal{Q}_i)$ &  quadratic space of even dimension & \S \ref{sec:intro} \\

$X$ & affine closure of $X^{\circ}$ & \eqref{X}\\
$X^{\circ}$ & Braverman-Kazhdan space & \eqref{XP}\\

$Y$ & $\{ (y_1,y_2,y_3) \in V:\mathcal{Q}_1(y_1)=\mathcal{Q}_2(y_2)=\mathcal{Q}_3(y_3)\}$ & \eqref{Y}\\
$Y^{\mathrm{ani}}$ & anisotropic vectors in $Y$ & \S \ref{sec:groups:orbits}\\
$Y^{\mathrm{sm}}$ & smooth locus of $Y$ &\S \ref{sec:groups:orbits} \\
$Y_0$ & $\widetilde{Y_0}/\GG_m^2$& \eqref{quotient0}\\
$Y_i$ & $\widetilde{Y}_i/\GG_m$ &
\eqref{quotient0}\\
$\widetilde{Y_0}$ & vanishing locus of $\mathcal{Q}_1,\mathcal{Q}_2,\mathcal{Q}_3$ in $V^\circ$ &\eqref{tildeY0}\\
$\widetilde{Y_i}$ & $\{(y_1,y_2,y_3) \in V^{\circ}:\mathcal{Q}_{i-1}(y_{i-1})=\mathcal{Q}_{i+1}(y_{i+1}) \textrm{ and } \mathcal{Q}_i(y_i)=0\}$ & \eqref{tildeYi}
\end{longtable}
\end{center}


\bibliography{refs}
\bibliographystyle{alpha}

\end{document}